\newtheorem{theorem}{Theorem} [section]
\newtheorem{lemma}[theorem]{Lemma}
\newtheorem{proposition}[theorem]{Proposition}
\newtheorem{corollary}[theorem]{Corollary}
\theoremstyle{definition}
\newtheorem{remark}[theorem]{Remark}
\DeclareMathOperator*{\intt}{\int}
\DeclareMathOperator{\Law}{Law}
\newcommand{\I}{\hspace{0.5mm}\text{I}\hspace{0.5mm}}
\newcommand{\noi}{\noindent}
\newcommand{\Z}{\mathbb{Z}}
\newcommand{\R}{\mathbb{R}}
\newcommand{\C}{\mathbb{C}}
\newcommand{\T}{\mathbb{T}}
\newcommand{\wick}[1]{{:}\,#1\mspace{2mu}{:}}
\let\P= \undefined
\newcommand{\P}{\mathbb{P}}
\newcommand{\E}{\mathbb{E}}
\newcommand{\Res}{\mathcal{R}}
\newcommand{\al}{\alpha}
\newcommand{\dl}{\delta}
\newcommand{\FL}{\mathcal{F}L} 
\newcommand{\ep}{\varepsilon}
\newcommand{\g}{\gamma}
\newcommand{\ld}{\lambda}
\newcommand{\s}{\sigma}
\newcommand{\ft}{\widehat}
\newcommand{\wt}{\widetilde}
\newcommand{\cj}{\overline}
\newcommand{\dx}{\partial_x}
\newcommand{\dt}{\partial_t}
\newcommand{\dd}{\partial}
\newcommand{\ta}{\theta}
\renewcommand{\l}{\ell}
\renewcommand{\o}{\omega}
\renewcommand{\O}{\Omega}
\newcommand{\les}{\lesssim}
\newcommand{\ges}{\gtrsim}
\newcommand{\jb}[1]
{\langle #1 \rangle}
\newcommand{\ind}{\mathbbm 1}
\newcommand{\N}{\mathbb{N}}
\newcommand{\NN}{\mathcal{N}}
\newcommand{\RR}{\mathcal{R}}
\newcommand{\QQ}{\mathcal{Q}}
\newcommand{\VV}{\mathcal{V}}
\renewcommand{\I}{\mathcal{I}}
\newtheorem*{ackno}{Acknowledgements}
\newcommand{\eps}{\ep}
\renewcommand{\ft}{\widehat}
\numberwithin{equation}{section}
\numberwithin{theorem}{section}
\begin{document}
\baselineskip = 14pt

\title[A.S. GWP and quasi-invariance for FNLS]
{
Quasi-invariance of Gaussian measures of negative regularity for fractional nonlinear Schr\"{o}dinger equations}

\author[J.~Forlano and L.~Tolomeo]
{Justin Forlano and Leonardo Tolomeo}

\address{
Justin Forlano, Department of Mathematics\\
University of California\\
Los Angeles\\
CA 90095\\
USA}

\email{forlano@math.ucla.edu}

 \address{
 Leonardo Tolomeo\\
 Mathematical Institute\\
 Hausdorff Center for Mathematics\\
 Universit\"{a}t Bonn\\
 Bonn\\
 Germany
}

\email{tolomeo@math.uni-bonn.de}

\maketitle

\begin{abstract}
We consider the Cauchy problem for the fractional nonlinear Schr\"{o}dinger equation (FNLS) on the one-dimensional torus with cubic nonlinearity and high dispersion parameter $\alpha > 1$, subject to a Gaussian random initial data of negative Sobolev regularity $\s<s-\tfrac{1}{2}$, for $s \le \frac 12$. We show that
for all $s_{\ast}(\al) <s\leq \tfrac{1}{2}$,
the equation is almost surely globally well-posed. Moreover, the associated Gaussian measure supported on $H^{s}(\T)$ is quasi-invariant under the flow of the equation.
For $\alpha < \frac{1}{20}(17 + 3\sqrt{21}) \approx 1.537$, the regularity of the initial data is lower than the one provided by the deterministic well-posedness theory. 

We obtain this result by following the approach of DiPerna-Lions (1989); first showing global-in-time bounds for the solution of the infinite-dimensional Liouville equation for the transport of the Gaussian measure, and then transferring these bounds to the solution of the equation by adapting Bourgain's invariant measure argument to the quasi-invariance setting. This allows us to bootstrap almost sure global bounds for the solution of (FNLS) from its probabilistic local well-posedness theory. 
\end{abstract}
\tableofcontents

\section{Introduction}
\subsection{Background}
In this paper, we consider the Cauchy problem for the (renormalized) fractional nonlinear Schr\"odinger equation (FNLS):
\begin{equation}\label{alphaNLS}
\begin{cases}
i \partial_t u + (-\dx^2)^{\al} u \pm \big(|u|^2  - 2 \big(\int_{\T} |u|^2 dx \big)\big)u = 0,\\
u(0) = u_0.
\end{cases}
\end{equation}
with high dispersion $\al>1$, posed on the one-dimensional torus $\T=\R/2\pi\Z$. Our goal is to study the long-time behaviour and statistical properties of the flow of this equation when subjected to a Gaussian random initial data $u_0$ of low regularity, so that $u_0 \not\in L^2(\T)$. 

The study of Hamiltonian PDEs with random initial data was initiated by McKean and Vaninsky for the cubic wave equation, and by Bourgain for the mass-critical Schr\"odinger equation, both posed on $\T$, in \cite{McVa94,Bo94}. These works were concerned with showing the invariance of the Gibbs measure associated to the Hamiltonian; the measures having been rigorously studied earlier by Lebowitz, Rose and Speer~\cite{LRS}.

In particular, Bourgain in \cite{Bo94} introduced the so-called Bourgain's invariant measure argument. The idea is to use the formal invariance of the Gibbs measure, as a replacement for a conservation law, to obtain almost sure global well-posedness on the support of the Gibbs measure.
 This scheme was applied in \cite{Bo94} to the one-dimensional NLS 
 \begin{align}
i \partial_t u -\dx^2 u +|u|^{p-1}u=0, \label{NLS}
\end{align}
where $p\in 2\mathbb{N}+1$,
  for almost every initial data (distributed according to the Gibbs measure) belonging to the $L^2$-based Sobolev space $H^{\frac 12 - \epsilon}(\T)$, where $\eps>0$. The local-in-time dynamics had been constructed earlier in another seminal paper by Bourgain~\cite{Bou}.
  We recall that the Gibbs measure for \eqref{NLS} is formally described by
  \begin{align}
d\rho(u) = Z^{-1} \exp(-H(u)) du = Z^{-1} \exp\Big(-\frac 12 \int_{\T} |\dx u|^2 - \frac 1p \int_{\T} |u|^p\Big) du,  \label{Gibbs}
\end{align}
where $H(u)=\frac 12 \int_{\T} |\dx u|^2 + \frac 1p \int_{\T} |u|^p dx$ is the Hamiltonian (energy) for \eqref{NLS}, and $Z$ is a normalisation constant. We heuristically expect $\rho$ to be conserved under the flow of \eqref{NLS} because (i) the Hamiltonian $H(u)$ is conserved, and (ii) the (non-existent) Lebesgue measure $``du"$ is formally conserved by Liouville's theorem.

Bourgain's result was especially remarkable because on the torus, one can use conservation of mass and energy to obtain a priori estimates for solutions to \eqref{NLS} with $p>3$ only in $H^1 (\T)$. Bourgain also obtained similar results in the focusing case ($+$ sign in \eqref{NLS}) when $p\leq 6$, although the unboundedness from below of the Hamiltonian requires an additional (conserved) $L^2(\T)$ cut-off on the measure \eqref{Gibbs}. We will not discuss such issues further here and refer the interested reader to \cite{OSTol}.

These techniques were further developed by Bourgain himself in his later result \cite{Bo96}, where he considered the defocusing cubic NLS posed on $\T^2$. He showed a similar almost sure global well-posedness result, this time in $H^{-\eps}(\T)$. 
At this regularity, not only does one lack nice a-priori bounds, but the equation is scaling super-critical, and it is ill-posed in a strong sense; see \cite{Ki14, Ki18} for more details. 

These results by Bourgain generated a lot of interest in the study of dispersive/Hamiltonian PDEs with random initial data, especially in the situation where it is possible to prove invariance of the Gibbs measure. This is typically a very difficult problem as in many cases of interest, such as in \cite{Bo96}, the Gibbs measure is supported on a space of functions which are too rough for deterministic well-posedness theory to apply (if it even exists at such regularities). This issue necessitates a \textit{probabilistic} (local) well-posedness theory, that goes beyond deterministic results by exploiting cancellations due to random oscillations.
The literature on the study of Gibbs measures for nonlinear Hamiltonian PDEs is by now quite long, so we point the reader to the papers \cite{Zhid, Tz06, TV1, TV2, OhKdV, NORBS, Deng, DTV, DNY2, ST1, OTzW, ORSW2, KMV, ST2, CK, DNY4, bdny}
and the references contained therein. 

In a parallel direction, a natural question that arises is to provide some description of the flow of Hamiltonian PDEs in situations where the initial data is \textit{not} distributed according to an invariant measure. This pursuit was initiated by Tzvetkov in \cite{Tz}, where the following question was studied: given a Gaussian measure $\mu$ and the flow $\Phi_t$ of a Hamiltonian PDE, when can we say that 
\begin{align}
(\Phi_t)_\# \mu \ll \mu\,? \label{Question}
\end{align}
Here, $(\Phi_t)_\#\mu$ denotes the push-forward of the measure $\mu$ under the map $\Phi_t$. If the answer to this question is positive, we say that the measure $\mu$ is {\it quasi-invariant} under the flow $\Phi_t$. 

Before we discuss this further, we give a rigorous definition for the Gaussian measures supported on Sobolev spaces.
Equipping $\T $ with normalised Lebesgue measure $(2\pi)^{-1}dx$, we let $u_0$ denote the random initial data given by 
\begin{equation}
u_0 ( x; \o) = \sum_{n \in \Z} \frac{g_n (\o)}{\jb{n}^s} e^{i n  x},  \label{u0}
\end{equation}
where $\{g_n\}_{n\in \Z}$ are independent complex-valued Gaussian normal random variables (namely, $\text{Re}(g_n)$ and $\text{Im}(g_n)$ are independent $\mathcal N(0,\frac12)$ Gaussian random variables on $\R$) on a probability space $(\O,\mathcal{F},\mathbb{P}),$ and $\jb{n}:=(1+n^2)^{1/2}$. It is well known that $u_0 \in H^\sigma (\T) \setminus H^{s-\frac{1}{2}}(\T)$ almost surely if and only if $\sigma < s - \frac 12$.
We then define the Gaussian measure $\mu_s$ as
\begin{equation}\label{musdef}
\mu_s(u_0) := \Law(u_0).
\end{equation}
At least formally,
\begin{equation*}
d\mu_s(u_0) = Z_{s}^{-1}\exp\big(- \tfrac12 \| u_0 \|_{H^s}^2\big)du_0,
\end{equation*}
since $\mu_s$ is a Gaussian measure with inverse covariance operator $(1-\dd^{2}_{x})^{s}$. 
The measure $\mu_s$ is then supported on $H^{\s}(\T)$, for $\s<s-\tfrac{1}{2}$, and the triplet $(H^s, H^\s, \mu_s)$ forms an abstract Wiener space; see~\cite{GROSS, Kuo2}.

The restriction to Gaussian measures in \eqref{Question} is natural in the context of Hamiltonian PDEs, as such equations often have conservation laws of the form:
$$ E(u) = \tfrac12 \| u \|_{H^s}^2 + \text{Rem}(u),$$
where $\text{Rem}(u)$ is a remainder term due to the nonlinearity in the equation,
and typically the associated invariant measure $ \exp(-E(u)) du$ is absolutely continuous with respect to the Gaussian measure $\mu_{s}$.\footnote{The main example in which this absolute continuity does not hold but the measure can still be proven to be invariant is the $\Phi^4_3$ measure of quantum field theory. This was very recently shown to be invariant for the flow of the nonlinear wave equation on $\T^3$ in the impressive work \cite{bdny}.} 
Indeed, a strategy to rigorously interpret the Gibbs measure is to view it as a weighted Gaussian measure 
\begin{align*}
d\rho =  \exp(-\text{Rem}(u))d\mu_s,
\end{align*}
and show that $e^{-\text{Rem}(u)}$ is integrable on the support of $\mu_{s}$.


We stress that the question of quasi-invariance of Gaussian measures under the flow of Hamiltonian PDEs is a non-trivial problem, even in high-regularity situations. In probability theory, the seminal work of Cameron and Martin~\cite{Cameron} identified necessary and sufficient conditions for the quasi-invariance of Gaussian measures under shifts of the form $x \mapsto x + y$. Ramer~\cite{Ramer} extended this study to general nonlinear transformations. In order to apply Ramer's result to prove the quasi-invariance under the flow of nonlinear Hamiltonian PDEs, one essentially needs a $(d+\eps)$-degree of smoothing on the nonlinear part of the flow, where $d\geq 1$ is the dimension of the torus $\T^{d}$. See a discussion in \cite[Section 1.4]{Tz} for more details. We also mention the works by Cruzeiro~\cite{Cru1, Cru2} on quasi-invariance for general evolution equations.

For a typical dispersive PDE, such an amount of nonlinear smoothing seems to be out of reach, and even when available, it requires careful analysis to unlock. 
For example, for \eqref{alphaNLS}, there seems to be $(2\al-1)$-degrees of nonlinear smoothing, provided that $s>1$. Thus, Ramer's result would imply quasi-invariance of $\mu_s$ under the flow of \eqref{alphaNLS} only for $s>1$. 

In order to go beyond Ramer's result, Tzvetkov \cite{Tz} introduced a general method to obtain quasi-invariance for Hamiltonian PDEs, which has been iterated and expanded upon in various directions over the years.
Indeed, many results have appeared regarding the quasi-invariance of Gaussian measures  
for various different dispersive PDEs. In particular, there are results for quasi-invariance of the BBM and Benjamin-Ono equations \cite{Tz, GLT1,GLT2}, KdV type equations~\cite{PTV2}, wave equations \cite{OTz2, GOTW, STX}, and Schr\"odinger equations \cite{OTz, OST, OTT, PTV, FT, DT2020, OS, FS, GLT1}. The key underlying observation in this study is that the quasi-invariance of Gaussian measures is intimately tied to the dispersive character of the equation; see \cite{OST, STX} for negative results for some dispersionless ODEs.

We point out that, contrary to the situation in which the initial data is distributed according to an invariant measure, these quasi-invariance results all rely on the underlying equation to at least be deterministically globally well-posed at the typical regularity of a sample of the Gaussian measure. In particular, so far, there has been no quasi-invariance result which is built upon a probabilistic local well-posedness result. 

%

\subsection{Main result}

The main goal of this paper is to bridge this gap between invariance and quasi-invariance results. In particular, we exploit the quasi-invariance of certain Gaussian measures in order to show almost sure global well-posedness \textit{below} the known local well-posedness threshold, in the spirit of the original works by Bourgain \cite{Bo94,Bo96}.

Our main result is the following:
\begin{theorem}\label{thm:qiandgwp}
Let $\alpha > 1$ and $s \le \frac 12$ be such that  
\begin{equation}\label{conditionalphas}
s > s_*(\alpha):= \begin{cases}
\frac14 \big(\sqrt{68\alpha^2-52\alpha + 9} - 10 \alpha + 7 \big) & \text{ if } \quad  1 < \alpha \le \frac{1}{32} \big(35 + \sqrt{105}\big),  \\
\frac12 \big(\sqrt{4\alpha^2-2\alpha+1} - 2\alpha + 1\big) & \text{ if } \quad \alpha >  \frac{1}{32} \big(35 + \sqrt{105}\big).
\end{cases}
\end{equation}
Then the equation \eqref{alphaNLS} is almost surely globally well-posed with respect to the Gaussian measure $\mu_{s}$. Moreover, the measure $\mu_s$ is quasi-invariant with respect to the flow and, if $\sigma < s - \frac 12$, for $\mu_s$ a.e.\ initial data $u_0$, there is an exponent $A = A(\alpha,s) > 0$ such that
\begin{equation}\label{sobolevgrowth}
\| u(t) \|_{H^\sigma} \les_{u_0,\sigma} \jb{t}^{A},
\end{equation}
\end{theorem}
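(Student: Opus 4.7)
The plan is to implement the DiPerna--Lions scheme described in the abstract, blended with Bourgain's invariant measure argument. Let $\Phi_t^N$ denote the flow of a frequency-truncated version of \eqref{alphaNLS} in which the cubic nonlinearity is projected onto modes $|n|\le N$; the high modes evolve freely, so $\Phi_t^N$ is globally defined on $H^\sigma(\T)$. Setting $f_N(t) := \frac{d(\Phi_t^N)_\#\mu_s}{d\mu_s}$, a formal calculation shows that $f_N$ solves an infinite-dimensional Liouville-type transport equation
\begin{equation*}
\partial_t f_N + X_N \cdot \nabla f_N = R_N f_N,
\end{equation*}
where $X_N$ is the nonlinear vector field of the truncated equation and $R_N$ is its Gaussian divergence, which plays the role of a renormalized energy derivative along the flow.

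The main technical step is to prove uniform-in-$N$ bounds of the form
\begin{equation*}
\|f_N(t)\|_{L^p(\mu_s)} \le C(p)\,\jb{t}^{A(p)} \quad \text{for all } p \ge 1.
\end{equation*}
I would obtain these by differentiating $\|f_N\|_{L^p}^p$ in time, integrating by parts with respect to $\mu_s$ (via Gaussian integration by parts), and controlling the resulting term $\int f_N^p R_N\, d\mu_s$ by H\"older's inequality together with $L^q(\mu_s)$ estimates for Wick-renormalized polynomial expressions in $u$. Substituting the provisional bound on $\|f_N\|_{L^p}$ back into the H\"older step closes the estimate via a Gr\"onwall argument. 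This is the sole place in the proof where the specific thresholds in \eqref{conditionalphas} appear: they quantify precisely the dispersive smoothing (coming from $(-\dx^2)^\al$ with $\al > 1$) needed to make $R_N$ integrable to all orders against $\mu_s$, despite the fact that for $s \le \tfrac12$ the measure $\mu_s$ is supported on distributions of negative regularity, on which the nonlinearity is a priori undefined.

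With such density bounds in hand, I would globalize following the ideas of Bourgain's invariant measure argument, adapted to the quasi-invariant setting. By probabilistic local well-posedness (established earlier in the paper), there is a local flow $\Phi_t$ for \eqref{alphaNLS} whose existence time depends only on a suitable ``good data'' norm of $u_0$, and $\Phi_t^N \to \Phi_t$ on this local interval. Combining the density bound on $f_N$ with Chebyshev's inequality and Fernique's theorem yields, for any $T>0$ and any $\eta>0$, a set $\Sigma_{T,\eta}\subset H^\sigma(\T)$ with $\mu_s(\Sigma_{T,\eta}^c) < \eta$ on which the relevant ``good data'' norms of $\Phi_t^N u_0$ remain bounded uniformly in $t\in[0,T]$ and in $N$. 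This lets us iterate the local theory over $[0,T]$ without depleting the measure, and passing $N\to\infty$ produces a global solution on $\Sigma_{T,\eta}$. Choosing $T=2^k$, $\eta = 2^{-k}$, a Borel--Cantelli argument then yields a full $\mu_s$-measure set of initial data with global solutions. Quasi-invariance of $\mu_s$ under $\Phi_t$ is obtained by writing $(\Phi_t)_\#\mu_s$ as the $N\to\infty$ limit of the absolutely continuous measures $f_N(t)\,d\mu_s$.

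Finally, the polynomial Sobolev bound \eqref{sobolevgrowth} is extracted from the same density estimate: since $\mu_s(\{\|u\|_{H^\sigma}>R\}) \les e^{-cR^2}$ by Fernique, H\"older gives
\begin{equation*}
\bigl((\Phi_t)_\#\mu_s\bigr)\bigl(\{\|u\|_{H^\sigma}>R\}\bigr) \le \|f(t)\|_{L^p(\mu_s)}\,\mu_s\bigl(\{\|u\|_{H^\sigma}>R\}\bigr)^{1-1/p},
\end{equation*}
and optimizing in $p$ and $R$ over dyadic times, followed by a further Borel--Cantelli argument, produces the claimed almost sure polynomial growth in $t$. The decisive difficulty throughout is the uniform $L^p(\mu_s)$ density bound; the remainder of the argument is a careful adaptation of by-now-standard tools in the field.
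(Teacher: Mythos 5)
Your overall architecture --- frequency truncation, density of the transported measure, uniform-in-$N$ $L^p$ bounds, a Bourgain-type globalization, and a weak limit to get quasi-invariance --- matches the paper. The genuine gap is in the one step you yourself call decisive: your method for the uniform $L^p$ density bound would fail. You propose to differentiate $\|f_N\|_{L^p}^p$ in time, integrate by parts against $\mu_s$, and control the resulting term $\int f_N^p R_N\, d\mu_s$ by H\"older together with $L^q(\mu_s)$ moments of the (Wick-renormalized) divergence $R_N$. But the relevant quantity --- the paper's $\QQ(u_0)$, essentially $\tfrac{d}{dt}\tfrac12\|u\|_{H^s}^2$ along the flow --- has \emph{no} finite moments: $\int |\QQ(u_0)|^2\, d\mu_s = +\infty$ for every $0<s\le 1$ (Remark \ref{QQdivergence}), and this divergence is not of the diagonal type that Wick ordering removes. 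Correspondingly the transported density is not Lipschitz (nor weakly differentiable) in time, so the differential Gr\"onwall step has no meaning. The paper only ever uses the \emph{time integral} $\int_0^\tau \QQ(\Phi_{t'}(u_0))\,dt'$, which is finite because the oscillation $e^{it\Phi_\alpha(\cj n)}$ produces a gain of $\jb{\Phi_\alpha(\cj n)}^{-1}$ after integration, and it replaces Gr\"onwall by a discrete-in-time bootstrap (Lemma \ref{LEM:ftNSM}): split $[0,t]$ into $\sim t/\tau$ local well-posedness intervals, use Jensen and the flow property of the density, and reduce everything to an exponential moment on a single short interval.

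Even for that single-interval exponential moment your scheme is missing an essential ingredient: $\QQ$ is quartic in the Cameron--Martin shift, so under $\mu_s$ the quadratic penalization $\tfrac12\|v_0\|_{H^s}^2$ in the Bou\'e--Dupuis formula (or in any Gaussian integration-by-parts scheme) cannot absorb it, and the exponential moments are simply infinite. The paper must pass to the auxiliary measure $\rho_{s,\gamma}\ll\mu_s$ weighted by $\exp\big(-\big|\int_\T \wick{|u_0|^2}\,dx\big|^\gamma\big)$, exploiting conservation of the renormalized mass to gain the coercive term $\|V_0\|_{L^2}^{2\gamma}$ with $2\gamma>4$; the constraint $\gamma<\tfrac1{1-2s}$ from this weight, combined with the exponent of $\|v_0\|_{L^2}$ in the local existence time, is precisely where the thresholds \eqref{conditionalphas} come from --- not from integrability of $R_N$ against $\mu_s$, which never holds. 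Finally, the bound actually obtained is $\log\|f^N_t\|_{L^p(\rho_{s,\gamma})}\les_p \jb{t}^{A}$, i.e.\ stretched-exponential rather than the polynomial $\|f_N(t)\|_{L^p}\le C(p)\jb{t}^{A(p)}$ you posit; the globalization and the polynomial Sobolev growth still go through because the tails of the controlling quantity under $\rho_{s,\gamma}$ are themselves stretched-exponential, but the optimization in your final two paragraphs must be redone with these inputs (this is the content of Theorem \ref{bima}).
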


Together with the result of the first author (and K.~Seong) \cite{FS}, that shows quasi invariance for \eqref{alphaNLS} for every $s>\frac12$, Theorem \ref{thm:qiandgwp} extends quasi-invariance of the measure $\mu_s$ to every $s > s_*(\alpha)$.

The condition \eqref{conditionalphas} is quite complicated, and we do not advance any claims about its optimality.\footnote{However, we point out that in the case $\alpha = 2$, Theorem \ref{thm:qiandgwp} provides $s_*=\frac12(\sqrt{13}-3)\approx 0.3028$, while the result in \cite{OS} shows quasi-invariance for $s > \frac 3{10}=0.3$, so $s_*(\alpha)$ is not optimal in the case $\alpha =2$.} However, we can compare it with the deterministic theory for \eqref{alphaNLS}. The equation \eqref{alphaNLS} is known to be globally well-posed in $H^\sigma (\T)$ for 
\begin{align}
 \sigma \ge \frac{1-\alpha}3,  \label{dethold}
\end{align}
thanks to the works \cite{OW, MT, Kwak, LSZ}. For more details, we refer to the discussion in Section~\ref{SEC:truncaux}.
In view of the optimal regularity for an initial data $u_0$ distributed according to $\mu_s$, Theorem \ref{thm:qiandgwp} provides a result stronger than the one derived by the deterministic theory if $s_*(\alpha) - \frac12 < \frac{1-\alpha}3$, which corresponds to 
$$ 1 < \alpha < \tfrac{1}{20}(17 + 3\sqrt{21}) \approx 1.537.$$

\subsection{Overview of the proof}\label{SEC:truncaux}
We summarise here the high level structure of the proof, explaining how it relates with the existing results in the literature, and how it allows us to go beyond the deterministic global (and local) well-posedness threshold. 

In view of Bourgain's invariant measure argument (or more precisely, a variant of the argument for the quasi-invariant case, see Theorem \ref{bima}), it is enough to show good $L^p$ estimates for the density of the transported measure. Therefore, we shift our attention to the Liouville equation associated with the equation \eqref{alphaNLS}, in the spirit of the celebrated works of DiPerna-Lions \cite{DL} and Ambrosio \cite{Amb}. We first show that, if $(\Phi_t)_\# \mu_s = f_t \mu_s$, then 
\begin{equation}
 f_t(\Phi_t(u_0)) = \exp\Big( \int_0^t \QQ(\Phi_{t'}(u_0))dt'\Big), \label{densityformulaintro}
\end{equation}
where $\QQ$ is an explicit function; see Lemma \ref{LEM:density}. Such a formula for the density has been known in the community for quite some time, however, it has been used in the context of quasi-invariance for dispersive PDEs only recently, initially by Debussche and Tsutsumi \cite{DT2020}, and subsequently in \cite{BTh,FS,GLT1}. 

Afterwards, by a bootstrap/Gronwall argument, we can derive an $L^p(\mu_s)$ estimate for $f_t$ from an $L^p(\mu_s)$ estimate for 
\begin{equation}
\exp\Big(\frac t {\tau} \int_0^\tau \QQ(\Phi_{t'}(u_0)) dt'\Big), \label{toLp}
\end{equation}
where $\tau\le \min(t,1)$ is an appropriately chosen stopping time. This argument is performed in Lemma \ref{LEM:ftNSM}. The bootstrap argument can be seen as a discrete-in-time version of the argument in \cite[Section 6]{AmbFi}, which has the term $ \QQ(u_0)$ in the exponential instead of its time average. However, we point out that in general, the term $\QQ(u_0)$ is ill-defined (see Remark \ref{QQdivergence}), and only its integral in time is well defined. 
As a consequence, the solution to the Liouville equation will not be Lipschitz in time (nor it will have a weak derivative in time in any sense), and this lack of temporal regularity extends to the transported density function $f_t \circ \Phi_t$.\footnote{Formally, this transported density solves an equation of the form 
$$ \partial_t (f_t \circ \Phi_t) = (\QQ\circ \Phi_t) (f_t \circ \Phi_t), $$
so the lack of regularity in time is somewhat unexpected.} 

In order to estimate the $L^p$ norm of \eqref{toLp}, we exploit the Bou\'e-Dupuis variational formula \eqref{P3}, popularised by Barashkov and Gubinelli~\cite{BG} the context of the construction of the $\Phi^4_3$ quantum field theory on $\T^3$. This formula essentially states that 
\begin{multline}\label{toestimatevialwp}
\log \Big(\int \exp\Big(\frac t {\tau} \int_0^\tau \QQ(\Phi_{t'}(u_0)) dt'\Big) d \mu_s(u_0)\Big)\\
 \le \int \sup_{v_0 \in H^s} \Big(\frac t {\tau} \int_0^\tau \QQ(\Phi_{t'}(u_0+v_0)) dt' - \frac 12 \| v_0 \|_{H^s}^2\Big) d \mu_s(u_0).
\end{multline}
Finally, we obtain the required estimate on $\frac t {\tau} \int_0^\tau \QQ(\Phi_{t'}(u_0+v_0)) dt'$ by developing a local well-posedness theory for \eqref{alphaNLS}, where $u_0$ is a random initial data distributed according to $\mu_s$, and $v_0 \in H^s$. As we need to exploit the nonlinear smoothing of the linear propagator of \eqref{alphaNLS} in order to be able to define $\int_0^\tau \QQ(\Phi_{t'}(u_0+v_0)) dt'$, it is important to develop the local well-posedness theory using the appropriate Fourier restriction norm spaces (introduced by Bourgain  in \cite{Bou} and Klainerman-Machedon in \cite{KlM}). 

In order to get slightly sharper estimates, which allow us to improve the value of $s_{*}(\al)$ and go below the deterministic threshold \eqref{dethold}, we need to work with the somewhat unusual spaces $X^{s,b}_{p,q}$. 
These keep track of the $L^q$-integrability in the time Fourier variable, rather than the more standard case of $q=2$. We point to Section~\ref{SEC:spaces} for precise definitions and also Remark~\ref{RMK:reasonforq} for a technical explanation for this choice of function space. The relevant local well-posedness estimates are contained in Proposition \ref{PROP:LWP}, while the estimate for $\QQ$ is shown in Lemma \ref{LEM:densityontau}.

In order to perform the steps described above, we need to introduce some auxiliary objects. The main reasons are that:
\begin{enumerate}[(I)]
\item the theory for the Liouville equation admits a rigorous justification only after a finite dimensional truncation of the equation \eqref{alphaNLS}, and 
\item the exponential estimates such as \eqref{toestimatevialwp} do not hold directly for the Gaussian measure $\mu_s$, but only for an auxiliary measure $\rho_{s,\gamma} \ll \mu_s$.
\end{enumerate}
We start by introducing the truncation for \eqref{alphaNLS}. In the following, we will exclusively consider the positive sign $+$ in \eqref{alphaNLS}. As we never perform any estimate that depends on the positivity of this sign, the same results will also hold for the negative sign $-$. 
We define  the nonlinearity in \eqref{alphaNLS} to be
\begin{equation}
|u|^2 u - 2 \Big(\int_{\T} |u|^2 dx\Big)u = \NN(u,u,u) + \Res(u,u,u), \label{nonlin}
\end{equation}
where the trilinear operators $\NN$ and $\Res$ are defined as
\begin{align}
\NN(u_1,u_2,u_3) &= \sum_{n_1-n_2+n_3-n_4 = 0, \atop n_1 \neq n_2,n_4} \ft{u_1}(n_1) \overline{\ft{u_2}(n_2)} \ft{u_3}(n_3) e^{in_4  x}, \label{nonres}  \\
\Res(u_1,u_2,u_3) &= -\sum_{n\in \Z}e^{inx} \ft{u_1}(n) \cj{\ft{u_2}(n)}\ft{u_3}(n), \label{res}
\end{align}
and $\ft u(n)$ denotes the Fourier transform in space of $u$. 

We point out that the subtraction of the term $2\int_{\T} |u|^2 dx$ in \eqref{nonlin} is necessary for studying \eqref{alphaNLS} below $L^2(\T)$, and is thus necessary for Theorem~\ref{thm:qiandgwp} to hold. Let us explain further. The equation \eqref{alphaNLS} without this term, namely,
\begin{align}
i\dt u +(-\dx^2)^{\al}u + |u|^2 u=0, \label{FNLS}
\end{align}
is the usual cubic fractional NLS. The equations \eqref{FNLS} and \eqref{alphaNLS} are equivalent in $L^2(\T)$ in the sense that if $u\in C([0,T];L^2(\T))$ solves \eqref{FNLS}, then  
\begin{align*}
\mathcal{G}(u) = e^{-it\int_{\T} |u|^2 dx} u
\end{align*}
solves \eqref{alphaNLS}. Moreover, this gauge transform $\mathcal{G}$ is also invertible. This relies on the fact that the mass 
\begin{align}
 M(u) := \int_\T |u|^2 dx \label{mass}
\end{align}
is a conserved quantity for the flow of both \eqref{FNLS} and \eqref{alphaNLS}. Whilst \eqref{FNLS} and \eqref{alphaNLS} are both globally-well posed in $L^2(\T)$ using the Fourier restriction norm method \cite{Bou, MT, OTz, FT}, the subtraction of $2M(u)$ in \eqref{nonlin} removes a bad resonant part, so that \eqref{alphaNLS} behaves better outside of $L^2(\T)$ than \eqref{FNLS}. Indeed, the arguments in \cite{GO, OW} can be applied to \eqref{FNLS} to show non-existence of solutions below $L^2(\T)$, whilst the flow of \eqref{alphaNLS} merely fails to be locally uniformly continuous with respect to the initial data; see, for example, \cite[Appendix A.2]{OTz}. 

The best known deterministic well-posedness results for \eqref{alphaNLS} that we mentioned earlier with the regularity restriction \eqref{dethold}, are based off a second gauge transformation which further weakens the resonant nonlinearity in \eqref{res}. We point out that, whilst results in \cite{MT, OW} imply the existence of global solutions to \eqref{alphaNLS} for $\s>-\tfrac{1}{4}$, if $\al=\tfrac{3}{2}$, and $\s>-\tfrac{9}{20}$, if $\al=4$, respectively, there is no uniqueness and thus no well-defined flow (so the question of quasi-invariance is ill-defined).

To continue with our discussion of $\textup{(I)}$ above, with a slight abuse of notation, we denote 
\begin{equation*}
\NN(u_1,u_2,u_3,u_4) := \jb{\NN(u_1,u_2,u_3),u_4}_{L^2} = \sum_{n_1-n_2+n_3-n_4 = 0, \atop n_1 \neq n_2,n_4} \ft{u_1}(n_1) \overline{\ft{u_2}(n_2)} \ft{u_3}(n_3) \overline{\ft{u_4}(n_4)},
\end{equation*}
and similarly
\begin{equation*}
\RR(u_1,u_2,u_3,u_4) := \jb{\RR(u_1,u_2,u_3),u_4}_{L^2}  = -\sum_{n\in \Z}\ft{u_1}(n) \cj{\ft{u_2}(n)}\ft{u_3}(n) \overline{\ft{u_4}(n_4)}.
\end{equation*}
Then, for $N\in \mathbb{N}$, we define the truncated system to be
\begin{equation} \label{4NLStrunc}
\begin{cases}
i \partial_t u + (-\dx^2)^{\al} u + P_N\big(|P_Nu|^2 P_Nu - 2 \big(\int |P_Nu|^2\big)P_Nu\big) = 0, \\
u(0) = u_0,
\end{cases}
\end{equation}
where $P_N$ is the sharp projection on Fourier frequencies with $|n| \le N$, i.e.\
\begin{equation*}
P_N u (x) = \sum_{|n| \le N} \ft{u}(n) e^{inx}.
\end{equation*}

Similar to the analogous result for \eqref{alphaNLS}, one can check that the mass \eqref{mass}
is preserved by the flow of \eqref{4NLStrunc}. An easy, but important, consequence of this mass conservation, together with the fact that the linear equation 
$$ i \partial_t u + (-\dx^2)^{\al} u = 0 $$
preserves the $H^\sigma (\T)$ norm for any $\sigma \in \R$, is the following:
\begin{proposition} \label{4NLStruncGWP}
The equation \ref{4NLStrunc} is globally well posed in $H^\sigma$ for every $\sigma \in \R$.
\end{proposition}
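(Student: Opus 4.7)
The plan is to decompose $u = u_L + u_H$ with $u_L := P_N u$ and $u_H := (I - P_N) u$, exploiting the fact that the nonlinearity in \eqref{4NLStrunc} is entirely supported on frequencies $|n|\le N$. Since $(-\dx^2)^\al$ commutes with $P_N$, applying $I - P_N$ to \eqref{4NLStrunc} reveals that $u_H$ satisfies the free linear equation $i \dt u_H + (-\dx^2)^\al u_H = 0$, whose solution is $u_H(t) = e^{-it(-\dx^2)^\al} u_H(0)$. This is automatically a global solution, and the propagator preserves every $H^\sigma$ norm, so $u_H$ gives no trouble.

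The interesting part is the low-frequency component $u_L$, which (after applying $P_N$ to \eqref{4NLStrunc}) satisfies
\begin{equation*}
i \dt u_L + (-\dx^2)^\al u_L + P_N \bigl( |u_L|^2 u_L - 2 M(u_L) u_L \bigr) = 0
\end{equation*}
on the finite-dimensional space $V_N := \mathrm{span}\{e^{inx} : |n| \le N \}$. Written in terms of the $(2N+1)$ Fourier coefficients $\{\ft u(n)\}_{|n|\le N}$, this is an ODE system whose right-hand side is polynomial and hence locally Lipschitz, so Picard--Lindel\"of yields a unique local-in-time solution in $V_N$ for any initial data $u_L(0) \in V_N$.

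To upgrade from local to global, I would verify conservation of $M(u_L) = \|u_L\|_{L^2}^2$. Pairing the reduced equation for $u_L$ with $u_L$ in $L^2$ and taking real parts, using that $P_N$ is self-adjoint with $P_N u_L = u_L$ and that $(-\dx^2)^\al$ is self-adjoint, one sees that both the dispersive and nonlinear terms contribute purely imaginary inner products, so $\dt \|u_L\|_{L^2}^2 = 0$. Since $V_N$ is finite-dimensional, all norms on $V_N$ are equivalent and in particular $\|u_L(t)\|_{H^\sigma} \les_{N,\sigma} \|u_L(0)\|_{L^2}$ uniformly in $t$. This rules out blow-up of $u_L$, and combined with the linear propagation of $u_H$, yields global well-posedness of \eqref{4NLStrunc} in $H^\sigma$ for every $\sigma \in \R$. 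I do not anticipate any real obstacle in executing this plan; the only mild subtlety is confirming that the extra mass-renormalization term $-2 M(u_L) u_L$ together with the outer projection $P_N$ does not disturb mass conservation of $u_L$, which is an immediate consequence of the self-adjointness remarks above.
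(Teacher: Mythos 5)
Your proposal is correct and takes essentially the same approach the paper has in mind: the paper states Proposition \ref{4NLStruncGWP} as a direct consequence of mass conservation for \eqref{4NLStrunc} and $H^\sigma$-preservation of the linear flow, and the low/high frequency decomposition $u = P_N u + (I-P_N)u$ that you use is exactly the one the paper makes explicit later, in the proof of Lemma \ref{LEM:density}. Your verification of $L^2$-conservation for $u_L$ via self-adjointness of $P_N$ and $(-\dx^2)^\alpha$, together with equivalence of norms on the finite-dimensional range of $P_N$, fills in the details the paper leaves to the reader.
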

We will denote the nonlinearity in \eqref{4NLStrunc} as 
$$ P_N\big(|P_Nu|^2 P_Nu - 2 \big(\int |P_Nu|^2\big)P_Nu\big) = \NN_N(u,u,u) + \Res_N(u,u,u),$$
with $\NN_N$ and $\Res_N$ having analogous definitions to $\NN$ and $\Res$ respectively. Notice that we have exactly 
\begin{equation}
\begin{split}
\NN_N(u_1,u_2,u_3) &= P_N\NN(P_Nu_1,P_Nu_2,P_Nu_3), \\
 \Res_N(u_1,u_2,u_3) &= P_N\Res(P_Nu_1,P_Nu_2,P_Nu_3).
\end{split}\label{nonlineartruncationcomp}
\end{equation}
By an abuse of notation again, if all arguments are the same, we simply write $\NN_{N}(u)=\NN_{N}(u,u,u)$ and $\Res_{N}(u)=\Res_{N}(u,u,u)$.


We now move on to introducing the auxiliary measure $\rho_{s,\gamma}$ from point $\textup{(II)}$ earlier. The main obstacle to showing the relevant estimate for \eqref{toestimatevialwp} is the fact that $\QQ$ is a (unsigned) polynomial of degree $4$ in the variable $v_0$, while clearly $\|v_0\|_{H^s}^2$ is a polynomial of degree 2. In order to get around this issue, we would like to exploit the mass conservation for the equation \eqref{alphaNLS}. 
However, $\int_{\T} |u_0|^2 dx = \infty$ almost surely, for $u_0$ in the support of the Gaussian measures $\mu_s$ when $s\leq \tfrac{1}{2}$. We instead need to use a renormalized version of the mass, which is formally given by 
$$ \wick{M(u)} = \int_\T \wick{|u|^2}\, dx = \int_\T |u|^2 dx - \int \int_{\T} |u_0|^2dx d \mu_s(u_0), $$
and we will make rigorous sense of this object in Section~\ref{SEC:rhomeas}. We notice that, since formally $\wick{M(u)} = M(u) + \text{constant}$, then $\wick{M(u)}$ is also preserved by the flow of \eqref{alphaNLS}, and so we can multiply $\mu_s$ by any function of $\wick{M(u)}$, and this will not affect the formula for the density $f_t$ given in \eqref{densityformulaintro}.
Therefore, for $s > \frac 14$ and $\gamma < \frac{1}{1-2s}$, we consider the measure
\begin{equation}
d \rho_{s,\gamma}(u_0) := \exp\Big(- \Big| \int_{\T} \wick{|u_0|^2}\, dx \Big|^\gamma\Big) d\mu_s(u_0). \label{rhosg}
\end{equation}
In view of the inequality 
\begin{align*}
\ind_{\{ |\cdot | \leq K\}} \leq \exp(-|x|^{\g})\exp(K^\g),
\end{align*}
the choice of $\rho_{s,\g}$ is reminiscent of the $L^2$-cut-offs, commonly employed in the construction of Gibbs measures and in some quasi-invariance results above $L^2(\T)$.

\subsection{Further remarks}
\begin{remark}\label{rk:gammacondition}
The condition $s > \frac 14$ in the definition of $\rho_{s,\gamma}$ comes from the estimate \eqref{UnifRN}, and it is necessary in order for $\wick{M(u)}$ to be well-defined (almost surely); see also \eqref{s14}. However, the restriction $\gamma < \frac{1}{1-2s}$ at first glance might seem insubstantial, since one could easily define the measure $\rho_{s,\gamma}$ for any $\gamma > 0$. 
This restriction on $\gamma$ will play an important role in the numerology of Theorem \ref{thm:qiandgwp} as discussed in Remark \ref{rk:numerology}, so one might wonder where this restriction comes from.

The main reason why we need to impose it is so that we can use the estimate \eqref{l2integrability} while applying the Bou\'e-Dupuis variational formula. While we will not perform this analysis here, one can show that when taking $\gamma_1 \ge \frac{1}{1-2s}$ in the definition of the measure $\rho_{s,\gamma_1}$, the right hand side of \eqref{l2integrability} will at best have a term of the form $\eps_0 \|V_0\|_{L^2}^{\frac 2{1-2s}}$ (for some $\eps_0 > 0$). 
This is due to the fact that it is possible to choose $V_0$ so that $\big|\wick{M(Y+V_0)}\big|\les 1$ and $\|V_0\|_{H^s}^2 \les \|V_0\|_{L^2}^{\frac{2}{1-2s}}$. This has essentially been shown in the papers \cite{OOT1,OOT2} by Oh, Okamoto and the second author, where similar measures have been considered. 

As shown in those, this interplay between the bounds on $\wick{M(Y+V_0)}$ and the respective bounds on the $L^2$ norm of $V_0$ leads to unexpected results, such as phase transitions in the constructions of certain quantum fields theories that happen far from mass criticality. For these reasons, while we do expect the numerology of Theorem \ref{thm:qiandgwp} to be in principle possible to improve, we believe that the choice of the weight $\exp\big(- \big| \int \wick{u_0^2} \big|^\gamma\big)$ is optimal in this context.
\end{remark}
\begin{remark}
Despite being partially inspired by the approach of DiPerna-Lions \cite{DL}, and the work by Ambrosio \cite{Amb} that introduced the concept of regular Lagrangian flows for ODEs with drifts that admit only weak derivatives, we do not develop explicitly the  theory for the Liouville equation associated to \eqref{alphaNLS}. 
We instead simply show the $L^p$ estimate \eqref{lpdensity} for the solution to the Liouville equation with initial data $\rho_{s,\gamma}$. 

If one were to be interested instead to the well-posedness theory for the Liouville equation 
\begin{equation}\label{eqn:inftylio}
 \nu_t = (\Phi_t)_\# \nu_0, 
\end{equation}
one obtains uniqueness of the solution in the class $\{ \nu_t \ll \mu_s $ for every $t\}$ as a direct consequence of the local well-posedness theory for \eqref{alphaNLS}. Similarly, global existence for $\nu_0 \ll \mu_s$ follows by imposing 
$$ \nu_t = f_t \Big(\frac{d \nu_0}{d \rho_{s,\gamma}}\circ{\Phi_{-t}}\Big) \rho_{s,\gamma}. $$    
Moreover, the $L^p$ estimate \eqref{lpdensity} shows that for every $\nu_0 \in L^\infty(\rho_{s,\gamma})$, then $\nu_t \in L^p(\rho_{s,\gamma})$ for every $t \in \R$ and every $p < \infty$. By keeping track of the dependence on $p$ of the estimate \eqref{lpdensity}, it should be possible to show that the equation \eqref{eqn:inftylio} is globally well posed in the Orlicz space $\exp(\log(L)^\beta)(\rho_{s,\gamma})$, for some appropriate $\beta=\beta(\alpha,s, \gamma)>1$. However, this goes beyond the scope of this paper, and we chose not to perform this analysis.
\end{remark}
\begin{remark}
A vast amount of work has been performed in the context of studying local and global well-posedness of dispersive PDEs with a random Gaussian initial data, independently of the properties of the transported measure. This process has been started by Burq and Tzvetkov in \cite{BuTz1, BuTz2}, who showed that the cubic wave equation posed on $\T^3$,
\begin{equation}\label{NLW}
u_{tt} - \Delta u + u^3 = 0,
\end{equation}
is globally well-posed as soon as the random initial data belongs to the space $H^{\eps}(\T^3)$ for some $\eps > 0$. This result is especially remarkable, since the equation \eqref{NLW} is deterministically ill-posed in $H^\sigma (\T^3)$ for $\sigma < \frac 12$. This result was achieved by writing the solution $u$ to \eqref{NLW} as 
$$ u = \psi + v, $$
where $\psi$ is the rough solution to the linear wave equation, $\psi \in H^\eps (\T^3)$, and the remainder $v$ belongs to the energy space $H^1$. 
In this way, they showed almost sure global well-posedness for \eqref{NLW} by performing an energy estimate for the remainder $v$. Similar results, always in the context of wave equations with random initial data, have been obtained in \cite{OP1,OP2,P}. 

However, this globalization technique breaks down very quickly when the regularity of $\psi$ is below $L^2 (\T^3)$. 
By exploiting the $I$-method, it should possible to show global well-posedness for \eqref{NLW} when $\psi \in \bigcap_{\eps > 0} H^{-\eps}(\T^3)$,  extending the local-in-time solutions in \cite{OPT},
but the case $\psi \not\in H^{-\eps_0}(\T^3)$ is beyond the current technology. In the context of \eqref{alphaNLS}, this corresponds exactly to the case $s = \frac 12$ (with the extra difficulty that the remainder $v$ in general does not belong to the energy space $H^{\alpha}$). 
For more details about this approach, see the papers of Gubinelli, Koch, Oh and the second author \cite{GKOT,TowaveR2}, where this analysis was performed in the setting of the stochastic wave equation on $\T^2$ and $\R^2$, respectively. See also the paper by the first author~\cite{Fo}, where a similar analysis was performed for the BBM equation. 

Other global well-posedness results that rely on energy estimates for the remainder have been obtained in \cite{CO,ADL,OOP}, in the context of nonlinear Schr\"odinger equations. Of particular relevance to this work is the result of \cite{CO}, 
where the authors exploited Bourgain's high-low method to show global well-posedness for \eqref{alphaNLS} with $\alpha =1$ and initial data distributed according to $\mu_s$, for $s> \frac{5}{12}$. However, we point out that due to the complete integrability of \eqref{alphaNLS} when $\alpha = 1$, and the existence of conserved quantities of the form 
$$ E_s(u) = \tfrac 12 \| u \|_{H^s}^2 + \mathrm{Rem}(u) $$
for every $s > - \frac12$ \cite{KT, KVZ}, we expect this particular case to belong to the more general setting of Bourgain's invariant measure argument (even if the invariance of such measures on $\T$  for $s < 1$ is still an open problem).
\end{remark}

\begin{remark}
In their works \cite{DNY2,DNY3}, Deng, Nahmod and Yue introduced the concepts of {\it random operators} and {\it random tensors}, with the explicit goal of building a local-in-time solution theory for nonlinear dispersive equations with Gaussian random initial data of low regularity. After a suitable renormalisation of the nonlinearity, this theory allowed them to solve the nonlinear Schr\"odinger equation posed on the torus $\T^d$:
\begin{equation} \label{NLS}
\begin{cases}
 i\dt u- \Delta u + |u|^{p-1}u = 0 , \\
u(0) = u_0,
\end{cases}
\end{equation}
with Gaussian initial data $u_0$ of the form \eqref{u0}, for $s > \frac d2- \frac{1}{p-1}$. In principle, it would be possible to apply the theory of random tensors also to the study of \eqref{alphaNLS}. 
This should lead to improvements in the bilinear estimates of Proposition \ref{PROP:bilinearlwp} and of Lemma \ref{lem:bilineargwp}. 
However, we point out that improving these estimates does not lead to any improvement in the statement of Theorem \ref{thm:qiandgwp}. Indeed, the current restriction \eqref{conditionalphas} on $s$ and $\alpha$ depends only on the following estimates:
\begin{enumerate}
\item The power of $\|v_0\|_{L^2}$ in the estimates \eqref{tau0bound} and \eqref{tausigmabound}, which in turn depends only on the \textit{deterministic} estimate of Proposition \ref{PROP:detquad},
\item The restriction \eqref{LWPqbd} on $q$, which follows again from the estimates of Proposition \ref{PROP:detquad}.
In this case, this restriction comes from the terms of $\NN(u)$ that depend linearly on the random initial data $u_0$. Hence, in principle we cannot exclude that exploiting further the randomness of $u_0$ it could be possible to improve the restriction on $q$. However, it is unclear to the authors how to obtain such an estimate. Moreover, this would lead to a lower bound for $s_*(\alpha)$ in Theorem \ref{thm:qiandgwp} only in the case $1 < \alpha \le \frac{1}{32} \big(35 + \sqrt{105}\big)$. 
\item The \textit{deterministic} estimates of Lemma \ref{LEM:detenergy}.
\end{enumerate}
Since introducing the notions of random operators or random tensors would make the paper significantly more complex, without leading to an improvement of the main result, we decided to rely on a less technological approach for the estimates of Proposition \ref{PROP:bilinearlwp} and Lemma \ref{lem:bilineargwp}. 

However, we do not exclude that the theory of random tensors, combined with the techniques developed in this paper, could lead to globalization results when considering equations with lower dispersion, higher order nonlinearities, or posed in higher dimension.
\end{remark}

\subsection{Structure of the paper}
This paper is organised as follows. In Section 2, we go over some preliminaries: First, we lay out some notation. Then, we show certain estimates for the phase function 
\begin{align}
\Phi_{\al}(\cj n):= \Phi_{\al}(n_1,n_2,n_3,n_4)= |n_1|^{2\al}-|n_2|^{2\al}+|n_3|^{2\al}-|n_4|^{2\al}, \label{p1}
\end{align}
and the symbol of $2s$ symmetrized  derivatives 
\begin{align}
\Psi_{s}(\cj{n}):= \Psi_{s}(n_1,n_2,n_3,n_4)=\jb{n_1}^{2s}-\jb{n_2}^{2s}+\jb{n_3}^{2s}-\jb{n_4}^{2s}. \label{symderiv}
\end{align}
The phase function $\Phi_\alpha$ will enter naturally in the analysis due to the structure of the equation \eqref{alphaNLS}, while the symbol of the symmetrized  derivatives will enter in the formula for the density $f_t$ of $(\Phi_t)_\#\mu_s$ with respect to $\mu_s$. 
Finally, we introduce the main probabilistic tools that we will use throughout the paper, and discuss the measures $\rho_{s,\g}$ at a rigorous level.

In Section 3, we introduce the Fourier restriction spaces that we are going to use for the local well-posedness theory, and show the multilinear estimates required to prove the main local well-posedness result of Proposition \ref{PROP:LWP}.

In Section 4, we define the stopping time $\tau$ that enters in \eqref{toLp}, and show an exponential integrability estimate for $\tau$.

In Section 5, we perform the main steps of the proof described earlier in this section for the truncated equation \eqref{4NLStrunc}. Most of the section is dedicated to showing the relevant multilinear estimates for the quadrilinear operator $\QQ$. The main result of the section is the  $L^p$ estimate for the density $f_t$, uniform in the parameter $N$, contained in Proposition \ref{prop:lpbound}. 

Finally, in Section 6, we perform the ``soft" part of the analysis, showing Bourgain's quasi-invariant measure argument and taking a limit for $N \to \infty$. This way, we extend the $L^p$ estimate for the density to the limit, and finalise the proof of Theorem \ref{thm:qiandgwp}.

\begin{ackno}
L.T.~was supported by the Deutsche
Forschungsgemeinschaft (DFG, German Research Foundation) under Germany's Excellence
Strategy-EXC-2047/1-390685813, through the Collaborative Research Centre (CRC) 1060. 
\end{ackno}

\section{Preliminaries}

\subsection{Notation}

For $1 \le p \le \infty$, we denote by $p'$ its H\"older conjugate, i.e.\ the only number such that 
$$ \frac 1 p + \frac 1 {p'} = 1. $$

Given $s\in \R$ and $1\leq p\leq \infty$, we define the Fourier-Lebesgue spaces $\FL^{s,p}(\T)$ via the norm
\begin{align*}
\| u\|_{\FL^{s,p}(\T)} = \| \jb{n}^{s} \ft u (n)\|_{\l^{p}_{n}(\Z)}. 
\end{align*}
When $p=2$, the spaces $\FL^{s,2}(\T)=H^{s}(\T)$. 

Given an integral operator $H:\l^2 (\Z) \to \l^2 (\Z)$ with kernel $K(n,m)$ so that 
\begin{align*}
(Hv)(n)= \sum_{m\in \Z} K(n,m) v(m), \quad v\in \l^2(\Z),
\end{align*}
we define the Hilbert-Schmidt norm $\| H\|_{HS(\l^2,\l^2)} =\| K(n,m)\|_{\l_{n}^2 \l_{m}^2}$.

We use $a+$ (and $a-$) to denote $a+\eps$ (and $a-\eps$, respectively) for arbitrarily small $\eps\ll 1$,
where an implicit constant is allowed to depend on $\eps > 0$ (and it usually diverges as $\eps\to 0$).

We use $c$ and $C$ to denote various constants that may change from line to line.
For two quantities $A$ and $B$ , we write
$A\les B$ if there is a uniform constant $C>0$ such that $A\leq CB$. We write
$A\sim B$ if $A\les B$ and $B\les A$. We also underscore the dependency of $C$ on any additional parameter $a$ by writing $A\les_{a} B$. We also write $A\vee B=\max(A,B)$ and $A\wedge B=\min(A,B)$.

Given dyadic numbers $N_{j} \geq 1$, $j=1,\ldots, k$, we define $N_{\max}=\max_{j=1,\ldots, k} N_j$ and $N_{\min}=\min_{j=1,\ldots, k} N_j$. For $N_1,N_2,N_3,N_4$ dyadic, we let $N^1 \ge N^2 \ge N^3 \ge N^4$ be a permutation of the $\{N_j\}_{j=1}^{4}$. 

\subsection{The phase function and symmetrized derivatives}


For particular values of $\al$, the phase function $\Phi_{\al}(\cj{n})$ in \eqref{p1} enjoys explicit factorisations on the hyperplane $n_4=n_1-n_2+n_3$. Indeed, if $\al=1$, it is well known that 
\begin{align*}
\Phi_{1}(\cj n)=n_1^{2}-n_2^2+n_3^2-n_4^2=-2(n_4-n_1)(n_4-n_3)  \qquad \text{when }\, n_4=n_1-n_2+n_3.
\end{align*}
This corresponds to the case of the one-dimensional cubic NLS.
A similar explicit factorisation holds for the 4NLS ($\al=2$ in \eqref{alphaNLS}); see \cite[Lemma 3.1]{OTz}.
For general $\al>\tfrac{1}{2}$, there is no such factorisation, so instead we rely heavily on the following lower bound, which we will use without explicit reference: 
for $\al>\tfrac 12$, we have
\begin{align}
\left|\Phi_{\al}(\overline{n})\right|
& \gtrsim |n_4-n_1||n_4-n_3|n_{\max}^{2\alpha-2} \qquad \text{when} \qquad n_4=n_1-n_2+n_3. \label{Phase}
\end{align}
and where $n_{\max}:=\max( |n_1|,|n_2|,|n_3|,|n_4|)+1$. We refer to \cite{demirbas2013existence,FT} for proofs. In particular, $|\Phi_{\al}(\cj{n})| \ges 1$ on $n_4=n_1-n_2+n_3$ and when $\{n_1,n_3\}\neq \{n_2,n_4\}$.

We also require estimates on the symmetrized derivatives $\Psi_{s}(\cj{n})$ in \eqref{symderiv}
 when $s\leq \tfrac{1}{2}$. 
The following two lemmas provide the estimates that we need.

\begin{lemma}
Let $0\leq s \le \frac12$. For $a,b \in \R$, we have that 
\begin{equation}\label{jbconcavity}
|\jb{a}^{2s} - \jb{b}^{2s}| \les \jb{a-b}^{2s}.
\end{equation}
In particular, if $n_4=n_1-n_2+n_3$, then 
\begin{equation}
|\Psi_{s}(\cj n)| \les \min( \jb{n_1-n_2}^{2s}, \jb{n_1-n_4}^{2s}) \les \jb{n_1-n_2}^{s}\jb{n_1-n_4}^{s}.
\label{psisbd}
\end{equation}
\end{lemma}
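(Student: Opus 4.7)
My plan is to derive \eqref{jbconcavity} from two elementary ingredients---the standard subadditivity of $t \mapsto t^{2s}$ on $[0,\infty)$ (valid since $2s \in [0,1]$) and the fact that the Japanese bracket $t \mapsto \jb t$ is $1$-Lipschitz---and then to deduce \eqref{psisbd} by exploiting two algebraic identities available on the hyperplane $\{n_4 = n_1 - n_2 + n_3\}$, followed by an AM--GM step.

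For \eqref{jbconcavity}, I would invoke the inequality $(x+y)^{2s} \le x^{2s} + y^{2s}$, valid for $x,y \ge 0$ and $2s \in [0,1]$, which follows from concavity of $t \mapsto t^{2s}$ together with $0^{2s} = 0$. Applying this with $x = \min(\jb a, \jb b)$ and $y = |\jb a - \jb b|$ (so that $x+y = \max(\jb a, \jb b)$) yields
\[
|\jb a^{2s} - \jb b^{2s}| \le |\jb a - \jb b|^{2s}.
\]
Next, since $\frac{d}{dt}\jb t = t/\jb t$ has absolute value bounded by $1$, the Japanese bracket is $1$-Lipschitz, i.e.\ $|\jb a - \jb b| \le |a-b|$. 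Combining this with the trivial bound $|a-b| \le \jb{a-b}$ gives the chain
\[
|\jb a^{2s} - \jb b^{2s}| \le |\jb a - \jb b|^{2s} \le |a-b|^{2s} \le \jb{a-b}^{2s},
\]
which is exactly \eqref{jbconcavity}.

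For \eqref{psisbd}, I would apply \eqref{jbconcavity} after two different pairings of the terms in $\Psi_s(\cj n)$. Writing
\[
\Psi_s(\cj n) = \bigl(\jb{n_1}^{2s} - \jb{n_2}^{2s}\bigr) + \bigl(\jb{n_3}^{2s} - \jb{n_4}^{2s}\bigr)
\]
and using the identity $n_1 - n_2 = n_4 - n_3$ (valid on $n_4 = n_1 - n_2 + n_3$), \eqref{jbconcavity} bounds each bracket by a constant multiple of $\jb{n_1 - n_2}^{2s}$, giving $|\Psi_s(\cj n)| \les \jb{n_1-n_2}^{2s}$. The alternative pairing
\[
\Psi_s(\cj n) = \bigl(\jb{n_1}^{2s} - \jb{n_4}^{2s}\bigr) - \bigl(\jb{n_2}^{2s} - \jb{n_3}^{2s}\bigr),
\]
combined with $n_1 - n_4 = n_2 - n_3$, analogously yields $|\Psi_s(\cj n)| \les \jb{n_1-n_4}^{2s}$. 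Taking the minimum of these two bounds establishes the first inequality in \eqref{psisbd}, while the second inequality reduces to the trivial estimate $\min(A,B) \le \sqrt{AB}$ applied with $A = \jb{n_1-n_2}^{2s}$ and $B = \jb{n_1-n_4}^{2s}$. I do not anticipate any serious obstacle here: the entire argument collapses to a few lines once the subadditivity/Lipschitz reduction behind \eqref{jbconcavity} is identified, and the remaining step is purely algebraic.
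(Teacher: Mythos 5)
Your proof is correct and follows essentially the same route as the paper: both arguments rest on the concavity/subadditivity of $x \mapsto x^{2s}$ for $2s \le 1$, followed by the two pairings of $\Psi_s(\cj n)$ dictated by the identities $n_1-n_2 = -(n_3-n_4)$ and $n_1-n_4=-(n_3-n_2)$, and the bound $\min(A,B)\le\sqrt{AB}$. The only (cosmetic) difference is in passing from absolute values to Japanese brackets: you apply subadditivity directly to $\jb a,\jb b$ and use that $\jb\cdot$ is $1$-Lipschitz, which avoids the additive constant the paper absorbs via $|x|^{2s}-1\le\jb x^{2s}\le|x|^{2s}+1$.
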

\begin{proof}
Since $s \le \frac 12$, the function $x \mapsto x^{2s}$ is concave on $[0, + \infty)$. Therefore, we have that 
\begin{align*}
||a|^{2s} - |b|^{2s}| \le | |a| - |b| |^{2s}  \le |a-b|^{2s}.
\end{align*}
Therefore, since $ |x|^{2s} - 1 \le \jb{x}^{2s} \le |x|^{2s} + 1$, we have
\begin{align*}
\jb{a}^{2s} - \jb{b}^{2s} \le 2 + |a|^{2s} - |b|^{2s} \le 2 +  |a-b|^{2s} \les \jb{a-b}^{2s}.
\end{align*}
Now, \eqref{psisbd} follows from $n_1 - n_2 = -(n_3 - n_4)$, $n_1 - n_4 = -(n_3 - n_2)$ and \eqref{jbconcavity}.
\end{proof}

As a consequence of the elementary inequality: 
\begin{align*}
\big||n|^{2s} - |m|^{2s}\big| \les \tfrac{|n-m|}{\jb{n}^{1-2s} \vee \jb{m}^{1-2s}} \quad \text{for} \quad \tfrac{1}{4}<s\leq \tfrac{1}{2},
\end{align*} we have the following estimate for $\Psi_s(\cj n)$.
\begin{lemma}\label{LEM:psi}
Fix $\tfrac 14 < s \le \tfrac{1}{2}$ and let $n_1,n_2,n_3,n_4\in \Z$ be such that $n_4=n_1-n_2+n_3$. Then, we have
	\begin{align*}
	\left|\Psi_s(\cj{n})\right|\les \frac{\jb{n_1-n_2}}{(\jb{n_1}^{1-2s}\vee\jb{n_2}^{1-2s})\wedge(\jb{n_3}^{1-2s}\vee\jb{n_4}^{1-2s})},
	\end{align*}
	and similarly
	\begin{align*}
	\left|\Psi_s(\cj{n})\right|\les \frac{\jb{n_1-n_4}}{(\jb{n_1}^{1-2s}\vee\jb{n_4}^{1-2s})\wedge(\jb{n_2}^{1-2s}\vee\jb{n_3}^{1-2s})},
	\end{align*}	
	where the implicit constant depends only on $s$.
%
%
%
\end{lemma}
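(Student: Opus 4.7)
The strategy is to pair the four terms of $\Psi_s(\cj n)$ in two different ways, apply the elementary inequality stated just above the lemma to each difference separately, and then combine the two resulting bounds using $\tfrac{1}{A}+\tfrac{1}{B} \leq \tfrac{2}{A\wedge B}$.

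First I would write
\begin{equation*}
\Psi_s(\cj n) = \big(\jb{n_1}^{2s} - \jb{n_2}^{2s}\big) - \big(\jb{n_4}^{2s} - \jb{n_3}^{2s}\big),
\end{equation*}
and use the constraint $n_4 = n_1-n_2+n_3$, which gives $n_1-n_2 = n_4-n_3$ (and hence $|n_1-n_2| = |n_3-n_4|$). Applying the (analogue for $\jb{\cdot}^{2s}$ of the) elementary inequality to each difference yields
\begin{equation*}
|\Psi_s(\cj n)| \les |n_1-n_2| \bigg( \frac{1}{\jb{n_1}^{1-2s}\vee \jb{n_2}^{1-2s}} + \frac{1}{\jb{n_3}^{1-2s}\vee \jb{n_4}^{1-2s}} \bigg).
\end{equation*}
Since $\tfrac{1}{A}+\tfrac{1}{B}\leq \tfrac{2}{A\wedge B}$ and $|n_1-n_2|\leq \jb{n_1-n_2}$, this gives precisely the first estimate. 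For the second estimate I would use instead the pairing
\begin{equation*}
\Psi_s(\cj n) = \big(\jb{n_1}^{2s} - \jb{n_4}^{2s}\big) - \big(\jb{n_2}^{2s} - \jb{n_3}^{2s}\big),
\end{equation*}
together with the identity $n_1-n_4 = n_2-n_3$, which follows from $n_4 = n_1-n_2+n_3$. The same argument then produces
\begin{equation*}
|\Psi_s(\cj n)| \les \frac{\jb{n_1-n_4}}{(\jb{n_1}^{1-2s}\vee \jb{n_4}^{1-2s})\wedge (\jb{n_2}^{1-2s}\vee \jb{n_3}^{1-2s})}.
\end{equation*}

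\textbf{Main obstacle.} Once the right pairing is identified, the proof is essentially mechanical. The only point to verify carefully is that the elementary inequality $||n|^{2s}-|m|^{2s}| \les |n-m|/(\jb{n}^{1-2s}\vee\jb{m}^{1-2s})$, as stated in the excerpt for $|\cdot|^{2s}$, carries over to $\jb{\cdot}^{2s}$. This can be checked by the mean value theorem applied to $f(x) = (1+x^2)^{s}$, splitting into the regimes $|n|\sim |m|$ (where the MVT with $f'(\xi) \sim \xi^{2s-1}$ directly gives the bound on the larger of $\jb{n},\jb{m}$, using $2s-1\leq 0$) and $|n-m|\gtrsim \max(|n|,|m|)$ (where concavity via \eqref{jbconcavity} gives $|\jb{n}^{2s}-\jb{m}^{2s}|\les |n-m|^{2s}\sim |n-m|/\max(\jb{n},\jb{m})^{1-2s}$). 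The restriction $s > \tfrac{1}{4}$ is inherited from the elementary inequality itself; the lemma statement needs no additional restriction beyond this.
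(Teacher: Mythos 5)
Your proof is correct and follows exactly the route the paper intends: the lemma is stated there as a direct consequence of the elementary inequality $\big||n|^{2s}-|m|^{2s}\big|\les |n-m|/(\jb{n}^{1-2s}\vee\jb{m}^{1-2s})$, applied to the two pairings $(n_1,n_2),(n_4,n_3)$ and $(n_1,n_4),(n_2,n_3)$ via the frequency constraint, and the paper omits further details. Your verification that the inequality transfers from $|\cdot|^{2s}$ to $\jb{\cdot}^{2s}$ (MVT in the comparable regime, concavity otherwise) is a sensible addition and is sound.
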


%

\subsection{Probabilistic tools}

The main probabilistic tools we use in this paper are the Wiener chaos estimate and the Bou\'e-Dupuis variational formula.

Recall that $\{ g_{n}\}_{n\in \Z}$ denotes a family of i.i.d standard complex-valued normal random variables on a probability space $(\O, \mathcal{F}, \mathbb{P})$.

\begin{lemma}[Wiener Chaos estimate]  \label{LEM:WCE}
Given $k\in \mathbb{N}$, let $c:\Z^{k} \to \C$ and 
\begin{align*}
S(\o)= \sum_{(n_1,\ldots, n_k)\in \Z^{k}} c(n_1,\ldots,n_k)g_{n_1}(\o)\cdots g_{n_k}(\o).
\end{align*}
Suppose that $S\in L^2(\O)$. Then, there exists $C_{k}>0$ such that for every $2\leq p<\infty$, 
\begin{align*}
\|S\|_{L^p(\O)} \leq C_{k} p^{\frac{k}{2}} \|S\|_{L^2(\O)}.
\end{align*}
\end{lemma}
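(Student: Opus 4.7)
The plan is to invoke Nelson's hypercontractivity theorem for the Ornstein--Uhlenbeck semigroup. First, I would write each complex Gaussian as $g_n = g_n^{(1)} + i g_n^{(2)}$, where $\{g_n^{(1)}, g_n^{(2)}\}_{n\in\Z}$ forms a family of independent real centered Gaussians of variance $\tfrac12$. Expanding the product $g_{n_1}\cdots g_{n_k}$ in real and imaginary parts realizes $S$ as (the $L^2$-limit of) a polynomial of total degree $k$ in this family of real Gaussians. Consequently, $S$ admits a Wiener--It\^{o} decomposition $S = \sum_{j=0}^{k} S_j$, where $S_j$ lies in the $j$-th homogeneous real Wiener chaos. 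This is the step that ensures the decomposition truncates at level $k$; no chaoses of order greater than $k$ appear because the generating polynomial has total degree $k$.

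Next, I would invoke Nelson's hypercontractivity in the following form: the OU semigroup $\{T_t\}_{t\ge 0}$ (generated by minus the number operator) acts as multiplication by $e^{-jt}$ on the $j$-th Wiener chaos, and it satisfies $\|T_t f\|_{L^p(\Omega)}\le\|f\|_{L^2(\Omega)}$ whenever $e^{2t}\ge p-1$. Choosing $f = S_j$ and $t = \tfrac12\log(p-1)$, the eigenfunction property gives $T_t S_j = (p-1)^{-j/2} S_j$, so that
\begin{equation*}
\|S_j\|_{L^p(\Omega)} \;\le\; (p-1)^{j/2}\, \|S_j\|_{L^2(\Omega)}.
\end{equation*}

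Finally, since the Wiener chaoses are mutually orthogonal in $L^2$, one has $\|S_j\|_{L^2(\Omega)}\le\|S\|_{L^2(\Omega)}$ for every $0\le j\le k$. Combining this bound with the triangle inequality in $L^p$ and the elementary comparison $(p-1)^{1/2}\le p^{1/2}$ for $p\ge 2$, I obtain
\begin{equation*}
\|S\|_{L^p(\Omega)} \;\le\; \sum_{j=0}^{k} (p-1)^{j/2}\,\|S_j\|_{L^2(\Omega)} \;\le\; (k+1)\, p^{k/2}\,\|S\|_{L^2(\Omega)},
\end{equation*}
which gives the desired estimate with $C_k = k+1$. There is no serious obstacle in the argument since the statement is a classical consequence of hypercontractivity; the only minor subtlety is the verification that working with complex (rather than real) Gaussians does not raise the chaos order beyond $k$, which is settled by the real/imaginary part expansion in the first step.
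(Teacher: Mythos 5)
Your proof is correct and is the standard hypercontractivity argument: the paper states this lemma as a known tool without providing a proof, and the classical proof it implicitly relies on is exactly the one you give (reduce to real Gaussians, note the chaos decomposition truncates at order $k$, apply Nelson's estimate $\|S_j\|_{L^p}\le (p-1)^{j/2}\|S_j\|_{L^2}$ chaos by chaos, and sum). The real/imaginary splitting correctly handles the complex Gaussians, and the constant $C_k=k+1$ is fine.
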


In order to state the Bou\'e-Dupuis variational formula,
as in \cite{BG,TW, OOT1,OOT2, OST20}, we first need to introduce some notations.
Let $W(t)$ be  a cylindrical Brownian motion in $L^2(\T)$, which means that
\begin{align*}
W(t) = \sum_{n \in \Z} B_n(t) e^{inx},
\end{align*}

\noi
where  
$\{B_n\}_{n \in \Z}$ is a sequence of mutually independent complex-valued\footnote{By convention, we normalize $B_n$ so that $\text{Var}(B_n(t)) = t$.} Brownian motions on a probability space $(\O, \mathcal{F}, \mathbb{P})$.
We then define a centered Gaussian process $Y(t)$
by 
\begin{align}
Y(t)
=  \jb{\nabla}^{-s}W(t).
\label{P2}
\end{align}

\noi
Note that 
we have $\text{Law}_{\P}(Y(1)) = \mu_s$, 
where $\mu_s$ is the Gaussian measure in \eqref{musdef}.
By setting  $Y_N = P_NY $, 
we have   $\text{Law}_{\P}(Y_N(1)) = (P_N)_*\mu_s$, 
i.e.~the push-forward of $\mu_s$ under $P_N$.

Next, let $\mathbb{H}_{a}$ denote the space of drifts, 
which are progressively measurable processes 
belonging to 
$L^2([0,1]; L^2(\T))$, $\P$-almost surely. 
We now state the variational formula~\cite{BD, Ust};
in particular, see Theorem 7 in \cite{Ust}.

\begin{lemma}[Bou\'e-Dupuis variational formula]\label{LEM:var3}
	Let $Y$ be as in \eqref{P2} and fix $N \in \mathbb{N}$.
	Suppose that  $F:C^\infty(\T^d) \to \R$
	is measurable, and satisfies $\E\big[|F(P_NY(1))|^p\big] < \infty$
	and $\E\big[|e^{-F(P_NY(1))}|^{p'} \big] < \infty$, for some $1 < p < \infty$.
	Then, we have
	\begin{align}
	- \log \E\Big[e^{-F(P_N Y(1))}\Big]
	= \inf_{\theta \in \mathbb H_a}
	\E\bigg[ F(P_N Y(1) + P_N I(\theta)(1)) + \frac{1}{2} \int_0^1 \| \theta(t) \|_{L^2_x}^2 dt \bigg], 
	\label{P3}
	\end{align}	
\noi
where  $I(\theta)$ is  defined by 
\begin{align*}
I(\theta)(t) = \int_0^t \jb{\nabla}^{-s} \theta(t') dt'
\end{align*}

\noi
and the expectation $\E = \E_\P$
is with respect to the underlying probability measure~$\P$. 
\end{lemma}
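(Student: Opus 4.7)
The statement is a particular case of the Bou\'e--Dupuis formula (see \cite{BD}, or the more general Theorem~7 of \cite{Ust}), specialized to the setting where the test functional depends only on $P_N Y(1) = P_N \jb{\nabla}^{-s} W(1)$. My plan is to sketch the proof of the general formula, then observe that the integrability hypotheses $\E[|F(P_N Y(1))|^p] < \infty$ and $\E[|e^{-F(P_N Y(1))}|^{p'}] < \infty$ are precisely what is needed to make all manipulations rigorous. The argument splits into proving the two inequalities ($\leq$) and ($\geq$) separately.

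\textbf{The $\leq$ direction (upper bound on $-\log\E[e^{-F}]$).} Fix an arbitrary admissible drift $\theta \in \mathbb{H}_a$ with $\E\big[\int_0^1 \|\theta(t)\|_{L^2_x}^2 dt\big] < \infty$; the case of infinite energy is trivial. The strategy is to apply Girsanov's theorem to the cylindrical Brownian motion $W$. Setting
\begin{equation*}
M_t := \exp\bigg(\int_0^t \langle \theta(s), dW(s)\rangle_{L^2_x} - \tfrac12 \int_0^t \|\theta(s)\|_{L^2_x}^2 \, ds\bigg), \qquad \frac{d\Q}{d\P} = M_1,
\end{equation*}
the process $\widetilde{W}(t) := W(t) - \int_0^t \theta(s) ds$ is a cylindrical Brownian motion under $\Q$. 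Writing $\E_{\P}[e^{-F(P_N Y(1))}] = \E_\Q\big[e^{-F(P_N Y(1))}\, M_1^{-1}\big]$, re-expressing in terms of $\widetilde{W}$, and applying Jensen's inequality to the convex function $-\log$ yields the desired bound. The only subtlety is verifying that $M_t$ is a true martingale (not merely a local martingale); this is handled by truncating $\theta$ at level $n$, applying the above to $\theta \wedge n$, and then passing to the limit using the integrability hypotheses on $F$.

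\textbf{The $\geq$ direction (near-optimal drift).} One must exhibit a drift $\theta^{*}$ that comes arbitrarily close to equality. Define the positive martingale $Z_t := \E[e^{-F(P_N Y(1))} \mid \mathcal{F}_t]$. By the martingale representation theorem in $L^2(\T)$, there is a progressively measurable process $\Psi \in L^2([0,1]; L^2(\T))$ with $Z_t = Z_0 + \int_0^t \langle \Psi(s), dW(s)\rangle_{L^2_x}$. Setting $\theta^{*}_t := -\Psi_t / Z_t$ (meaningful since $Z_t > 0$ a.s.\ by positivity of $e^{-F}$), an application of It\^o's formula to $-\log Z_t$ identifies $-\log Z_0 = -\log \E[e^{-F(P_N Y(1))}]$ with $\E[F(P_N Y(1) + P_N I(\theta^{*})(1)) + \tfrac12 \int_0^1 \|\theta^{*}(t)\|_{L^2_x}^2 dt]$, up to admissibility issues.

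\textbf{Main obstacle.} The hard technical point lies in justifying the formal manipulation in the $\geq$ direction: the drift $\theta^*$ constructed via martingale representation need not have finite energy a priori, nor even lie in $\mathbb{H}_a$ in the required sense. This is the heart of the argument of~\cite{BD,Ust}, and requires a careful truncation-and-limit procedure together with uniform integrability, exactly at the level where the hypotheses $\E[|F(P_N Y(1))|^p]<\infty$ and $\E[|e^{-F(P_N Y(1))}|^{p'}]<\infty$ for some $p > 1$ are exploited via H\"older's inequality. Since the specific structure $F(P_N Y(1))$ reduces the relevant sigma-algebra to one generated by finitely many real Gaussians (the Fourier coefficients $B_n(1)$ for $|n| \le N$), one could alternatively bypass the infinite-dimensional theory entirely by approximating the integral against a finite-dimensional Gibbs variational principle (Donsker--Varadhan duality on $\R^{2(2N+1)}$) and checking that the infimum over $\mathbb{H}_a$ does reach this finite-dimensional infimum by taking drifts of the form $\theta(t) = \sum_{|n|\le N} \theta_n(t) e^{inx}$.
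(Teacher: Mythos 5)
The paper itself does not prove this lemma; it is quoted verbatim from the references \cite{BD, Ust} (specifically Theorem~7 of \cite{Ust}), so there is no ``paper proof'' to compare against. Your sketch of the standard argument has the right overall architecture — Girsanov plus Jensen for the upper bound, martingale representation for the lower bound — but the lower-bound step as written contains a real error. Applying It\^o's formula to $-\log Z_t$, with $dZ_t = \langle \Psi_t, dW_t \rangle$ and $\theta^*_t = -\Psi_t/Z_t$, gives
\begin{equation*}
F(P_N Y(1)) = -\log Z_0 + \int_0^1 \langle \theta^*_t, dW_t\rangle + \tfrac12 \int_0^1 \|\theta^*_t\|_{L^2_x}^2\, dt,
\end{equation*}
and taking $\E_\P$ of both sides (the stochastic integral killing) yields
\begin{equation*}
-\log Z_0 = \E_\P\big[F(P_N Y(1))\big] - \tfrac12\, \E_\P\Big[\int_0^1 \|\theta^*_t\|_{L^2_x}^2\, dt\Big],
\end{equation*}
which has the \emph{wrong sign} on the energy term and does not produce the shifted argument $Y(1) + I(\theta^*)(1)$. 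Your claim that It\^o alone ``identifies'' $-\log Z_0$ with $\E\big[F(P_N Y(1) + P_N I(\theta^*)(1)) + \tfrac12\int_0^1\|\theta^*\|^2\big]$ is therefore not correct as stated.

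What is missing is a second application of Girsanov's theorem, this time to the tilted measure $d\Q/d\P := Z_1/Z_0 = e^{-F(P_N Y(1))}/\E[e^{-F(P_N Y(1))}]$. Since $Z_t/Z_0 = \mathcal E\big(\!-\!\int_0^\cdot\langle\theta^*,dW\rangle\big)$ is a Dol\'eans--Dade exponential, under $\Q$ the process $\widetilde W_t := W_t + \int_0^t \theta^*_s\, ds$ is a cylindrical Brownian motion. Rewriting the It\^o identity in terms of $\widetilde W$ (so that the stochastic integral has mean zero under $\Q$, not $\P$) and noting $P_N Y(1) = P_N \jb{\nabla}^{-s}\widetilde W(1) - P_N I(\theta^*)(1)$ flips the sign of the energy term and produces the shifted argument: one obtains $-\log Z_0 = \E_\Q\big[F(P_N\widetilde Y(1) + P_N I(-\theta^*)(1)) + \tfrac12\int_0^1 \|\theta^*\|^2\big]$, with $\widetilde Y(1) := \jb{\nabla}^{-s}\widetilde W(1)$ distributed under $\Q$ as $Y(1)$ under $\P$. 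Only after this change of measure does the lower bound match the form of the right-hand side of \eqref{P3}. The remaining delicate issues — whether $-\theta^*$ is admissible (progressively measurable with finite energy) with respect to the filtration of $\widetilde W$, and the truncation arguments that exploit the $L^p$/$L^{p'}$ integrability hypotheses — are correctly flagged by you as the technical heart of \cite{BD, Ust}. Your closing remark about reducing to a finite-dimensional Donsker--Varadhan principle is a valid observation in principle (the functional depends only on $2(2N+1)$ real Gaussians), but it is not quite a shortcut: the Gibbs variational formula gives an infimum over all probability measures, and matching it to an infimum over adapted square-integrable drifts of the form $\sum_{|n|\le N}\theta_n(t)e^{inx}$ still requires the finite-dimensional Bou\'e--Dupuis argument, which is structurally identical to the infinite-dimensional one.
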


 We recall some pathwise regularity bounds  on 
$Y(1)$ and $I(\theta)(1)$. The proofs are well-known and can be found, for instance, in \cite[Lemma 4.7]{GOTW}.

\begin{lemma}  \label{LEM:Dr}
For any finite $p \ge 1$ and $\s<s-\tfrac{1}{2}$, 
\begin{align*}
\sup_{N\in \mathbb{N}}\E \Big[  \| Y_N(1)\|_{W^{\s,\infty} }^p \Big] \leq C_p <\infty,
\end{align*}
and moreover, for any $\theta \in \mathbb{H}_{a}$, 
\begin{align}
\| I(\theta)(1) \|_{H^{s}}^2 \leq \int_0^1 \| \theta(t) \|_{L^2}^2dt.
\label{CM}
\end{align}
\end{lemma}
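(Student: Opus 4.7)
The lemma splits into two independent statements: a regularity bound for the Gaussian field $Y_N(1)$, and a Cameron--Martin type estimate for the drift map $I(\theta)$. The first part is the more substantial one and relies on standard tools (Sobolev embedding, Minkowski, Wiener chaos), while the second is a short direct computation using Cauchy--Schwarz. My plan is to dispatch the second estimate quickly and then focus on obtaining the Gaussian regularity estimate uniformly in $N$.

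For the second bound, I would begin by observing that since $I(\theta)(1) = \int_0^1 \jb{\nabla}^{-s}\theta(t)\,dt$, applying $\jb{\nabla}^s$ gives
\begin{equation*}
\jb{\nabla}^s I(\theta)(1) = \int_0^1 \theta(t)\,dt.
\end{equation*}
Taking the $L^2(\T)$ norm, using Minkowski's inequality and then Cauchy--Schwarz in $t$, one obtains
\begin{equation*}
\|I(\theta)(1)\|_{H^s}^2 = \Big\|\int_0^1 \theta(t)\,dt\Big\|_{L^2}^2 \le \Big(\int_0^1 \|\theta(t)\|_{L^2}\,dt\Big)^2 \le \int_0^1 \|\theta(t)\|_{L^2}^2\,dt,
\end{equation*}
which is precisely \eqref{CM}. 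Note this is deterministic and pathwise.

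For the first estimate, I would write $Y_N(1) = \sum_{|n|\le N} \jb{n}^{-s} g_n e^{inx}$ with $g_n = B_n(1)$ i.i.d. complex standard Gaussians, so that $\jb{\nabla}^\sigma Y_N(1)(x) = \sum_{|n|\le N} \jb{n}^{\sigma-s} g_n e^{inx}$. The key point is to trade the $L^\infty_x$ norm for a finite $L^r_x$ norm at the cost of a tiny loss of regularity: pick $\sigma < \sigma' < s - \tfrac12$ and use the Sobolev embedding $W^{\sigma',r}(\T) \hookrightarrow W^{\sigma,\infty}(\T)$ for some large but finite $r = r(\sigma, \sigma')$. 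Then by Minkowski's inequality (taking $r \ge p$),
\begin{equation*}
\big\|\|Y_N(1)\|_{W^{\sigma,\infty}}\big\|_{L^p(\Omega)} \lesssim \big\|\jb{\nabla}^{\sigma'} Y_N(1)\big\|_{L^p(\Omega; L^r_x)} \le \big\|\jb{\nabla}^{\sigma'} Y_N(1)(x)\big\|_{L^r_x L^p(\Omega)}.
\end{equation*}
Since each $(\jb{\nabla}^{\sigma'}Y_N(1))(x)$ is a first-order Wiener chaos element, Lemma \ref{LEM:WCE} bounds its $L^p(\Omega)$ norm by $\sqrt{p}$ times its $L^2(\Omega)$ norm. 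The latter is computed explicitly:
\begin{equation*}
\E\big[|(\jb{\nabla}^{\sigma'} Y_N(1))(x)|^2\big] = \sum_{|n|\le N} \jb{n}^{2(\sigma'-s)} \le \sum_{n\in\Z} \jb{n}^{2(\sigma'-s)} < \infty,
\end{equation*}
since $\sigma' < s - \tfrac12$. This bound is independent of both $x$ and $N$, so integrating in $x$ over the torus gives the desired estimate uniformly in $N$.

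The main potential obstacle is the interplay of the parameters: one must choose $\sigma' \in (\sigma, s-\tfrac12)$ so that the $L^2(\Omega)$ bound converges, and simultaneously choose $r$ large enough for Sobolev embedding $W^{\sigma',r} \hookrightarrow W^{\sigma,\infty}$ to hold while still satisfying $r \ge p$ for Minkowski. Both constraints are easily met since $\sigma < s-\tfrac12$ strictly and $p$ is finite, so no sharp numerology is involved; the resulting constant $C_p$ scales like $p^{1/2}$ from the Wiener chaos bound, which is more than sufficient.
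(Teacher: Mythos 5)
The paper does not actually prove this lemma; it cites \cite[Lemma 4.7]{GOTW}, and your argument is precisely the standard proof one finds there: the Cameron--Martin bound \eqref{CM} via $\jb{\nabla}^s I(\theta)(1)=\int_0^1\theta(t)\,dt$, Minkowski and Cauchy--Schwarz is exactly right, and the reduction of the $W^{\sigma,\infty}$ bound to a pointwise second-moment computation via Sobolev embedding, Minkowski and the Wiener chaos estimate is the correct and intended route, with the series $\sum_n \jb{n}^{2(\sigma'-s)}$ converging precisely because $\sigma'<s-\frac12$. The one slip is the direction of the exponent condition in the generalized Minkowski inequality: the step $\|\,\|f\|_{L^r_x}\|_{L^p(\O)} \le \|\,\|f\|_{L^p(\O)}\|_{L^r_x}$ requires the \emph{outer} exponent to dominate the inner one, i.e.\ $p \ge r$, not $r \ge p$ as you state (with $r\ge p$ the inequality goes the other way). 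This is harmless but should be repaired explicitly: since $\mathbb{P}$ is a probability measure, $\E[|X|^p]^{1/p}$ is nondecreasing in $p$, so one first proves the estimate for all $p\ge r$, where $r>1/(\sigma'-\sigma)$ is fixed by the Sobolev embedding, and then deduces it for smaller $p$ by Jensen. With that correction the constant still scales like $C^p p^{p/2}$, as you note.
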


As a corollary of \eqref{P3} and \eqref{CM}, we obtain the following simplified version of the Bou\'e-Dupuis variational formula.

\begin{lemma}\label{LEM:BD2}
Let $Y$ be a random variable distributed according to $\mu_s$.
	Fix $N \in \mathbb{N}$.
	Suppose that  $F:C^\infty(\T^d) \to \R$
	is measurable such that $\E\big[|F_{-}(P_NY)|^p\big] < \infty$ for some $p > 1$, where $F_{-}$ denotes the negative part of $F$. Then 
	\begin{align}
	\log \E\Big[e^{F(P_N Y)}\Big]
	\le 
	\E\bigg[ \sup_{V \in H^s} \Big\{  F(P_N Y + P_N V) -  \frac{1}{2} \|V\|_{H^s}^2 \Big\} \bigg],
	\label{P4}
	\end{align}
where the expectation is taken with respect to the underlying probability measure.
\end{lemma}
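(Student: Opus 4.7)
The plan is to deduce Lemma \ref{LEM:BD2} from the Bou\'e-Dupuis formula (Lemma \ref{LEM:var3}) by applying it with $-F$ (suitably truncated) in place of $F$, and then using the Cameron-Martin bound \eqref{CM} to pass from controls $\theta \in \mathbb H_a$ to shifts $V \in H^s$. The truncation is needed because the hypothesis only controls the negative part $F_{-}$, while Lemma \ref{LEM:var3} requires finite moments of both $F$ and $e^{-F}$.

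Concretely, first set $F_M := \min(F, M)$ for $M > 0$. Since $|F_M| \le |F_{-}| + M$, we have $\E[|F_M(P_N Y)|^p] < \infty$, and since $F_M \le M$, we have $\E[|e^{F_M(P_N Y)}|^{p'}] \le e^{p'M} < \infty$. Hence Lemma \ref{LEM:var3} applies with $-F_M$ in place of $F$, giving
\begin{align*}
\log \E\big[ e^{F_M(P_N Y)}\big] = \sup_{\theta \in \mathbb H_a} \E\bigg[ F_M(P_N Y + P_N I(\theta)(1)) - \tfrac12 \int_0^1 \|\theta(t)\|_{L^2}^2\, dt \bigg].
\end{align*}
By \eqref{CM}, $\|I(\theta)(1)\|_{H^s}^2 \le \int_0^1 \|\theta(t)\|_{L^2}^2 dt$ almost surely, so for any admissible $\theta$, setting $V = I(\theta)(1) \in H^s$ pathwise yields
\begin{align*}
F_M(P_N Y + P_N I(\theta)(1)) - \tfrac12 \int_0^1 \|\theta(t)\|_{L^2}^2\, dt \le \sup_{V \in H^s} \Big\{ F_M(P_N Y + P_N V) - \tfrac12 \|V\|_{H^s}^2 \Big\}.
\end{align*}
Taking expectation on both sides, then the supremum over $\theta$, and finally using $F_M \le F$, we obtain
\begin{align*}
\log \E\big[ e^{F_M(P_N Y)}\big] \le \E\bigg[ \sup_{V \in H^s} \Big\{ F(P_N Y + P_N V) - \tfrac12 \|V\|_{H^s}^2 \Big\} \bigg].
\end{align*}

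Finally, let $M \to \infty$. Since $F_M(P_N Y) \nearrow F(P_N Y)$ pointwise, the monotone convergence theorem gives $\E[e^{F_M(P_N Y)}] \nearrow \E[e^{F(P_N Y)}]$ (with the value $+\infty$ allowed on the right-hand side), so the left-hand side of the displayed inequality converges to $\log \E[e^{F(P_N Y)}]$, completing the proof. I do not anticipate any serious technical obstacle: measurability of the pointwise supremum over $V \in H^s$ is the only minor issue, and it can be handled by standard arguments (e.g.\ restricting to a countable dense subset of $H^s$ when $F$ is continuous on $C^\infty(\T^d)$, which is the setting in which $F$ is defined here).
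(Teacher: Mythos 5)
Your proof is correct and follows essentially the same route as the paper: truncate from above to $F_M=\min(F,M)$, apply the Bou\'e--Dupuis formula (Lemma \ref{LEM:var3}) to $-F_M$, use the Cameron--Martin bound \eqref{CM} to replace the drift cost by $\tfrac12\|V\|_{H^s}^2$ and pass to the supremum over $V\in H^s$, and conclude via monotone convergence as $M\to\infty$. The only cosmetic difference is that you verify the integrability hypotheses of Lemma \ref{LEM:var3} explicitly and fold the bounded and unbounded cases into a single argument, whereas the paper states the bounded-above case separately before truncating; the underlying argument is identical.
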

\begin{proof}
We start by proving \eqref{P4} in the case where $F$ is bounded from above. In this case, abusing of notation, we pick a process $Y(t)$ as in \eqref{P2}, so that $\Law(Y(1)) = \mu_s$. We can then apply Lemma \ref{LEM:var3} (to $-F$) and \eqref{CM}, and obtain that 
\begin{align*}
\log \E\Big[e^{F(P_N Y)}\Big]&=\log \E\Big[e^{F(P_N Y(1))}\Big] \\
&=\sup_{\theta \in \mathbb H_a}
	\E\bigg[ F(P_N Y(1) + P_N I(\theta)(1)) - \frac{1}{2} \int_0^1 \| \theta(t) \|_{L^2_x}^2 dt \bigg] \\
&\le \sup_{\theta \in \mathbb H_a}
	\E\bigg[ F(P_N Y(1) + P_N I(\theta)(1)) - \frac{1}{2} \|I(\theta)(1)\|_{H^s}^2 \bigg] \\
&\le \sup_{\theta \in \mathbb H_a}
	\E\bigg[ \sup_{V \in H^s} \Big\{ F(P_N Y(1) + P_N V) - \frac{1}{2} \|V\|_{H^s}^2 \Big\} \bigg] \\
&= \E\bigg[ \sup_{V \in H^s} \Big\{ F(P_N Y + P_N V) - \frac{1}{2} \|V\|_{H^s}^2 \Big\} \bigg].
\end{align*}
We now move to the situation in which $F$ is unbounded from above. For $n \in \N$, define $F_n := \min(F,n)$. Since $F_n$ is bounded from above, we obtain that 
\begin{align*}
\log \E\Big[e^{F_n(P_N Y)}\Big] &\le \E\bigg[ \sup_{V \in H^s} \Big\{ F_n(P_N Y + P_N V) - \frac{1}{2} \|V\|_{H^s}^2 \Big\} \bigg]\\
&\le \E\bigg[ \sup_{V \in H^s} \Big\{ F(P_N Y + P_N V) - \frac{1}{2} \|V\|_{H^s}^2 \Big\} \bigg].
\end{align*}
By monotone convergence, we have that 
$$ \lim_{n \to \infty} \E\Big[e^{F_n(P_N Y)}\Big] =   \E\Big[e^{F(P_N Y)}\Big], $$
so by taking limits as $n \to \infty$ we obtain \eqref{P4}.
\end{proof}

\subsection{On the measures $\rho_{s,\gamma}$} \label{SEC:rhomeas}

In the following section, we give a rigorous meaning to the auxiliary measures $\rho_{s,\g}$ defined in \eqref{rhosg}.

For $s\leq \tfrac{1}{2}$ and $u_0$ distributed according to $\mu_s$, the quantity $\|u_0\|_{L^2}^{2}$ is almost surely infinite. This divergence is due to the unboundedness of the variance of $|u_0(x)|^2$ for any fixed $x\in \T$. Indeed, for any $N\in \mathbb{N}$,
\begin{align*}
\s_N :=\E[ |P_{N}u_0 (x)|^{2}] = \sum_{|n| \le N} \frac{1}{\jb{n}^{2s}}  \sim 
\begin{cases}
\log N & \quad \text{if} \quad s=\tfrac{1}{2}, \\
N^{1-2s} & \quad \text{if} \quad s<\tfrac{1}{2}.
\end{cases}
\end{align*}
We instead consider the Wick ordered product:
\begin{align*}
\wick{|P_{N} u_0|^{2}}\,= |P_{N} u_0|^{2} -\s_{N},
\end{align*}
and thus the Wick ordered $L^2$-norm:
\begin{align*}
\int_{\T}  \wick{|P_{N}u_0|^2 (x)} \, dx  =\int_{\T} |P_{N}u_0(x)|^2 dx -\s_{N}.
\end{align*}
Since $\s_N$ is independent of $x\in \T$, we also have $\s_N = \E[ \|P_{M}u_0\|_{L^2}^{2}]$.
Then, by Lemma~\ref{LEM:WCE}, for any $1\leq p<\infty$, 
\begin{align}
\E \bigg[ \Big \vert \int_{\T}  \wick{|P_{N}u_0|^2 (x)} \, dx \Big\vert^{p} \bigg]^{\frac{1}{p}} \les p \E \Big[ \Big| \sum_{|n|\leq N} \tfrac{|g_n|^2-1}{ \jb{n}^{2s}}\Big\vert^{2} \Big]^{\frac{1}{2}} \les p \bigg( \sum_{|n|\leq N} \frac{1}{\jb{n}^{4s}} \bigg)^{\frac{1}{2}} \les_{p} 1, \label{s14}
\end{align}
uniformly in $N\in \mathbb{N}$ if $s>\tfrac{1}{4}$. Moreover, by trivially modifying the above computation,
we obtain
\begin{align}
\bigg\|  \int_{\T}  \wick{|P_{N}u_0|^2 (x)} \, dx -  \int_{\T}  \wick{|P_{M}u_0|^2 (x)}\, dx \bigg\|_{L^p(\mu_s)} \leq Cp N^{-1+4s},
\label{WickCauchy}
\end{align}
for any $1\leq p<\infty$ and $M\geq N \geq 1$.
Now, \eqref{WickCauchy} implies that the sequence $\{ \int_{\T} \wick{(P_{N}u_0)^2} dx\}_{N\in \mathbb{N}}$ is Cauchy in $L^p (\mu_{s})$ and we may define 
\begin{align*}
\int_{\T} \wick{ |u_0|^{2}} \, dx = \lim_{N\to \infty} \int_{\T} \wick{ |P_{N} u_0|^{2}}\, dx \in L^{p}(d\mu_{s})
\end{align*}
for any $1\leq p<\infty$.
Given fixed $N\in \mathbb{N}$ and $\g\geq 0$, we define
\begin{align*}
R_{N}(u_0) = \exp \bigg( - \bigg\vert \int_{\T} \wick{ |P_{N}u_0|^{2}} \,dx \bigg\vert^{\g} \bigg). 
\end{align*}
Then, the following proposition ensures that the measures $\rho_{s,\g}$ in \eqref{rhosg} are well-defined.

\begin{proposition}\label{PROP:measconst}
Let $\tfrac{1}{4}<s \leq \tfrac{1}{2}$ and $\g<\tfrac{1}{1-2s}$. Then, for any finite $p\geq 1$,
\begin{align}
\sup_{N\in \mathbb{N}}  \|R_{N}(u_0)\|_{L^p(\mu_s)} <\infty. \label{UnifRN}
\end{align} Moreover, for any $1\leq p<\infty$, $R_{N}(u_0)$ converges to $\exp \big( - \big\vert \int_{\T} \wick{ |u_0|^{2}}dx \big\vert^{\g} \big)$
in $L^p(d\mu_s)$.
 \end{proposition}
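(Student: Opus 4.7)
The plan has two independent ingredients, one for each conclusion of the proposition, and both are essentially immediate given what precedes the statement.

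For the uniform $L^p$ bound \eqref{UnifRN}, I would simply observe that for any $\gamma \ge 0$ one has pointwise
\[
0 \le R_N(u_0) = \exp\!\Big(-\Big|\!\int_\T \wick{|P_N u_0|^2}\,dx\Big|^{\gamma}\Big) \le 1,
\]
since the exponent is non-positive. Consequently $\|R_N\|_{L^p(\mu_s)} \le 1$ uniformly in $N$, in $p \in [1,\infty)$, and in fact in $\gamma \ge 0$. In particular the assumption $\gamma < \frac{1}{1-2s}$ plays no role in the proof of \eqref{UnifRN}; it is included in the hypotheses for compatibility with the downstream applications of $\rho_{s,\gamma}$ (cf.\ Remark \ref{rk:gammacondition} and the Bou\'e-Dupuis analysis in Section 5).

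For the $L^p$-convergence, my plan is to leverage the estimate \eqref{WickCauchy} that has already been established immediately above the proposition statement. That bound (which requires $s>\tfrac14$, via the summability of $\jb{n}^{-4s}$ in \eqref{s14}) shows that the sequence $X_N := \int_\T \wick{|P_N u_0|^2}\,dx$ is Cauchy, and hence convergent, in $L^p(\mu_s)$ for every finite $p$, with limit $X := \int_\T \wick{|u_0|^2}\,dx$. In particular $X_N \to X$ in $\mu_s$-probability. Applying the continuous mapping theorem to the continuous map $t \mapsto \exp(-|t|^{\gamma})$ on $\R$ then yields $R_N(u_0) \to \exp(-|X|^{\gamma})$ in probability under $\mu_s$.

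To upgrade this to $L^p$-convergence, I would finish with dominated convergence: since both $R_N(u_0)$ and its pointwise-in-probability limit take values in $[0,1]$, the constant function $1$ is an $L^p(\mu_s)$-integrable envelope, and the bounded convergence theorem gives $R_N(u_0) \to \exp(-|X|^{\gamma})$ in $L^p(\mu_s)$ for every $1 \le p < \infty$. There is no real obstacle anywhere in the argument; the substantive probabilistic work has already been done in \eqref{s14} and \eqref{WickCauchy} via the Wiener chaos estimate of Lemma \ref{LEM:WCE}, and the role of the proposition is simply to package those estimates into a well-posed definition of $\rho_{s,\gamma}$.
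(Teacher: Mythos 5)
Your proposal is correct and follows essentially the same route as the paper: the pointwise bound $R_N \le 1$ gives \eqref{UnifRN} immediately, and the paper likewise upgrades convergence in measure (via \eqref{WickCauchy}) to $L^p$-convergence using the uniform bound. Your observation that the restriction $\gamma < \tfrac{1}{1-2s}$ is unnecessary for this proposition is also consistent with the paper's Remark \ref{rk:gammacondition}.
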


The proof of Proposition~\ref{PROP:measconst} follows by standard arguments so we will be brief. Clearly, the bound \eqref{UnifRN} follows from the point-wise bound $R_N \le 1$. Then, the convergence of $R_{N}$ in $L^p(\mu_s)$ follows from the uniform bound \eqref{UnifRN} and the weaker convergence in measure (which in turn rests upon \eqref{WickCauchy}). 

Notice that, as observed in Remark \ref{rk:gammacondition}, the restriction $\gamma < \tfrac{1}{1-2s}$ is completely inessential in the construction of the measure $\rho_{s,\gamma}$. However, we decided to keep it in the statement of Proposition \ref{PROP:measconst} in view of the following estimate, which will be crucial when developing the arguments of Section 4 and 5.
\begin{lemma}\label{LEM:var5}
For $\frac 14 < s \leq  \frac 12$ and $\gamma < \frac{1}{1-2s}$, there exist $C_\gamma = C_\gamma(s) > 0$, $\kappa = \kappa(\gamma,s)$, $\delta > 0$, such that 
\begin{equation} \label{l2integrability}
\Big|\int_{\T} |V_0|^2 + 2   \textup{Re}( \cj{V_0} Y) dx+ Z  \Big|^\gamma \ge \frac 12\|V_0\|_{L^2}^{2\g} - \frac 1{10} \| V_0 \|_{H^s}^2 - C_\gamma \|Y\|_{H^{s-\frac12 - \delta}}^\kappa - C_\gamma |Z|^\gamma.
\end{equation}
\end{lemma}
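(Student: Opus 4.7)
The plan is to reduce Lemma~\ref{LEM:var5} to two pieces. Introduce the shorthand $A := \|V_0\|_{L^2}^2$, $B := 2\,\textup{Re}\int_\T \overline{V_0}\, Y\,dx$, and $C := Z$, so the expression on the left-hand side equals $|A+B+C|^\gamma$. First, I will establish a pointwise real-variable lower bound
\begin{equation*}
|A+B+C|^\gamma \geq (1-\eps) A^\gamma - C_{\eps,\gamma}(|B|^\gamma+|C|^\gamma),
\end{equation*}
valid for any $\eps \in (0,1)$. Then, I will estimate $|B|^\gamma$ by
\begin{equation*}
|B|^\gamma \leq \eps_1 A^\gamma + \eps_2\|V_0\|_{H^s}^2 + C_{\gamma,\eps_1,\eps_2}\|Y\|_{H^{s-1/2-\delta}}^\kappa,
\end{equation*}
via duality, Sobolev interpolation, and a three-term weighted Young inequality. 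Substituting the second into the first and choosing $\eps, \eps_1, \eps_2$ small enough will collapse the two $A^\gamma$ contributions to $\tfrac12 A^\gamma$ and leave an admissible $\|V_0\|_{H^s}^2$ remainder.

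For the pointwise step, I start from the triangle inequality $A \leq |A+B+C| + |B+C|$ together with the elementary fact that for any $a,b \geq 0$, $\gamma \geq 1$, and $\eps > 0$, one has $(a+b)^\gamma \leq (1+\eps) a^\gamma + C_{\eps,\gamma} b^\gamma$ (immediate by splitting on whether $b$ is small or large compared to $a$). Combined with $(|B|+|C|)^\gamma \leq 2^{\gamma-1}(|B|^\gamma + |C|^\gamma)$, this gives the first inequality with explicit $C_{\eps,\gamma}$.

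For the estimate on $|B|^\gamma$, I dualise via $|B| \leq 2\|V_0\|_{H^{1/2-s+\delta}}\|Y\|_{H^{s-1/2-\delta}}$. Since $s > \tfrac14$, I can pick $\delta > 0$ so small that $0 \leq \tfrac12 - s + \delta \leq s$, after which Sobolev interpolation gives $\|V_0\|_{H^{1/2-s+\delta}} \leq \|V_0\|_{L^2}^{1-\theta}\|V_0\|_{H^s}^\theta$ with $\theta = (\tfrac12 - s + \delta)/s$. Raising to the $\gamma$-th power produces the three-factor bound
\begin{equation*}
|B|^\gamma \leq 2^\gamma A^{(1-\theta)\gamma/2}\|V_0\|_{H^s}^{\theta\gamma}\|Y\|_{H^{s-1/2-\delta}}^\gamma.
\end{equation*}
I then apply the three-term weighted Young inequality with exponents chosen so that the three factors become $A^\gamma$, $\|V_0\|_{H^s}^2$, and $\|Y\|_{H^{s-1/2-\delta}}^\kappa$. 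This is admissible precisely when $(1-\theta)/2 + \theta\gamma/2 < 1$, i.e.\ $\theta(\gamma-1) < 1$; substituting the value of $\theta$, this condition becomes $\gamma < (1+2\delta)/(1-2s+2\delta)$, a quantity which increases to $1/(1-2s)$ as $\delta \downarrow 0$. The hypothesis $\gamma < 1/(1-2s)$ therefore guarantees a suitable $\delta > 0$, and the admissible conjugate exponent $p_3 = 2/(1 - \theta(\gamma-1))$ produces $\kappa = \gamma p_3 = 2\gamma/(1-\theta(\gamma-1))$.

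To close, I substitute the second bound into the first and select $\eps = \tfrac14$ together with $\eps_1, \eps_2$ so small that $C_{\eps,\gamma}\cdot 2^\gamma \eps_1 \leq \tfrac14$ and $C_{\eps,\gamma}\cdot 2^\gamma \eps_2 \leq \tfrac{1}{10}$; the $A^\gamma$ coefficients then combine to $(\tfrac34 - \tfrac14)A^\gamma = \tfrac12 A^\gamma$, the $\|V_0\|_{H^s}^2$ coefficient is bounded by $\tfrac1{10}$, and the $\|Y\|^\kappa$ and $|Z|^\gamma$ terms are absorbed into the final $C_\gamma$. The main obstacle is exactly the Young-admissibility condition in the bilinear step: since $\gamma$ can exceed $2$ already for $s > \tfrac38$, one cannot directly estimate $\|V_0\|_{H^s}^\gamma\|Y\|^\gamma$ against $\|V_0\|_{H^s}^2$ by the usual two-variable Young (the conjugate exponent $2\gamma/(2-\gamma)$ would be negative). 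Interpolating $V_0$ partially into $L^2$ creates the extra factor $A^{(1-\theta)\gamma/2}$, whose $A$-power must then be absorbed against the $(1-\eps)A^\gamma$ from step one; the tight numerical match between the hypothesis $\gamma < 1/(1-2s)$ and the admissibility of the resulting three-way split reflects the optimality discussed in Remark~\ref{rk:gammacondition}.
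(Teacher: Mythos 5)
Your proof is correct and follows essentially the same route as the paper's: peel off the leading $\|V_0\|_{L^2}^{2\gamma}$ term by the triangle inequality, bound the cross term by duality plus $L^2$–$H^s$ interpolation, and close with a three-term weighted Young's inequality whose admissibility is exactly the hypothesis $\gamma < \frac{1}{1-2s}$. (A cosmetic remark: $\gamma$ can already exceed $2$ once $s > \tfrac14$, not $s > \tfrac38$, though this is only a side comment and does not affect the argument.)
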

\begin{proof}
By the triangle inequality, there exists a constant $C$ such that 
\begin{align}\label{l2integrability1}
\Big|\int_{\T} |V_0|^2 + 2   \textup{Re}( \cj{V_0} Y)  dx+ Z \Big|^\gamma \ge \frac 34 \Big(\int_{\T} |V_0|^2 dx\Big)^\gamma - C \Big|\int_{\T} \cj{V_0} Y dx\Big|^\gamma - C |Z|^\gamma.
\end{align}
Moreover, for $\delta$ small enough, by interpolation and Young's inequality, for every $\eps_0,\eps_1 > 0$, we have that
\begin{align}
\Big|\int_{\T} \cj{V_0} Y dx\Big|^\gamma & \les \|V_0\|_{H^{\frac12+\delta-s}}^\gamma \|Y\|_{H^{s-\frac12 - \delta}}^\gamma \notag \\
&\les \|V_0\|_{L^2}^{\gamma \frac{4s-1-2\delta}{2s}} \|V_0\|_{H^s}^{\gamma \frac{1+2\delta-2s}{2s}} \|Y\|_{H^{s-\frac12 - \delta}}^\gamma \notag\\
&\les \eps_0 \|V_0\|_{L^2}^{2\gamma} + \eps_1 \|V_0\|_{H^s}^2 + C(\gamma,s,\eps_0,\eps_1) \|Y\|_{H^{s-\frac12 - \delta}}^{( 1 -\frac{4s-1-2\delta}{4 s}-  \frac{\gamma(1+2\delta - 2s)}{4s})^{-1}} \label{l2integrability2}.
\end{align}
Notice that, recalling that $\gamma < \frac{1}{1-2s}$, for $\delta$ small enough,
\begin{align*}
\tfrac{4s-1-2\delta}{4 s} +  \tfrac{\gamma(1+2\delta - 2s)}{4s} &= \tfrac{4s-1}{4 s} +  \tfrac{\gamma(1- 2s)}{4s} + O(\delta) < \tfrac{4s-1}{4 s} +  \tfrac{1}{4s} =1. 
\end{align*}
Therefore, from \eqref{l2integrability1} and \eqref{l2integrability2}, we obtain \eqref{l2integrability}.
\end{proof}

\section{Local well-posedness} \label{SEC:LWP}

\subsection{Function spaces}\label{SEC:spaces}

For the local well-posedness analysis of \eqref{alphaNLS} and \eqref{4NLStrunc}, we will use the Fourier restriction norm spaces $X^{s,b}_{p,q}$, which are adapted to the linear flow $S(t)=e^{it(-\dx^2)^{\al}}$, $\al>1$, and the Fourier-Lebesgue spaces $\FL^{s,q}(\T)$. These were first introduced in \cite{GH}. 
Given $s,b\in \R$ and $1\leq p,q \leq \infty$, we define the space $X_{p,q}^{s,b}=X^{s,b}_{p,q}(\R\times \T)$ via the norm
\begin{align*}
\|u\|_{X^{s,b}_{p,q}(\R\times \T)}=\big\|\| \jb{n}^{s}\jb{\tau -|n|^{2\al} }^{b}\, \ft{u}(\tau, n)\|_{L^{q}_{\tau}(\R)}\big\|_{l^{p}_{n}(\Z)}, 
\end{align*}
where $\ft{u}(\tau, n)$ denotes the space-time Fourier transform of $u(t,x)$. 
When $p=q=2$, the $X^{s,b}_{2,2}=X^{s,b}$ spaces reduce to the standard $L^2$-based $X^{s,b}$-spaces~\cite{Bou, KlM}. We will make heavy use of the case when $p=2$ and $q\geq 2$, and as such we will define $X^{s,b}_{2,q}=X^{s,b}_{q}$.
These spaces enjoy the following embedding properties:
for every $1 \le p \le \wt p \le \infty$,
\begin{equation}\label{Xsbpembedding}
\|u\|_{X^{s,b}_{\wt p,q}} \le \|u\|_{X^{s,b}_{p,q}},
\end{equation}
which follows from the elementary estimate for sequences $\|f\|_{l^{\wt p}_{n}} \le \| f\|_{l^p_{n}}$.

Given $T>0$, we define the local-in-time version $X^{s,b}_{p,q}([0,T]\times \T)$ of $X^{s,b}_{p,q}(\R\times \T)$ as 
\begin{align}
X^{s,b}_q([0,T]\times \T)=\inf\{ \|v\|_{X^{s,b}_{p,q}(\mathbb{R}\times \T)} \, : \, v|_{[0,T]}=u \}. \label{timeloc0}
\end{align}
We will denote by $X^{s,b}_{p,q}(T)$ the spaces $X^{s,b}_{p,q}([0,T]\times \T)$. 
We have the following important embedding:
for any $s\in \R$ and $b>\frac{1}{q'}$, we have
\begin{align}
X^{s,b}_{p,q}(T) \hookrightarrow C([0,T];\FL^{s,p}(\T)). \label{ctsembed}
\end{align}
Given any function $F$ on $[0, T]\times \T$, we denote by $\tilde{F}$ any extension of $F$ onto $\R\times \T$.
Let $\eta\in C_{c}^{\infty}(\R)$ be a smooth non-negative cutoff function supported on $[-2,2]$ with $\eta\equiv 1$ on $[-1,1]$, and we set $\eta_{T}(t):=\eta(T^{-1}t)$ for $T>0$. 
For $0<T\leq 1$, we have the point-wise bounds
\begin{align}
|\ft \eta_{T}(\tau)|+|\ft \chi_{T}(\tau)| \les \frac{T^{\ta}}{\jb{\tau}^{1-\ta}}, \quad 0\leq \ta \leq 1, \label{etabd}
\end{align}
where $\chi (t)=\chi_{[-1,1]}(t)$ is a sharp cut-off to the interval $[-1,1]$ and, given $T>0$, we define $\chi_{T}(t)=\chi_{[-1,1]}(T^{-1}t)=\chi_{[-T,T]}(t)$.

We recall the following linear estimates related to the $X^{s,b}_{p,q}(T)$ spaces.

\begin{lemma}
The following are true:
\begin{enumerate}[\normalfont (i)]
\setlength\itemsep{0.3em}
\item \textup{ [Homogeneous linear estimate]} Given $s,b\in \R$ and $1\leq p,q\leq \infty$, we have 
\begin{align}
\| S(t)u_0\|_{X^{s,b}_{p,q}(T)} \les \|u_0\|_{\FL^{s,p}}, \label{linest}
\end{align}
for any $0<T\leq 1$.
\item \textup{ [Nonhomogeneous linear estimate]} Let $s\in \R$, $1\leq p,q\leq \infty$ and $b>\tfrac{1}{q'}$. Then we have
 \begin{align}
\bigg\| \int_{0}^{t} S(t-t')F(t')dt' \bigg\|_{X^{s,b}_{p,q}(T)} \les  \|F\|_{X^{s,b-1}_{p,q}(T)}, \label{duhamelest}
\end{align}
 for any $0<T\leq 1$. 
\end{enumerate}
  
\end{lemma}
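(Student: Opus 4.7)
For part (i), the natural extension is $w(t,x) := \eta(t) S(t) u_0$, where $\eta \in C_c^\infty(\R)$ equals $1$ on $[-1,1]$; since $0 < T \le 1$, this extends $S(t)u_0\big|_{[0,T]}$. A direct computation of the space-time Fourier transform yields
\[
\widehat{w}(\tau,n) = \widehat{\eta}(\tau - |n|^{2\alpha})\,\widehat{u_0}(n),
\]
so the $X^{s,b}_{p,q}$ norm of $w$ factorises, after the change of variables $\sigma = \tau - |n|^{2\alpha}$ inside the $L^q_\tau$ norm, as $\| \jb{\sigma}^b \widehat{\eta}(\sigma)\|_{L^q_\sigma} \cdot \|u_0\|_{\FL^{s,p}}$. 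The first factor is finite by the Schwartz decay of $\widehat{\eta}$, and taking the infimum over extensions in \eqref{timeloc0} concludes (i).

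For part (ii), let $\tilde F$ be any extension of $F$ to $\R\times \T$ that nearly achieves the infimum in the definition of $X^{s,b-1}_{p,q}(T)$. The candidate extension is
\[
w(t,x) := \eta(t) \int_0^t S(t-t') \tilde F(t')\,dt'.
\]
Working mode by mode in the spatial frequency $n$, I would expand $\widehat{\tilde F}(t',n) = \int e^{it'\tau'} \widehat{\tilde F}(\tau',n) d\tau'$, evaluate the resulting Duhamel integral in $t'$ explicitly, and then take the time-Fourier transform of the product with $\eta(t)$. This yields (up to a constant)
\[
\widehat{w}(\tau,n) = \int_{\R} \frac{\widehat{\tilde F}(\tau',n)}{\tau' - |n|^{2\alpha}}\,\bigl[\widehat{\eta}(\tau-\tau') - \widehat{\eta}(\tau-|n|^{2\alpha})\bigr]\,d\tau'.
\]
Changing variables to $\sigma = \tau - |n|^{2\alpha}$ and $\sigma' = \tau'-|n|^{2\alpha}$, the desired bound reduces, for each fixed $n$, to the assertion that the integral operator with kernel
\[
K(\sigma,\sigma') := \frac{1}{\sigma'}\bigl[\widehat{\eta}(\sigma-\sigma') - \widehat{\eta}(\sigma)\bigr]
\]
is bounded from $L^q(\jb{\sigma'}^{q(b-1)}d\sigma')$ to $L^q(\jb{\sigma}^{qb}d\sigma)$, uniformly in $n$. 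Squaring/$\ell^p$-summing in $n$ then produces the right-hand side $\|\tilde F\|_{X^{s,b-1}_{p,q}}$, and taking infimum over $\tilde F$ gives (ii).

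The main obstacle is the singular-kernel estimate for $K$. I would handle it by a Schur test split into two regimes: for $|\sigma'|\le 1$ the bracket vanishes to first order at $\sigma'=0$, so the mean value theorem gives $|K(\sigma,\sigma')| \lesssim \|\widehat{\eta}\,{'}\|_{L^\infty}$ with rapid decay in $\jb{\sigma-\theta\sigma'}$ for some $\theta \in [0,1]$; for $|\sigma'|>1$ we separate the two terms of $K$, and each is integrable against the weights thanks to the Schwartz decay of $\widehat{\eta}$. The hypothesis $b>1/q'$ enters precisely here, to guarantee that $\|\jb{\sigma}^{b}\widehat{\eta}(\sigma)\|_{L^{q}_{\sigma}}$ and the dual-side integrals $\int \jb{\sigma'}^{(1-b)q'}|\sigma'|^{-q'}d\sigma'$ on $|\sigma'|>1$ converge, allowing Schur's lemma (or, equivalently, Minkowski and Young in the weighted setting) to close. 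With these ingredients the two estimates follow in a standard way.
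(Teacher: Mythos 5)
Your argument is correct and follows the same route as the paper's: extend via $\eta(t)\,\times\,$Duhamel, compute the time-Fourier kernel, establish the decay bound $|K(\sigma,\sigma')|\les \jb{\sigma'}^{-1}\big(\jb{\sigma-\sigma'}^{-M}+\jb{\sigma}^{-M}\big)$, and close by H\"older (this is where $b>1/q'$ enters) and Young. The paper inserts an extra inner cutoff $\eta(t')$ in the Duhamel integrand and cites a reference for both the kernel bound and part (i), but the resulting kernel estimate and the weighted $L^q$ boundedness argument are identical to what you derive directly.
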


\begin{proof}
The first estimate \eqref{linest} can be found in \cite[Lemma 2.2 (i)]{ofw20}. For the second estimate \eqref{duhamelest}, we note that for any extension $\wt F$ of $F$ onto $\R$, 
\begin{align*}
\I \wt{F}(t,x):= \eta(t) \int_{0}^{t} S(t-t')\eta(t') \wt{F}(t',x)dt' 
\end{align*}
is an extension of $\int_{0}^{t} S(t-t')F(t',x)dt'$, and so it suffices to prove 
\begin{align}
\| \I \wt{F} \|_{X^{0,b}_{p,q}} \les \| \wt F\|_{X^{0,b-1}_{p,q}}. \label{inhomog2}
\end{align}
Taking a space-time Fourier transform, we find
\begin{align*}
\mathcal{F}_{t,x} \{  \I \wt{F} \}(\tau+|n|^{2\al},n) = \int_{\R} K(\tau, \ld)\wt F(\ld+|n|^{2\al},n)d\ld,
\end{align*}
where the kernel $K(\tau,\ld)$ satisfies
\begin{align}
|K(\tau, \ld)| \les \tfrac{1}{\jb{\ld}} \big(  \tfrac{1}{\jb{\tau-\ld}^{M}} +\tfrac{1}{\jb{\tau}^{M}}\big), \label{kernel}
\end{align}
for any $M \geq 1$.
See, for example, \cite[Lemma 3.1]{DNY1}. Using \eqref{kernel}, we see that 
\begin{align*}
 \jb{\tau}^{b} \jb{\ld}^{1-b} |K(\tau,\ld)| \les \frac{1}{ ( \jb{\tau}\wedge \jb{\ld})^{b} (\jb{\tau}\vee \jb{\ld})^{M}} + \frac{1}{ \jb{\tau-\ld}^{M}} \ind_{ \jb{\tau}\sim \jb{\ld}}.   
\end{align*}
By H\"{o}lder's inequality (which requires $b>\tfrac{1}{q'}$) and Young's inequality, we obtain
\begin{align*}
\Big\|   \int_{\R}  \jb{\tau}^{b} \jb{\ld}^{1-b} K(\tau,\ld) G(\ld) d\ld \Big\|_{L^q_{\tau}} \les \|G\|_{L^q_{\tau}},
\end{align*}
which implies \eqref{inhomog2}.
\end{proof}

We also have the following properties for the $X^{s,b}_{p,q}$ spaces related to localisation in time.

\begin{lemma}
 For any $s\in \R$, $1\leq p,q\leq \infty$ and $-\tfrac{1}{q}<b'\leq b<\tfrac{1}{q'}$, we have 
 \begin{align}
\|u\|_{X^{s,b'}_{p,q}(T)} &\les T^{b-b'}\|u\|_{X^{s,b}_{p,q}(T)}, \label{timeloc}
\end{align}
for any $0<T\leq 1$. In particular, \eqref{timeloc} also holds for $-\frac 1q < b' \le b < 1+\tfrac{1}{q'}$, under the additional condition that $u(0) = 0$.
Moreover, for any $s\in \R$, $1\leq p \leq \infty$, $q\geq 2$ and $0\leq b<\tfrac{1}{q'}$,
\begin{align}
\|\eta_{T}u\|_{X^{s,b}_{p,q}} &\les T^{\frac{1}{q'}-b-\dl}\|u\|_{X^{s,\frac{1}{q'}-\dl}_{p,q}}, \label{timeloc2} \\
\|\chi_{T} u\|_{X^{s,b}_{p,q}} & \les T^{\frac{1}{q'}-b-2\dl}\|u\|_{X^{s,\frac{1}{q'}-\dl}_{p,q}(T)},\label{timeloc4}
\end{align}
where $0<T\leq 1$ and $\dl>0$ is sufficiently small. Finally, for any $s\in \R$, $b\geq 0$, $1\leq p\leq \infty$, $1\leq q\leq r\leq \infty$, and $0<T\leq 1$, we have 
\begin{align}
\|u\|_{X^{s,b}_{p,r}(T)} \les \|u\|_{X^{s,b}_{p,q}(T)}
\label{q'+}
\end{align}
\end{lemma}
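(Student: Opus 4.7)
All five estimates are standard time-localization properties of Fourier restriction norm spaces and can be obtained by Fourier analysis in the time variable. The key technical inputs are the two cutoff multiplier estimates \eqref{timeloc2} and \eqref{timeloc4}, from which the remaining three follow by short arguments.

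To establish \eqref{timeloc2}, I would take the space-time Fourier transform of $\eta_T u$, which is a convolution of $\ft\eta_T$ against $\ft u$ in the temporal variable. The elementary inequality $\jb{\tau - |n|^{2\alpha}}^b \lesssim \jb{\tau - \lambda}^b + \jb{\lambda - |n|^{2\alpha}}^b$ (valid for $b\ge 0$) splits the estimate into two convolution operators which, after shifting $\tau \mapsto \tau + |n|^{2\alpha}$, take the uniform-in-$n$ form $f \mapsto g_i * (m_i f)$ on $L^q_\tau$ with
\[
g_1(\xi) = \jb{\xi}^b |\ft\eta_T(\xi)|, \qquad m_1(\lambda) = \jb{\lambda}^{-(\tfrac{1}{q'} - \delta)},
\]
\[
g_2(\xi) = |\ft\eta_T(\xi)|, \qquad m_2(\lambda) = \jb{\lambda}^{b - (\tfrac{1}{q'} - \delta)}.
\]
Combining H\"older's and Young's inequalities yields $\|g_i * (m_i f)\|_{L^q_\tau} \le \|g_i\|_{L^a_\tau} \|m_i\|_{L^{a'}_\tau} \|f\|_{L^q_\tau}$. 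The Schwartz decay $|\ft\eta_T(\tau)| \lesssim T/\jb{T\tau}^M$ gives, by rescaling, $\|\jb{\cdot}^c \ft\eta_T\|_{L^a_\tau} \sim T^{1/a' - c}$ for $c\ge 0$. The exponent $a'$ must exceed $1/(\tfrac{1}{q'} - \delta)$ (for $m_1$) or $1/(\tfrac{1}{q'} - \delta - b)$ (for $m_2$) in order for $m_i \in L^{a'}_\tau$; choosing $a'$ just above these thresholds produces a small loss that is absorbed into $\delta$, yielding the desired gain $T^{\frac{1}{q'} - b - \delta}$. Estimate \eqref{timeloc4} follows the same scheme with $\chi_T$ replacing $\eta_T$; the weaker Fourier decay $|\ft\chi_T(\tau)| \lesssim T/\jb{T\tau}$ accounts for the additional factor of $T^{\delta}$.

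Estimate \eqref{timeloc} is then immediate: given $u$ on $[0,T]$, I would fix a near-optimal extension $\tilde u$ and note that $v := \eta_T \tilde u$ is again an extension of $u$ because $\eta_T \equiv 1$ on $[-T,T]$; applying \eqref{timeloc2} with the input regularity chosen as $\tfrac{1}{q'} - \delta = b$ (i.e.\ $\delta := \tfrac{1}{q'} - b$, which lies in the admissible range since $b > b'$) and taking the infimum over extensions yields the $T^{b-b'}$ gain. The extended range under the assumption $u(0) = 0$ is obtained by writing $u(t) = \int_0^t \partial_s u(s)\,ds$ and using the inhomogeneous linear estimate \eqref{duhamelest} to trade one unit of $b$-regularity for a factor of $T$, reducing to the case already treated. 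Finally, \eqref{q'+} is proved using the compactly supported extension $v = \eta_T \tilde u$ and exploiting that $w := S(-t)v$ has time support in $[-2T, 2T]$, so that $\ft w(\cdot, n)$ is entire of exponential type $2T$; a Bernstein/Plancherel--P\'olya-type inequality gives $\|\ft w(\cdot, n)\|_{L^r_\tau} \lesssim T^{1/q - 1/r}\|\ft w(\cdot, n)\|_{L^q_\tau} \le \|\ft w(\cdot, n)\|_{L^q_\tau}$ for $T \le 1$ and $q\le r$, and the weighted version follows by dyadic decomposition of $\jb{\sigma}^b$. The principal obstacle throughout is the careful bookkeeping of H\"older and Young exponents in \eqref{timeloc2} and \eqref{timeloc4}: one must ensure that all integrability thresholds are simultaneously met and that endpoint losses can be absorbed into $\delta$; this is precisely what forces the restriction $b < 1/q'$ and the appearance of $\delta$.
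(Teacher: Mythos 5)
Your overall strategy is sound and, for \eqref{timeloc2} and \eqref{timeloc4}, essentially reproduces the standard convolution/H\"older--Young computation; note that the paper itself proves \eqref{timeloc4} slightly differently, by interpolating between the $b=0$ case and the trivial bound $\|\chi_T u\|_{X^{s,b}_{p,q}} \les \|u\|_{X^{s,b+\dl}_{p,q}}$, both read off from \eqref{etabd}, and it obtains \eqref{timeloc} and \eqref{timeloc2} by citing the $q=2$ argument of Tao's Lemma 2.11 and DNY rather than by a self-contained computation. Two points in your write-up, however, are genuine gaps rather than stylistic differences. First, your derivation of \eqref{timeloc} from \eqref{timeloc2} (taking $\dl = \tfrac{1}{q'}-b$) only covers $0\le b'\le b<\tfrac{1}{q'}$: the statement \eqref{timeloc2} you invoke requires the target exponent to be nonnegative, so the range $-\tfrac1q<b'<0$ claimed in \eqref{timeloc} is not reached. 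That range needs a separate (duality) step, or the direct argument of the $q=2$ case; your remark that ``$\dl$ lies in the admissible range since $b>b'$'' also conflates positivity of $\dl$ with smallness of $\dl$. Relatedly, your treatment of the extension to $b<1+\tfrac{1}{q'}$ via ``$u(t)=\int_0^t\partial_s u\,ds$'' ignores the linear flow: the correct reduction writes $u$ via the Duhamel operator applied to $F$ with $\ft F(\tau,n)\sim(\tau-|n|^{2\al})\ft u(\tau,n)$, so that $\|F\|_{X^{s,b-1}_{p,q}}\sim\|u\|_{X^{s,b}_{p,q}}$, and then uses the kernel bound \eqref{kernel}; as written, your identity does not interface with the $X^{s,b}_{p,q}$ norms.

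Second, for \eqref{q'+} your Plancherel--P\'olya/Nikolskii argument handles only the unweighted case $b=0$. The ``weighted version by dyadic decomposition of $\jb{\sigma}^b$'' does not go through: once you restrict $\ft w$ to a dyadic annulus in $\tau-\mu$, the resulting piece is no longer the Fourier transform of a function supported in $[-2T,2T]$, so Bernstein cannot be applied annulus by annulus, and summing the global Bernstein bound over annuli loses the localisation entirely. The fix is to abandon Bernstein for $b>0$ and use the convolution identity $\ft w=\ft\eta\ast\ft w$ together with a splitting of the weight according to whether $\jb{\tau_2-\mu}\ges\jb{\tau-\mu}$ or $\jb{\tau_1}\sim\jb{\tau-\mu}$, which is exactly the paper's proof of \eqref{q'+1}. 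Finally, a minor bookkeeping point: your H\"older--Young scheme for \eqref{timeloc2} yields $T^{\frac{1}{q'}-b-\dl-\eps}$ for every $\eps>0$ rather than the stated $T^{\frac{1}{q'}-b-\dl}$, since the endpoint $a'=(\tfrac1{q'}-\dl-b)^{-1}$ is excluded; this is harmless (relabel $\dl$, as the paper does in \eqref{timeloc4} with its $2\dl$), but should be said.
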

\begin{proof}
The first estimate \eqref{timeloc} follows a similar proof as the case when $q=2$, which can be found in \cite[Lemma 2.11]{TAO}, and \eqref{timeloc2} is just a special case of \eqref{timeloc}. When $u(0)=0$, the additional claim regarding \eqref{timeloc} can be proven by slight modification of \cite[Lemma 3.2]{DNY1}.

We now discuss the estimate \eqref{timeloc4}. This follows from the inequality:
\begin{align}
\|\chi_{T} u\|_{X^{s,b}_{p,q}} & \les T^{\frac{1}{q'}-b-2\dl}\|u\|_{X^{s,\frac{1}{q'}-\dl}_{p,q}}. \label{timeloc3}
\end{align}
This estimate holds without the extra $\dl$ loss in the case $q=2$ leveraging the Plancherel theorem~\cite{DBD}. 
When $q>2$, we can settle for the $\dl$-loss, and \eqref{timeloc3} follows from interpolating between 
\begin{align*}
\| \chi_{T} u\|_{X^{s,0}_{p,q}} \les T^{\frac{1}{q'}-2\dl} \|u\|_{X^{s,\frac{1}{q'}-\dl}_{p,q}}, \quad \text{and} \quad \| \chi_{T} u\|_{X^{s,b}_{p,q}} \les \|u\|_{X^{0,b+\dl}_{p,q}},
\end{align*}
for any $s\in \R$, $1\leq p\leq \infty$, $q> 2$, $b<\tfrac{1}{q'}$ and $0<T\leq 1$. Now, both of these inequalities follow readily by using \eqref{etabd}. 
As for \eqref{timeloc4}, using \eqref{timeloc3} and the definition \eqref{timeloc0}, if $w$ is any extension of $u$ onto $[0,T]$, then
\begin{align*}
\|\chi_{T} w\|_{X^{s,b}_{p,q}} = \|\chi_T w\|_{X^{s,b}_{p,q}} \les T^{\frac{1}{q'}-b-2\dl}\|w\|_{X^{s,\frac{1}{q'}-\dl}_{p,q}}.
\end{align*}
Now we take an infimum over all such extensions $w$ to obtain \eqref{timeloc4}.

We now move onto \eqref{q'+}. It suffices to show that 
\begin{align*}
\| \eta u\|_{X^{s,b}_{p,r}} \les \|u\|_{X^{s,b}_{p,q}}.
\end{align*}
 which in turn follows from the following inequality: for any $1\leq q<r\leq \infty$, and $b\geq 0$  that 
\begin{align}
 \| \jb{\tau -\mu}^{b}  ( \ft{\eta} \ast \ft f)\|_{L^r_{\tau}} \les \|\jb{\tau-\mu}^{b}f\|_{L^q_{\tau}}, \label{q'+1}
\end{align}
with implicit constant independent of $\mu\in \R$. If $b=0$, \eqref{q'+1} follows from Young's inequality and \eqref{etabd}. So suppose that $b>0$ and fix $\mu\in \R$. We write 
\begin{align*}
\| \jb{\tau -\mu}^{b}  ( \ft{\eta} \ast \ft f)\|_{L^r_{\tau}} = \bigg\| \intt_{\tau_1+\tau_2=\tau}  \jb{\tau-\mu}^{b} \ft \eta (\tau_1) \ft f(\tau_2)d\tau_1 \bigg\|_{L^r_{\tau}}.
\end{align*}
If $\jb{\tau_2-\mu} \ges \jb{\tau-\mu}$, then we proceed as in the case $b=0$ using Young's inequality. Otherwise, if $\jb{\tau_2-\mu} \ll \jb{\tau-\mu}$, then $\tau_1= \tau-\tau_2=(\tau-\mu)-(\tau_2-\mu)$ implies that $\jb{\tau_1}\sim \jb{\tau-\mu}$. Therefore,
\begin{align*}
\tfrac{\jb{\tau-\mu}^{b} |\eta(\tau_1)|}{\jb{\tau_2-\mu}^{b}} \les \tfrac{1}{\jb{\tau-\mu}^{M} \jb{\tau_2-\mu}^{M}},
\end{align*}
for any $M>0$, at which point, \eqref{q'+1} follows from H\"{o}lder's inequality.
\end{proof}

\subsection{The local well-posedness result}

The remainder of this section is to show the following local well-posedness result. See also Proposition~\ref{PROP:LWPN} for a similar statement with regards to a truncated version of \eqref{FNLSv}.

\begin{proposition}\label{PROP:LWP}
Let $\al>1$ and $\tfrac{1}{4}<s\leq \tfrac{1}{2}$ be such that $\al+s>\tfrac{3}{2}$ and \begin{align}
 2 \le q <\tfrac{4\al}{3-2s}. \label{LWPqbd}
\end{align}
Given $u_0$ distributed according to $\mu_s$, we define
$$ \Xi(u_0) := \big(u_0, J_{12}, J_{13}, X^{(3)} \big), $$
where $J_{12}, J_{13}$ are defined in \eqref{J12def} and \eqref{J13def}, $X^{(3)}$ is defined in \eqref{X3def}, and we define a norm
\begin{align}
\begin{split}
\|\Xi(u_0)\|_{\mathcal X^{\sigma,q}}:=& \|u_0\|_{H^{s-\frac 12 -}} + \|u_0\|_{\FL^{\frac{\s}{3}, 6}} + \|X^{(3)}\|_{X^{\sigma,\frac 1{q'} +}_q(1)}\\
& \hphantom{\|u_0\|_{H^{s-\frac 12 -}}}+ \|J_{12}\|_{b_J+, q, \sigma,-\sigma} +  \|J_{13}\|_{b_J+,q, \sigma, -\sigma} ,
\end{split}\label{normxidef}
\end{align}
where $b_J$ is defined in \eqref{bilinearlwpcondition} and $\s\in \R$.
Let $v_0\in H^{s}(\T)$. Then, the equation \begin{equation}\label{FNLSv}
\begin{cases}
i\partial_t v+(-\dx^2)^\alpha v+\NN( S(t)(u_0+v_0)+v)+\RR(S(t)(u_0+v_0)+v)=0 ,\\
v|_{t=0}=0,
\end{cases}
\end{equation}
is locally-well posed in $L^2(\T)$. Moreover, if $\tau_0$ is defined by 
\begin{align} \label{tau0def}
\tau_0(u_0,v_0)=\max\{ t\geq 0\,:\, \|v\|_{X^{0,\frac{1}{q'}+}_{q}(T)} \les 1+\|v_0\|_{L^2}\},
\end{align}
then there exists $C_{\eps}>0$ such that 
\begin{align}
\tau_{0}(u_0,v_0)^{-1} \les 1 + \|v_0\|_{L^2}^{\frac{4\al}{2\al-1}+} +\|\Xi(u_0)\|_{\mathcal X^{0,q}}^{C_{\eps}}. \label{tau0bound}
\end{align}
Furthermore, for $0 \le \sigma \le s$, we define 
\begin{align*} 
\tau_\sigma(u_0,v_0)=\max\{ 0 \le t \le \tau_{0}(u_0,v_0)\,:\, \|v\|_{X^{\sigma,\frac{1}{q'}+}_{q}(T)} \les 1+\|v_0\|_{H^\sigma}\},
\end{align*}
and there exists $C_{\eps,\sigma}>0$ such that 
\begin{align}
\tau_{\sigma}(u_0,v_0)^{-1} \les 1+\|v_0\|_{L^2}^{\frac{4\al}{2\al-1}+} +\|\Xi(u_0)\|_{\mathcal X^{0,q}}^{C_{\eps}}+\|\Xi(u_0)\|_{\mathcal X^{\sigma,q}}^{C_{\eps,\sigma}}. \label{tausigmabound}
\end{align}
\end{proposition}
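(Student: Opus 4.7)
The plan is to set up a Picard iteration for the Duhamel formulation
\[
v = \Gamma v := -i\int_0^t S(t-t')\bigl[\NN+\RR\bigr]\bigl(S(t')(u_0+v_0)+v(t')\bigr)\,dt',
\]
in the ball $B_R = \{v \in X^{0,\frac{1}{q'}+}_q(T) : \|v\|_{X^{0,\frac{1}{q'}+}_q(T)} \le R\}$ with $R \sim 1+\|v_0\|_{L^2}$ and $T$ to be chosen small. By the Duhamel estimate \eqref{duhamelest}, the task reduces to bounding $[\NN+\RR](S(t')(u_0+v_0)+v)$ in $X^{0,\frac{1}{q'}+-1}_q(T)$, and the time of existence will be squeezed out of the gain $T^\theta$ afforded by combining \eqref{timeloc} and \eqref{timeloc4} at the edge of the admissible $b$-range.

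To estimate the trilinearity, I would expand $\NN(S(t')u_0+S(t')v_0+v)$ multilinearly (and analogously for $\RR$) and classify the $27$ resulting terms by the number $k \in \{0,1,2,3\}$ of slots occupied by the random contribution $S(t')u_0$. For $k=0$, the term is purely deterministic in $\{S(t')v_0,v\}$ and is handled by the deterministic multilinear estimate of Proposition \ref{PROP:detquad}; this is the step that dictates the hypotheses $\alpha+s>\tfrac32$ and $q<\frac{4\alpha}{3-2s}$, and determines the power of $\|v_0\|_{L^2}$ in the final bound. For $k=2$, the term factors through one of the bilinear stochastic objects $J_{12}$ or $J_{13}$ (according to which two slots are random), and is controlled by Proposition \ref{PROP:bilinearlwp}; these contribute the $J$-norms of $\mathcal X^{0,q}$. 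For $k=3$, the output is exactly the purely stochastic object $X^{(3)}$, controlled directly in its $X^{0,\frac{1}{q'}+}_q$ component.

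The delicate case is $k=1$. Here the random slot cannot be fed into Proposition \ref{PROP:detquad} directly, since $S(t)u_0 \notin L^2(\T)$; instead, I would rerun the trilinear analysis using the almost-sure Fourier-Lebesgue integrability of $u_0$ encoded in the $\|u_0\|_{H^{s-\frac12-}}$ and $\|u_0\|_{\FL^{\sigma/3,6}}$ components of $\mathcal X^{\sigma,q}$, absorbing the loss of half a derivative into those norms. It is precisely here that \eqref{LWPqbd} becomes a real constraint, since the $L^q_\tau$ size of the random linear evolution sets the admissible range of $q$. I expect this $k=1$ case to be the main technical obstacle, since it is the only piece where determinism and randomness must interact through a bilinear quantity in $\{S(t')v_0,v\}$ without the benefit of a precomputed stochastic object. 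The contraction estimate for $\Gamma v_1 - \Gamma v_2$ follows from the same scheme applied to the telescoping trilinear expansion, which vanishes on the diagonal.

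Choosing $T$ so that the total nonlinear contribution is bounded by $R/2$ closes the fixed point. Balancing the dominant contribution $T^\theta(1+R)^3 \lesssim R$ with $R \sim 1+\|v_0\|_{L^2}$ yields \eqref{tau0bound}, where the power $\tfrac{4\alpha}{2\alpha-1}+$ tracks back to the value of $\theta$ available from the temporal localisation at the critical exponent $b=\tfrac{1}{q'}+$. The bound \eqref{tausigmabound} then follows by rerunning the same argument in $X^{\sigma,\frac{1}{q'}+}_q(T)$: the same multilinear estimates allow $\sigma$ derivatives to be distributed across the factors, so the $L^2$-level time of existence remains admissible at $H^\sigma$-level, at the price of an additional multiplicative factor from $\|v_0\|_{H^\sigma}$ and $\|\Xi(u_0)\|_{\mathcal X^{\sigma,q}}$.
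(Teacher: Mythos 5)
Your overall framework—Duhamel/Picard iteration on a ball of radius $\sim 1+\|v_0\|_{L^2}$, multilinear expansion by the number $k$ of random slots, with $k=3,2$ fed to $X^{(3)}, J_{12}, J_{13}$ respectively and $k=0$ to the deterministic quadrilinear estimate—is the right skeleton, and matches the paper. However, several of your attributions are off in ways that would, if pursued, lead you down a wrong road.

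First, your assessment of the $k=1$ case is reversed. You single it out as ``the main technical obstacle'' requiring a separate ``rerun of the trilinear analysis'' mixing randomness and the Fourier--Lebesgue norms. In fact $k=1$ requires \emph{no} new estimate and \emph{no} exploitation of randomness: Proposition \ref{PROP:detquad} is flexible in its indices $\sigma_1,\sigma_2,\sigma_3$ precisely so that the single rough factor $S(t)u_0$ can be fed in at regularity $-\sigma_1 = s-\tfrac12-$ (controlled by the $\|u_0\|_{H^{s-\frac12-}}$ component of $\mathcal X^{\sigma,q}$), with the remaining two factors $S(t)v_0+v$ at regularity $0$ and the dual factor at $0$ (or $\sigma$). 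Condition \eqref{quadlwpcond} then reads $b>\tfrac{3-2s}{4\alpha}$, which is what produces the exponent $T^{1-\frac{3-2s}{4\alpha}-}$ in the contraction. The $\FL^{\sigma/3,6}$ component you invoke for $k=1$ is actually deployed for the \emph{resonant} part $\RR$ when all three factors are random (via \eqref{resrandom}); it plays no role in the nonresonant $k=1$ term.

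Second, the source of the restriction \eqref{LWPqbd} (and hence of $\alpha+s>\tfrac32$) is misplaced: it comes from the $k=3$ random trilinear estimate of Proposition \ref{randomtrilwp} (the integrability requirement on $X^{(3)}$), not from the $k=0$ case as you assert. The $k=0$ (fully deterministic) term only needs $b>\tfrac{1}{2\alpha}<\tfrac12$, i.e.\ $\alpha>1$; that is also where the final power $\tfrac{4\alpha}{2\alpha-1}$ of $\|v_0\|_{L^2}$ in \eqref{tau0bound} comes from, which you did identify correctly.

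Third, for \eqref{tausigmabound} you write that one simply ``reruns the same argument'' in $X^{\sigma,\frac1{q'}+}_q(T)$. That glosses over the main subtlety in the persistence step: a contraction in the $H^\sigma$-level norm would naturally yield a time of existence depending on $\|v_0\|_{H^\sigma}$, whereas the statement requires the time $\tau_\sigma$ to depend on $v_0$ only through $\|v_0\|_{L^2}$. The paper handles this by proving that $\Gamma$ maps $B_\sigma$ into itself (on a time interval governed by $\|v_0\|_{L^2}$ and the $\Xi$-norms), and then appeals to Schauder's fixed-point theorem together with compactness of the embedding $X^{\sigma,b}_q(T)\hookrightarrow X^{\sigma-,b-}_q(T)$ and $L^2$-level uniqueness to identify the fixed point with the solution from the first step. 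Some such device is needed; closing a naive contraction at the $\sigma$-level does not give the claimed bound on $\tau_\sigma$.
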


Let us make a few remarks about this local well-posedness result. 
Heuristically, if $v$ is a solution to \eqref{FNLSv}, then 
\begin{align*}
u = S(t)u_0 +S(t)v_0+v(t)
\end{align*}
is a solution to \eqref{alphaNLS} with initial data given by the sum $u_0+v_0$, where $u_0 \sim \mu_s$ and $v_0$ belongs to the Cameron-Martin space of $\mu_s$; namely, $H^{s}(\T)$. The presence of $v_0$ here and the tracking of the dependence of the local existence times \eqref{tau0bound} and \eqref{tausigmabound} is a crucial part of our globalisation argument; indeed, see Lemma~\ref{lem:tauintegrability} and Proposition~\ref{LEM:densityontau}. Moreover, for $\al$ close to $1$, we need to exploit the fact that solutions to \eqref{FNLSv} belong to  $H^{\s}(\T)$ in space for some $\s > 0$, and not just $L^2(\T)$. The maximum possible upper bound on this additional regularity is dictated by the perturbation $v_0$. 

It is also crucial that we propagate this additional regularity of $v$ by a persistence of regularity argument, so that the smaller time of existence $\tau_{\s}(u_0,v_0)$ only depends on the $L^2$-norm of $v_0$ and not on its $H^{\s}$-norm. 

Finally, in order to obtain the numerology in Theorem~\ref{thm:qiandgwp}, it is essential that we aim to prove nonlinear estimates with the best, or optimal, exponents of the local-existence time $T$ that we can. See for instance \eqref{TconditionlistNN}. These exponents then trickle through to the exponent on the norm $\|v_0\|_{L^2}$ in \eqref{tau0bound} and \eqref{tausigmabound}.

\subsection{Nonlinear estimates}

In order to prove Proposition~\ref{PROP:LWP}, we need estimates on the nonlinear terms in \eqref{FNLSv}, which were defined in \eqref{nonres} and \eqref{res}.
As we wish to obtain results below the known deterministic well-posedness threshold, we first need to establish a probabilistic local well-posedness result for \eqref{alphaNLS}.
Namely, we seek to exploit probabilistic cancellations. Writing $w(t)=S(t)v_0 +v(t)$, the multi-linearity of $\mathcal{N}$ implies that $\mathcal{N}(S(t)u_0+w(t))$ splits into four kinds of pieces: (i) trilinear in $S(t)u_0$, (ii) bilinear in $S(t)u_0$ and linear in $w(t)$, (iii) linear in $S(t)u_0$ and bilinear in $w(t)$, and (iv) trilinear in $w(t)$. We establish estimates for the contributions (i) and (ii) probabilistically in Lemma~\ref{randomtrilwp} and Proposition~\ref{PROP:bilinearlwp}. Proposition~\ref{PROP:detquad} states deterministic estimates which are suitable to handle (iii) and (iv).

We begin with the simpler estimates for the resonant term $\Res$. We also need to exploit multi-linearity of $\Res$ to place $u_0$ in an appropriate Fourier-Lebesgue space.

\begin{proposition}
For any $0 \le \sigma \le s$, we have the following estimates. 
\begin{gather}
\int_\R \Res(u_1(t),u_2(t),u_3(t),u_4(t)) dt \les \| u_1 \|_{X^{0,\frac1{q'}+}_{\infty, q}}\| u_2 \|_{X^{0,\frac1{q'}+}_{\infty, q}} \|u_3\|_{X^{\sigma,0}_q} \|u_4\|_{X^{-\sigma,0}_{q'}} \label{res12},\\
\int_\R \Res(u_1(t),u_2(t),u_3(t),u_4(t)) dt \les \| u_1 \|_{X^{0,\frac1{q'}+}_{\infty, q}}\|u_2\|_{X^{\sigma,\frac1{q'}+}_q} \| u_3 \|_{X^{0,0}_{\infty, q}}  \|u_4\|_{X^{-\sigma,0}_{q'}} \label{res13},\\
\int_\R \Res(u_1(t),u_2(t),u_3(t),u_4(t)) dt \les \| u_1 \|_{X^{\frac \sigma 3,\frac1{q'}+}_{6,q}}\| u_2 \|_{X^{\frac \sigma 3,\frac1{q'}+}_{6,q}} \|u_3\|_{X^{\frac \sigma 3,0}_{6,q}} \|u_4\|_{X^{-\sigma,0}_{q'}} \label{resrandom}.
\end{gather}
\end{proposition}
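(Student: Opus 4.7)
The plan is to reduce each of the three estimates to a convolution estimate on the modulation hyperplane $\lambda_1 - \lambda_2 + \lambda_3 - \lambda_4 = 0$, and then to finish with H\"older's inequality in the common spatial frequency $n$. Specifically, I would first pass to the space-time Fourier side and use Plancherel in time to write
\begin{align*}
\Big|\int_\R \Res(u_1, u_2, u_3, u_4)(t)\, dt\Big| \les \sum_{n \in \Z} \int_{\lambda_1 - \lambda_2 + \lambda_3 - \lambda_4 = 0} \prod_{j=1}^{4} |G_j(\lambda_j, n)|\, d\lambda_1 d\lambda_2 d\lambda_3,
\end{align*}
where $G_j(\lambda, n) = \ft{u_j}(\lambda + |n|^{2\al}, n)$. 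The crucial point here is that since all four factors in $\Res$ sit at a common spatial frequency $n$, the modulation variables $\lambda_j := \tau_j - |n|^{2\al}$ inherit the same linear relation as the $\tau_j$, so the integration takes place on a $3$-dimensional hyperplane.

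For fixed $n$, I would view the inner integral as the convolution pairing $\langle G_4, G_1 \ast \wt G_2 \ast G_3\rangle_{L^2_\lambda}$ with $\wt G_2(\lambda) = G_2(-\lambda)$, and bound it by Young's convolution inequality combined with H\"older as
\begin{align*}
\int |G_1 G_2 G_3 G_4|\, d\lambda \les \|G_1(\cdot,n)\|_{L^1_\lambda}\|G_2(\cdot,n)\|_{L^1_\lambda}\|G_3(\cdot,n)\|_{L^q_\lambda}\|G_4(\cdot,n)\|_{L^{q'}_\lambda}.
\end{align*}
The two $L^1_\lambda$ factors are provided by the modulation regularity $b = \tfrac{1}{q'}+$ on $u_1$ and $u_2$: H\"older with the weight $\jb{\lambda}^{-1/q'-\eps} \in L^{q'}_\lambda$ yields $\|G_j(\cdot,n)\|_{L^1_\lambda} \les \|\jb{\lambda}^{1/q'+} G_j(\cdot, n)\|_{L^q_\lambda}$, and this is the only place the $b>\tfrac{1}{q'}$ hypothesis enters.

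To finish, I would sum in $n$ using H\"older. The key algebraic observation is that in each of the three estimates the spatial regularity indices $(s_1, s_2, s_3, s_4)$ on the right-hand side sum to zero, namely $(0, 0, \sigma, -\sigma)$, $(0, \sigma, 0, -\sigma)$, and $(\sigma/3, \sigma/3, \sigma/3, -\sigma)$ respectively, so inserting $1 = \prod_j \jb{n}^{s_j}$ into the $n$-summand and pairing each weight with its corresponding factor reduces everything to H\"older in $n$ with exponents $(\infty, \infty, 2, 2)$, $(\infty, 2, \infty, 2)$, and $(6, 6, 6, 2)$ respectively. Each of these tuples satisfies $\sum 1/p_j = 1$ and matches precisely the $\ell^p_n$ indices of the norms appearing on the right-hand side. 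The only nontrivial analytic input is the Young/H\"older estimate on the modulation hyperplane; once that is in place, matching the numerology across the three cases is essentially bookkeeping, and I do not anticipate any sharp step.
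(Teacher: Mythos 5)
Your proposal is correct and follows essentially the same route as the paper: reduce to the convolution integral over the hyperplane $\tau_1-\tau_2+\tau_3-\tau_4=0$ at fixed $n$, absorb the $\jb{\tau-|n|^{2\al}}^{\frac1{q'}+}$ weights on $u_1,u_2$ via H\"older/Young to land in $L^q_\tau\times L^q_\tau\times L^q_\tau\times L^{q'}_\tau$, and then conclude with H\"older in $n$ with exponents $(\infty,\infty,2,2)$, $(\infty,2,\infty,2)$, $(6,6,6,2)$. The only difference is cosmetic: you spell out the Young-plus-H\"older proof of the hyperplane inequality that the paper merely states.
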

\begin{proof}
We have that 
\begin{align*}
&\int_\R \Res(u_1(t),u_2(t),u_3(t),u_4(t)) dt \\
=& \sum_{n \in \Z} \int_{\tau_1-\tau_2+\tau_3-\tau_4=0} \ft{u_1}(n,\tau_1) \cj{\ft{u_2}(n,\tau_2)}\ft{u_3}(n,\tau_3)\cj{\ft{u_4}(n,\tau_4) }d \tau_1d \tau_2d \tau_3d \tau_4.
\end{align*}
From the inequality 
$$\int_{t_1-t_2+t_3-t_4=a} \frac{f_1(t_1)}{\jb{t_1}^{\frac1{q'}+}}\frac{f_2(t_2)}{\jb{t_2}^{\frac1{q'}+}}{f_3(t_3)}{f_4(t_4)} d t_1 d t_2 d t_3 d t_4 \les \prod_{j=1}^3 \| f_j \|_{L^q}\|f_4\|_{L^{q'}},$$
where the implicit constant is independent of $a\in \R$, we obtain that 
\begin{align*}
&\int_\R \Res(u_1(t),u_2(t),u_3(t),u_4(t)) dt \\
\les& \sum_{n \in \Z} \| \jb{\tau-|n|^{2\al}}^{\frac1{q'}+}\, \ft{u_1}(n,\tau)\|_{L^q_\tau}\|\jb{\tau-|n|^{2\al}}^{\frac1{q'}+}\, \ft{u_2}(n,\tau)\|_{L^q_\tau}\| \ft{u_3}(n,\tau)\|_{L^q_\tau}\|\ft{u_4}(n,\tau)\|_{L^{q'}_\tau}.
\end{align*}
Therefore, \eqref{res12}, \eqref{res13}, \eqref{resrandom} follow respectively by the inequalities 
\begin{align*}
\sum_n f_1(n)f_2(n){f_3(n)}f_4(n) &\les \|f_1\|_{l_{n}^\infty}\|f_2\|_{l_{n}^\infty}\| \jb{n}^{\s} f_3\|_{l_{n}^2}\| \jb{n}^{-\s}f_4\|_{l_{n}^2}, \\
\sum_n f_1(n)f_2(n){f_3(n)}f_4(n)  &\les \|f_1\|_{l_{n}^\infty}\|\jb{n}^{\s}f_2\|_{l_{n}^2}\|f_3\|_{l_{n}^\infty}\|\jb{n}^{-\s}f_4\|_{l_{n}^2}, \\
\sum_n f_1(n)f_2(n){f_3(n)}f_4(n)  &\les \prod_{j=1}^{3} \| \jb{n}^{\frac{\s}{3}}f_j\|_{l_{n}^6} \|\jb{n}^{-\s}f_4\|_{l_{n}^2}, 
\end{align*}
which can be all proven by using H\"older.
\end{proof}

We now control the non-resonant part $\mathcal{N}$ of the nonlinearity and begin by proving an estimate on the contribution which is trilinear in $S(t)u_0$.

\begin{proposition}[Random trilinear estimate] \label{randomtrilwp}
Let 
\begin{equation}
 X^{(3)}(t) = \int_0^t S(t-t') \NN(S(t') u_0) dt'. \label{X3def}
\end{equation}
Then, for every $\alpha > 1$, $\frac 14 < s \leq  \frac 12$, $0 \le \sigma \le s$, and 
\begin{align}
2\leq q< \tfrac{4\al}{3+2\s-6s} \wedge \tfrac{2(2\al-1)}{1+2\s-2s}, \label{qbdX3}
\end{align} we have that 
\begin{equation}
\E \Big[ \|X^{(3)} \|_{X^{\sigma, \frac{1}{q'}+ }_q(1)}^p  \Big] < \infty,\label{X3integrability}
\end{equation} for every $1\leq p<\infty$.
Moreover, if 
\begin{equation}
 X_N^{(3)}(t) := \int_0^t S(t-t') \NN_N(S(t') u_0) dt', \label{X3Ndef}
\end{equation}
then under the same conditions on $\alpha, s, \sigma$ and $q$, there exists $\delta = \delta(\alpha, s, \sigma,q) > 0$ such that for every $1\leq p < \infty$,
\begin{equation}
 \E \bigg[  \|X_N^{(3)} - X^{(3)} \|_{X^{\sigma,\frac 1{q'}+ }_q(1)}^p \bigg] \les N^{-\delta p}. \label{X3convergence}
\end{equation}
\end{proposition}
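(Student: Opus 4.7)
The plan is to carry out the standard random multilinear analysis: explicitly compute the space-time Fourier transform of $X^{(3)}$, reduce to an $L^{2}(\Omega)$ estimate via the Wiener chaos (Lemma~\ref{LEM:WCE}), and then perform a deterministic counting argument using the dispersive gain provided by the phase-function lower bound \eqref{Phase}. After multiplying by a smooth temporal cutoff $\eta(t)$ and using the identity $\int_{0}^{t} e^{-it'\Phi_{\alpha}(\bar n)} dt' = (1-e^{-it\Phi_{\alpha}(\bar n)})/(i\Phi_{\alpha}(\bar n))$, the space-time Fourier transform becomes
\begin{align*}
\widehat{\eta X^{(3)}}(n_{4},\tau) = \sum_{\substack{n_{1}-n_{2}+n_{3}=n_{4}\\ n_{1}\neq n_{2}, n_{4}}} \frac{g_{n_{1}}\overline{g_{n_{2}}}g_{n_{3}}}{\jb{n_{1}}^{s}\jb{n_{2}}^{s}\jb{n_{3}}^{s}}\cdot \frac{\widehat\eta(\tau-|n_{4}|^{2\alpha}) - \widehat\eta(\tau-|n_{4}|^{2\alpha}-\Phi_{\alpha}(\bar n))}{i\Phi_{\alpha}(\bar n)},
\end{align*}
which is well-defined since $|\Phi_{\alpha}(\bar n)|\gtrsim 1$ on the admissible set by \eqref{Phase}.

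For $p\ge \max(q,2)$, two applications of Minkowski's inequality (switching $L^{p}_{\omega}$ first with $\ell^{2}_{n_{4}}$ and then with $L^{q}_{\tau}$) yield
\begin{align*}
\|\eta X^{(3)}\|_{L^{p}_{\omega} X^{\sigma,b}_{q}}^{2} \le \sum_{n_{4}} \jb{n_{4}}^{2\sigma} \Big\|\jb{\tau-|n_{4}|^{2\alpha}}^{b}\big\|\widehat{\eta X^{(3)}}(n_{4},\tau)\big\|_{L^{p}_{\omega}}\Big\|_{L^{q}_{\tau}}^{2}.
\end{align*}
Since $\widehat{\eta X^{(3)}}(n_{4},\tau)$ is a homogeneous Wiener chaos of degree at most $3$ in the $\{g_{n}\}$, Lemma~\ref{LEM:WCE} reduces $L^{p}_{\omega}$ control to $L^{2}_{\omega}$ control at the cost of a $C p^{3/2}$ factor, and by independence/orthogonality the variance equals
\begin{align*}
\sum_{\substack{n_{1}-n_{2}+n_{3}=n_{4}\\ n_{1}\neq n_{2}, n_{4}}} \frac{\big|\widehat\eta(\tau-|n_{4}|^{2\alpha})-\widehat\eta(\tau-|n_{4}|^{2\alpha}-\Phi_{\alpha}(\bar n))\big|^{2}}{\jb{n_{1}}^{2s}\jb{n_{2}}^{2s}\jb{n_{3}}^{2s}|\Phi_{\alpha}(\bar n)|^{2}}.
\end{align*}

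The remaining deterministic analysis then splits into two cases. In the \emph{non-resonant} case governed by $\widehat\eta(\tau-|n_{4}|^{2\alpha})$, the rapid decay of $\widehat\eta$ absorbs the weight $\jb{\tau-|n_{4}|^{2\alpha}}^{b}$ and after $\tau$-integration one reduces to summing $\jb{n_{4}}^{2\sigma}\prod_{j=1}^{3}\jb{n_{j}}^{-2s}|\Phi_{\alpha}(\bar n)|^{-2}$ on the hyperplane $n_{1}-n_{2}+n_{3}=n_{4}$. In the \emph{resonant} case, the decay of $\widehat\eta$ forces $|\Phi_{\alpha}(\bar n)|\sim \jb{\tau-|n_{4}|^{2\alpha}}$, so the modulation weight transfers onto the phase denominator, producing an analogous sum with $|\Phi_{\alpha}(\bar n)|^{-2+2/q+}$ in place of $|\Phi_{\alpha}(\bar n)|^{-2}$. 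Both sums are estimated via a dyadic decomposition in $N_{j}\sim \jb{n_{j}}$, using the lower bound $|\Phi_{\alpha}(\bar n)|\gtrsim |n_{4}-n_{1}||n_{4}-n_{3}|n_{\max}^{2\alpha-2}$ together with a counting argument on the hyperplane; the two inequalities in \eqref{qbdX3} are precisely the conditions ensuring convergence in each of the two cases respectively.

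Finally, for the difference estimate \eqref{X3convergence}, observe that $X^{(3)}-X_{N}^{(3)}$ admits the same Wiener chaos expansion but restricted to the region where $n_{\max}>N$. Since the conditions in \eqref{qbdX3} are open, I would strengthen them by an arbitrarily small amount and extract a factor $n_{\max}^{-\delta}\le N^{-\delta}$ from the summand, yielding the required $N^{-\delta p}$ decay. The principal obstacle lies in the dyadic counting of the third paragraph: the phase gain from \eqref{Phase} is only $|n_{4}-n_{1}||n_{4}-n_{3}|n_{\max}^{2\alpha-2}$ rather than the naive $n_{\max}^{2\alpha}$, and this limited smoothing must simultaneously offset the outer weight $\jb{n_{4}}^{2\sigma}$ and the modulation loss coming from the unusual $L^{q}_{\tau}$ outer norm. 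A careful case analysis according to which of the $N_{j}$'s is largest (and whether $n_{4}$ is close to $n_{1}$ or to $n_{3}$) is required to achieve the optimal numerology encoded in \eqref{qbdX3}.
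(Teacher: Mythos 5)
Your high-level route matches the paper's: compute $\widehat{\eta X^{(3)}}$ explicitly, reduce $L^p_\omega$ moments to the $L^2_\omega$ variance via Minkowski and the Wiener chaos estimate, then bound the resulting deterministic sum via dyadic decomposition and the phase lower bound \eqref{Phase}. The deterministic part of your argument, however, contains a genuine error, and the explanation for where the two constraints in \eqref{qbdX3} come from is incorrect.

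The exponent of $|\Phi_\alpha(\bar n)|$ you produce from the $\tau$-integration is wrong. With $b = \tfrac{1}{q'}+$, the resonant piece $\widehat\eta(\lambda-\Phi_\alpha)$ gives
\begin{equation*}
\Big(\int_\R \tfrac{|\widehat\eta(\lambda-\Phi_\alpha)|^q}{|\Phi_\alpha|^q}\jb{\lambda}^{qb}\,d\lambda\Big)^{2/q} \les \jb{\Phi_\alpha}^{2b-2} = \jb{\Phi_\alpha}^{-\frac{2}{q}+},
\end{equation*}
that is, $|\Phi_\alpha|^{-2+\frac{2}{q'}+}$. You wrote $|\Phi_\alpha|^{-2+\frac{2}{q}+}$, which equals $|\Phi_\alpha|^{-\frac{2}{q'}+}$ and corresponds to taking $b = \tfrac{1}{q}+$ rather than $b = \tfrac{1}{q'}+$. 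For $q>2$ (exactly the regime the $X^{s,b}_q$-spaces are introduced to exploit), your exponent overstates the modulation gain, since $\tfrac{2}{q'} > \tfrac{2}{q}$, and would lead to a constraint on $q$ strictly weaker than \eqref{qbdX3}.

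Relatedly, the two inequalities in \eqref{qbdX3} do not come from the resonant/non-resonant split in $\widehat\eta$. Since $-2(1-b) > -2$, the resonant term $\jb{\Phi_\alpha}^{-2(1-b)}$ dominates, and the non-resonant $\jb{\Phi_\alpha}^{-2}$ imposes no new constraint once $\alpha>1$, $s>\tfrac14$. Both restrictions arise from the dyadic case analysis inside the resonant contribution, namely from the possible configurations of which $N_j$ is maximal in the sum $\sum (N_1N_2N_3)^{1-2s}N_{\max}^{-4(\alpha-1)(1-b)+2\sigma}(N_1\vee N_2)^{-2(1-b)}(N_2\vee N_3)^{-2(1-b)}$: the condition $q<\tfrac{4\alpha}{3+2\sigma-6s}$ comes from the case where $N_{\max}$ appears in both weight factors, and $q<\tfrac{2(2\alpha-1)}{1+2\sigma-2s}$ from the case where it appears in only one. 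You correctly flag this counting as the crux, but leave it undone, so as written the proposal does not actually recover \eqref{qbdX3}. Your sketch for the difference estimate \eqref{X3convergence} (restrict to $n_{\max}\gtrsim N$ and extract a small power from strict inequalities) is fine and matches the paper's argument.
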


\begin{proof}
In order to prove \eqref{X3integrability}, it suffices to show
$$  \E \Big[ \|\eta X^{(3)}\|_{X^{\sigma,b}_q}^p \Big] < \infty,$$
where $b=\tfrac{1}{q'}+$ and $p \ge q$.
By definition, we have 
\begin{equation}
 X^{(3)}(t) = \sum_{n \in \Z} \sum_{\substack{n_1-n_2+n_3 = n, \\ n_2 \neq n_1,n_3}} \frac{g_{n_1}}{\jb{n_1}^s}\frac{\overline{g_{n_2}}}{\jb{n_2}^s}\frac{g_{n_3}}{\jb{n_3}^s} \frac{e^{it\Phi_\alpha{(\cj n)}}-1}{i\Phi_\alpha{(\cj n)}} e^{it|n|^{2\alpha} + i n x}, \label{X3}
\end{equation}
from which we obtain that
\begin{equation*}
\widehat{\eta X^{(3)}}(\tau,n) =  \sum_{\substack{n_1-n_2+n_3 = n, \\ n_2 \neq n_1,n_3}} \frac{g_{n_1}}{\jb{n_1}^s}\frac{\overline{g_{n_2}}}{\jb{n_2}^s}\frac{g_{n_3}}{\jb{n_3}^s} \frac{\ft \eta(\tau -\Phi_\alpha{(\cj n)} - |n|^{2\alpha}) - \ft\eta(\tau - |n|^{2\alpha})}{i\Phi_\alpha{(\cj n)}}. 
\end{equation*}
Notice that $\widehat{\eta X^{(3)}}(\tau,n)$ is (formally) a polynomial of degree $3$ in $\{g_n\}$. Therefore, by Lemma~\ref{LEM:WCE},
\begin{align*}
\E \Big[  \big|\widehat{\eta X^{(3)}}(\tau,n)\big|^{p} \Big] \les_p \Big( \E  \Big[ \big|\widehat{\eta X^{(3)}}(n,\tau)\big|^{2} \Big]\Big)^\frac p2.
\end{align*}
Moreover, by independence of the $\{g_n\}$, together with the condition $n_2 \neq n_1,n_3$, we have that 
\begin{equation*}
\E\Big[ |\widehat{\eta X^{(3)}}(\tau,n)|^{2} \Big] = \sum_{\substack{n_1-n_2+n_3 = n, \\ n_2 \neq n_1,n_3}} \frac{|\ft \eta(\tau -\Phi_\alpha{(\cj n)} - |n|^{2\alpha}) - \ft\eta(\tau - |n|^{2\alpha})|^2}{\jb{n_1}^{2s}\jb{n_2}^{2s}\jb{n_3}^{2s} |\Phi_\alpha{(\cj n)}|^2}.
\end{equation*}
Therefore, by definition of the $X^{s,b}_q$-norm and Minkowski's integral inequality, for $p \ge q\ge2$ we have that 
\begin{align*}
 &\E \Big[ \|\eta X^{(3)}\|_{X^{\sigma,b}_q}^p  \Big]^\frac2p  \\
 &= \E \bigg[  \Big| \sum_{n \in \Z} \jb{n}^{2\sigma} \Big|\int_\R \Big|\widehat{\eta X^{(3)}}(\tau,n)\Big|^{q}\jb{\tau-|n|^{2\alpha}}^{qb}d\tau\Big|^{\frac 2q}\Big|^\frac p2  \bigg] ^\frac2p\\
 &\le  \sum_{n \in \Z}\jb{n}^{2\sigma}\E \bigg[\Big|\int_\R \Big|\widehat{\eta X^{(3)}}(\tau,n)\Big|^{q}\jb{\tau-|n|^{2\alpha}}^{qb}d\tau\Big|^{\frac {p}q} \Big| \bigg]^\frac2p \\
 &\le  \sum_{n \in \Z}\jb{n}^{2\sigma}\Big|\int_\R  \E \Big[  \Big|\widehat{\eta X^{(3)}}(\tau,n)\Big|^{p}\Big]^\frac qp   \jb{\tau-|n|^{2\alpha}}^{qb}d\tau\Big|^\frac2q\\
 &\les_p \sum_{n \in \Z}\jb{n}^{2\sigma}\Big|\int_\R \E\Big[ \Big|\widehat{\eta X^{(3)}}(\tau,n)\Big|^{2}\Big]^{\frac q2}\jb{\tau-|n|^{2\alpha}}^{qb} d\tau \Big|^\frac2q\\
 &= \sum_{n \in \Z}\jb{n}^{2\sigma}\bigg(\int_\R \bigg[ \sum_{\substack{n_1-n_2+n_3 = n, \\ n_2 \neq n_1,n_3}} \frac{|\ft \eta(\tau -\Phi_\alpha{(\cj n)} - |n|^{2\alpha}) - \ft\eta(\tau - |n|^{2\alpha})|^2}{\jb{n_1}^{2s}\jb{n_2}^{2s}\jb{n_3}^{2s} |\Phi_\alpha{(\cj n)}|^2}\bigg]^{\frac q2}\jb{\tau-|n|^{2\alpha}}^{qb} d\tau \bigg)^\frac2q\\
 &\les \sum_{n \in \Z}\jb{n}^{2\sigma}\sum_{\substack{n_1-n_2+n_3 = n, \\ n_2 \neq n_1,n_3}} \Big|\int_\R \Big|  \frac{|\ft \eta(\tau -\Phi_\alpha{(\cj n)} - |n|^{2\alpha}) - \ft\eta(\tau - |n|^{2\alpha})|^2}{\jb{n_1}^{2s}\jb{n_2}^{2s}\jb{n_3}^{2s} |\Phi_\alpha{(\cj n)}|^2}\Big|^{\frac q2}\jb{\tau-|n|^{2\alpha}}^{qb} d\tau \Big|^\frac2q\\
 &\les \sum_{\substack{n_1-n_2+n_3 - n_4 = 0, \\ n_2 \neq n_1,n_3}} \frac{\jb{n_4}^{2\sigma}}{\jb{\Phi_\alpha{(\cj n)}}^{2(1-b)}\jb{n_1}^{2s}\jb{n_2}^{2s}\jb{n_3}^{2s}}\\
 &\les \sum_{\substack{n_1-n_2+n_3 - n_4 = 0, \\ n_2 \neq n_1,n_3}} \frac{\jb{n_4}^{2\sigma}}{\jb{n_1-n_2}^{2(1-b)}\jb{n_2-n_3}^{2(1-b)}\jb{n_{\max}}^{4(\alpha-1)(1-b)}\jb{n_1}^{2s}\jb{n_2}^{2s}\jb{n_3}^{2s}} \\
 &\les \sum_{\substack{N_1,N_2,N_3 \\ \text{dyadic}}} (N_1N_2N_3)^{-2s}N_{\max}^{-4(\al-1)(1-b)+2\sigma}\sum_{\substack{n_1-n_2+n_3 - n_4 = 0, \\ n_2 \neq n_1,n_3, \\ \jb{n_j} \sim N_j}} \frac{1}{\jb{n_1-n_2}^{2(1-b)}\jb{n_2-n_3}^{2(1-b)}} \\
 &\les \sum_{\substack{N_1,N_2,N_3 \\ \text{dyadic}}} (N_1N_2N_3)^{-2s}N_{\max}^{-4(\al-1)(1-b)+2\sigma}N_1N_2N_3 (N_1 \vee N_2)^{-2(1-b)} (N_2 \vee N_3)^{-2(1-b)},
\end{align*}
which is finite
as long as 
\begin{gather*}
4(\al-1)(1-b)+2(1-b)+2s-1-2\s >0, \quad \text{and} \quad  4\al(1-b)+ 6s -3- 2\s > 0.
\end{gather*}
These conditions yield \eqref{qbdX3}.

In order to show \eqref{X3convergence}, we simply note that $X^{(3)}_{N}(t)$ obeys the same formula as $X^{(3)}(t)$ in \eqref{X3} but with the extra restrictions that $|n_j| \le N$ for $1 \le j \le 4$.
 Therefore, proceeding in the same way, we obtain that 
\begin{align*}
&\E\Big[ \|X^{(3)} - X_N^{(3)} \|_{X^{\sigma,b }_{q}(1)}^p\Big]^{\frac 2p} \\
&\les \sum_{\substack{N_1,N_2,N_3, \\ N_{\max} \gtrsim N}} (N_1N_2N_3)^{-2s}N_{\max}^{-4(\al-1)(1-b)+2\sigma}N_1N_2N_3 (N_1 \vee N_2)^{-2(1-b)} (N_2 \vee N_3)^{-2(1-b)} \\
&\les N^{-2\delta}
\end{align*}
for $\delta \ll 1$ such that 
\begin{gather*}
4(\al-1)(1-b)+2(1-b)+2s-1-2\s >2\dl, \quad \text{and} \quad  4\al(1-b)+ 6s -3- 2\s>2\dl.\qedhere
\end{gather*} 
\end{proof}

The following estimate controls the contributions in the nonresonant part of the nonlinearity which are bilinear in $S(t)u_0$ and linear in $S(t)v_0+v$.

\begin{proposition}[Random bilinear estimate]\label{PROP:bilinearlwp}
Define the random bilinear forms $J_{12}(t)$ and $J_{13}(t)$ by 
\begin{gather}
J_{12}(t)[v_1,v_2] := \NN(S(t)u_0,S(t)u_0,v_1,v_2),\label{J12def} \\
J_{13}(t)[v_1,v_2] := \NN(S(t)u_0,v_1,S(t)u_0,v_2),\label{J13def} 
\end{gather}
and let $\|J\|_{b,q,\sigma,-\sigma}$ be the best constant $C$ such that 
\begin{equation*}
\int_\R J(t)[v_1(t),v_2(t)] d t \le C\big( \|v_1\|_{X^{\sigma,0}_{q}} \|v_2\|_{X^{-\sigma,b}_{q'+}} + \|v_1\|_{X^{\sigma,b}_q} \|v_2\|_{X^{-\sigma,0}_{q'+}}  \big),
\end{equation*}
for test functions $v_1,v_2$. Then for every $1\leq p< \infty$, $0 \le \sigma < s$, $1< q < \infty$,
\begin{equation}\label{Jintegrability}
\E \big[ \|J_{12}\|_{b,q,\sigma,-\sigma}^p +\|J_{13}\|_{b,q,\sigma,-\sigma}^p \big] < \infty,
\end{equation}
as long as 
\begin{equation}\label{bilinearlwpcondition}
b_J:=\tfrac{1}{2\al} < b < \tfrac 12.
\end{equation}
Moreover, defining
\begin{gather}
J_{12,N}(t)[v_1,v_2] := \NN_N(  S(t)u_0, S(t)u_0,v_1,v_2),\label{J12Ndef} \\
J_{13,N}(t)[v_1,v_2] := \NN_N( S(t)u_0,v_1,S(t)u_0,v_2),\label{J13Ndef} 
\end{gather}
then, under the same hypotheses on $\sigma, b, q$, there exists $\delta = \delta(\sigma, b, q) > 0$ so that for every $p < \infty$, 
\begin{equation}\label{Jconvergence}
\E\big[ \|J_{12}-J_{12,N}\|_{b,q,\sigma,-\sigma}^p+\|J_{13}-J_{13,N}\|_{b,q,\sigma,-\sigma}^p \big] \les N^{-p\delta}.
\end{equation}
\end{proposition}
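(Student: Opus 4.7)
The proof I have in mind follows the standard duality / Wiener chaos / Hilbert--Schmidt scheme for random multilinear estimates. I would begin by rewriting the target bilinear forms in space-time Fourier: substituting $\ft{S(t)u_0}(\tau,n) = \jb{n}^{-s}g_n\,\delta(\tau-|n|^{2\al})$ into \eqref{nonres} and integrating in $t$ yields
\begin{equation*}
\int_\R J_{12}(t)[v_1(t),v_2(t)]\,dt = \sum_{\substack{n_1-n_2+n_3-n_4=0\\ n_1\neq n_2,\,n_4}} \frac{g_{n_1}\overline{g_{n_2}}}{\jb{n_1}^s\jb{n_2}^s} \int_{\tau_3-\tau_4 = -\Phi_{\al}(\cj{n})} \ft{v_1}(\tau_3,n_3)\,\overline{\ft{v_2}(\tau_4,n_4)}\,d\tau_3\,d\tau_4 ,
\end{equation*}
with an analogous formula for $J_{13}$. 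On the support of the $\delta$-constraint one always has $\max(\jb{\tau_3-|n_3|^{2\al}},\jb{\tau_4-|n_4|^{2\al}})\gtrsim \jb{\Phi_{\al}(\cj n)}$, so splitting according to which modulation dominates and applying H\"older in $\tau$ with exponents $(q, q'+)$ would extract a free factor $\jb{\Phi_{\al}(\cj n)}^{-b}$ together with precisely the two terms on the right-hand side of the target inequality; the $\varepsilon$-loss $q'+$ is absorbed while summing over dyadic modulation scales.

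Next, I would apply Cauchy--Schwarz in $(n_3,n_4)$ with weights $\jb{n_3}^{\sigma}$, $\jb{n_4}^{-\sigma}$, reducing $\|J_{12}\|_{b,q,\sigma,-\sigma}$ and $\|J_{13}\|_{b,q,\sigma,-\sigma}$ to the $\ell^2_{n_3,n_4}$-norm (with the above weights) of a random kernel of the shape
\begin{equation*}
K(n_3,n_4) := \sum_{\substack{n_1-n_2 = n_4-n_3\\ n_1\neq n_2,\,n_4}} \frac{g_{n_1}\overline{g_{n_2}}}{\jb{n_1}^s\jb{n_2}^s\,\jb{\Phi_{\al}(\cj n)}^b} .
\end{equation*}
Since $K(n_3,n_4)$ is a polynomial of degree $2$ in the Gaussians, Minkowski's inequality $\|K\|_{L^p(\Omega;\ell^2_{n_3,n_4})}\le \|K\|_{\ell^2_{n_3,n_4}(L^p(\Omega))}$ (valid for $p\ge 2$) combined with Lemma~\ref{LEM:WCE} would reduce the $L^p(\Omega)$ bound to the $L^2(\Omega)$ norm of each $K(n_3,n_4)$. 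By independence of $\{g_n\}$ and the constraint $n_1\neq n_2$, this variance equals the diagonal sum $\sum \jb{n_1}^{-2s}\jb{n_2}^{-2s}\jb{\Phi_{\al}(\cj n)}^{-2b}$, and the whole estimate reduces to showing the finiteness of the deterministic sum
\begin{equation*}
S := \sum_{\substack{n_1-n_2+n_3-n_4=0\\ n_1\neq n_2,\,n_4}} \frac{\jb{n_3}^{-2\sigma}\jb{n_4}^{2\sigma}}{\jb{n_1}^{2s}\jb{n_2}^{2s}\,\jb{\Phi_{\al}(\cj n)}^{2b}}.
\end{equation*}

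I would then invoke the phase lower bound \eqref{Phase}, $|\Phi_{\al}(\cj n)|\gtrsim |n_1-n_2||n_1-n_4|n_{\max}^{2\al-2}$, and dyadically decompose $|n_j|\sim N_j$ with three free scales once the resonance $n_1-n_2+n_3-n_4=0$ is imposed. Convergence of $S$ then reduces to a scale-by-scale power-counting inequality whose critical threshold is $b=\tfrac{1}{2\al}$, producing \eqref{bilinearlwpcondition}, while the strict inequality $\sigma<s$ is needed to absorb $\jb{n_4}^{2\sigma}$ against $\jb{n_1}^{-2s}$ in the regime where $n_4$ and $n_1$ are comparable. The convergence statement \eqref{Jconvergence} follows from the same analysis applied to $J_{12}-J_{12,N}$ (resp.\ $J_{13}-J_{13,N}$), which restricts the sum to $\{N_{\max}\gtrsim N\}$; the strict margin of convergence of $S$ in the stated parameter range then turns this restriction into the desired factor $N^{-\delta p}$ for any sufficiently small $\delta>0$.

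The principal obstacle I anticipate lies in the last step: namely, handling the regimes in which two or more of the $N_j$ are comparable (where \eqref{Phase} is essentially sharp), and quantifying the $\varepsilon$-losses incurred in the modulation H\"older step carefully enough to guarantee that the critical threshold $b>\tfrac{1}{2\al}$ is not eroded in the parameter ranges at issue.
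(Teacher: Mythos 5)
Your overall strategy coincides with the paper's: pass to the space--time Fourier side by duality, use the constraint on the shifted time frequencies $\wt\tau_3-\wt\tau_4=-\Phi_\al(\cj n)$ to force $\max(\jb{\wt\tau_3},\jb{\wt\tau_4})\gtrsim\jb{\Phi_\al(\cj n)}$ and extract $\jb{\Phi_\al(\cj n)}^{-b}$, reduce the operator norm to a Hilbert--Schmidt norm of a degree-two Gaussian chaos kernel, apply Minkowski and Lemma~\ref{LEM:WCE}, and finish with the phase bound \eqref{Phase} and dyadic power counting. Your reduced deterministic sum $S$ and the resulting threshold $b>\tfrac{1}{2\al}$ (together with the role of $s>\tfrac14$ in the second summability condition) are exactly what the paper obtains, and your treatment of \eqref{Jconvergence} via the restriction $N_{\max}\gtrsim N$ is also the paper's.

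There is, however, one step that would fail as written: the passage from the pointwise bound on the $\tau$-integral to the signed kernel $K(n_3,n_4)$. The H\"older step in $\tau$ produces, for each \emph{fixed} quadruple $\cj n$, an upper bound $|I(\cj n)|\le \jb{\Phi_\al(\cj n)}^{-b}F_1(n_3)F_2(n_4)$ on the absolute value of the inner time integral $I(\cj n)$; it is not a factorization of $I(\cj n)$ itself, which genuinely depends on $(n_1,n_2)$ through $\Phi_\al(\cj n)$. You therefore cannot substitute this bound into the signed sum $\sum_{n_1-n_2=n_4-n_3} \jb{n_1}^{-s}\jb{n_2}^{-s} g_{n_1}\overline{g_{n_2}}\,I(\cj n)$ and still arrive at $\sum_{n_3,n_4}K(n_3,n_4)F_1(n_3)F_2(n_4)$ with $K$ retaining the signed Gaussians; doing it with absolute values inside the $(n_1,n_2)$-sum destroys precisely the Wiener-chaos cancellation that makes the variance a diagonal sum, which is the heart of the gain. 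The paper circumvents this by \emph{not} integrating out the modulation first: it defines, for each fixed $\wt\tau_1$, a kernel $K_{\wt\tau_1}(n_3,n_4)$ (containing the indicator $\ind_{\{|\wt\tau_1|\gtrsim\jb{\Phi_\al(\cj n)}\}}$ and a weight $\jb{\wt\tau_1}^{-b'}$ with $b'<b$ still above $\tfrac1{2\al}$), performs the Hilbert--Schmidt reduction at fixed $\wt\tau_1$, and only then integrates $\wt\tau_1$ out through the $L^r_{\wt\tau_1}$-norm $G^{N_1,N_2}$ with $r>(b-b')^{-1}$; this same sacrifice $b\to b'$ is also what absorbs the $q'+$ loss, rather than the dyadic modulation summation you invoke. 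With this reorganization your argument goes through and lands on the same sum $S$ and the same condition \eqref{bilinearlwpcondition}.
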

\begin{proof}
We start with showing the estimate for $J_{12}$. Since 
\begin{equation}\label{J12formula}
\begin{aligned}
 &\int_\R J_{12} (t)[v_1(t),v_2(t)]  \\
 &=  \sum_{\substack{n_1-n_2+n_3 - n_4 = 0 \\ n_2 \neq n_1,n_3}} \int_{\tau_1-\tau_2= |n_2|^{2\alpha}-|n_1|^{2\alpha}}\frac{g_{n_1}}{\jb{n_1}^s}\frac{\overline{g_{n_2}}}{\jb{n_2}^s} \ft{v_1}(n_3, \tau_1) \overline{\ft{v_2}}(n_4,\tau_2) d\tau_1d\tau_2,
\end{aligned}
\end{equation}
writing $\wt{\tau}_j = \tau_j - |n_{j+2}|^{2\alpha}$, the condition $\tau_1 - \tau_2 =  |n_2|^{2\alpha}-|n_1|^{2\alpha}$ becomes $\wt\tau_1 - \wt\tau_2 = - \Phi_\alpha{(\cj n)}$. Therefore, by symmetry between $\ft{v_1}$ and $\overline {\ft{v_2}}$, $n_1$ and $n_2$, and $q$ and $q'$, it is enough to show that 
\begin{align}
\begin{split}
 \int_{\R}   \sum_{\substack{n_1-n_2+n_3 - n_4 = 0, \\ n_2 \neq n_1,n_3}} \ind_{\{|\wt\tau_1|\gtrsim \jb{\Phi_\alpha{(\cj n)}}\}}  \frac{\jb{n_4}^\sigma}{\jb{n_3}^\sigma}\frac{g_{n_1}}{\jb{n_1}^s}\frac{\overline{g_{n_2}}}{\jb{n_2}^s} &\frac{f(n_3, \wt\tau_1)}{\jb{\wt\tau_1}^b}g(n_4,\wt\tau_1+\Phi_\alpha{(\cj n)})  d\wt\tau_1 \\
 &  \le C \|f\|_{\l^2_n L^q_{\wt \tau}} \|g\|_{l^{2}_nL^{q'}_{\wt \tau}}, 
 \end{split} \label{Bilin1}
\end{align}
with $\E[ |C|^p]< \infty$ for every $p < \infty$. Fix $b' < b$ such that \eqref{bilinearlwpcondition} holds for $b'$ as well.
For each fixed $\wt\tau_1\in \R$, we define the kernel 
\begin{equation*}
K_{\wt\tau_1}(n_3,n_4) := \sum_{\substack{n_1-n_2+n_3 - n_4 = 0, \\ n_1 \neq n_2,n_4}} \ind_{\{|\wt\tau_1|\gtrsim \jb{\Phi_\alpha{(\cj n)}}\}} \frac{\jb{n_4}^\sigma}{\jb{n_3}^\sigma}\frac{g_{n_1}\overline{g_{n_2}}}{\jb{n_1}^s\jb{n_2}^s\jb{\wt\tau_1}^{b'}}.
\end{equation*}
It is convenient to split this kernel into dyadic components. Henceforth, we define for $N_1, N_2$ dyadic,
\begin{equation*}
 K_{\wt\tau_1}^{N_1,N_2}(n_3,n_4) = \sum_{\substack{n_1-n_2+n_3 - n_4 = 0, \\ n_1 \neq n_2,n_4, \\ \jb{n_1} \sim N_1, \jb{n_2} \sim N_2}}  \ind_{\{|\wt\tau_1|\gtrsim \jb{\Phi_\alpha{(\cj n)}}\}}  \frac{\jb{n_4}^\sigma}{\jb{n_3}^\sigma}\frac{g_{n_1}\overline{g_{n_2}}}{\jb{n_1}^s\jb{n_2}^s\jb{\wt\tau_1}^{b'}}. 
\end{equation*}
We obtain that 
\begin{align*}
&\int\sum_{\substack{n_1-n_2+n_3 - n_4 = 0, \\ n_2 \neq n_1,n_3 \\ \jb{n_j}\sim N_j, \,\, j=1,2,3,4}}  \ind_{\{|\wt\tau_1|\gtrsim \jb{\Phi_\alpha{(\cj n)}}\}}\frac{\jb{n_4}^{\sigma}}{\jb{n_3}^\sigma}\frac{g_{n_1}}{\jb{n_1}^s}\frac{\overline{g_{n_2}}}{\jb{n_2}^s} \frac{f(n_3, \wt\tau_1)}{\jb{\wt\tau_1}^b}g(n_4,\wt\tau_1+\Phi_\alpha{(\cj n)})  d\wt\tau_1\\
&\les \sum_{\jb{n_j} \sim N_j, \, j=3,4} G^{N_1,N_2}(n_3,n_4) \|f(n_3,\cdot)\|_{L^q} \|g(n_4,\cdot)\|_{L^{q'+}}\\
&\les \|G^{N_1,N_2} \ind_{\jb{n_3}\sim N_3} \ind_{\jb{n_4}\sim N_4}\|_{\text{HS}(\l^2,\l^2)} \|f\|_{\l^2_n L^q_{\wt \tau}} \|g\|_{l^{2}_nL^{q'+}_{\wt \tau}}, \end{align*}
where we defined another kernel 
 \begin{align*}
G^{N_1,N_2}(n_3,n_4) := \Big\| \jb{\wt\tau_1}^{-(b-b')}  K^{N_1, N_2}_{\wt\tau_1}(n_3,n_4)\Big\|_{L^r_{\wt \tau_1}},
\end{align*}
for $r$ such that $r  > \frac{1}{b-b'}$ (depending on the implicit $\eps$ in $q'+ = q'+ \eps$).
Using that $\jb{\wt\tau_1}\gtrsim \jb{\Phi_\alpha{(\cj n)}}$, for $\jb{n_3} \sim N_3$, $\jb{n_4} \sim N_4$, we have that 
\begin{align}
\begin{split}
\E \Big[|K_{\wt\tau_1}^{N_1,N_2}(n_3,n_4)|^2 \Big] &\les \frac{N_4^{2\sigma}}{N_1^{2s}N_2^{2s}N_3^{2\sigma}} \bigg( \ind_{\{ N_1\leq N_2\}}\sum_{\substack{\jb{n_1} \sim N_1,\\ n_1 \neq n_2, n_4}}  \tfrac{1}{\jb{\Phi_\alpha{(\cj n)}}^{2b}} \\
& \hphantom{XXXXXXXXXX} +\ind_{\{ N_2\leq N_1\}}\sum_{\substack{\jb{n_2} \sim N_2,\\ n_2 \neq n_3, n_1}}  \tfrac{1}{\jb{\Phi_\alpha{(\cj n)}}^{2b}}  \bigg)
\end{split} \label{HKest1}
\end{align}
Fix $p > r$. Then, Minkowski's inequality, that $K^{N_1,N_2}_{\wt\tau_1}$ is a polynomial (of degree 2) in the Gaussian random variable $u_0$, and \eqref{HKest1} imply 
\begin{align}
 \E \bigg[   \|G^{N_1,N_2} &\ind_{\jb{n_3}\sim N_3} \ind_{\jb{n_4}\sim N_4}\|_{\text{HS}(\l^2,\l^2)}^{p}  \bigg]^{\frac{2}{p}} \notag \\
 & \leq  \sum_{\jb{n_j} \sim N_j, \, j=3,4} \bigg| \int \frac{1}{\jb{ \wt \tau_1}^{r(b-b')}}   \E[ |K_{\wt \tau_1}^{N_1,N_2}(n_3,n_4)|^{p}]^{\tfrac{r}{p}} d\wt \tau_1 \bigg|^{\frac{2}{r}} \notag \\ 
 & \les   \sum_{\jb{n_j} \sim N_j, \, j=3,4} \bigg| \int \frac{1}{\jb{ \wt \tau_1}^{r(b-b')}}   \E[ |K_{\wt \tau_1}^{N_1,N_2}(n_3,n_4)|^{2}]^{\tfrac{r}{2}} d\wt \tau_1 \bigg|^{\frac{2}{r}}  \notag \\
&\les \frac{N_4^{2\sigma}}{N_1^{2s}N_2^{2s}N_3^{2\sigma}} \bigg[ \ind_{\{ N_1\leq N_2\}}  \sum_{\substack{n_1 \sim N_1,n_3 \sim N_3, n_4 \sim N_4,\\ n_1 \neq n_2, n_4}} \frac{1}{\jb{\Phi_\alpha{(\cj n)}}^{2b'}}  \notag\\
& \hphantom{XXXXXXXXX} + \ind_{\{ N_2\leq N_1\}}  \sum_{\substack{n_2 \sim N_2,n_3 \sim N_3, n_4 \sim N_4,\\ n_1 \neq n_2, n_4}} \frac{1}{\jb{\Phi_\alpha{(\cj n)}}^{2b'}} \bigg] \notag\\
&\les \frac{N_4^{2\sigma}}{N_1^{2s}N_2^{2s}N_3^{2\sigma}N_{\max}^{4b'(\alpha-1)}} \bigg[   \ind_{\{ N_1\leq N_2\}}  \sum_{\substack{n_1 \sim N_1,n_3 \sim N_3, n_4 \sim N_4,\\ n_1 \neq n_2, n_4}}\tfrac{1}{\jb{n_1-n_2}^{2b'}}\tfrac{1}{\jb{n_1-n_4}^{2b'}}\notag   \\
& \hphantom{XXXXXXXX} +  \ind_{\{ N_2\leq N_1\}}  \sum_{\substack{n_1 \sim N_1,n_3 \sim N_3, n_4 \sim N_4,\\ n_1 \neq n_2, n_4}}\tfrac{1}{\jb{n_3-n_4}^{2b'}}\tfrac{1}{\jb{n_2-n_3}^{2b'}} \bigg] \notag\\
&\les N_4^{2\sigma}N_{\max}^{2-4b'} N_{\min}(N_1 N_2)^{-2s}N_3^{-2\s}N_{\max}^{-4b'(\al-1)},\label{GN1Nest}
\end{align}
Summing this over $N_1,N_2, N_3, N_4$, with the restriction that the largest two frequencies are similar, since $n_1-n_2+n_3=n_4$, and recalling that $\s \leq s$, we obtain \eqref{Bilin1} 
as long as
$$2b'(2\alpha - 2) > 2-4b', \quad \text{and} \quad 3-4b' < 4s + 2b'(2\alpha-2).$$
Therefore, since $s>\tfrac{1}{4}$, we need 
$b'> \max( \tfrac{1}{2\al}, \tfrac{3-4s}{4\al}) = \tfrac{1}{2\al},$
 which enforces \eqref{bilinearlwpcondition}.

We now move to the estimate for $J_{13}$. We proceed similarly, and define 

\begin{align*}
K_{\wt\tau_1}^{N_1,N_3}(n_2,n_4) &= \sum_{\substack{n_1-n_2+n_3 - n_4 = 0, \\ n_1 \neq n_2,n_4, \\ \jb{n_1} \sim N_1, \jb{n_3} \sim N_3}} \ind_{\{|\wt\tau_1|\gtrsim \jb{\Phi_\alpha{(\cj n)}}\}} \frac{\jb{n_4}^{\sigma}}{\jb{n_2}^\sigma}\frac{g_{n_1}{g_{n_3}}}{\jb{n_1}^s\jb{n_3}^s\jb{\wt\tau_1}^{b'}}, \\
G^{N_1,N_3}(n_2,n_4) &= \Big\| \jb{\wt\tau_1}^{-(b-b')}  K^{N_1, N_3}_{\wt\tau_1}(n_2,n_4)\Big\|_{L^r_{\wt \tau_1}}.
 \end{align*}

Then,
\begin{align*}
 \Big( \E \Big[ &\|G^{N_1,N_2} \ind_{\jb{n_3}\sim N_3} \ind_{\jb{n_4}\sim N_4}\|_{\text{HS}(\l^2,\l^2)}^\frac p2  \Big] \Big)^{\frac{2}{p}}  \\
 &\les \frac{N_4^{2\sigma}}{N_1^{2s}N_2^{2\s}N_3^{2s}} \bigg[ \ind_{\{ N_1\leq N_3\}}  \sum_{\substack{n_1 \sim N_1,n_3 \sim N_3, n_4 \sim N_4,\\ n_1 \neq n_2, n_4}} \frac{1}{\jb{\Phi_\alpha{(\cj n)}}^{2b'}}  \notag\\
& \hphantom{XXXXXXXXX} + \ind_{\{ N_3\leq N_1\}}  \sum_{\substack{n_3 \sim N_3,n_2 \sim N_2, n_4 \sim N_4,\\ n_1 \neq n_2, n_4}} \frac{1}{\jb{\Phi_\alpha{(\cj n)}}^{2b'}} \bigg] \notag\\
& \les  N_4^{2\sigma}N_{\max}^{2-4b'}N_{\min} (N_1 N_3)^{-2s}N_{2}^{-2\s} N_{\max}^{-4b'(\al-1)},
\end{align*}
so that under the condition \eqref{bilinearlwpcondition}, we can sum over $N_1,N_2, N_3, N_4$,  and obtain \eqref{Bilin1}, concluding the estimate for $J_{13}$ in the same way as we did for the estimate of $J_{12}$.

We now move to showing \eqref{Jconvergence}. We show the details only for $J_{12,N}$ as the estimates for $J_{13,N}$ are completely analogous. We have the following analogue of \eqref{J12formula} for $J_{12,N}$
\begin{equation*}
\begin{aligned}
 &\int_\R J_{12,N} (t)[v_1(t),v_2(t)]  \\
 &= \int_{\tau_1-\tau_2= |n_2|^{2\alpha}-|n_1|^{2\alpha}} \sum_{\substack{n_1-n_2+n_3 - n_4 = 0, \\ n_2 \neq n_1,n_3, \\ |n_j| \le N}} \frac{g_{n_1}}{\jb{n_1}^s}\frac{\overline{g_{n_2}}}{\jb{n_2}^s} \ft{v_1}(n_3, \tau_1) \overline{\ft{v_2}}(n_4,\tau_2) d\tau_1d\tau_2.
\end{aligned}
\end{equation*}
We define 
\begin{align*}
K_{\wt\tau_1,N}^{N_1,N_2}(n_3,n_4) &= \sum_{\substack{n_1-n_2+n_3 - n_4 = 0, \\ n_1 \neq n_2,n_4, \\ \jb{n_1} \sim N_1, \jb{n_2} \sim N_2} |n_j| \le N,}   \ind_{\{|\wt\tau_1|\gtrsim \jb{\Phi_\alpha{(\cj n)}}\}}\frac{\jb{n_4}^\sigma}{\jb{n_3}^\sigma}\frac{g_{n_1}\overline{g_{n_2}}}{\jb{n_1}^s\jb{n_2}^s\jb{\wt\tau_1}^{b'}}, \\
G^{N_1,N_2}_{N}(n_3,n_4) &= \Big\| \jb{\wt\tau_1}^{-(b-b')}  K^{N_1, N_2}_{\wt\tau_1}(n_3,n_4)\Big\|_{L^r_{\wt \tau_1}}.
\end{align*}
Proceeding exactly in the same way as before, we obtain that 
\begin{equation*}
\| J_{12} - J_{12,N} \|_{b,q,\sigma,-\sigma} \les \sum_{ \substack{N_1,N_2,N_3,N_4 \\ N_{\text{max}}\sim N_{\text{med}}}}   \big\| \big(G^{N_1,N_2} -G^{N_1,N_2}_{N} \big) \ind_{\jb{n_3}\sim N_3} \ind_{\jb{n_4} \sim N_4} \big\|_{\text{HS}(\l^2,\l^2)},
\end{equation*}
and proceeding as in the estimates for $J_{12}$, we obtain the analogous of \eqref{GN1Nest}, 
$$\E \Big[ \big\| \big(G^{N_1,N_2} -G^{N_1,N_2}_{N} \big) \ind_{\jb{n_3}\sim N_3} \ind_{\jb{n_4} \sim N_4} \big\|_{\text{HS}(\l^2,\l^2)}^{p} \Big]^{\frac{1}{p}} \les\frac{N_4^{\sigma}N_{\max}^{-2b'}N_{\min}^{\frac{1}{2}}}{N_1^{s}N_2^{\sigma}N_3^{s}N_{\max}^{b'(2\alpha-2)}}  \chi_{\{N_{\max} \gtrsim N\}}.$$
Therefore, in the same way as before, we obtain that 
$$ \E \big[\| J_{12} - J_{12,N} \|_{b,q,\sigma,-\sigma}^p \big] \les N^{\frac{p}{2}(3-4b - 4s - 2b(2\alpha-2)+)}.   $$
\end{proof}

The final estimates below controls the remaining contributions in a deterministic way.

\begin{proposition}[Deterministic quadrilinear estimate] \label{PROP:detquad}
Let $0 \le \sigma_1, \sigma_2, \sigma_3 < \frac 12$, and $0~<~b~<~\frac 12$ be such that 
\begin{equation} \label{quadlwpcond}
\quad b > \frac{1+\sigma_1+\sigma_2-\sigma_3}{2\alpha} 
\end{equation}
Then, for every $1 < q < \infty$, we have that 
\begin{equation}\label{quad_det_1}
\begin{aligned}
\int_{\R} \NN(u_1(t),u_2(t),u_3(t),u_4(t)) dt \les&\ \|u_1\|_{X^{-\sigma_1,b}_q}\|u_2\|_{X^{-\sigma_2,\frac1{q'}+}_q}\|u_3\|_{X^{\sigma_3,\frac 1 {q'}+}_q}\|u_4\|_{X^{0,0}_{q'}} \\
&+  \|u_1\|_{X^{-\sigma_1,\frac1{q'}+}_q}\|u_2\|_{X^{-\sigma_2,b}_q}\|u_3\|_{X^{\sigma_3,\frac 1 {q'}+}_q}\|u_4\|_{X^{0,0}_{q'}} \\
&+  \|u_1\|_{X^{-\sigma_1,\frac1{q'}+}_q}\|u_2\|_{X^{-\sigma_2,\frac1{q'}+}_q}\|u_3\|_{X_{q}^{\sigma_3,b}} \|u_4\|_{X^{0,0}_{q'}} \\
&+  \|u_1\|_{X^{-\sigma_1,0}_q}\|u_2\|_{X^{-\sigma_2,\frac1{q'}+}_q}\|u_3\|_{X^{\sigma_3,\frac 1 {q'}+}_q}\|u_4\|_{X^{0,b}_{q'}}. \\
\end{aligned}
\end{equation}
Similarly, we have that 
\begin{equation}\label{quad_det_2}
\begin{aligned}
\int_{\R} \NN(u_1(t),u_2(t),u_3(t),u_4(t)) dt \les&\ \|u_1\|_{X^{-\sigma_1,b}_q}\|u_2\|_{X^{\sigma_3,\frac1{q'}+}_q}\|u_3\|_{X^{-\sigma_2,\frac 1 {q'}+}_q}\|u_4\|_{X^{0,0}_{q'}} \\
&+  \|u_1\|_{X^{-\sigma_1,\frac1{q'}+}_q}\|u_2\|_{X^{\sigma_3,b}_q}\|u_3\|_{X^{-\sigma_2,\frac 1 {q'}+}_q}\|u_4\|_{X^{0,0}_{q'}} \\
&+  \|u_1\|_{X^{-\sigma_1,\frac1{q'}+}_q}\|u_2\|_{X^{\sigma_3,\frac1{q'}+}_q}\|u_3\|_{X^{-\sigma_2,b}_{q}}\|u_4\|_{X^{0,0}_{q'}} \\
&+  \|u_1\|_{X^{-\sigma_1,0}_q}\|u_2\|_{X^{\sigma_3,\frac1{q'}+}_q}\|u_3\|_{X^{-\sigma_2,\frac 1 {q'}+}_q}\|u_4\|_{X^{0,b}_{q'}}. \\
\end{aligned}
\end{equation}
\end{proposition}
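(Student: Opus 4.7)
By symmetry, we focus on \eqref{quad_det_1}; the estimate \eqref{quad_det_2} follows from the same argument after redistributing the weights across the factors, using the equivalent phase factorization $|\Phi_\alpha(\cj n)| \ges |n_2-n_1||n_2-n_3| n_{\max}^{2\alpha-2}$ (valid on $n_1-n_2+n_3-n_4=0$ by the identity $|n_4-n_1| = |n_2-n_3|$ and $|n_4-n_3| = |n_2-n_1|$). The four terms on the right-hand side of \eqref{quad_det_1} correspond to the four regimes where $\jb{\lambda_{j^*}} = \max_j \jb{\lambda_j}$ for $j^* \in \{1,2,3,4\}$, with $\lambda_j := \tau_j - |n_j|^{2\alpha}$. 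Via Plancherel the time constraint becomes $\lambda_1 - \lambda_2 + \lambda_3 - \lambda_4 = -\Phi_\alpha(\cj n)$, forcing $\max_j \jb{\lambda_j} \ges \jb{\Phi_\alpha(\cj n)}$. I describe the argument in the representative case $j^* = 1$, which produces the first term.

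In this regime I extract a factor $\jb{\lambda_1}^{-b}$ (to be absorbed into the $X^{-\sigma_1, b}_q$-norm of $u_1$) and pay for it using \eqref{Phase}: since the non-resonance condition $n_1 \neq n_2, n_4$ forces $|n_4 - n_1|, |n_4 - n_3| \ge 1$,
\begin{align*}
\jb{\lambda_1}^{-b} \les \jb{\Phi_\alpha(\cj n)}^{-b} \les \jb{n_4-n_1}^{-b}\,\jb{n_4-n_3}^{-b}\, n_{\max}^{-b(2\alpha-2)}.
\end{align*}
The remaining modulation integral, with weights $\jb{\lambda_j}^{-(1/q'+)}$ for $j=2,3$, is controlled via H\"older's inequality combined with Young's convolution inequality on the modulation variables subject to $\lambda_1 - \lambda_2 + \lambda_3 - \lambda_4 = -\Phi_\alpha(\cj n)$; the small $+$ above $1/q'$ provides exactly the integrability of $\jb{\lambda_j}^{-(1/q'+)}$ in $L^{q'}_\tau$ needed to close the step, yielding a bound by the product $\|\ft u_1\|_{\l^2_n L^q_\tau}\|\ft u_2\|_{\l^2_n L^q_\tau}\|\ft u_3\|_{\l^2_n L^q_\tau}\|\ft u_4\|_{\l^2_n L^{q'}_\tau}$ weighted against the frequency-side quantity.

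The problem then reduces to the frequency sum
\begin{align*}
\sum_{\substack{n_1-n_2+n_3-n_4=0 \\ n_1 \neq n_2, n_4}} \frac{\jb{n_1}^{\sigma_1}\jb{n_2}^{\sigma_2}\jb{n_3}^{-\sigma_3}}{\jb{n_4-n_1}^b \jb{n_4-n_3}^b n_{\max}^{b(2\alpha-2)}} \prod_{j=1}^4 a_j(n_j) \les \prod_{j=1}^4 \|a_j\|_{\l^2_n}.
\end{align*}
Dyadically decomposing $|n_j| \sim N_j$ and $|n_4 - n_k| \sim M_k$ for $k=1,3$, and applying Cauchy-Schwarz together with the elementary count $\#\{n \in \Z : |n - n_0| \sim M\} \les M$, the worst-case dyadic configuration (typically $N_1 \sim N_3 \sim M_1 \sim M_3 \sim N_{\max}$) contributes a scaling of $N_{\max}^{2(1+\sigma_1+\sigma_2-\sigma_3)-4b\alpha}$. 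Summability of the resulting geometric series over dyadic $N_{\max}$ forces this exponent to be negative, which is exactly the condition $b > \frac{1+\sigma_1+\sigma_2-\sigma_3}{2\alpha}$ of \eqref{quadlwpcond}.

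The main technical obstacle is the dyadic bookkeeping over the six scales $N_1, N_2, N_3, N_4, M_1, M_3$: one must carefully distinguish which of the $N_j$ attains $N_{\max}$, and whether $M_1, M_3$ are comparable to or smaller than $N_{\max}$, so as to identify the extremal configuration that saturates \eqref{quadlwpcond}. The strict inequality in \eqref{quadlwpcond} leaves no room for logarithmic losses, so at the threshold each dyadic sum must converge as a proper geometric series. Once this bookkeeping is handled, the cases $j^* = 2, 3, 4$ and the estimate \eqref{quad_det_2} are handled by identical analysis with the obvious relabeling of the $\sigma$-weights across the four factors.
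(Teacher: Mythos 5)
Your proposal follows the same route as the paper's proof: isolate the maximal modulation, trade it for $\jb{\Phi_\alpha(\cj n)}^{-b}$ via the phase lower bound \eqref{Phase}, close the time integral by H\"older on the hyperplane $\wt\tau_1-\wt\tau_2+\wt\tau_3-\wt\tau_4=\Phi_\alpha(\cj n)$, and reduce to a weighted frequency sum handled by dyadic decomposition and Cauchy--Schwarz, with the extremal configuration where all frequencies are comparable yielding exactly \eqref{quadlwpcond}. This is correct and essentially identical to the paper's argument; the dyadic bookkeeping you defer is carried out there by splitting into the two cases $N_2\vee N_3\sim N_{\max}$ and $N_1\sim N_4\gg N_2,N_3$.
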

\begin{proof}
We show only \eqref{quad_det_1}, as the proof of \eqref{quad_det_2} is completely analogous. 
Let 
$$ v_1 = \jb{\dx}^{-\sigma_1} u_1, \,\,\, v_2 = \jb{\dx}^{-\sigma_1} u_2, \,\,\, v_3 =\jb{\dx}^{\sigma_3} u_3, \,\,\, v_4=u_4,$$
so that 
$$  \|u_1\|_{X^{-\sigma_1,b_1}_q}\|u_2\|_{X^{-\sigma_2,b_2}_q}\|u_3\|_{X^{\sigma_3,b_3}_q}\|u_4\|_{X^{0,b_4}_{q'}} = \prod_{j=1}^3 \| v_j\|_{X^{0,b_j}_q} \|v_4\|_{X^{0,b_4}_{q'}}, $$
and
\begin{equation} \label{Nexpansion}
\begin{aligned}
&\int_\R \NN(u_1(t),u_2(t),u_3(t),u_4(t)) dt \\
=&\intt_{\tau_1-\tau_2+\tau_3-\tau_4 = 0} \sum_{\substack{n_1-n_2+n_3 - n_4 = 0, \\ n_2 \neq n_1,n_3}} \tfrac{\jb{n_1}^{\sigma_1}\jb{n_2}^{\sigma_2}}{\jb{n_3}^{\sigma_3}} \prod_{\substack{j=1 \\ j\neq 2}}^{3} \ft{v_j}(\tau_j,n_j) \overline{\ft{v_{j+1}}(\tau_{j+1},n_{j+1})} d\tau_1 \cdots d\tau_4.
\end{aligned}
\end{equation}
For convenience of notation, let $\wt{\tau_j} := \tau_j+|n_j|^{2\alpha}$. The condition $\tau_1-\tau_2+\tau_3-\tau_4 = 0$ then becomes 
$\wt{\tau_1}-\wt{\tau_2}+\wt{\tau_3}-\wt{\tau_4} = \Phi_\alpha{(\cj n)}.$ 
Therefore, for any $f_1, f_2, f_3 \in L^q(\R)$, $f_4 \in L^{q'}(\R)$, by H\"older we have that
\begin{align*}
&\int_{\substack{\wt{\tau}_{1}-\wt{\tau}_{2}+\wt{\tau}_{3}-\wt{\tau}_{4} = \Phi_\alpha{(\cj n)}}} \frac{\prod_{j=1}^{4} |f_j(\wt\tau_j)|}{\jb{\wt{\tau}_{1}}^{\frac1{q'}+}\jb{\wt{\tau}_{2}}^{\frac1{q'}+}\jb{\wt{\tau}_{\max}}^b} d\tau_1d\tau_2d\tau_3d\tau_4 \\
&\les \frac{1}{\jb{\Phi_\alpha{(\cj n)}}^b} \int_{\wt{\tau}_1, \wt{\tau}_2, \wt{\tau}_3} \frac{ |f_1(\wt{\tau_1})||f_2(\wt{\tau_2})||f_3(\wt{\tau_3})||f_4(\Phi_\alpha{(\cj n)} -\wt{\tau_1} - \wt{\tau_2} - \wt{\tau_3})|}{\jb{\wt{\tau}_{1}}^{\frac1{q'}+}\jb{\wt{\tau}_{2}}^{\frac1{q'}+}} d \wt{\tau_1}d \wt{\tau_2}d \wt{\tau_3} \\
&\les \frac{1}{\jb{\Phi_\alpha{(\cj n)}}^b} \prod_{j=1}^3 \|f_j\|_{L^q(\R)} \|f_4\|_{L^{q'}(\R)}
\end{align*}
By applying this inequality to \eqref{Nexpansion} over all possible configurations of $\wt{\tau_j}=\wt{\tau}_{\max}$, and permuting the roles of $f_1,f_2,f_3$ (so that $f_1$ and $f_2$ never correspond to $\wt\tau_{\max}$), we obtain 
\begin{align*}
&\int_{\R} \NN(u_1(t),u_2(t),u_3(t),u_4(t)) dt \\
\les& \sum_{\substack{n_1-n_2+n_3 - n_4 = 0, \\ n_2 \neq n_1,n_3}}  \frac{\jb{n_1}^{\sigma_1}\jb{n_2}^{\sigma_2}}{\jb{n_3}^{\sigma_3}\jb{\Phi_\alpha{(\cj n)}}^b} \\
&\phantom{ \sum} \times \big(\| \ft{v_1}(n_1,\tau_1) \jb{\tau_1}^b \|_{L^q_{\tau_1}} \| \ft{v_2}(n_2,\tau_2) \jb{\tau_2}^{\frac 1 {q'}+} \|_{L^q_{\tau_2}} \| \ft{v_3}(n_3,\tau_3) \jb{\tau_3}^{\frac 1 {q'}+} \|_{L^q_{\tau_3}}  \| \ft{v_4}(n_4,\tau_4) \|_{L^{q'}_{\tau_4}} \\
&\phantom{ \sum \times )} + \| \ft{v_1}(n_1,\tau_1) \jb{\tau_1}^{\frac1 {q'} +} \|_{L^q_{\tau_1}} \| \ft{v_2}(n_2,\tau_2) \jb{\tau_2}^{b} \|_{L^q_{\tau_2}} \| \ft{v_3}(n_3,\tau_3) \jb{\tau_3}^{\frac 1 {q'}+} \|_{L^q_{\tau_3}}  \| \ft{v_4}(n_4,\tau_4) \|_{L^{q'}_{\tau_4}} \\
&\phantom{ \sum \times ) } + \| \ft{v_1}(n_1,\tau_1) \jb{\tau_1}^{\frac1 {q'} +} \|_{L^q_{\tau_1}} \| \ft{v_2}(n_2,\tau_2) \jb{\tau_2}^{\frac1 {q'} +} \|_{L^q_{\tau_2}} \| \ft{v_3}(n_3,\tau_3) \jb{\tau_3}^{b} \|_{L^q_{\tau_3}}  \| \ft{v_4}(n_4,\tau_4) \|_{L^{q'}_{\tau_4}} \\
&\phantom{ \sum \times )} + \| \ft{v_1}(n_1,\tau_1) \|_{L^q_{\tau_1}} \| \ft{v_2}(n_2,\tau_2) \jb{\tau_2}^{\frac1 {q'} +} \|_{L^q_{\tau_2}} \| \ft{v_3}(n_3,\tau_3) \jb{\tau_3}^{\frac1 {q'} +} \|_{L^q_{\tau_3}} \| \ft{v_4}(n_4,\tau_4) \jb{\tau_4}^b \|_{L^{q'}_{\tau_4}}\big).
\end{align*}
Therefore, \eqref{quad_det_1} follows if we show that 
\begin{equation*}
\sum_{\substack{n_1-n_2+n_3 - n_4 = 0, \\ n_2 \neq n_1,n_3}} \frac{\jb{n_1}^{\sigma_1}\jb{n_2}^{\sigma_2}}{\jb{n_3}^{\sigma_3}\jb{\Phi_\alpha{(\cj n)}}^b} \prod_{j=1}^4 w_j(n_j) \les \prod_{j=1}^4 \| w_j \|_{\l^2_n}.
\end{equation*}
By \eqref{p1} and H\"older, we have that 
\begin{align*}
&\sum_{\substack{n_1-n_2+n_3 - n_4 = 0, \\ n_2 \neq n_1,n_3}} \frac{\jb{n_1}^{\sigma_1}\jb{n_2}^{\sigma_2}}{ \jb{n_3}^{\s_3}  \jb{\Phi_\alpha{(\cj n)}}^b} \prod_{j=1}^4 w_j(n_j)\\
&\les \sum_{n_1-n_2+n_3-n_4=0}  \frac{\jb{n_1}^{\sigma_1}\jb{n_2}^{\sigma_2}}{\jb{n_3-n_2}^{b}\jb{n_3-n_4}^b\jb{n_{\max}}^{2b(\alpha - 1)}} \prod_{j=1}^{4} | w_j (n_j)|\\
&\les \sum_{\substack{N_1,N_2,N_3,N_4  \\ N^{1}\sim N^2}} 
\frac{ N_{1}^{\s_1}N_{2}^{\s_2}}{ N_{3}^{\s_3} (N^1)^{2b(\al-1)}} \sum_{\substack{n_1-n_2+n_3-n_4 = 0, \\ n_1 \neq n_2,n_4,\\ \jb{n_j}\sim N_j}}
\frac{1}{\jb{n_3-n_2}^{b}\jb{n_3-n_4}^b}\prod_{j=1}^{4} | w_j (n_j)|. 
\end{align*}
We distinguish two cases.

\medskip
\noi
\textbf{Case 1:} $N_2 \vee N_3 \sim N_{\max}$. \\
We have that 
\begin{align*}
&\sum_{\substack{N_1,N_2,N_3,N_4  \\ N^{1}\sim N_2 \vee N_3 }} 
\frac{ N_{1}^{\s_1}N_{2}^{\s_2}}{ N_{3}^{\s_3} (N^1)^{2b(\al-1)}} \sum_{\substack{n_1-n_2+n_3-n_4 = 0, \\ n_1 \neq n_2,n_4,\\ \jb{n_j}\sim N_j}}
\frac{1}{\jb{n_3-n_2}^{b}\jb{n_3-n_4}^b}\prod_{j=1}^{4} | w_j (n_j)|. \\
&\les \sum_{\substack{N_1,N_2,N_3,N_4  \\ N^{1}\sim N_2 \vee N_3}} 
\frac{ N_{1}^{\s_1}N_{2}^{\s_2}}{ N_{3}^{\s_3} (N^1)^{2b(\al-1)}} \Big(\sum_{ \substack{\jb{n_j} \sim N_j \\ j=2,3,4}} \frac{|w_2(n_2)|^2 |w_4(n_4)|^2}{ \jb{n_3-n_2}^{2b}}\Big)^{\frac12} \Big(\sum_{ \substack{ \jb{n_j}\sim N_j \\ j=1,3,4}} \frac{|w_1(n_1)|^2 |w_3(n_3)|^2}{ \jb{n_3-n_4}^{2b}}\Big)^{\frac12} \\
&\les \sum_{\substack{N_1,N_2,N_3,N_4  \\ N^{1}\sim N_2 \vee N_3 }} N_1^{\sigma_1}N_2^{\sigma_2}N_3^{-\sigma_3}N_{\max}^{-(2\alpha-2)b} \big(N_3 (N_3 \vee N_2)^{-2b}\big)^{\frac12} \big(N_4 (N_3 \vee N_4)^{-2b}\big)^{\frac12} \prod_{j=1}^{4} \| w_j \|_{\l^2}  \\
&\les \sum_{\substack{N_1,N_2,N_3,N_4  \\ N^{1}\sim N_2 \vee N_3}}  N_1^{\sigma_1}N_2^{\sigma_2}N_3^{\frac12-\sigma_3}N_{\max}^{-2\alpha b+b} N_4^{\frac 12-b} \prod_{j=1}^{4} \| w_j \|_{\l^2} . 
\end{align*}
Since $\sigma_3, b \le \frac 12$, it easy to check that the last sum is summable if
$$ 2\alpha b  > 1 + \sigma_1 + \sigma_2 - \sigma_3,$$
i.e.\ exactly under the condition \eqref{quadlwpcond}.


\medskip
\noi
\textbf{Case 2:} $N_1 \sim N_{4}\gg N_2,N_3$. \\
We have that 
\begin{align*}
&\sum_{\substack{N_1,N_2,N_3,N_4  \\ N^{1}\sim N_1 \wedge N_4}} 
\frac{ N_{1}^{\s_1}N_{2}^{\s_2}}{ N_{3}^{\s_3} (N^1)^{2b(\al-1)}} \sum_{\substack{n_1-n_2+n_3-n_4 = 0, \\ n_1 \neq n_2,n_4,\\ \jb{n_j}\sim N_j}}
\frac{1}{\jb{n_3-n_2}^{b}\jb{n_3-n_4}^b}\prod_{j=1}^{4} | w_j (n_j)|. \\
&= \sum_{\substack{N_1,N_2,N_3,N_4  \\ N^{1}\sim N_1 \wedge N_4}} 
\frac{ N_{1}^{\s_1}N_{2}^{\s_2}}{ N_{3}^{\s_3} (N^1)^{2b(\al-1)}} \sum_{\substack{n_1-n_2+n_3-n_4 = 0, \\ n_1 \neq n_2,n_4,\\ \jb{n_j}\sim N_j}}
\frac{1}{\jb{n_3-n_4}^{b}\jb{n_1-n_4}^b}\prod_{j=1}^{4} | w_j (n_j)|. \\
&\les \sum_{\substack{N_1,N_2,N_3,N_4  \\ N^{1}\sim N_1 \wedge N_4}} 
\frac{ N_{1}^{\s_1}N_{2}^{\s_2}}{ N_{3}^{\s_3} (N^1)^{2b(\al-1)}} \Big(\sum_{ \substack{\jb{n_j} \sim N_j \\ j=2,3,4}} \frac{|w_2(n_2)|^2 |w_4(n_4)|^2}{ \jb{n_3-n_4}^{2b}}\Big)^{\frac12} \Big(\sum_{ \substack{ \jb{n_j}\sim N_j \\ j=1,3,4}} \frac{|w_1(n_1)|^2 |w_3(n_3)|^2}{ \jb{n_1-n_4}^{2b}}\Big)^{\frac12} \\
&\les \sum_{\substack{N_1,N_2,N_3,N_4  \\ N^{1}\sim N_1 \wedge N_4}} N_1^{\sigma_1}N_2^{\sigma_2}N_3^{-\sigma_3}N_{\max}^{-(2\alpha-2)b} \big(N_3 (N_3 \vee N_4)^{-2b}\big)^{\frac12} \big(N_4 (N_1 \vee N_4)^{-2b}\big)^{\frac12} \prod_{j=1}^{4} \| w_j \|_{\l^2}  \\
&\les \sum_{\substack{N_1,N_2,N_3,N_4  \\ N^{1}\sim N_1 \wedge N_4}}  N_1^{\sigma_1}N_2^{\sigma_2}N_3^{\frac12-\sigma_3}N_{\max}^{-2\alpha b + \frac 12} \prod_{j=1}^{4} \| w_j \|_{\l^2} . 
\end{align*}
Similarly to the previous case, since $\sigma_3 \le \frac 12$, the last sum is summable if
$$ 2\alpha b  > 1 + \sigma_1 + \sigma_2 - \sigma_3,$$
i.e.\ exactly under the condition \eqref{quadlwpcond}.
\end{proof}

\subsection{Proof of Proposition \ref{PROP:LWP}}

Fix $\al$, $s$ and $q$ as in the statement of Proposition~\ref{PROP:LWP}.
We start by showing local well-posedness in $L^2(\T)$, together with the estimate \eqref{tau0bound}. To this purpose, it is enough to show that the functional
$$\Gamma(v)(t) := \int_0^t \NN(S(t)(u_0+v_0)+v) + \RR(S(t)(u_0+v_0)+v) $$
is a contraction (for instance, with Lipschitz constant $1/2$) on the ball
$$B:= \Big\{ \|v\|_{X^{0,\frac1{q'}+}_q(T)}\les 1 + \|v_0\|_{L^2}\Big\},$$
for every $T$ such that 
\begin{equation}
 0\le T \ll \Big(1 + \|v_0\|_{L^2}^{\frac{4\al}{2\al-1}+\eps} +\|\Xi(u_0)\|_{\mathcal X^{0,q}}^{C_{\eps}}\Big)^{-1}. \label{Tconditionlwp}
\end{equation}
By \eqref{duhamelest} and duality, it is enough to check that for $v \in B$, for $T$ that satisfies \eqref{Tconditionlwp}, and for $z(t):=S(t)(u_0+v_0)$,
\begin{align}
\int_0^t \NN(z(t')+v,z(t')+v,z(t')+v,z(t')+v,& \chi_T(t') w(t')) d t'  \notag \\
 &\ll (1 + \|v_0\|_{L^2})\|w\|_{X^{0,\frac1q-}_{q'}} \label{imageconditionNNlwp}
\end{align}
and
\begin{align}
\int_0^t \RR(z(t')+v,z(t')+v,z(t')+v,z(t')+v, &\chi_T(t') w(t')) d t' \notag\\
 &\ll (1 + \|v_0\|_{L^2})\|w\|_{X^{0,\frac1q-}_{q'}},\label{imageconditionRRlwp}
\end{align}

and that for $v_1,v_2 \in B$,

\begin{align}
 &\int_0^t \NN(z(t')+v_1,z(t')+v_1,z(t')+v_1,z(t')+v_1, \chi_T(t') w(t')) d t'  \notag \\
 -&\int_0^t \NN(z(t')+v_2,z(t')+v_2,z(t')+v_2,z(t')+v_2, \chi_T(t') w(t')) d t'  \notag \\
 &\hphantom{\int_0^t \NN(z(t')+v_2,z(t')+v_2,z(t')+v_2,z(t')+v_2, }\ll \|v_1-v_2\|_{X_{q}^{0,\frac{1}{q'}+}(T)}\|w\|_{X^{0,\frac1q-}_{q'}}.\label{contractiononditionNN}
\end{align}
and 
\begin{align}
 &\int_0^t \RR(z(t')+v_1,z(t')+v_1,z(t')+v_1,z(t')+v_1, \chi_T(t') w(t')) d t' \notag \\
 -&\int_0^t \RR(z(t')+v_2,z(t')+v_2, z(t')+v_2,z(t')+v_2, \chi_T(t') w(t')) d t' \notag\\
 &\hphantom{\int_0^t \RR(z(t')+v_1,z(t')+v_1,z(t')+v_1,z(t')+v_1,}\ll \|v_1-v_2\|_{X_{q}^{0,\frac{1}{q'}+}(T)}\|w\|_{X^{0,\frac1q-}_{q'}}. \label{contractiononditionRR}
\end{align}

In the following estimates, we use that on the ball $B$, we have that 
$$ \|S(t) v_0 + v \|_{X^{0,\frac1{q'}+}_q(T)} \les 1 + \|v_0\|_{L^2}. $$
From the definitions \eqref{X3def}, \eqref{J12def} and \eqref{J13def} of $X^{(3)}$,\footnote{Notice that the condition \eqref{qbdX3} is automatically satisfied under the hypothesis \eqref{LWPqbd}.}  $J_{12}$ and $J_{13}$ respectively, the symmetry $\NN(u_1,u_2,u_3,u_4) = \NN(u_3,u_2,u_1,u_4)$, \eqref{quad_det_1}, \eqref{quad_det_2}, \eqref{timeloc}, \eqref{q'+} and H\"{o}lder's inequality,  we obtain that 
\begin{align*}
&  \int_0^t \NN(S(t)(u_0+v_0)+v,S(t)(u_0+v_0)+v,S(t)(u_0+v_0)+v,S(t)(u_0+v_0)+v, \chi_T(t) w(t)) d t \\
\les&\  \|X^{(3)}\|_{X^{0, \frac1{q'}+}_q}\|w\|_{X^{0,\frac1q-}_{q'}} \\
&+ \big(\|J_{12}\|_{b_J+,q,0,0} + \|J_{13}\|_{b_J+,q,0,0} \big)\\
&\phantom{+\ } \times \big(\|S(t) v_0 + v\|_{X^{0,0}_q(T)}\|\chi_T w  \|_{X^{0,b_J+}_{q'+}(T)} + \|\chi_T(t)(S(t) v_0 + v)\|_{X^{0,b_J+}_q}\|\chi_T w \|_{X^{0,0}_{q'+}}\big)\\
&+ \Big(\|S(t)u_0\|_{X^{s-\frac12-, \frac{3-2s-}{4\alpha}+}_q(T)} \|S(t) v_0 + v\|_{X^{0,\frac1{q'}+}_q(T)}^2 \|\chi_T w  \|_{X^{0,0}_{q'}} \\
&\phantom{\Big()}+ \|S(t)u_0\|_{X^{s-\frac12-, \frac1{q'}+}_q(T)} \|S(t) v_0 + v\|_{X^{0,\frac{3-2s-}{4\alpha}+}_q(T)}\|S(t) v_0 + v\|_{X^{0,\frac1{q'}+}_q} \|\chi_T w  \|_{X^{0,0}_{q'}}\\
&\phantom{\Big()}+ \|S(t)u_0\|_{X^{s-\frac12-,0}_q(T)} \|S(t) v_0 + v\|_{X^{0,\frac1{q'}+}_q(T)}^2 \|\chi_T w  \|_{X^{0,\frac{3-2s-}{4\alpha}+}_{q'}(T)}\Big)\\
&+\Big( \|S(t)v_0 + v\|_{X^{0, \frac{1}{2\alpha}+}_q(T)} \|S(t) v_0 + v\|_{X^{0,\frac1{q'}+}_q(T)}^2 \|\chi_T w  \|_{X^{0,0}_{q'}} \\
&\phantom{\Big()}+ \|S(t)v_0 + v\|_{X^{0, 0}_q(T)} \|S(t) v_0 + v\|_{X^{0,\frac1{q'}+}_q(T)}^2 \|\chi_T w  \|_{X^{0,\frac{1}{2\alpha}+}_{q'}}\Big)\\
\les&\ \|w\|_{X^{0,\frac1q-}_{q'}} \Big(T^{\eta-} \|X^{(3)}\|_{X^{0,\frac1{q'}+\eta+}_q(1)} + \big(\|J_{12}\|_{b_J+,q,0,0} + \|J_{13}\|_{b_J+,q,0,0} \big) T^{1 - b_J-} (1 + \|v_0\|_{L^2}) \\
&\phantom{\|w\|_{X^{0,\frac1{q'}+\eta+}_q(1)} \Big()}+ T^{1-\frac{3-2s-}{4\alpha}-} \|u_0\|_{H^{s-\frac12-}} (1 + \|v_0\|_{L^2})^2 + T^{1-\frac 1{2\alpha}}(1 + \|v_0\|_{L^2})^3\Big).
\end{align*}
Therefore, \eqref{imageconditionNNlwp} holds if we choose $T$ such that 
\begin{equation}
\begin{aligned}
T &\ll \|X^{(3)}\|_{X^{0,\frac1{q'}+\eta+}_q(1)}^{-\eta^{-1}-}, \\
T &\ll (\|J_{12}\|_{b_J+,q,0,0} + \|J_{13}\|_{b_J+,q,0,0})^{-\frac1{1-b_J}-},\\
T&\ll  \big[\|u_0\|_{H^{s-\frac12-}} (1 + \|v_0\|_{L^2})\big]^{-\frac1{1-\frac{3-2s}{4\alpha}}-},\\
T&\ll (1 + \|v_0\|_{L^2})^{-\frac{2}{1-\frac 1{2\alpha}}-}.
\end{aligned}\label{TconditionlistNN}
\end{equation}
By Young's inequality, these conditions are all satisfied under \eqref{Tconditionlwp}, after noticing that 
\begin{equation*}
\frac{2}{1-\frac 1{2\alpha}} > \frac1{1-\frac{3-2s}{4\alpha}} \iff \alpha + s > 1. 
\end{equation*}
Notice that applying Proposition~\ref{randomtrilwp} enforces the condition on $q$ in \eqref{LWPqbd}, which also forces $\al+s>\tfrac{3}{2}$. 

We proceed similarly for the difference estimate \eqref{contractiononditionRR}, using quadrilinearity of the functional $\NN$.
We obtain that
\begin{align*}
 &\int_0^t \NN(z(t')+v_1,z(t')+v_1,z(t')+v_1,z(t')+v_1, \chi_T(t') w(t')) d t' \\
 -&\int_0^t \NN(z(t')+v_2,z(t')+v_2,z(t')+v_2,z(t')+v_2, \chi_T(t') w(t')) d t' \\
\les&\ \|v_1-v_2\|_{X^{0,\frac1{q'}+}_q(T)}\|w\|_{X^{0,\frac1q-}_{q'}} \Big(\big(\|J_{12}\|_{b_J+,q,0,0} + \|J_{13}\|_{b_J+,q,0,0} \big) T^{1 - b_J-}  \\
&\phantom{\|v_1-v_2\|_{X^{0,\frac1{q'}+}_q(T)}\|w\|_{X^{0,\frac1q-}_{q'}} \Big()}+ T^{2-\frac{3-2s}{4\alpha}-} \|u_0\|_{H^{s-\frac12-}} (1 + \|v_0\|_{L^2}) + T^{1-\frac 1{2\alpha}}(1 + \|v_0\|_{L^2})^2\Big).
\end{align*}
Therefore, we obtain \eqref{contractiononditionNN} under the same list of conditions \eqref{TconditionlistNN}, which in particular holds whenever \eqref{Tconditionlwp} holds.  

We now move to estimate the resonant term $\RR$. By \eqref{res12}, \eqref{res13}, \eqref{resrandom}, the symmetry $\RR(u_1,u_2,u_3,u_4) = \RR(u_3,u_2,u_1,u_4)$, \eqref{Xsbpembedding}, \eqref{linest}, and \eqref{timeloc} we obtain that 
\begin{align*}
 \int_0^t & \RR(z(t')+v,z(t')+v,z(t')+v,z(t')+v, \chi_T(t') w(t')) d t' \\
\les &\ \|S(t)u_0\|_{X^{0,\frac1{q'}+,}_{6,q}(T)}^2\|S(t)u_0\|_{X^{0,0}_{6,q}(T)}\|\chi_T w\|_{X^{0,0}_{q'}(T)} \\
&+ \|S(t)u_0\|_{X^{0,\frac1{q'}+}_{\infty,q}(T)}^2\|S(t)v_0 + v\|_{X^{0,0}_q(T)}\|\chi_T w\|_{X^{0,0}_{q'}(T)} \\
&+ \|S(t)u_0\|_{X^{0,\frac1{q'}+}_{\infty, q}(T)}\|S(t)v_0 + v\|_{X^{0,\frac1{q'}+}_q(T)}\|S(t)v_0 + v\|_{X^{0,0}_q(T)}\|\chi_T w\|_{X^{0,0}_{q'}(T)} \\
&+ \|S(t)v_0 + v\|_{X^{0,\frac1{q'}+}_q(T)}^2\|S(t)v_0 + v\|_{X^{0,0}_q(T)}\|\chi_T w\|_{X^{0,0}_{q'}(T)} \\
&\les T^{1-} \| w\|_{X^{0,\frac1q-}_{q'}}\big( \|u_0\|_{\FL^{0,6}}^3 + (1+\|v_0\|_{L^2})^3\big).
\end{align*}
Therefore, we obtain \eqref{imageconditionRRlwp} under the conditions
\begin{align*}
T &\ll \|u_0\|_{\FL^{0,6}}^{-3-}, \\
T &\ll (1+\|v_0\|_{L^2})^{-2-},
\end{align*}
which clearly hold under \eqref{Tconditionlwp}. Proceeding similarly for the difference estimate, we obtain that 
\begin{align*}
 &\int_0^t \RR(z(t')+v_1,z(t')+v_1,z(t')+v_1,z(t')+v_1, \chi_T(t') w(t')) d t' \\
 -&\int_0^t \RR(z(t')+v_2,z(t')+v_2,z(t')+v_2,z(t')+v_2, \chi_T(t') w(t')) d t' \\
 & \hphantom{XXX} \les T^{1-}\|v_1-v_2\|_{X^{0,\frac1{q'}+}_q} \| w(t)\|_{X^{0,\frac1q-}_{q'}}\big(\|u_0\|_{\FL^{0,6}}^2 + (1+\|v_0\|_{L^2})^2\big).
\end{align*}
Therefore, we obtain \eqref{contractiononditionRR} under the conditions 
\begin{align*}
T &\ll \|u_0\|_{\FL^{0,6}}^{-2-}, \\
T &\ll (1+\|v_0\|_{L^2})^{-2-},
\end{align*}
which holds as well under \eqref{Tconditionlwp}.

We now move to the estimate \eqref{tausigmabound}. We will show that, if 
\begin{equation}\label{Tpreservationlwp}
T \ll \Big(1+\|v_0\|_{L^2}^{\frac{4\al}{2\al-1}+\eps} +\|\Xi(u_0)\|_{\mathcal X^{0,q}}^{C_{\eps}}+\|\Xi(u_0)\|_{\mathcal X^{\sigma,q}}^{C_{\eps,\sigma}}\Big)^{-1},
\end{equation}
and 
\begin{equation*}
v \in B_\sigma:= \big\{ \|v\|_{X^{\sigma,\frac1{q'}+}_q(T)} \les 1 + \|v_0\|_{H^{\sigma}} \big\} \cap \big\{ \|v\|_{X^{0,\frac1{q'}+}_q(T)} \les 1 + \|v_0\|_{L^2} \big\}
\end{equation*}
then
\begin{equation*}
\| \Gamma(v) \|_{X^{\sigma,\frac1{q'}+}_q(T)} \ll 1 + \|v_0\|_{H^\sigma}.
\end{equation*}
The fact that on such an interval the solution $v$ will satisfy 
$$\| v \|_{X^{\sigma,\frac1{q'}+}_q(T)} \les 1 + \|v_0\|_{H^\sigma} $$
follows from standard arguments.\footnote{For instance, one can apply Schauder's fixed point theorem to $\Gamma$ on the set $B_\sigma$, after noticing that $X^{\sigma,b}_q(T)$ embeds compactly in $X^{\sigma-,b-}_q(T)$, and use uniqueness of the solution in $L^2$ to conclude that the fixed point given by Schauder's theorem coincides with the solution $v$ built in the first part of the proof. Notice that the $L^2$ estimate needed to apply Schauder's fixed point theorem on $B_\sigma$ follows from \eqref{imageconditionNNlwp} and \eqref{imageconditionRRlwp}.}
By \eqref{duhamelest} and duality, it is enough to check that for $T$ that satisfies \eqref{Tpreservationlwp} and $v \in B_\sigma$,

\begin{align}
\int_0^t \NN(z(t')+v,z(t')+v,z(t')+v,z(t')+v,& \chi_T(t') w(t')) d t'  \notag\\
 &\ll (1 + \|v_0\|_{H^\sigma})\|w\|_{X^{-\sigma,\frac1{q}-}_{q'}} \label{NNpreservation}
\end{align}
and 
\begin{align}
 \int_0^t \RR(z(t')+v,z(t')+v,z(t')+v,z(t')+v, &\chi_T(t') w(t')) d t'  \notag\\
 &\ll (1 + \|v_0\|_{H^\sigma})\|w\|_{X^{-\sigma,\frac1{q}-}_{q'}}. \label{RRpreservation}
\end{align}
We start with estimating the terms depending on $\NN$, proceeding analogously to what we did for \eqref{imageconditionNNlwp}. Notice that if $v \in B_\sigma$, then
$$ \|S(t)v_0 + v \|_{X^{\sigma,\frac1{q'}+}_q(T)} \les 1 + \|v_0\|_{H^\sigma}, \quad \|S(t)v_0 + v \|_{X^{0,\frac1{q'}+}_q(T)} \les 1 + \|v_0\|_{L^2}. $$
Therefore, by the definitions \eqref{X3def}, \eqref{J12def} and \eqref{J13def} of $X^{(3)}$, $J_{12}$ and $J_{13}$ respectively, the symmetry 
$$\NN(u_1,u_2,u_3,u_4) = \NN(u_3,u_2,u_1,u_4)=\NN(u_1,u_4,u_3,u_2) = \NN(u_3,u_4,u_1,u_2),$$
\eqref{quad_det_1}, \eqref{quad_det_2}, \eqref{timeloc}, and \eqref{q'+}, we obtain that 
\begin{align*}
&\int_0^t \NN(z(t')+v,z(t')+v,z(t')+v,z(t')+v, \chi_T(t') w(t')) d t' \\
\les&\  \|X^{(3)}\|_{X^{\sigma, \frac1{q'}+}_q(T)}\|w\|_{X^{-\sigma,\frac1q-}_{q'}} \\
&+ \big(\|J_{12}\|_{b_J+,q,\sigma,-\sigma} + \|J_{13}\|_{b_J+,q,\sigma,-\sigma} \big)\\
&\phantom{+\ } \times \big(\|S(t) v_0 + v\|_{X^{\sigma,0}_q}\|w  \|_{X^{-\sigma,b_J+}_{q'+}} + \|S(t) v_0 + v\|_{X^{\sigma,b_J+}_q}\|\chi_T w  \|_{X^{-\sigma,0}_{q'+}}\big)\\
&+ \Big(\|S(t)u_0\|_{X^{s-\frac12-, \frac{3-2s-}{4\alpha}+}_q(T)} \|S(t) v_0 + v\|_{X^{0,\frac1{q'}+}_q(T)}\|S(t) v_0 + v\|_{X^{\sigma,\frac1{q'}+}_q(T)} \|\chi_T w \|_{X^{-\sigma,0}_{q'}} \\
&\phantom{\Big()}+\|S(t)u_0\|_{X^{s-\frac12-, \frac{1}{q'}+}_q(T)} \|S(t) v_0 + v\|_{X^{0,\frac{3-2s-}{4\alpha}+}_q(T)}\|S(t) v_0 + v\|_{X^{\sigma,\frac1{q'}+}_q(T)} \|\chi_T w  \|_{X^{-\sigma,0}_{q'}}\\
&\phantom{\Big()}+\|S(t)u_0\|_{X^{s-\frac12-, \frac{1}{q'}+}_q(T)} \|S(t) v_0 + v\|_{X^{0,\frac1{q'}+}_q(T)}\|S(t) v_0 + v\|_{X^{\sigma,\frac{3-2s-}{4\alpha}+}_q(T)} \|\chi_T w \|_{X^{-\sigma,0}_{q'}}\\
&\phantom{\Big()} +\|S(t)u_0\|_{X^{s-\frac12-, \frac{1}{q'}+}_q(T)} \|S(t) v_0 + v\|_{X^{0,\frac1{q'}+}_q(T)}\|S(t) v_0 + v\|_{X^{\sigma,0}_q(T)} \|\chi_T w  \|_{X^{-\sigma,\frac{3-2s-}{4\alpha}+}_{q'}}\Big)\\
&+\Big( \|S(t)v_0 + v\|_{X^{0, \frac{1}{2\alpha}}_q(T)} \|S(t) v_0 + v\|_{X^{0,\frac1{q'}+}_q(T)}  \|S(t) v_0 + v\|_{X^{\sigma,\frac1{q'}+}_q(T)} \|\chi_T w  \|_{X_{q'}^{-\sigma,0}} \\
&\phantom{\Big()} + \|S(t) v_0 + v\|_{X^{0,\frac1{q'}+}_q(T)}^2  \|S(t) v_0 + v\|_{X^{\sigma,\frac1{2\alpha}}_q(T)} \|\chi_T w  \|_{X_{q'}^{-\sigma,0}}\\
&\phantom{\Big()} + \|S(t) v_0 + v\|_{X^{0,\frac1{q'}+}_q(T)}^2  \|S(t) v_0 + v\|_{X^{\sigma,0}_q(T)} \|\chi_T w  \|_{X_{q'}^{-\sigma,\frac{1}{2\alpha}}}\Big)\\
\les& \|w\|_{X^{-\sigma,\frac1q-}_{q'}} \Big(T^{\eta-} \|X^{(3)}\|_{X^{\sigma, \frac1{q'}+\eta+}_q(1)} \\
&  \phantom{\|w\|_{X^{-\sigma,\frac12-}} \Big()} + \big(\|J_{12}\|_{b_J+,q,\sigma,-\sigma} + \|J_{13}\|_{b_J+,q,\sigma,-\sigma} \big) T^{1 - b_J-} (1 + \|v_0\|_{H^\sigma}) \\
&\phantom{\|w\|_{X^{-\sigma,\frac12-}} \Big()}+ T^{1-\frac{3-2s}{4\alpha}-} \|u_0\|_{H^{s-\frac12-}} (1 + \|v_0\|_{L^2})(1 + \|v_0\|_{H^\sigma})\\
&\phantom{\|w\|_{X^{-\sigma,\frac12-}} \Big()} + T^{1-\frac 1{2\alpha}}(1 + \|v_0\|_{L^2})^2(1 + \|v_0\|_{H^\sigma})\Big).
\end{align*}
Therefore, \eqref{NNpreservation} holds if $T$ satisfies
\begin{align*}
T &\ll \|X^{(3)}\|_{X^{\sigma, \frac1{q'}+\eta+}_q(1)}^{-\eta^{-1}-} \\
T &\ll (\|J_{12}\|_{b_J+,q,\sigma,-\sigma} + \|J_{13}\|_{b_J+,q,\sigma,-\sigma})^{-\frac1{1-b_J}-},\\
T&\ll  \big[\|u_0\|_{H^{s-\frac12-}} (1 + \|v_0\|_{L^2})\big]^{-\frac1{1-\frac{3-2s}{4\alpha}}-},\\
T&\ll (1 + \|v_0\|_{L^2})^{-\frac{2}{1-\frac 1{2\alpha}}-}.
\end{align*}
Compare to the list \eqref{TconditionlistNN}. The difference is that terms depending on $\|\Xi(u_0)\|_{\mathcal X^{0,q}}$ are replaced by terms depending on $\|\Xi(u_0)\|_{\mathcal X^{\sigma,q}}$. Therefore, one can check that these conditions hold under \eqref{Tpreservationlwp} in the same way as we proved the analogous result for \eqref{TconditionlistNN}. Recall that the extra term depending $\|\Xi(u_0)\|_{\mathcal X^{0,q}}$ in  \eqref{Tpreservationlwp} is there to guarantee that $\| v \|_{X^{0,\frac1{q'}+}_q(T)} \les 1 + \|v_0\|_{L^2}$. We now move to showing \eqref{RRpreservation}. By \eqref{res12}, \eqref{res13}, \eqref{resrandom}, the symmetry 
$$\RR(u_1,u_2,u_3,u_4) = \RR(u_3,u_2,u_1,u_4) = \RR(u_1,u_4,u_3,u_2) = \RR(u_3,u_4,u_1,u_2),$$ 
\eqref{Xsbpembedding}, \eqref{linest}, \eqref{timeloc}, and \eqref{timeloc2}, we obtain that 
\begin{align*}
 &\int_0^t \RR(z(t')+v,z(t')+v,z(t')+v,z(t')+v, \chi_T(t') w(t')) d t' \\
\les &\ \|S(t)u_0\|_{X^{\frac\sigma3,\frac1{q'}+}_{6,q}(T)}^2\|S(t)u_0\|_{X^{\frac\sigma3,0}_{6,q}(T)}\|\chi_T w\|_{X^{0,0}_{q'}(T)} \\
&+ \|S(t)u_0\|_{X^{0,\frac1{q'}+}_{\infty, q}(T)}^2\|S(t)v_0 + v\|_{X^{\sigma,0}_q(T)}\|\chi_T w\|_{X^{0,0}_{q'}(T)} \\
&+ \|S(t)u_0\|_{X^{0,\frac1{q'}+}_{\infty, q}(T)}\|S(t)v_0 + v\|_{X^{0,\frac1{q'}+}_q(T)}\|S(t)v_0 + v\|_{X^{\sigma,0}_q(T)}\|\chi_T w\|_{X^{0,0}_{q'}(T)} \\
&+ \|S(t)v_0 + v\|_{X^{0,\frac1{q'}+}_q(T)}^2\|S(t)v_0 + v\|_{X^{0,0}_q(T)}\|\chi_T w\|_{X^{0,0}_{q'}(T)} \\
&\les T^{1-} \| w(t)\|_{X^{0,\frac1q-}_{q'}}\big( \|u_0\|_{FL^{\frac\sigma3,6}}^3 + (1+\|v_0\|_{L^2})^2)(1+\|v_0\|_{H^\sigma})\big).
\end{align*}
Therefore, \eqref{RRpreservation} follows as long as 
\begin{align*}
T & \ll \|u_0\|_{\FL^{\frac\sigma3,6}}^{-3-}, \\
T & \ll \|u_0\|_{\FL^{0,\infty}}^{-2-}, \\
T& \ll (1+\|v_0\|_{L^2})^{-2-}.
\end{align*}
It is easy to check that all of these conditions hold under \eqref{Tpreservationlwp}. This completes the proof of Proposition~\ref{PROP:LWP}.

\subsection{Local well-posedness for the truncated equation}

We conclude this section by stating an analogous version of Proposition \ref{PROP:LWP} for the truncated system \eqref{4NLStrunc}, together with a convergence result as $N \to \infty$. 

\begin{proposition}\label{PROP:LWPN}
Let $\al>1$ and $\tfrac{1}{4}<s\leq \tfrac{1}{2}$ be such that $\al+s>\tfrac{3}{2}$ and let $ 2 \le q < \tfrac{4\alpha}{3-2s}.$
 Given $u_0$ distributed according to $\mu_s$ and $N\in \mathbb{N}$, we define
$$ \Xi_N(u_0) := \big(u_0, J_{12,N}, J_{13,N}, X_N^{(3)} \big), $$
where $J_{12,N}, J_{13,N}$ are defined in \eqref{J12Ndef} and \eqref{J13Ndef}, and $X^{(3)}_N$ is defined in \eqref{X3Ndef}. 
Define
\begin{align*}
\|\Xi_N(u_0)\|_{\mathcal X^{\sigma,q}}:=& \|u_0\|_{H^{s-\frac 12 -}} + \|u_0\|_{\FL^{\frac{\s}{3}, 6}} + \|X^{(3)}_{N}\|_{X^{\sigma,\frac 1{q'} +}_q(1)}\\
& \hphantom{\|u_0\|_{H^{s-\frac 12 -}}}+ \|J_{12,N}\|_{b_J+, q, \sigma,-\sigma} +  \|J_{13,N}\|_{b_J+,q, \sigma, -\sigma} ,
\end{align*}
where $b_J$ is defined in \eqref{bilinearlwpcondition} and $0\leq \s\leq s$. Let $v_0\in H^{s}(\T)$. Then, the equation \begin{equation}\label{FNLSvN}
\begin{cases}
i\partial_t v_N+(-\dx^2)^\alpha v_N +\NN_N( S(t)(u_0+v_0)+v_N)+\RR_N(S(t)(u_0+v_0)+v_N)=0 ,\\
v_N|_{t=0}=0,
\end{cases}
\end{equation}
is locally-well posed in $L^2(\T)$. Moreover, if $\tau_{0,N}$ is defined as 
\begin{align} \label{tau0Ndef}
\tau_{0,N}(u_0,v_0)=\max\{ t\geq 0\,:\, \|v_N\|_{X^{0,\frac{1}{q'}+}_{q}(T)} \les 1+\|v_0\|_{L^2}\},
\end{align}
then there exists $C_{\eps}>0$ such that 
\begin{align}
\tau_{0,N}(u_0,v_0)^{-1} \les 1 + \|v_0\|_{L^2}^{\frac{4\al}{2\al-1}+\eps} +\|\Xi_N(u_0)\|_{\mathcal X^{0,q}}^{C_{\eps}}, \label{tau0Nbound}
\end{align}
where the implicit constant is independent of $N$.
Moreover, suppose that 
\begin{equation}
\lim_{N \to \infty} \|\Xi_N(u_0) - \Xi(u_0) \|_{\mathcal X^0} \to 0, \label{Xiconvergencelwp}
\end{equation}
and that for some $0 <T \le 1$, and for some $N \ge N_0(u_0,C) \gg 1$, 
\begin{equation}\label{vNboundedness}
\| v_N \|_{L^\infty([0,T], L^2)} \le C < \infty.
\end{equation}
Then, if $v$ is the solution of \eqref{FNLSv}, we have that $v$ can be extended up to time $T$, and 
\begin{equation}\label{vboundedness}
\| v \|_{L^\infty([0,T], L^2)} \le C +1.
\end{equation}
Moreover, in this case,
\begin{equation}\label{vconvergence}
\lim_{N \to \infty} \| v - v_N \|_{L^\infty([0,T], L^2)} =0.
\end{equation}
Furthermore, if $v$ is a solution to \eqref{FNLSv} such that \eqref{Xiconvergencelwp}, \eqref{vboundedness} hold, then there exists $N_1(u_0,C)\gg 1$ such that for $N\geq N_0 (u_0,C)$, $v_N$ can be extended up to time $T$ and  satisfies 
\begin{equation}
\label{vNboundedness2}
\| v_N \|_{L^\infty([0,T], L^2)} \le C+2 < \infty,
\end{equation}
and \eqref{vconvergence} holds.
Finally, for $0 \le \sigma \le s$, define 
\begin{align} \label{tausigmaNdef}
\tau_{\sigma,N}(u_0,v_0)=\max\{ 0 \le t \le \tau_{0}(u_0,v_0)\,:\, \|v_N\|_{X^{\sigma,\frac{1}{q'}+}_{q}(T)} \les 1+\|v_0\|_{H^\sigma}\}.
\end{align}
Then there exists $C_{\eps,\sigma}>0$ such that 
\begin{align}
\tau_{\sigma}(u_0,v_0)^{-1} \les 1+\|v_0\|_{L^2}^{\frac{4\al}{2\al-1}+\eps} +\|\Xi_N(u_0)\|_{\mathcal X^{0,q}}^{C_{\eps}}+\|\Xi_N(u_0)\|_{\mathcal X^{\sigma,q}}^{C_{\eps,\sigma}}, \label{tausigmaNbound}
\end{align}
where the implicit constant is independent of $N$.
\end{proposition}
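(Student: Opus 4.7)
The local well-posedness in $L^2$ and the uniform-in-$N$ bounds \eqref{tau0Nbound}, \eqref{tausigmaNbound} follow by verbatim repetition of the contraction argument used in the proof of Proposition \ref{PROP:LWP}, replacing $\NN,\RR,X^{(3)},J_{12},J_{13}$ by $\NN_N,\RR_N,X^{(3)}_N,J_{12,N},J_{13,N}$ throughout. Indeed, by \eqref{nonlineartruncationcomp} each occurrence of $\NN_N$ is of the form $P_N\NN(P_N\cdot,P_N\cdot,P_N\cdot)$, and since $P_N$ is a Fourier multiplier of operator norm at most $1$ on every space $X^{s,b}_{p,q}$, every multilinear estimate of Propositions \ref{PROP:detquad}, \ref{randomtrilwp}, \ref{PROP:bilinearlwp} passes through with identical constants when $\NN$ is replaced by $\NN_N$ and $\Xi$ by $\Xi_N$. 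The list of time conditions \eqref{TconditionlistNN} then holds verbatim, yielding \eqref{tau0Nbound} and, by the persistence-of-regularity part of the proof, also \eqref{tausigmaNbound}.

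For the convergence statements, set $w_N:=v-v_N$ and $z(t):=S(t)(u_0+v_0)$, and decompose
$$\NN(z+v)-\NN_N(z+v_N)=\bigl[\NN(z+v)-\NN(z+v_N)\bigr]+\bigl[\NN(z+v_N)-\NN_N(z+v_N)\bigr],$$
and analogously for $\RR-\RR_N$. By multilinearity, the first bracket is a sum of trilinear terms in $z,v,v_N$ each containing at least one factor of $w_N$, and is controlled on any window of length $\tau$ by the same multilinear machinery used for \eqref{imageconditionNNlwp} and \eqref{imageconditionRRlwp}; this yields, via \eqref{duhamelest}, a bound of the form $\tau^{\theta}\,K\,\|w_N\|_{X^{0,\frac1{q'}+}_q(\tau)}$ for some $\theta>0$, where $K$ depends only on $\|\Xi(u_0)\|_{\mathcal X^{0,q}}$, $\|v_0\|_{L^2}$, and the $L^\infty_tL^2_x$ bounds on $v$ and $v_N$. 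The second bracket depends on the random inputs only through $\Xi-\Xi_N$ and through $(I-P_N)z$; applying the same multilinear estimates gives a contribution bounded by a quantity $\delta_N$, uniform in $t_0\in[0,T]$, with $\delta_N\to 0$ as $N\to\infty$ by \eqref{X3convergence}, \eqref{Jconvergence}, and \eqref{Xiconvergencelwp}.

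Given the uniform bound \eqref{vNboundedness}, pick $\tau_*>0$ depending only on $C,\|v_0\|_{L^2},\|\Xi(u_0)\|_{\mathcal X^{0,q}}$ so that on any subinterval $[t_0,t_0+\tau_*]\subset[0,T]$, the constant in front of $\|w_N\|_{X^{0,\frac1{q'}+}_q(\tau_*)}$ coming from the first bracket is at most $\tfrac12$. The contraction estimate then reads
$$\|w_N\|_{X^{0,\frac1{q'}+}_q(\tau_*)}\leq \tfrac12\|w_N\|_{X^{0,\frac1{q'}+}_q(\tau_*)}+C_*\|w_N(t_0)\|_{L^2}+\delta_N,$$
so that $\|w_N(t_0+\tau_*)\|_{L^2}\leq 2C_*\|w_N(t_0)\|_{L^2}+2\delta_N$. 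Iterating over the $\lceil T/\tau_*\rceil$ windows covering $[0,T]$ and using $w_N(0)=0$ produces $\|w_N\|_{L^\infty([0,T],L^2)}\leq C'\delta_N\to 0$, establishing \eqref{vconvergence}; by the triangle inequality this simultaneously extends $v$ up to time $T$ and yields \eqref{vboundedness}. The reverse direction---from \eqref{vboundedness} to \eqref{vNboundedness2}---is obtained by exchanging the roles of $v$ and $v_N$ in the same argument, the asymmetry in the truncation term contributing only additional vanishing factors.

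The main obstacle is ensuring that the ``truncation bracket'' $\NN(z+v_N)-\NN_N(z+v_N)$ generates a quantitatively vanishing error $\delta_N$ uniformly across the $O(T/\tau_*)$ iterations: absent a convergence rate, one would obtain only qualitative convergence of the random inputs and the bootstrap would not close. This is precisely why the power-gain estimates \eqref{X3convergence} and \eqref{Jconvergence} are formulated, and why the dyadic sums in the proofs of Propositions \ref{PROP:detquad}--\ref{PROP:bilinearlwp} must be performed with a small surplus so that restricting to dyadic blocks at scale $\gtrsim N$ produces a factor $N^{-\delta}$, controlling the $(I-P_N)z$ contributions.
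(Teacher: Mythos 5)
Your proposal is essentially the argument the paper has in mind: it explicitly says that \eqref{tau0Nbound}, \eqref{tausigmaNbound} are ``completely analogous'' to the $N=\infty$ estimates (correct, since $P_N$ is a contraction on every $X^{s,b}_{p,q}$ space) and that the convergence statements ``follow by a standard argument,'' which is the iterated contraction you describe. Your decomposition of $\NN(z+v)-\NN_N(z+v_N)$ into a contraction bracket and a truncation bracket, the uniform choice of $\tau_*$, and the resulting discrete Gronwall/bootstrap are exactly the right skeleton.

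Two small clarifications are worth recording. First, in the closing paragraph you claim that \emph{quantitative} rates from \eqref{X3convergence} and \eqref{Jconvergence} are essential for the bootstrap to close; this is not quite right. Once the uniform bound \eqref{vNboundedness} fixes $\tau_*$ (depending only on $C$, $\|v_0\|_{L^2}$, $\|\Xi(u_0)\|_{\mathcal X^{0,q}}$), the number of windows $\lceil T/\tau_*\rceil$ is independent of $N$, so the accumulated error is $(2C_*)^{\lceil T/\tau_*\rceil}\delta_N$, and the purely \emph{qualitative} convergence $\delta_N\to 0$ supplied by \eqref{Xiconvergencelwp} already suffices. The $N^{-\delta}$ rates are used in the paper to pass from $L^p$ moment bounds to a.s.\ convergence via Borel--Cantelli (Lemma~\ref{LEM:xiconvergence}), not to save the iteration count. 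Second, when restarting the equation at $t_0>0$ the random data becomes $S(t_0)u_0$; as the paper notes later (Section~6), $J_{12},J_{13}$ are invariant under this shift while $X^{(3)}$ transforms via $X^{(3)}[S(t_0)u_0](t)=X^{(3)}[u_0](t+t_0)-S(t_0)X^{(3)}[u_0](t_0)$. For $T\le 1$ all these objects live inside a fixed compact time window, so the needed uniform-in-$t_0$ bounds follow, but this should be mentioned rather than left implicit, since it is precisely what justifies taking $\tau_*$ uniform across all windows.
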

The proof of the estimates \eqref{tau0Nbound} and \eqref{tausigmaNbound} are completely analogous to the proofs of the respective estimates \eqref{tau0bound} and \eqref{tausigmabound}, and the convergence results \eqref{vboundedness}, \eqref{vconvergence}, and \eqref{vNboundedness2} follow by a standard argument. Hence, we omit the proof.

\section{Uniform exponential tail estimates}

For fixed $u_0$, let $\VV_N(u_0)$ be the solution of the equation
\begin{equation} \label{4NLSshiftedtruncated2}
\begin{cases}
i \partial_t v + (-\dx^2)^{\al} v + \NN_{N}(S(t)u_0 + v) + \Res_{N}(S(t)u_0 + v)= 0,\\
v(0) = 0,
\end{cases}
\end{equation}
where we recall the notation in \eqref{nonlineartruncationcomp}.
Note that this corresponds to \eqref{FNLSvN} with $v_0 = 0$. Moreover, we have $\VV_{N}(u_0)=\VV_{N}(P_N u_0)$.
Let $2 \le q <\frac{4\alpha}{3-4s}$, and define the stopping time $ \tau_N=\tau_{N}(u_0)$ in the following way:
\begin{equation}\label{taudef}
\begin{aligned}
\tau_{N}(u_0) = \max\bigg\{ M^{-\frac{4\alpha}{2\alpha-1}-}: M\ge1 \text{ dyadic},&\ \|\VV_N(u_0)\|_{X^{0,\frac1{q'}+}_q(M^{-\frac{4\alpha}{2\alpha-1}-})} \les M, \\
&\ \|\VV_N(u_0)\|_{X^{s,\frac1{q'}+}_q(M^{-\frac{4\alpha}{2\alpha-1}-})}\les M^\gamma \bigg\}.
\end{aligned}
\end{equation}
We do not keep track of the dependence of $\tau_{N}$ on $q$ as we will make a concrete choice of $q$ in Lemma~\ref{LEM:densityontau}, which will then be fixed for the rest of the paper. 

The aim of this section is to prove that the inverse of $\tau_{N}(u_0)$ is exponentially integrable, uniformly in $N$, see Lemma~\ref{lem:tauintegrability} below. This verifies one of the key assumptions for Bourgain's quasi-invariant measure argument (Theorem~\ref{bima}), which we will use to globalise our local-in-time solutions from Section~\ref{SEC:LWP}.

We begin by verifying that the random object $\Xi(u_0)$ is indeed almost surely finite in $\mathcal{X}^{\s}$ and can be approximated by $\Xi_{N}(u_0)$. It then follows from Proposition~\ref{PROP:LWPN} that $\tau_{N}(u_0)$ is almost surely finite. 

\begin{lemma}\label{LEM:xiconvergence}
For every $1\leq p < \infty$ and for every $0 \le \sigma \le s$, we have that 
 \begin{equation}\label{xiintegrability}
 \sup_{ N\in \mathbb{N}}  \E \big[ \|\Xi_N(u_0)\|_{\mathcal X^\sigma}^p  \big]\les_{\sigma,p} 1, 
 \end{equation}
where the expectation is taken with respect to the measure $\mu_s$.
 Moreover, for $\mu_s$-a.e. $u_0$, we have that 
\begin{equation}\label{xiconvergence}
\lim_{N \to \infty} \|\Xi(u_0) - \Xi_N(u_0) \| = 0.
\end{equation}
\end{lemma}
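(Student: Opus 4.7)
The plan is to estimate each of the five summands in the definition \eqref{normxidef} of $\|\Xi_N(u_0)\|_{\mathcal{X}^{\sigma,q}}$ separately, uniformly in $N$, and then derive almost sure convergence from quantitative $L^p$ rates already proven earlier.

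For the $N$-independent contributions, namely $\|u_0\|_{H^{s-\frac12-}}$ and $\|u_0\|_{\FL^{\sigma/3,6}}$, one simply expands these norms in terms of the i.i.d.\ Gaussian coefficients $\{g_n\}$. For the Sobolev piece, the standard Gaussian computation $\E[\|u_0\|_{H^{s-\frac12-}}^p]\les_p 1$ is immediate. For the Fourier--Lebesgue piece, writing
\[
\|u_0\|_{\FL^{\sigma/3,6}}^6 = \sum_{n\in\Z} \jb{n}^{2\sigma - 6s}|g_n|^6,
\]
and using $\sigma \le s$ together with $s > \tfrac14$ (which guarantees $s < 3s - \tfrac12$, hence $2\sigma - 6s < -1$), the series is summable after taking expectation, and higher moments follow from Minkowski's inequality in $\ell^6$ combined with the Wiener chaos estimate (Lemma~\ref{LEM:WCE}) applied to $|g_n|^6$.

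For the $N$-dependent components, the uniform integrability follows directly from Propositions~\ref{randomtrilwp} and \ref{PROP:bilinearlwp}. Indeed, \eqref{X3integrability} controls $\|X^{(3)}\|_{X^{\sigma,\frac1{q'}+}_q(1)}$ in every $L^p(\mu_s)$, and inspection of its proof shows that the bound arises as a positive sum over frequencies; restricting to $|n_j|\le N$ can only decrease the sum, so the same bound applies to $X_N^{(3)}$ uniformly in $N$. Likewise, \eqref{Jintegrability} controls $\|J_{12,N}\|_{b_J+,q,\sigma,-\sigma}$ and $\|J_{13,N}\|_{b_J+,q,\sigma,-\sigma}$ uniformly in $N$ (the kernel analysis there was already carried out for both the untruncated and truncated versions). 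The applicability of these propositions for $0\le \sigma \le s$ and $q$ in the stated range must be checked, but this follows from \eqref{qbdX3} and \eqref{bilinearlwpcondition} together with the hypothesis $q < \tfrac{4\alpha}{3-2s}$ (the regime $\sigma=s$ being the tightest, and still within \eqref{qbdX3}). Summing the five component bounds proves \eqref{xiintegrability}.

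For the almost sure convergence \eqref{xiconvergence}, the first two components of $\Xi_N(u_0)$ do not depend on $N$, so there is nothing to prove there. For the remaining three, the quantitative estimates \eqref{X3convergence} and \eqref{Jconvergence} give
\[
\E\big[\|X^{(3)} - X_N^{(3)}\|_{X^{\sigma,\frac1{q'}+}_q(1)}^p\big] + \E\big[\|J_{12} - J_{12,N}\|_{b_J+,q,\sigma,-\sigma}^p + \|J_{13} - J_{13,N}\|_{b_J+,q,\sigma,-\sigma}^p\big] \les N^{-p\delta}
\]
for every $p<\infty$ and some $\delta = \delta(\alpha,s,\sigma,q) > 0$. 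By Chebyshev's inequality applied with $p$ so large that $p\delta > 2$, and restricting to dyadic $N$, we find $\sum_{N\text{ dyadic}} \P(\|\Xi(u_0) - \Xi_N(u_0)\|_{\mathcal{X}^{\sigma,q}} > N^{-\delta/2}) < \infty$, and Borel--Cantelli together with monotonicity of the norm in $N$ (obtained via the fact that the random forms $X_N^{(3)}, J_{*,N}$ are themselves given by frequency-truncated sums) yields the conclusion. The argument is essentially bookkeeping; the only mild subtlety lies in verifying that the parameter range for $q$ and $\sigma$ used by Propositions~\ref{randomtrilwp} and \ref{PROP:bilinearlwp} accommodates the full range $0\le \sigma \le s$ relevant for the definition of the $\mathcal{X}^{\sigma,q}$ norm.
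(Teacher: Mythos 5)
Your proposal is correct and follows essentially the same route as the paper: Gaussian moment computations (Minkowski plus Wiener chaos) for the two $N$-independent terms, and the quantitative estimates \eqref{X3integrability}, \eqref{X3convergence}, \eqref{Jintegrability}, \eqref{Jconvergence} combined with Borel--Cantelli for the three $N$-dependent terms. One small remark: the detour through dyadic $N$ and the claimed ``monotonicity of the norm in $N$'' (which is not actually true for $\|X^{(3)}-X^{(3)}_N\|$) is unnecessary, since the rate $N^{-p\delta}$ with $p$ large is already summable over all integers $N$, so Borel--Cantelli applies directly.
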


\begin{proof}
We show \eqref{xiintegrability} and \eqref{xiconvergence} for the terms of $\Xi_N(u_0)$ one-by-one. 

\medskip
\noi
 \underline{\textbf{Term 1:}} $\|u_0\|_{H^{s-\frac12-}}$. For this term the convergence is automatic (since it does not depend on $N$), so we just need to show the integrability \eqref{xiintegrability}. Recall that we can express $u_0$ in terms of the Fourier expansion \eqref{u0}.
Thus, for finite $p\geq 2$,  Minkowski's inequality implies
\begin{align*}
\E\big[ \|u_0\|_{H^{s-\frac12-}}^p \big] & \les \bigg( \sum_{n\in \Z}  \frac{ \jb{n}^{2s-1-}\E[|g_n|^{p}]^{\frac{2}{p}}}{\jb{n}^{2s}} \bigg)^{\frac{p}{2}}  \les_{p} \bigg( \sum_{n\in \Z} \frac{\jb{n}^{2s-1-}}{\jb{n}^{2s}}\bigg)^{\frac{p}{2}}\les_{p} 1.
\end{align*}

\medskip
\noi
 \underline{\textbf{Term 2:}} $\|u_0\|_{\FL^{\frac\sigma3,6}}$.  This term is also independent of $N$, so convergence is automatic. We just need to show the integrability \eqref{xiintegrability}. For $p\geq 6$, Minkowski's inequality again implies \begin{align*}
\E \big[ \|u_0\|_{\FL^{\frac\sigma3,6}}^p \big] &\les \bigg( \sum_{n\in \Z}  \frac{ \jb{n}^{2\s}\E[|g_n|^{p}]^{\frac{6}{p}}}{\jb{n}^{6s}} \bigg)^{\frac{p}{6}}  \les_{p} \bigg( \sum_{n\in \Z} \frac{\jb{n}^{2\s}}{\jb{n}^{6s}}\bigg)^{\frac{p}{6}}\les_{p} \bigg( \sum_{n\in \Z} \frac{1}{\jb{n}^{4s}}\bigg)^{\frac{p}{6}} \les_{p} 1,
\end{align*}
which is finite since $s>\tfrac{1}{4}$.

\medskip
\noi
\underline{\textbf{Terms 3 \& 4}:} $\|J_{12,N}\|_{b_J+,q,\sigma,-\sigma}$ and  $\|J_{13,N}\|_{b_J+,q,\sigma,-\sigma}$. 
The uniform integrability follows from \eqref{Jconvergence} and \eqref{Jintegrability}. Moreover, the a.s.\ convergence \eqref{xiconvergence} of $J_{12,N}$ to $J$ and similarly of $J_{13,N}$ to $J_{13}$ follow from the estimate \eqref{Jconvergence} and the Borel-Cantelli lemma.

\medskip
\noi
\underline{\textbf{Term 5}:} $\|X^{(3)}_N\|_{X^{\sigma,\frac1{q'}+}_q(1)}$. The uniform integrability \eqref{xiintegrability} follows from \eqref{X3integrability} and \eqref{X3convergence}. The a.s.\ convergence \eqref{xiconvergence} follows from \eqref{X3convergence} and the Borel-Cantelli lemma. 

\end{proof}

We now prove the exponential integrability of $\tau_{N}(u_0)^{-1}$, \textit{uniform} in $N$.

\begin{lemma} \label{lem:tauintegrability}
For every $\beta < \frac{2\alpha-1}{2\alpha}\gamma$, there exists a constant $C = C(\alpha,\beta,\gamma,s)$ such that we have 
\begin{equation} \label{tauintegrability}
\sup_{N\in \mathbb{N} }    \int \exp\big(\tau_{N}(u_0)^{-\beta}\big) d\rho_{s,\gamma}(u_0) \le C(\alpha, \beta,\gamma,s) < \infty.
\end{equation}
\end{lemma}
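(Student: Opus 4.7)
The plan is to derive a pointwise bound of the form
\begin{equation*}
\tau_N(u_0)^{-\beta} \le \varepsilon\, |\wick{M(u_0)}|^\gamma + B(u_0),
\end{equation*}
with $\varepsilon < 1$ (taken arbitrarily small) and $B$ exponentially integrable under $\mu_s$ uniformly in $N$. Given this, the claim follows from
\begin{equation*}
\int \exp(\tau_N^{-\beta})\, d\rho_{s,\gamma} \le \int \exp\bigl(-(1-\varepsilon)|\wick{M(u_0)}|^\gamma\bigr)\exp(B(u_0))\, d\mu_s \le \|\exp B\|_{L^1(\mu_s)} < \infty.
\end{equation*}

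To establish this bound, I would iterate Proposition \ref{PROP:LWPN}. At step $k$, writing the solution as $u(t_k) = S(t_k)u_0 + \VV_N(t_k)$ and using the $S(t)$-invariance of $\mu_s$, one applies Proposition \ref{PROP:LWPN} with Gaussian data $S(t_k)u_0 \sim \mu_s$ and smooth part $\VV_N(t_k) \in H^s$, extending the solution over a time step of length at least
\begin{equation*}
\Delta_k \gtrsim \bigl(1 + \|\VV_N(t_k)\|_{L^2}^{\frac{4\alpha}{2\alpha-1}+} + \|\Xi_N(S(t_k)u_0)\|_{\mathcal X^{s,q}}^{C_s}\bigr)^{-1},
\end{equation*}
while $\|\VV_N(t_k)\|_{H^s}$ grows at most polynomially in $k$. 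The weight in $\rho_{s,\gamma}$ enters through mass conservation: preservation of $\int\wick{|P_N u|^2}$ under both \eqref{4NLStrunc} and the linear flow, combined with $\VV_N = P_N\VV_N$, yields
\begin{equation*}
\|\VV_N(t)\|_{L^2}^2 + 2\,\mathrm{Re}\,\langle \VV_N(t), P_N S(t)u_0\rangle_{L^2} = 0.
\end{equation*}
Applying Lemma \ref{LEM:var5} with $V_0 = \VV_N(t)$, $Y = P_N S(t)u_0$, and $Z = -\wick{M(u_0)}$ (so that the left-hand side of \eqref{l2integrability} equals $|\wick{M(u_0)}|^\gamma$) and rearranging gives
\begin{equation*}
\|\VV_N(t)\|_{L^2}^{2\gamma} \lesssim |\wick{M(u_0)}|^\gamma + \|\VV_N(t)\|_{H^s}^2 + \|u_0\|_{H^{s-\frac12-\delta}}^\kappa.
\end{equation*}

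Substituting into the iteration yields $\Delta_k^{-1} \lesssim F + k^p$, where $p := \frac{4\alpha}{(2\alpha-1)\gamma}$ and
\begin{equation*}
F(u_0) := 1 + |\wick{M(u_0)}|^{\frac{2\alpha}{2\alpha-1}} + \|u_0\|_{H^{s-\frac12-\delta}}^{c_1} + \sup_{k} \|\Xi_N(S(t_k)u_0)\|_{\mathcal X^{s,q}}^{C_s},
\end{equation*}
for some $c_1 = c_1(\alpha,s,\gamma,\kappa) > 0$. Summing gives $t_K \gtrsim F^{1/p - 1}$ after $K \sim F^{1/p}$ iterations, and at this stage Lemma \ref{LEM:var5} together with the iteration give $\|\VV_N\|_{X^0(t_K)} \lesssim M_* := F^{(2\alpha-1)/(4\alpha)}$ and $\|\VV_N\|_{X^s(t_K)} \lesssim M_*^\gamma$. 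Taking $M = M_*$ in the definition \eqref{taudef} of $\tau_N$ then yields $\tau_N \gtrsim M_*^{-\frac{4\alpha}{2\alpha-1}-} = F^{-1-}$, and consequently
\begin{equation*}
\tau_N^{-\beta} \lesssim 1 + |\wick{M(u_0)}|^{\frac{2\alpha\beta}{2\alpha-1}+} + \|u_0\|_{H^{s-\frac12-\delta}}^{c_1\beta} + \sup_k \|\Xi_N(S(t_k)u_0)\|_{\mathcal X^{s,q}}^{C_s \beta}.
\end{equation*}

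The key numerology is that the exponent $\frac{2\alpha\beta}{2\alpha-1}$ of $|\wick{M(u_0)}|$ is strictly less than $\gamma$ exactly when $\beta < \frac{2\alpha-1}{2\alpha}\gamma$, so that Young's inequality gives $|\wick{M}|^{\frac{2\alpha\beta}{2\alpha-1}+} \le \varepsilon |\wick{M}|^\gamma + C_\varepsilon$ for any $\varepsilon > 0$, providing the required $\varepsilon|\wick{M}|^\gamma$ term with $\varepsilon < 1$. The main technical obstacle is the exponential integrability of the Gaussian- and chaos-type quantities in $B$: Gaussian tail estimates handle $\|u_0\|^{c_1\beta}$, the Wiener chaos estimate (Lemma \ref{LEM:WCE}) combined with the uniform moments of Lemma \ref{LEM:xiconvergence} handle $\|\Xi_N\|^{C_s\beta}$, and the supremum over $k$ is reduced to a norm of $\Xi_N(u_0)$ on the enlarged time interval $[0,2]$ by the time-translation identity $X^{(3)}_{S(t_k)u_0}(t) = X^{(3)}(t+t_k) - S(t)X^{(3)}(t_k)$ (and the analogous identities for $J_{12,N}, J_{13,N}$), with the Bou\'e-Dupuis formula (Lemma \ref{LEM:BD2}) available as a refinement should sharper chaos moments be required near the endpoint $\beta = \frac{2\alpha-1}{2\alpha}\gamma$.
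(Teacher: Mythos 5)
Your proposal takes a genuinely different route from the paper's, and the route does not close. The paper applies the Bou\'e--Dupuis formula (Lemma~\ref{LEM:BD2}) \emph{first}, reducing the exponential moment to estimating
\begin{equation*}
\E\Big[\sup_{V_0 \in H^s}\Big\{\tau_N(Y+V_0)^{-\beta} - \Big|\int \wick{|Y+V_0|^2}\Big|^\gamma - \tfrac12\|V_0\|_{H^s}^2\Big\}\Big],
\end{equation*}
and then uses the single LWP bound \eqref{tausigmaNbound} on the \emph{shifted} stopping time $\tau_{s,N}(Y,V_0)^{-1}$, together with \eqref{l2integrability}, to absorb the $\|V_0\|_{L^2}^{\frac{4\alpha}{2\alpha-1}+}$ and $\|V_0\|_{H^s}^{\frac{4\alpha}{\gamma(2\alpha-1)}+}$ terms into the penalties. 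No iteration in time is needed: the shift $V_0$ produced by the variational formula plays the role that you try to create by hand through the iteration. The crucial structural point, and the one your proposal misses, is that after Bou\'e--Dupuis the random quantity $\|\Xi_N(Y)\|_{\mathcal X^{s,q}}^{C_{\eps,s}}$ only has to lie in $L^1(\mu_s)$, which is exactly \eqref{xiintegrability}.

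Your pointwise strategy $\tau_N(u_0)^{-\beta} \le \varepsilon|\wick{M(u_0)}|^\gamma + B(u_0)$ puts the whole burden on exponential integrability of $B$, and that burden cannot be met. The term $\sup_k \|\Xi_N(S(t_k)u_0)\|_{\mathcal X^{s,q}}^{C_s\beta}$ contains $\|X^{(3)}_N\|^{C_s\beta}$, with $X^{(3)}_N$ a Wiener chaos of order $3$. Lemma~\ref{LEM:WCE} gives $\|X^{(3)}_N\|_{L^p} \lesssim p^{3/2}\|X^{(3)}_N\|_{L^2}$, hence $\exp(c|X^{(3)}_N|^{2/3})$ is integrable but $\exp(|X^{(3)}_N|^{a})$ is not for $a > 2/3$. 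The exponent $C_s\beta$ has $C_s = C_{\eps,s}$ coming from the conditions such as $T \ll \|X^{(3)}\|^{-\eta^{-1}-}$ in \eqref{TconditionlistNN}, so $C_s$ is a large constant (it diverges as the various $\eps$'s shrink), and $C_s\beta > 2/3$ in the regime of interest. The Wiener chaos estimate together with the moment bounds in Lemma~\ref{LEM:xiconvergence} therefore do \emph{not} handle this term; the Bou\'e--Dupuis formula is indispensable, not a ``refinement near the endpoint.'' Two secondary remarks: (i) the iteration is superfluous for $\tau_N$, which is defined as a single stopping time and is directly bounded below by the LWP estimate; (ii) the claimed polynomial growth of $\|\VV_N(t_k)\|_{H^s}$ is too optimistic --- the persistence-of-regularity estimate $\|\VV_N\|_{X^{s,b}_q(T)} \lesssim 1+\|v_0\|_{H^s}$ is multiplicative with constant $>1$, and iterating it gives exponential growth in $k$ unless one sets up a more careful bootstrap, which is precisely what the paper avoids by not iterating at all.
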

\begin{proof} 
Note that $\tau_{N}(u_0)=\tau_{N}(P_N u_0)$.
By Lemma~\ref{LEM:BD2}, it is enough to show that
\begin{align*}
 \E\Big[\sup_{V_0 \in H^s}  \Big\{  \tau_{N}(Y + V_0)^{-\beta} - \Big|\int \wick{|Y + V_0|^2}\Big|^\gamma - \frac12 \| V_0\|_{H^s}^2 \Big\} \Big] \leq K
\end{align*}
for some constant $K>0$ depending on $\alpha, \beta, \gamma$ and $s$. For readability, we have simply written $P_N Y$ as $Y$ and $P_N V_0$ as $V_0$.

Recalling the definition \eqref{tau0Ndef} of $\tau_{0,N}$, the definition \eqref{tausigmaNdef} of $\tau_{s,N}$, and the definition of $\tau$ given in \eqref{taudef}, we have that 
\begin{equation*}
\begin{aligned}
\{ \tau_{N}(Y+V_0)^{-1} = M^{\frac{4\alpha}{2\alpha-1}+} \} &\subseteq \{ \| V_0 \|_{L^2} \ge M \} \cup \{ \| V_0 \|_{H^s} \ge M^\gamma \} \\
&\phantom{\subseteq\,\,\,}  \cup \{ \tau_{s,N}(Y,V_0)^{-1} \gtrsim M^{\frac{4\alpha}{2\alpha-1}+})\}.
\end{aligned}
\end{equation*}
Therefore, from \eqref{tausigmaNbound}, we obtain that 
\begin{align*}
\tau_{N}(Y+V_0)^{-1} &\les \| V_0 \|_{L^2}^{ \frac{4\alpha}{2\alpha-1}+} + \| V_0 \|_{H^s}^{ \frac{4\alpha}{\gamma(2\alpha-1)}+} + \tau_{s,N}(Y,V_0)^{-1} \\
&\les (1+ \| V_0 \|_{L^2})^{ \frac{4\alpha}{2\alpha-1}+} + \|\Xi_N(u_0)\|_{\mathcal X^{0,q}}^{C_{\eps}} + \|\Xi_N(u_0)\|_{\mathcal X^{s,q}}^{C_{\eps,s}}+ \| V_0 \|_{H^s}^{ \frac{4\alpha}{\gamma(2\alpha-1)}+} 
\end{align*}
Therefore, by \eqref{l2integrability} and \eqref{xiintegrability}, for some constant $C$ that can change from line to line, we have that 
\begin{align*}
& \E\Big[ \sup_{V_0 \in H^s} \Big\{ \tau_{N}(Y + V_0)^{-\beta} - \Big|\int \wick{(Y + V_0)^2}\Big|^\gamma -\frac12 \| V_0\|_{H^s}^2 \Big\} \Big] \\
&\leq  \E\Big[\sup_{V_0 \in H^s} \Big\{ C(1 + \| V_0 \|_{L^2})^{ \beta(\frac{4\alpha}{2\alpha-1}+)} + C\| V_0 \|_{H^s}^{ \beta(\frac{4\alpha}{\gamma(2\alpha-1)}+)} \\
&\phantom{\ge \inf_{V_0 \in H^s} \E\Big[ ]} + C\|\Xi_N(u_0)\|_{\mathcal X_0}^{C_{\eps}} +C\|\Xi_N(u_0)\|_{\mathcal X_0}^{C_{\eps,s}}+ C_\gamma \|Y\|_{H^{s-\frac12-\delta}}^\kappa +C_\gamma \Big| \int \wick{Y^2}\Big|^\gamma \\
&\phantom{\ge \inf_{V_0 \in H^s} \E\Big[ ]} - \tfrac12 \|V_0\|_{L^2}^{2\gamma}- \tfrac14 \| V_0\|_{H^s}^2\Big\} \Big] \\
&\leq   \E\Big[ \sup_{V_0 \in H^s} \Big\{ C(1 + \| V_0 \|_{L^2})^{ \beta(\frac{4\alpha}{2\alpha-1}+)} + C\| V_0 \|_{H^s}^{ \beta(\frac{4\alpha}{\gamma(2\alpha-1)}+)}  - \tfrac12 \|V_0\|_{L^2}^{2\gamma}- \tfrac14 \| V_0\|_{H^s}^2 \Big\} \Big] + C,
\end{align*}
which is finite, as long as
\begin{align*}
\beta\big(\tfrac{4\alpha}{2\alpha-1}+\!\big) < 2\gamma, \quad \text{and} \quad  \beta\big(\tfrac{4\alpha}{\gamma(2\alpha-1)}+\!\big) < 2,
\end{align*}
which is exactly the condition that we imposed on $\beta$. 
\end{proof}

\section{Uniform integrability of the transported density}

The main thrust of this section is to obtain a uniform in $N$ bound on the $L^p(\rho_{s,\g})$-norm of the density of the transported measure $(\Phi_{t}^{N})_{\#} \rho_{s,\g}$, for at least some $1<p<\infty$. In fact, we show it for all finite $p<\infty$.
 From Lemma~\ref{LEM:density} below, we have an explicit formula for the density (Radon-Nikodym derivative) 
\begin{align*}
f_{t}^{N}(u_0) : = \frac{ (\Phi_{t}^{N})_{\#} \rho_{s,\g}}{d\rho_{s,\g}}(u_0),
\end{align*}
for each fixed $N$. This density is trivially in $L^p(\rho_{s,\g})$ for all $p<\infty$ albeit with a bound that grows with $N$; see Remark~\ref{RMK:LpbdN}. The key triumph in this section is, indeed, a uniform in $N$ bound on the $L^p(\rho_{s,\g})$-norm of $f_{t}^{N}$ for any finite $p<\infty$. This is the result of Proposition~\ref{prop:lpbound}, see \eqref{lpdensity}. The proof of Proposition~\ref{prop:lpbound} is built up from the results of Lemma~\ref{LEM:densityontau} and Lemma~\ref{LEM:ftNSM}. One of the major novelties here is the proof of Lemma~\ref{LEM:ftNSM}, which shows that bounds on the density,
at least for as long as the solutions $\Phi_{t}^{N}(u_0)$ can be assumed to be controlled, are essentially reduced to uniform bounds on a single time interval where local well-posedness holds; see \eqref{densitybd1}.
The argument exploits the explicit formula for the density from \eqref{densityformula} and the fact that it is the Radon-Nikodym derivative.
 We prove the remaining uniform bounds in Lemma~\ref{LEM:densityontau}, using the Bou\'e-Dupuis formula, the local well-posedness theory from Section~\ref{SEC:LWP}, and suitable energy estimates from Subsection~\ref{SEC:energyests}.

\subsection{A formula for the transported density}
We define the operator
\begin{equation}
\QQ(u_1,u_2,u_3,u_4) := \text{Re}  \sum_{\substack{n_1 - n_2 + n_3 - n_4 = 0, \\ n_1 \neq n_2,n_4}} i{\Psi_s(\cj{n})} \ft{u_1}(n_1) \cj{\ft{u_2}(n_2)} \ft{u_3}(n_3) \cj{\ft{u_4}(n_4)}, \label{Qenergy}
\end{equation}
where we recall that $\Psi_{s}(\cj{n})$ was defined in \eqref{symderiv}.
With an abuse of notation, we denote
\begin{equation*}
\QQ(u) := \QQ(u,u,u,u).
\end{equation*}

\begin{lemma}\label{LEM:density}
Let $\Phi_t^N(u_0)$ denote the solution of the equation \eqref{4NLStrunc} and let $f_t^N$ denote the density of the measure $(\Phi_t^N)_{\#}\rho_{s,\gamma}$, i.e.\
\begin{equation*}
\int F(\Phi_t^N(u_0)) d \rho_{s,\gamma}(u_0) = \int F(u_0) f^{N}_t(u_0) d \rho_{s,\gamma}(u_0).
\end{equation*}
Then 
\begin{equation} \label{densityformula}
f^{N}_t(u_0) = \exp\bigg( \int_0^t \QQ(P_N\Phi_{-t'}^N(u_0),P_N\Phi_{-t'}^N(u_0),P_N\Phi_{-t'}^N(u_0),P_N\Phi_{-t'}^N(u_0)) dt' \bigg).
\end{equation}
\end{lemma}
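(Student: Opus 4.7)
The plan is to decompose the flow $\Phi_t^N$ into its low-frequency and high-frequency parts, apply Liouville's theorem to the finite-dimensional Hamiltonian ODE on low modes, reduce the weight coming from $\rho_{s,\gamma}$ via mass conservation, and finally identify the exponent with an integral of $\QQ$ via the fundamental theorem of calculus together with a symmetrization of the non-resonant part of the equation.

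Since the nonlinearity in \eqref{4NLStrunc} is of the form $P_N[\cdots]$, the flow acts as the linear propagator $S(t)$ on high modes $|n|>N$, which preserves $\mu_s$ restricted to those modes (being a rotation on each Fourier component), and as a finite-dimensional ODE on the space of low modes $|n|\leq N$. This finite-dimensional system is Hamiltonian, so Liouville's theorem guarantees that it preserves Lebesgue measure on $\C^{2N+1}$. Moreover, the plain mass $M(u)=\int_\T|u|^2\,dx$ is conserved by \eqref{4NLStrunc}, hence so is the Wick-ordered mass $\int_\T\wick{|u|^2}\,dx = M(u)-\sigma_N$; in particular the weight $W(u_0):=\exp(-|\int_\T\wick{|u_0|^2}\,dx|^\gamma)$ relating $\rho_{s,\gamma}$ and $\mu_s$ satisfies $W\circ\Phi_{-t}^N = W$. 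A direct change-of-variable argument then reduces the problem to computing $\frac{d(\Phi_t^N)_\#\mu_s}{d\mu_s}$, and combining the mode decomposition with Liouville on the low-frequency space (where $\mu_s$ is equivalent to Lebesgue with density $\propto \exp(-\tfrac12\|\cdot\|_{H^s}^2)$) yields
\begin{equation*}
 f_t^N(u_0) = \exp\!\Big(\tfrac12\|P_N u_0\|_{H^s}^2 - \tfrac12\|P_N \Phi_{-t}^N(u_0)\|_{H^s}^2\Big).
\end{equation*}

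The last step is to rewrite the exponent via the fundamental theorem of calculus as $-\int_0^t \frac{d}{dt'}\bigl(\tfrac12\|P_N\Phi_{-t'}^N(u_0)\|_{H^s}^2\bigr)\,dt'$ and evaluate the integrand. For $w(t'):=\Phi_{-t'}^N(u_0)$, the derivative equals $\text{Re}\langle \partial_{t'}P_N w, P_N w\rangle_{H^s}$: the linear dispersion term vanishes by skew-adjointness of $i(-\dx^2)^\alpha$ on $H^s$, and the resonant term vanishes because $\langle \RR_N(P_N w), P_N w\rangle_{H^s}$ is real (so $\text{Re}[\,i\cdot\text{real}\,]=0$). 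The remaining non-resonant contribution is proportional to
\begin{equation*}
\text{Im}\!\sum_{\substack{n_1-n_2+n_3-n_4=0\\ n_1\neq n_2,n_4}}\!\! \langle n_4\rangle^{2s}\,\ft{P_N w}(n_1)\cj{\ft{P_N w}(n_2)}\ft{P_N w}(n_3)\cj{\ft{P_N w}(n_4)},
\end{equation*}
and symmetrizing using the involution $n_2\leftrightarrow n_4$ (which preserves both constraints and the product) together with the conjugation symmetry $(n_1,n_2,n_3,n_4)\leftrightarrow(n_2,n_1,n_4,n_3)$ (which replaces the product by its complex conjugate and exchanges $\langle n_4\rangle^{2s}$ with $\langle n_3\rangle^{2s}$), the factor $\langle n_4\rangle^{2s}$ is replaced, up to the combinatorial constant absorbed in the definition of $\QQ$, by $\Psi_s(\cj n)$. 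The integrand is thereby identified with $\QQ(P_N \Phi_{-t'}^N(u_0))$ as defined in \eqref{Qenergy}, yielding \eqref{densityformula}. The main obstacle is the symmetrization bookkeeping needed to match signs and the combinatorial normalization of $\QQ$; the other ingredients (Liouville, mass conservation, skew-adjointness) are standard.
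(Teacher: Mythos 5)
Your proposal is correct and follows essentially the same strategy as the paper's proof: decompose $\Phi_t^N$ into the Hamiltonian ODE on the low frequencies (where Liouville's theorem applies) and the $H^s$-isometric linear flow on the high frequencies, reduce from $\rho_{s,\gamma}$ to $\mu_s$ via conservation of the Wick-ordered mass, and obtain the density as a ratio of Gaussian weights. The paper's write-up states that $f_t^N$ solves the Liouville equation and then jumps to \eqref{densityformula}; your version makes the missing steps explicit (writing the density as $\exp(\tfrac12\|P_Nu_0\|_{H^s}^2-\tfrac12\|P_N\Phi_{-t}^N(u_0)\|_{H^s}^2)$ and differentiating), which is exactly the computation used in Section 5.2 where the paper records $\tfrac{d}{dt}\tfrac12\|\Phi_{-t}^{N,1}(u_0)\|_{H^s}^2=\QQ(\Phi_{-t}^{N,1}(u_0))$; just be careful that the symmetrization over $n_2\leftrightarrow n_4$, $n_1\leftrightarrow n_3$, and the conjugation involution produces $\Psi_s(\cj n)$ only up to an overall sign and combinatorial factor which must be checked against the specific normalization $\QQ=\text{Re}\sum i\Psi_s(\cj n)\cdots$ in \eqref{Qenergy}.
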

\begin{proof}
We observe that if $u$ solves \eqref{4NLStrunc}, then we can write $u = u_1 + u_2$, with $u_1$ solving the finite dimensional system of ODEs
\begin{equation} 
i\partial_t u_1 +(-\dx^2)^{\al} u_1 + \mathcal{N}_{N}(u_1)+\Res_{N}(u_1) = 0, \label{lowfreqNLStrunc}
\end{equation}
where $u_1 = \sum_{|n|\le N} \ft{u_1}(n,t) e^{inx}$, with initial data $u_1(0)=P_{N}u_0$, and $u_2$ solves the linear equation
\begin{equation*}
i\partial_t u_2 + (-\dx^2)^{\al} u_2 = 0,
\end{equation*}
with initial data $u_2(0) = P_{>N} u(0)$, where $P_{>N}:= \text{Id}-P_N$. Let $\Phi^{N,1}_t$ and $\Phi^{N,2}_t$ be the two flows, respectively.
We notice that for $x = \sum_{|n| \le N} x_n e^{inx}$, $f^{N}_t(x)$ solves the Liouville equation:
\begin{equation*}
\partial_t \Big(f^{N}_t(x) \exp\big(-\tfrac12 \|x\|_{H^s}^2\big)  \Big) = - \mathrm{div}\Big( f^{N}_t(x) \exp\big(-\tfrac12 \|x\|_{H^s}^2\big)\cdot(i (-\dd_{x}^{2})^\alpha x + i\NN_N(x,x,x))\Big).
\end{equation*}
Therefore, we have that 
\begin{equation*}
\int F(\Phi_t^{N,1}(x)) d (P_N)_{\#} \mu_s(x) = \int F(x) f^{N}_t(x) d(P_N)_{\#} \mu_s(x).
\end{equation*}
Moreover, it is easy to check that $\Phi^{N,2}_t$ preserves the $H^s$ norm and commutes with the projection $P_N$. Therefore, 
\begin{equation*}
\int F(\Phi_t^{N,2}(u_2)) d (1 - P_N)_{\#} \mu_s(u_2) = \int F(u_2) d(P_{>N})_{\#} \mu_s(u_2).
\end{equation*}
Noticing that $(P_N)_\#\mu_s$ and $(P_{>N})_\#$ are independent, we have
$$\int F(u) d \mu_s(u) = \iint F(x+y) d(P_N)_{\#} \mu_s(x) d (P_{>N})_{\#} \mu_s(y).$$
Therefore, by Fubini, we obtain
\begin{align*}
&\int F(\Phi_t^{N}(u_0)) d \mu_s(u_0) \\
&= \int F(\Phi_t^{N,1}(P_N u_0) + \Phi_t^{N,2}((P_{>N})u_0)) d \mu_s(u_0) \\
& = \iint F(\Phi_t^{N,1}(x) + \Phi_t^{N,2}(y)) d(P_N)_{\#} \mu_s(x) d(P_{>N})_{\#} \mu_s(y) \\
&= \iint F(x+y) f^{N}_t(x) d(P_N)_{\#} \mu_s(x) d(P_{>N})_{\#} \mu_s(y) \\
& = \int F(u_0) f^{N}_t(P_N(u_0))d \mu_s(u_0) \\
& = \int F(u_0) f^{N}_t(u_0)d \mu_s(u_0).
\end{align*}
In order to conclude the proof, we notice that for every $M \ge N$, $ \int |P_Mu_0|^2 dx$
is conserved by the flow $\Phi^N$. Therefore, the quantity 
$$ \int : |u_0|^2 : dx = \lim_{M \to \infty} \bigg( \int |P_Mu_0|^2 dx- \E\Big[ \int |P_M u_0|^2  dx\Big]  \bigg) $$
is conserved as well. In particular, we obtain that 
\begin{align*}
\int F(\Phi_t^{N}(u_0)) d \rho_{s,\gamma}(u_0) &= \int F(\Phi_t^{N}(u_0)) \exp\Big(-\Big|\int : |u_0|^2 : dx\Big|^\gamma\Big)  d \mu_s(u_0) \\
& = \int F(\Phi_t^{N}(u_0)) \exp\Big(-\Big|\int : |\Phi_t^{N}(u_0)|^2 : dx\Big|^\gamma\Big)  d \mu_s(u_0)\\
& = \int F(u_0) \exp\Big(-\Big|\int : |u_0|^2 : dx\Big|^\gamma\Big) f^{N}_t(u_0)d \mu_s(u_0) \\
& = \int F(u_0) f^{N}_t(u_0)d \rho_{s,\gamma}(u_0).  \qedhere
\end{align*}
\end{proof}

\begin{remark}\rm \label{RMK:LpbdN}
It is easy to see (although important later on) that $\int f_{t}^{N}(u_0)^{p}d\rho_{s,\g}(u_0)$ is finite for each fixed $N\geq 1$, $t\geq 0$, and for any $1<p<\infty$.
 Indeed, it follows from Proposition~\ref{4NLStrunc}, \eqref{Qenergy}, writing $P_{N}\Phi^{N}_{t}(u_0)=\Phi_{t}^{N,1}(u_0)$, and the $L^2$-conservation of solutions to \eqref{lowfreqNLStrunc}, that 
\begin{align*}
|\mathcal{Q}(\Phi_{t}^{N,1}(u_0))| &\les\|  \Phi_{t}^{N,1}(u_0)\|_{H^1}^{4} 
 \leq C_{N}  \|  \Phi_{t}^{N,1}(u_0)\|_{L^2}^{4} 
 = C_N\|P_{N} u_0\|_{L^2}^{4}.
\end{align*}
Therefore, 
\begin{align*}
\int f_{t}^{N}(u_0)^{p}d\rho_{s,\g}(u_0) & \les \int \exp\bigg( pC_{N,t} \|P_N u_0\|_{L^2}^{4} - \bigg\vert \int \wick{|P_{N}u_0|^2}dx \bigg\vert^{\g}  \bigg) d(P_N)_{\#}\mu_{s}(u_0),
\end{align*}
which is finite since $2\g>4$.
\end{remark}

\begin{remark}\label{QQdivergence}
In this remark, we expand on the claim made in the introduction that states that $\QQ(u_0)$ is ill defined for $u_0$ distributed according to $\mu_s$. More precisely, we will show that 
for every $0 < s \le 1$, 
\begin{equation*}
\int |\QQ(u_0)|^2 d \mu_s(u_0) = +\infty.
\end{equation*}
By \eqref{u0}, we have that 
\begin{align*}
\int |\QQ(u_0)|^2 d \mu_s(u_0) \sim \sum_{\substack{n_1-n_2+n_3 -n_4 =0, \\ n_1 \neq n_2,n_4}} |\Psi_s(\cj{n})|^2 \prod_{j=1}^4 \jb{n_j}^{-2s}.
\end{align*}
If we restrict this sum to the domain $|n_j| \sim N_j$ with $N_1 \sim N_2 \gg N_3 \gg N_4$, it is easy to check that $|\Psi_s(\cj{n})|^2 \sim N_3^{4s}$. Moreover, we have that 
$$|\{\jb{n_j}\sim N_j: n_1-n_2+n_3 -n_4 =0, n_1 \neq n_2,n_4, N_1 \sim N_2 \gg N_3 \gg N_4 \}| \sim N_2N_3N_4.$$
Therefore, we obtain that 
\begin{align*}
\int |\QQ(u_0)|^2 d \mu_s(u_0) &\ge \sum_{N_1 \sim N_2 \gg N_3 \gg N_4} N_3^{4s} \Big(\prod_{j=1}^{4} N_j^{-2s}\Big) N_2N_3N_4 \\
& \sim \sum_{N_2 \gg N_3 \gg N_4} N_2^{1-4s} N_3^{1+2s} N_4^{1-2s}\\
& \gtrsim \sum_{N_3 \gg N_4} N_3^{2-2s}N_4^{1-2s} = \infty,
\end{align*}
since $s \le 1$.
\end{remark}

\subsection{Energy estimates}\label{SEC:energyests}

In this subsection, we prove estimates on the quantity $\mathcal{Q}$ from \eqref{Qenergy}. We view these as energy estimates since
\begin{align*}
\frac{d}{dt} \bigg(\frac{1}{2} \|\Phi_{-t}^{N,1}(u_0)\|_{H^{s}}^{2}\bigg) =  \mathcal{Q}(\Phi_{-t}^{N,1}(u_0)).
\end{align*}
We will again leverage the expansion $\Phi_{t}^{N,1}(u_0)=S(t)P_N u_0 +\mathcal{V}_{N}(u_0)(t)$,
where $\mathcal{V}_{N}(u_0)(t)$ solves \eqref{4NLSshiftedtruncated2}, and the multi-linearity of $\mathcal{Q}$ to reduce to cases depending upon the number of linear factors $P_{N}S(t)u_0$. Such a decomposition allows us to control certain contributions in a probabilistic manner, which seem to be out of reach with deterministic methods.

We consider the first case when all four factors in $\mathcal{Q}$ are of the form $P_{N}S(t)u_0$. As we will eventually use Lemma~\ref{LEM:BD2}, we will switch notation writing $Y$, rather than $u_0$.
 
\begin{lemma} For $\frac14 < s \le \frac12$, let $Y$ be distributed according to $\mu_s$, and let $0 \le t \le 1$ and $\al>1$. Then,
\begin{equation} \label{QQu0estimate}
\E\Big[ \Big| \int_0^t \QQ(P_NS(t')Y) dt \Big|^2 \Big] \les t,
\end{equation}
where the implicit constant does not depend on $N$.\footnote{The power of $t$ can be improved to $t^{2(1-\frac{3-4s}{4\al}-)}$. However, this worse estimate does not affect the numerology in Theorem~\ref{thm:qiandgwp}.}
\end{lemma}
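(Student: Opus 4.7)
The plan is to exploit the explicit structure of $\QQ$ and carry out a direct $L^2$-computation in the Wiener chaos expansion. Substituting $Y = \sum_n \frac{g_n}{\jb{n}^s}e^{inx}$ into the definition \eqref{Qenergy} of $\QQ$ and integrating the oscillatory factor $e^{it' \Phi_\al(\cj n)}$ against $dt'$ from $0$ to $t$, one obtains
\begin{align*}
\int_0^t \QQ(P_NS(t')Y)\, dt' \,=\, \Re \sum_{(\cj n) \in D_N} \frac{\Psi_s(\cj n)\,(e^{it\Phi_\al(\cj n)}-1)}{\Phi_\al(\cj n)\,\jb{n_1}^s\jb{n_2}^s\jb{n_3}^s\jb{n_4}^s}\,g_{n_1}\cj g_{n_2} g_{n_3}\cj g_{n_4},
\end{align*}
where $D_N := \{(\cj n) \in \Z^4 : n_1 - n_2 + n_3 - n_4 = 0,\; n_1 \ne n_2, n_4,\; |n_j| \le N\}$. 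Denoting the complex sum on the right-hand side by $Z$, the inequality $T^2 \le |Z|^2$ reduces the matter to bounding $\E[|Z|^2]$.

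Next, I would compute $\E[|Z|^2]$ by expanding $Z \cj Z$ and applying Wick's theorem to the eight resulting Gaussians. The key observation is that the domain restrictions $n_1 \ne n_2, n_4$ --- which on the hyperplane $n_1-n_2+n_3-n_4=0$ also imply $n_3 \ne n_2, n_4$ --- eliminate every pairing of a Gaussian with one from its own term. Consequently the unbarred $g_{n_1}, g_{n_3}$ of the $(\cj n)$-factor must pair with the barred $\cj g_{m_1}, \cj g_{m_3}$ of the conjugate $(\cj m)$-factor, and symmetrically for the remaining four Gaussians. This leaves exactly four surviving pairings, and each yields the same diagonal contribution after invoking the invariance of $\Phi_\al$ and $\Psi_s$ under the two swaps $n_1 \leftrightarrow n_3$ and $n_2 \leftrightarrow n_4$. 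We therefore obtain
\begin{align*}
\E[T^2] \,\les\, \sum_{(\cj n) \in D_N} \frac{|\Psi_s(\cj n)|^2\,|e^{it\Phi_\al(\cj n)}-1|^2}{|\Phi_\al(\cj n)|^2\,\jb{n_1}^{2s}\jb{n_2}^{2s}\jb{n_3}^{2s}\jb{n_4}^{2s}}.
\end{align*}

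The elementary inequality $|e^{itx}-1|^2 = 2(1-\cos(tx)) \le 2t|x|$ (valid for all $t \ge 0$ and $x \in \R$, since $1-\cos y \le |y|$ for every $y \in \R$) then extracts the desired factor of $t$, reducing the problem to the uniform-in-$N$ estimate
\begin{align*}
\sum_{(\cj n) \in D}\frac{|\Psi_s(\cj n)|^2}{|\Phi_\al(\cj n)|\,\jb{n_1}^{2s}\jb{n_2}^{2s}\jb{n_3}^{2s}\jb{n_4}^{2s}} \,<\, \infty.
\end{align*}

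The main obstacle lies in establishing this summability. Combining the crude bound $|\Psi_s(\cj n)| \les \jb{n_1-n_2}^s \jb{n_1-n_4}^s$ of \eqref{psisbd} with the phase lower bound $|\Phi_\al(\cj n)| \gtrsim \jb{n_1-n_2}\jb{n_1-n_4}\,n_{\max}^{2\al-2}$ from \eqref{Phase} proves insufficient. Instead, I would rely on the refined estimate of Lemma~\ref{LEM:psi}, which for $s > \tfrac14$ gains an additional factor of roughly $N_{\max}^{-(1-2s)}$. Combining this with a dyadic decomposition $\jb{n_j} \sim N_j$ and the standard counting $\#\{(\cj n) : |n_j|\sim N_j\} \les N^*_1 N^*_2 N^*_3$ on the hyperplane (with $N^*_1 \le N^*_2 \le N^*_3 \le N^*_4 = N_{\max}$), a short case analysis based on which frequency attains $N_{\max}$ reduces the sum to $\sum_{N_{\max}} N_{\max}^{3-2\al-4s}(\log N_{\max})^{O(1)}$. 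This converges precisely under $\al + 2s > 3/2$, which is guaranteed by the strict hypotheses $\al > 1$ and $s > \tfrac14$.
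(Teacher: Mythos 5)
Your proposal is correct and follows the paper's argument almost verbatim up to the final summation: the same explicit formula for the time integral, the same Wick/Isserlis computation yielding the diagonal sum with multiplicity $4$ (the restrictions $n_1\neq n_2, n_4$ on the hyperplane indeed kill every self-pairing), and the same use of $|e^{itx}-1|^2\le 2t|x|$ together with $|\Phi_\al(\cj n)|\gtrsim 1$ on the non-resonant set to extract the factor of $t$. The only divergence is in bounding the remaining constant: you assert that \eqref{psisbd} combined with \eqref{Phase} is insufficient and reach for the refined Lemma~\ref{LEM:psi}, whereas the paper closes the estimate using exactly \eqref{psisbd} and \eqref{Phase}, arriving at the condition $3-6s-2\alpha<0$. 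In fact the crude bound $|\Psi_s(\cj n)|^2\les \jb{n_1-n_2}^{2s}\jb{n_1-n_4}^{2s}$ does suffice, provided one assigns the lattice-point count and the summation ranges of the difference variables $n_1-n_2$, $n_1-n_4$ judiciously case by case; this yields $\sum_{N_{\max}} N_{\max}^{3-4s-2\alpha}(\log N_{\max})^{O(1)}$, i.e.\ precisely your condition $\alpha+2s>\tfrac32$, which holds strictly under $\alpha>1$, $s>\tfrac14$. Your route through Lemma~\ref{LEM:psi} is therefore an unnecessary detour, but it is a legitimate one (the lemma is available under $s>\tfrac14$) and it lands on the same correct numerology, so the proof goes through.
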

\begin{proof}
Recall that we can express $Y$ in terms of the Fourier series expansion in \eqref{u0}.
Therefore, we have that 
\begin{align*}
\E \bigg[ \Big|\int_0^t\QQ(P_NS(t')Y)dt'\Big|^{2} \bigg] &\leq  \E \bigg[\Big|\int_0^t\sum_{\substack{n_1 - n_2 + n_3 - n_4 = 0, \\ n_1 \neq n_2,n_4, \\ |n_j| \le N}} {\Psi_s(\cj{n})}\frac{g_{n_1}\cj{g_{n_2}}g_{n_3}\cj{g_{n_4}}}{\prod_{j=1}^4 \jb{n_j}^s}e^{it'\Phi_\alpha(\cj{n})}dt'\Big|^2\bigg]\\
&=  \E\bigg[\Big|\sum_{\substack{n_1 - n_2 + n_3 - n_4 = 0, \\ n_1 \neq n_2,n_4, \\ |n_j| \le N}} {\Psi_s(\cj{n})} \frac{g_{n_1}\cj{g_{n_2}}g_{n_3}\cj{g_{n_4}}}{\prod_{j=1}^4 \jb{n_j}^s}\frac{e^{it\Phi_\alpha(\cj{n})}-1}{i\Phi_\alpha(\cj{n})}\Big|^2 \bigg]\\
&=4 \sum_{\substack{n_1 - n_2 + n_3 - n_4 = 0, \\ n_1 \neq n_2,n_4, \\ |n_j| \le N}} \frac{|\Psi_s(\cj{n})|^2}{\prod_{j=1}^4 \jb{n_j}^{2s}}\frac{|e^{it\Phi_\alpha(\cj{n})}-1|^2}{|\Phi_\alpha(\cj{n})|^2}\\
&\les t \sum_{\substack{n_1 - n_2 + n_3 - n_4 = 0, \\ n_1 \neq n_2,n_4, \\ |n_j| \le N}} \frac{|\Psi_s(\cj{n})|^2}{\jb{\Phi_\alpha(\cj{n})}\prod_{j=1}^4 \jb{n_j}^{2s}}.
\end{align*}
 By \eqref{Phase} and \eqref{psisbd}, we have that 
\begin{align*}
&\sum_{\substack{n_1 - n_2 + n_3 - n_4 = 0, \\ n_1 \neq n_2,n_4, \\ |n_j| \le N}} \frac{|\Psi_s(\cj{n})|^2}{\jb{\Phi_\alpha(\cj{n})}\prod_{j=1}^4 \jb{n_j}^{2s}}  \\
&\les \sum_{\substack{n_1 - n_2 + n_3 - n_4 = 0, \\ n_1 \neq n_2,n_4, }} \frac{1}{\jb{n_1-n_4}^{1-s}\jb{n_1-n_2}^{1-s}n_{\max}^{2\alpha-2}\prod_{j=1}^4 \jb{n_j}^{2s}}\\
&\les \sum_{\substack{N_1,N_2,N_3,N_4 \\ N^1 \sim N^2}} (N_1 N_2 N_3 N_4)^{-2s}N_{\max}^{-2\alpha+2} \sum_{\substack{n_1 - n_2 + n_3 - n_4 = 0,\\ \jb{n_j} \sim N_j}} \frac{1}{\jb{n_1-n_4}^{1-s}\jb{n_1-n_2}^{1-s}}\\
&\les \sum_{\substack{N_1,N_2,N_3,N_4 \\ N^1 \sim N^2}} (N_1 N_2 N_3 N_4)^{-2s}N_{\max}^{-2\alpha+2} (N^{1})^s(N^2)^sN^4 \\
&\les \sum_{\substack{N_1,N_2,N_3,N_4 \\ N^1 \sim N^2}} (N^1)^{-2s-2\alpha + 2}  (N^3)^{-2s} (N^4)^{1-2s} \les 1,
\end{align*}
since $3-6s-2\al<0$.
\end{proof}

The following lemma controls $\mathcal{Q}$ when three of its inputs are of the form $P_{N}S(t)Y$.

\begin{lemma}
Given $0\leq s\leq \tfrac{1}{2}$, let $Y$ be distributed according to $\mu_s$, and let $\al>1$. Define 
\begin{equation*}
 Y^{(3),N}(t) := \sum_{\substack{n_1,n_2,n_3, \\ n_2 \neq n_1,n_3, \\ |n_j|\le N}} \Psi_s(\cj{n}) \ft{Y}(n_1)\cj{\ft{Y}(n_2)}\ft{Y}(n_3)e^{it(|n_1|^{2\alpha}-|n_2|^{2\alpha}+|n_3|^{2\alpha})} e^{i(n_1-n_2+n_3)x}.
\end{equation*}
Let 
\begin{equation}\label{sigma3def}
\sigma_3(\alpha,s):= \begin{cases}
\tfrac 32 - \alpha - s + & \text{ if } \alpha+s \le \frac32,\\
0 & \text{ otherwise.}
\end{cases}
\end{equation}
Then, for every $0 \le T \le 1$, and for every $2 \le q < \infty$, $ 1 \leq  p < \infty$, we have that 
\begin{equation} \label{Y3estimate}
\E \Big[ \| \chi_T(t) Y^{(3),N}(t) \|_{X^{-\sigma_3(\alpha,s),-\frac1{q'}-}_{q'}}^p \Big] \les_{p,q} T,
\end{equation}
where the implicit constant does not depend on $N$.
Moreover, if 
\begin{equation}\label{Y3def}
 Y^{(3)}(t) := \sum_{\substack{n_1,n_2,n_3, \\ n_2 \neq n_1,n_3}} \Psi_s(\cj{n}) \ft{Y}(n_1)\cj{\ft{Y}(n_2)}\ft{Y}(n_3)e^{it(|n_1|^{2\alpha}-|n_2|^{2\alpha}+|n_3|^{2\alpha})} e^{i(n_1-n_2+n_3)x},
\end{equation}
then we have that for $\mu_s$-a.e. $Y$, 
\begin{equation*}
\label{Y3convergence}
\lim_{N \to \infty} \| Y^{(3)} - Y^{(3),N} \|_{X^{-\sigma_3(\alpha,s),-\frac1{q'}-}_{q'} (1)} = 0.
\end{equation*}
\end{lemma}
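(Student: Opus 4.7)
My plan is to follow the standard recipe for probabilistic multilinear estimates: pass the $L^p(\Omega)$ norm inside by Minkowski, reduce to the second moment via the Wiener chaos estimate (Lemma \ref{LEM:WCE}), exploit Gaussian orthogonality, and verify the resulting deterministic multilinear sum after a dyadic decomposition.

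First I would compute the space-time Fourier transform
\begin{equation*}
\ft{\chi_T Y^{(3),N}}(\tau,n) = \sum_{\substack{n_1-n_2+n_3=n\\ n_2 \ne n_1,n_3\\ |n_j|\le N}} \Psi_s(\cj n)\, \ft Y(n_1)\overline{\ft Y(n_2)}\ft Y(n_3)\, \ft{\chi_T}\!\big(\wt\tau - \Phi_\alpha(\cj n)\big),
\end{equation*}
where $\wt\tau := \tau - |n|^{2\alpha}$ and $\cj n = (n_1,n_2,n_3,n)$. For $p\ge 2\vee q'$, Minkowski's integral inequality followed by Lemma \ref{LEM:WCE} yields
\begin{equation*}
\big(\E \|\chi_T Y^{(3),N}\|_{X^{-\sigma_3,-\frac{1}{q'}-}_{q'}}^p\big)^{1/p} \les_p \Big\| \jb n^{-\sigma_3}\jb{\wt\tau}^{-b}\big(\E|\ft{\chi_T Y^{(3),N}}|^2\big)^{1/2}\Big\|_{\l^2_n L^{q'}_\tau},
\end{equation*}
with $b=\tfrac{1}{q'}+\varepsilon$. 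The exclusion $n_2\ne n_1, n_3$ kills the Gaussian resonances, so orthogonality gives
\begin{equation*}
\E|\ft{\chi_T Y^{(3),N}}(\tau,n)|^2 \les \sum_{\substack{n_1-n_2+n_3=n\\ n_2\ne n_1,n_3\\ |n_j|\le N}} \frac{|\Psi_s(\cj n)|^2}{\prod_j \jb{n_j}^{2s}}\,|\ft{\chi_T}(\wt\tau-\Phi_\alpha(\cj n))|^2.
\end{equation*}

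After taking square roots via $(\sum a_j^2)^{1/2}\le \sum|a_j|$ and applying Minkowski in $L^{q'}_\tau$, matters reduce to the pointwise kernel bound
\begin{equation*}
\|\jb{\wt\tau}^{-b}\ft{\chi_T}(\wt\tau-\Phi)\|_{L^{q'}_\tau} \les T^{1/q}\jb{\Phi}^{-b},
\end{equation*}
which follows from $|\ft{\chi_T}(\lambda)|\les \min(T,|\lambda|^{-1})$ by splitting into the main lobe $|\wt\tau-\Phi|\le 1/T$ and its complement, using $bq'>1$. The remaining deterministic task is to bound
\begin{equation*}
\sum_{n_4}\jb{n_4}^{-2\sigma_3}\bigg(\sum_{\substack{n_1-n_2+n_3=n_4\\ n_2\ne n_1,n_3}} \frac{|\Psi_s(\cj n)|}{\prod_j\jb{n_j}^s\,\jb{\Phi_\alpha(\cj n)}^b}\bigg)^{\!2},
\end{equation*}
for which I would combine \eqref{Phase} with \eqref{psisbd} and perform a dyadic decomposition $\jb{n_j}\sim N_j$. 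Standard counting (the two largest scales must be comparable, followed by Cauchy--Schwarz on the inner sum) produces a summand with decay $n_{\max}^{-2\alpha b + O(1)}$ once the $|\Psi_s|$ and $|\Phi_\alpha|^{-b}$ factors are combined; the choice of $\sigma_3(\alpha,s)$ in \eqref{sigma3def} is precisely calibrated to close the sum, with $\sigma_3=0$ sufficing when $\alpha+s>\tfrac 32$ and the borderline low-frequency contribution forcing $\sigma_3=\tfrac 32-\alpha-s+$ otherwise.

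Finally, the almost sure convergence $Y^{(3),N}\to Y^{(3)}$ in $X^{-\sigma_3,-\frac{1}{q'}-}_{q'}(1)$ follows from the same scheme applied to the difference $Y^{(3)}-Y^{(3),N}$: the analogous sum carries the extra restriction $n_{\max}\ge N$, which improves the dyadic estimate by a factor $N^{-\delta}$ for some $\delta=\delta(\alpha,s,q)>0$, giving $\E \|Y^{(3)}-Y^{(3),N}\|^p \les_p N^{-p\delta}$, after which the Borel--Cantelli lemma concludes. The main obstacle will lie in the dyadic case analysis: one must carefully distinguish all possible orderings of the scales $N_1,\ldots,N_4$ and track how the low-frequency loss from $|\Psi_s|\les \jb{n_1-n_2}^s\jb{n_1-n_4}^s$ competes with the high-frequency smoothing $n_{\max}^{-2(\alpha-1)b}$ coming from \eqref{Phase}, in order to verify that $\sigma_3(\alpha,s)$ is exactly what is needed (and no smaller) to ensure uniform-in-$N$ convergence of the sum.
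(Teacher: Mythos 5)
Your overall skeleton (space--time Fourier transform, Minkowski plus the Wiener chaos estimate to reduce to the second moment, Gaussian orthogonality, a dyadic count, and Borel--Cantelli for the convergence) is the same as the paper's. The gap is in how you pass from $\big(\E|\widehat{\chi_T Y^{(3),N}}(\tau,n)|^2\big)^{1/2}=\big(\sum c_{n_1,n_2,n_3}(\tau)^2\big)^{1/2}$ to a deterministic sum. You use $(\sum c^2)^{1/2}\le\sum|c|$ followed by the triangle inequality in $L^{q'}_\tau$, which leaves you with the $\ell^2_{n_4}\big(\ell^1_{(n_1,n_2,n_3)}\big)$ quantity you display. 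The paper instead applies Minkowski's integral inequality to interchange $L^{q'}_\tau$ with $\ell^2_{(n_1,n_2,n_3)}$ --- this is exactly where $q'\le 2$ enters --- and lands on
$$T\sum_{\substack{n_1-n_2+n_3-n_4=0\\ n_2\neq n_1,n_3}}\frac{|\Psi_s(\cj n)|^2}{\jb{n_1}^{2s}\jb{n_2}^{2s}\jb{n_3}^{2s}\jb{n_4}^{2\sigma_3}\jb{\Phi_\alpha(\cj n)}},$$
an $\ell^1$ sum of \emph{squares} over all four frequencies. Your quantity exceeds this by up to the cardinality of the inner sum, $\sim N_{\max}^2$, and the extra phase decay $\jb{\Phi_\alpha}^{-2b}$ with $b=\tfrac1{q'}+$ that you harvest from the $\tau$-integral (versus the paper's $\jb{\Phi_\alpha}^{-1}$) only recovers $N_{\max}^{2\alpha(2b-1)}$ through \eqref{Phase}. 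Concretely, on the block $N_1\sim N_2\sim N_3\sim N_4\sim N$ your sum is of size $N^{5-2s-2\sigma_3-4\alpha b}$ while the paper's is $N^{3-2s-2\sigma_3-2\alpha}$; with $\sigma_3$ as in \eqref{sigma3def}, yours is summable only when $b>\tfrac12+\tfrac1{2\alpha}$, i.e.\ $q>\tfrac{2\alpha}{\alpha-1}$. It therefore diverges for $q=2$ and, more importantly, for the value $q=\tfrac{4\alpha}{3-2s}-$ with which the lemma is actually applied in Lemma \ref{LEM:densityontau} once $\alpha$ is close to $1$. So $\sigma_3(\alpha,s)$ is not ``precisely calibrated'' for your reduction; it is calibrated for the $\ell^2$-in-frequency one.

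A second, smaller issue is the power of $T$. Your kernel bound extracts $T^{1/q}$, so at best $\E\|\cdot\|^p\les T^{p/q}$, which yields the stated bound $\les T$ only for $p\ge q$ (Jensen cannot upgrade $T^{p/q}$ to $T$ when $p<q$). The paper uses $|\widehat{\chi_T}(\lambda)|\les T^{1/2}\jb{\lambda}^{-1/2}$ from \eqref{etabd}, keeps $|\widehat{\chi_T}|^2\les T\jb{\cdot}^{-1}$ inside the squared sum, and obtains $\E\|\cdot\|^p\les T^{p/2}$; this stronger power is precisely what is consumed downstream by the factor $\tau_*^{-1/2}\|Y^{(3),N}\|$ in the proof of Lemma \ref{LEM:densityontau}. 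The fix for both issues is the same: do not trade $T$ for phase decay, and perform the frequency reduction at the level of $\ell^2$ via Minkowski rather than collapsing to $\ell^1$.
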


\begin{proof}
Recalling the definition of the $X^{\sigma,b}_{q}$ norm, and proceeding as in the proof of Proposition \ref{randomtrilwp} and using \eqref{psisbd}, we obtain for finite $p \ge 2$,
\begin{align}
&\E \Big[\| \chi_{T}Y^{(3),N}\|_{X^{-\sigma_3(\alpha,s),-\frac1{q'}-}_{q'}}^p\Big]^\frac 2p   \notag\\
&\les \sum_{\substack{n_1-n_2+n_3-n = 0, \\n_2 \neq n_1,n_3,\\ |n_j|\les N}} \Big|\int_\R \Big[ \frac{|\Psi_s(\cj{n})|^2 |\ft{\chi_T}(\tau-\Phi_\alpha(\cj{n}) - |n|^{2\alpha})|^2}{\jb{n_1}^{2s}\jb{n_2}^{2s}\jb{n_3}^{2s}\jb{n}^{2\sigma_3}}\Big]^{\frac {q'}2} \jb{\tau-|n|^{2\alpha}}^{-1-} d \tau \Big|^{\frac 2{q'}}   \notag \\
&\les  \sum_{\substack{n_1-n_2+n_3-n = 0, \\n_2 \neq n_1,n_3,\\ |n_j|\les N}} \frac{|\Psi_s(\cj{n})|^2 }{\jb{n_1}^{2s}\jb{n_2}^{2s}\jb{n_3}^{2s}\jb{n}^{2\sigma_3}} \Big|\int |\ft{\chi_T}(\tau-\Phi_\alpha(\cj{n}))|^{q'}\jb{\tau}^{-1-} d \tau\Big|^{\frac 2 {q'}}  \notag  \\
&\les  \sum_{\substack{n_1-n_2+n_3-n = 0, \\n_2 \neq n_1,n_3,\\ |n_j|\les N}} \frac{|\Psi_s(\cj{n})|^2 }{\jb{n_1}^{2s}\jb{n_2}^{2s}\jb{n_3}^{2s}\jb{n}^{2\sigma_3}} \Big|\int T^{\frac{q'}2} \jb{\tau-\Phi_\alpha(\cj{n})}^{-\frac{q'}{2}}\jb{\tau}^{-1-} d \tau\Big|^{\frac 2 {q'}}  \notag \\
&\les T \sum_{\substack{n_1-n_2+n_3 - n_4 = 0, \\ n_2 \neq n_1,n_3, \\ |n_j|\le N}}\frac{|\Psi_s(\cj{n})|^2}{\jb{n_1}^{2s}\jb{n_2}^{2s}\jb{n_3}^{2s}\jb{n_4}^{2\sigma_3}\jb{\Phi_\alpha(\cj{n})}}  \notag\\
&\les T \sum_{ \substack{N_1,N_2,N_3,N_4\\ N^1 \sim N^2}} (N_1N_2N_3)^{-2s}N_4^{-2\sigma_3}N_{\max}^{-2\alpha + 2} \sum_{\substack{n_1-n_2+n_3 - n_4 = 0, \\ n_2 \neq n_1,n_3 \\ \jb{n_j}\sim N_j}} \frac{1}{\jb{n_1-n_4}^{1-2s}\jb{n_1-n_2}^{1-2s}}. \label{YN3bd2}
\end{align}
If $N_1\sim N_3 \ges N_4 \gg N_2$, then this contribution to \eqref{YN3bd2} is bounded by 
\begin{align*}
& \les T \sum_{ \substack{N_1,N_2,N_3,N_4\\ N^1 \sim N^2}} (N_1N_2N_3)^{-2s}N_4^{-2\sigma_3}N_{\max}^{-2\alpha + 2}  (N^1)^{-1+4s}N^3 N^4 \\
&  \les T \sum_{ N_1,N_2,N_3,N_4}(N^1)^{-2\al+3-2s-2\s_3} \les T,
\end{align*}
provided that $\al +s+\s_3>\tfrac{3}{2}$, which holds by definition of $\sigma_3$ in \eqref{sigma3def}.
In all remaining cases, we may bound the contribution to \eqref{YN3bd2} by 
\begin{align*}
&\les T \sum_{\substack{N_1,N_2,N_3,N_4\\ N^1 \sim N^2}} (N_1N_2N_3)^{-2s}N_4^{-2\sigma_3}N_{\max}^{-2\alpha + 2 +} (N^1)^{2s}(N^3)^{2s}N^4  \\ 
& \les  T \sum_{\substack{N_1,N_2,N_3,N_4\\ N^1 \sim N^2}}  (N^4)^{1-2s} (N^1)^{-2(\al-1)-2\s} \les T,
\end{align*}
which is  again true provided that \eqref{sigma3def} holds.
\end{proof}

Our final probabilistic energy estimate concerns $\mathcal{Q}$ when exactly two factors are $P_N S(t)Y$.

\begin{lemma}\label{lem:bilineargwp}
Given $\tfrac{1}{4}<s\leq \tfrac{1}{2}$, let $Y$ be distributed according to $\mu_s$, and let $\al>1$. Define the random bilinear forms $J^\QQ_{12,N}(t)$ and $J^\QQ_{13,N}(t)$ by 
\begin{gather*}
J^\QQ_{12,N}(t)[v_1,v_2] := \QQ(P_NS(t)Y,P_NS(t)Y,v_1,v_2)\label{JQQN12def}, \\
J^\QQ_{13;N}(t)[v_1,v_2] := \QQ(P_NS(t)Y,v_1,P_NS(t)Y,v_2),\label{JQQN13def} 
\end{gather*}
and let $\|J\|_{2,b,q,\sigma,\sigma}$ be the best constant $C$ such that 
\begin{equation*}
\int_\R J(t)[v_1(t),v_2(t)] d t \le C\big( \|v_1\|_{X^{\sigma,\frac1{q'}-\frac 1 q +}_q} \|v_2\|_{X^{\sigma,b}_q} + \|v_1\|_{X^{\sigma,b}_q} \|v_2\|_{X^{\sigma,\frac1{q'}-\frac 1 q +}_q}  \big).
\end{equation*}
for every $v_1,v_2$. Then for every $2 \le q < \infty$, $1\leq p< \infty$, $0 < b < 1$,
\begin{equation}\label{JQQintegrability}
\E \big[ \|J^\QQ_{12,N}\|_{2,b,q,\sigma,\sigma}^p +\|J^\QQ_{13,N}\|_{2,b,q,\sigma,\sigma}^p \big] \les_{b,q,\sigma} 1,
\end{equation}
where the implicit constant is independent of $N$, as long as 
\begin{equation}\label{bJQQcondition}
0 < b < \tfrac 12, \quad 0 \le \sigma \le s, \quad 4b\alpha + 4\sigma > 3.
\end{equation}
Moreover, if we define 
\begin{gather}
J^\QQ_{12}(t)[v_1,v_2] := \QQ(S(t)Y,S(t)Y,v_1,v_2)\label{JQQ12def}, \\
J^\QQ_{13}(t)[v_1,v_2] := \QQ(S(t)Y,v_1,S(t)Y,v_2),\label{JQQ13def}
\end{gather}
then for $\mu_s$ a.e.\ $Y$, we have that 
 \begin{equation}\label{JQQconvergence}
\lim_{N\to \infty} \|J^\QQ_{12,N} - J^\QQ_{12}\|_{2,b,q,\sigma,\sigma} = 0, \quad \lim_{N\to \infty} \|J^\QQ_{13,N} - J^\QQ_{13}\|_{2,b,q,\sigma,\sigma} = 0.
\end{equation}
\end{lemma}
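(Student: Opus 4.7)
The plan is to follow closely the structure of the proof of Proposition \ref{PROP:bilinearlwp}, adapting it to the modified bilinear form $J^\QQ_{12,N}$ and $J^\QQ_{13,N}$, whose defining weights contain the symmetrized derivative symbol $\Psi_s(\cj n)$ instead of a constant. Writing $Y$ as in \eqref{u0}, one has
\begin{equation*}
\int_\R J^\QQ_{12,N}[v_1(t),v_2(t)] dt = \sum_{\substack{n_1-n_2+n_3-n_4=0, \\ n_1 \neq n_2,n_4, \\ |n_j| \le N}} \Psi_s(\cj n)\frac{g_{n_1}\cj{g_{n_2}}}{\jb{n_1}^s \jb{n_2}^s}\intt_{\wt\tau_1 - \wt\tau_2 = -\Phi_\alpha(\cj n)} \ft{v_1}(n_3,\wt\tau_1) \cj{\ft{v_2}(n_4,\wt\tau_2)} d\wt\tau_1 d\wt\tau_2,
\end{equation*}
where $\wt\tau_j = \tau_j - |n_{j+2}|^{2\alpha}$. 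By symmetry between the two ``inputs'', and between $n_1$ and $n_2$, it suffices to establish an analogue of \eqref{Bilin1} in which the weight $\jb{n_4}^\sigma \jb{n_3}^{-\sigma}$ is replaced by $\jb{n_4}^\sigma \jb{n_3}^{\sigma}\Psi_s(\cj n)$, on the domain $|\wt\tau_1| \gtrsim \jb{\Phi_\alpha(\cj n)}$ (the opposite regime is absorbed by the improved time weight $\jb{\wt\tau_2}^{\frac1{q'}-\frac1q+}$ via H\"older in the time variables).

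Fixing $b' < b$ with $b'$ still satisfying \eqref{bJQQcondition}, I would then introduce the dyadic localizations
\begin{equation*}
K^{N_1,N_2}_{\wt\tau_1}(n_3,n_4) := \sum_{\substack{n_1-n_2+n_3-n_4 = 0 \\ \jb{n_1}\sim N_1, \jb{n_2}\sim N_2 \\ |n_j| \le N,\, n_1 \neq n_2,n_4}} \ind_{\{|\wt\tau_1|\gtrsim \jb{\Phi_\alpha(\cj n)}\}}\frac{\jb{n_4}^\sigma \jb{n_3}^\sigma \Psi_s(\cj n)}{\jb{n_1}^s\jb{n_2}^s \jb{\wt\tau_1}^{b'}}\, g_{n_1}\cj{g_{n_2}},
\end{equation*}
together with $G^{N_1,N_2}(n_3,n_4) := \|\jb{\wt\tau_1}^{-(b-b')} K^{N_1,N_2}_{\wt\tau_1}(n_3,n_4)\|_{L^r_{\wt\tau_1}}$ for $r$ large enough, and reduce matters, via H\"older in the time variable and the $\ell^2$ duality $(v_1,v_2) \mapsto \int J_{12,N}^\QQ[v_1,v_2]$, to a Hilbert-Schmidt bound on each dyadic block, exactly as in the proof of Proposition \ref{PROP:bilinearlwp}. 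Using Minkowski, Wiener chaos (Lemma \ref{LEM:WCE}), the lower bound \eqref{Phase} on $\Phi_\alpha$, and the pointwise bound $|\Psi_s(\cj n)| \les \jb{n_1-n_2}^s \jb{n_1-n_4}^s$ from \eqref{psisbd}, I expect to obtain
\begin{equation*}
\E\big[\big\| G^{N_1,N_2}\ind_{\jb{n_3}\sim N_3}\ind_{\jb{n_4}\sim N_4}\big\|_{HS(\ell^2,\ell^2)}^p\big]^{\frac2p} \les \frac{N_4^{2\sigma} N_3^{2\sigma} N_{\min} N_{\max}^{-4b'(\alpha-1)}}{(N_1 N_2)^{2s}} (N_1\vee N_4)^{2s-2b'}(N_2\vee N_3)^{2s-2b'},
\end{equation*}
where I used $n_1 - n_4 = -(n_3 - n_2)$ to convert the second $\Psi_s$ factor. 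Summing over dyadic $N_j$ (with $N^1 \sim N^2$ forced by $n_1-n_2+n_3-n_4=0$) and using $\sigma \le s$, the summability reduces exactly to the condition $4b\alpha + 4\sigma > 3$ of \eqref{bJQQcondition}. The estimate for $J^\QQ_{13,N}$ is entirely analogous, swapping the roles of $n_1,n_3$ and $n_2,n_4$ in the definition of $K^{N_1,N_3}$.

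For the convergence statement \eqref{JQQconvergence}, one reruns the same argument replacing $K^{N_1,N_2}_{\wt\tau_1}$ by $K^{N_1,N_2}_{\wt\tau_1} - K^{N_1,N_2}_{\wt\tau_1,N}$ and restricting the dyadic sum to $N_{\max} \gtrsim N$, giving a bound of the form $\E[\|J^\QQ_{12} - J^\QQ_{12,N}\|_{2,b,q,\sigma,\sigma}^p] \les N^{-p\delta}$ for some $\delta > 0$, strictly as long as one takes $b'$ slightly smaller than the boundary in \eqref{bJQQcondition}; the almost sure convergence then follows by Borel-Cantelli. The main obstacle I anticipate is that the factor $|\Psi_s(\cj n)|$ is rather large, growing like $\jb{n_1-n_2}^s\jb{n_1-n_4}^s$, and its interplay with the dispersive gain from $\Phi_\alpha(\cj n)$ is what dictates the precise summability threshold \eqref{bJQQcondition}; care is needed when $s > b$, where the differences no longer provide a gain and the only source of summability comes from the factor $N_{\max}^{-4b(\alpha-1)}$ combined with the Sobolev weight $N^3 \cdot N^4$ coming from the $\jb{n_3}^\sigma \jb{n_4}^\sigma$ derivatives.
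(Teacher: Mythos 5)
Your overall architecture — dyadic localization, Wiener chaos on the bilinear Gaussian kernel, Hilbert--Schmidt bound on each block, a lower bound on $\jb{\Phi_\alpha}$, and \eqref{psisbd} to control $|\Psi_s(\cj n)|$ — is the same as the paper's. There is, however, a genuine sign error in your kernel that would change the conclusion: you place $\jb{n_3}^\sigma \jb{n_4}^\sigma$ in the \emph{numerator}, writing the new weight as $\jb{n_4}^\sigma\jb{n_3}^\sigma\Psi_s(\cj n)$. In the norm $\|J\|_{2,b,q,\sigma,\sigma}$, \emph{both} test functions $v_1,v_2$ are measured in $X^{\sigma,\cdot}_q$ with $\sigma \ge 0$; dualizing in $\ell^2_{n_3}, \ell^2_{n_4}$ therefore produces $\jb{n_3}^{-\sigma}\jb{n_4}^{-\sigma}$ in the kernel. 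The correct analogue of your $K^{N_1,N_2}_{\wt\tau_1}$ thus has $\jb{n_3}^\sigma\jb{n_4}^\sigma$ in the \emph{denominator} (matching the paper's
$K_{\wt\tau_1}$). You correctly kept $\jb{n_4}^{+\sigma}$ from Proposition~\ref{PROP:bilinearlwp} — but there $v_2 \in X^{-\sigma}$, and it is exactly this sign that must now flip, while the $\jb{n_3}^{-\sigma}$ from $v_1 \in X^\sigma$ should be left unchanged. With the weights in the numerator as you have them, the claim that the dyadic sum ``reduces exactly to'' $4b\alpha + 4\sigma > 3$ cannot be right: increasing $\sigma$ would then \emph{worsen} convergence, so the condition would read $4b\alpha - 4\sigma > 3$ instead. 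The whole purpose of inserting a positive Sobolev weight on $v_1,v_2$ here (in contrast to Proposition~\ref{PROP:bilinearlwp}) is to \emph{gain} summability via the factor $(N_3 N_4)^{-2\sigma}$, and your version loses that gain.

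Beyond the sign, your treatment of the time variable is slightly heavier than necessary: you carry over the auxiliary kernel $G^{N_1,N_2}$ and the $L^r_{\wt\tau_1}$ integration from Proposition~\ref{PROP:bilinearlwp}. The paper short-circuits this by noting that since one test function carries the extra decay $\jb{\tau}^{\frac1{q'}-\frac1q+}$ in $L^q_\tau$, it lands in $L^{q'}_\tau$, and then it suffices to take $\sup_{\wt\tau_1}\E[\|K_{\wt\tau_1}\|_{HS(\ell^2,\ell^2)}^2]$ directly with $\jb{\wt\tau_1}^b$ in the kernel — no $b'$ split and no $L^r$ norm. Both routes should close, but after correcting the $\sigma$ sign you would find the supremum version cleaner, since the bound \eqref{psisbd} is applied in several different combinations (e.g.\ $\jb{n_1-n_2}^{4s}$ alone, $\jb{n_1-n_4}^{4s}$ alone, and mixed products) and one then takes the minimum over the resulting dyadic bounds to handle the regime $s>b$ that you flagged.
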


\begin{proof}
We only show the estimate for $J_{12,N}^\QQ$, since the proof of the estimate for $J_{13,N}^\QQ$ is identical.
Proceeding as in the proof of \eqref{Jintegrability} (noticing that $\jb{\tau}^{-(\frac 1{q'}-\frac 1q +)} f \in L^{q'}_{\tau}$ whenever $f \in L^q_\tau$), we (essentially) just need to show that under \eqref{bJQQcondition},
\begin{equation*}
\sup_{\wt \tau_1 \in \R}\E \big[ \|K_{\wt\tau_1}\|_{\mathrm{HS}(\l^2,\l^2)}^2 \big] \les 1,
\end{equation*}
independently of $N$, where $K_{\wt\tau_1}$ is defined as 
$$ K_{\wt\tau_1} = \sum_{\substack{n_1-n_2+n_3-n_4=0, \\ n_1 \neq n_2,n_4,\\ |n_1|\le N, |n_2|\le N}}  \ind_{ \{\jb{\wt\tau_1} \gtrsim \jb{\Phi_\alpha(\cj{n})}\}}\frac{\Psi_s(\cj{n}) g_{n_1} \cj{g_{n_2}}}{\jb{n_1}^s\jb{n_2}^s\jb{n_3}^\sigma\jb{n_4}^\sigma\jb{\wt\tau_1}^b}.$$
We have 
\begin{align*}
\E\big[ \|K_{\wt\tau_1}\|_{\mathrm{HS}(\l^2,\l^2)}^2 \big]
&\les \sum_{\substack{n_1-n_2+n_3-n_4=0, \\ n_1 \neq n_2,n_4,}} \frac{|\Psi_s(\cj{n})|^2 }{\jb{n_1}^{2s}\jb{n_2}^{2s}\jb{n_3}^{2\sigma}\jb{n_4}^{2\sigma}\jb{\Phi_\alpha(\cj{n})}^{2b}}\\
&\les \sum_{ \substack{N_1,N_2,N_3,N_4 \\ N^1 \sim N^2}}(N_1N_2)^{-2s}(N_3N_4)^{-2\sigma} \sum_{\substack{n_1-n_2+n_3-n_4=0, \\ n_1 \neq n_2,n_4,\\|n_j|\sim N_j}} \frac{|\Psi_s(\cj{n})|^2 }{\jb{\Phi_\alpha(\cj{n})}^{2b}}
\end{align*}
To estimate the inner sum, we use \eqref{Phase} and \eqref{psisbd} in multiple different ways, obtaining
\begin{align*}
\sum_{\substack{n_1-n_2+n_3-n_4=0, \\ n_1 \neq n_2,n_4,\\ |n_j|\sim N_j}} \frac{|\Psi_s(\cj{n})|^2 }{\jb{\Phi_\alpha(\cj{n})}^{2b}} &\les N_{\max}^{-2b(2\alpha-2)}\sum_{\substack{n_1,n_2,n_4 \\|n_j|\sim N_j}} \frac{\jb{n_1-n_2}^{4s-2b}}{{\jb{n_1-n_4}^{2b}}} \\
&\les N_{\max}^{-2b(2\alpha-2)}N_1N_2N_4(N_1\vee N_4)^{-2b}(N_1\vee N_2)^{4s-2b},
\end{align*}
\begin{align*}
\sum_{\substack{n_1-n_2+n_3-n_4=0, \\ n_1 \neq n_2,n_4,\\|n_j|\sim N_j}} \frac{|\Psi_s(\cj{n})|^2 }{\jb{\Phi_\alpha(\cj{n})}^{2b}} &\les N_{\max}^{-2b(2\alpha-2)}\sum_{\substack{n_1,n_2,n_4 \\|n_j|\sim N_j}} \frac{\jb{n_1-n_4}^{4s-2b}}{{\jb{n_1-n_2}^{2b}}} \\
&\les N_{\max}^{-2b(2\alpha-2)}N_1N_2N_4(N_1\vee N_4)^{4s-2b}(N_1\vee N_2)^{-2b},\\
\end{align*}
from which we get
\begin{equation*}
\sum_{\substack{n_1-n_2+n_3-n_4=0, \\ n_1 \neq n_2,n_4,\\|n_j|\sim N_j}} \frac{|\Psi_s(\cj{n})|^2 }{\jb{\Phi_\alpha(\cj{n})}^{2b}} \les N_{\max}^{-2b(2\alpha-2)}N_1N_2N_4
(N_1\vee N_4)^{2s-2b}(N_1\vee N_2)^{2s-2b},
\end{equation*}
and similarly
\begin{align*}
\sum_{\substack{n_1-n_2+n_3-n_4=0, \\ n_1 \neq n_2,n_4,\\|n_j|\sim N_j}} \frac{|\Psi_s(\cj{n})|^2 }{\jb{\Phi_\alpha(\cj{n})}^{2b}} \les N_{\max}^{-2b(2\alpha-2)}N_1N_2N_3
(N_2\vee N_1)^{2s-2b}(N_2\vee N_3)^{2s-2b},\\
\sum_{\substack{n_1-n_2+n_3-n_4=0, \\ n_1 \neq n_2,n_4,\\|n_j|\sim N_j}} \frac{|\Psi_s(\cj{n})|^2 }{\jb{\Phi_\alpha(\cj{n})}^{2b}} \les N_{\max}^{-2b(2\alpha-2)}N_1N_3N_4
(N_1\vee N_4)^{2s-2b}(N_3\vee N_4)^{2s-2b},\\
\sum_{\substack{n_1-n_2+n_3-n_4=0, \\ n_1 \neq n_2,n_4,\\|n_j|\sim N_j}} \frac{|\Psi_s(\cj{n})|^2 }{\jb{\Phi_\alpha(\cj{n})}^{2b}} \les N_{\max}^{-2b(2\alpha-2)}N_2N_3N_4
(N_3\vee N_4)^{2s-2b}(N_3\vee N_2)^{2s-2b}.
\end{align*}
Let $N^1 \ge N^2 \ge N^3 \ge N^4$ be a reordering of $N_1,N_2,N_3,N_4$. By the previous estimates, and recalling that $\sigma \le s$, we obtain 
\begin{align*}
&\sum_{N_1,N_2,N_3,N_4}(N_1N_2)^{-2s}(N_3N_4)^{-2\sigma} \sum_{\substack{n_1-n_2+n_3-n_4=0, \\ n_1 \neq n_2,n_4,\\|n_j|\sim N_j}} \frac{|\Psi_s(\cj{n})|^2 }{\jb{\Phi_\alpha(\cj{n})}^{2b}}\\
&\les \sum_{N_1,N_2,N_3,N_4}(N_1N_2)^{-2s}(N_3N_4)^{-2\sigma}N_{\max}^{-2b(2\alpha-2)}N^2N^3N^4 (N^2)^{2s-2b}\big((N^2)^{2s-2b}\vee (N^3)^{2s-2b}\big)\\
&\les \sum_{N_1,N_2,N_3,N_4}(N^1N^2)^{-2\sigma}(N^3N^4)^{-2s}N_{1}^{-2b(2\alpha-2)}N^2N^3N^4 (N^2)^{2s-2b}\big((N^2)^{2s-2b}\vee (N^3)^{2s-2b}\big)
\end{align*}
Recalling that $N^1 \sim N^2$, this is summable provided that $4b\al +4\s >3$ and 
\begin{align}
\begin{cases}
4b\al+4\s > 1+4s, &\quad \text{if} \quad b\leq s, \\
4b\al+4\s> 1+2s+2b, & \quad \text{if} \quad b> s.
\end{cases}
\label{JQQnumerology}
\end{align}
Under the hypothesis $\frac 14 < s \le \frac 12$, and $b < \tfrac{1}{2}$, the conditions \eqref{JQQnumerology} are implied under the condition $4b\al +4\s >3$.
This shows \eqref{JQQintegrability}.

 We obtain \eqref{JQQconvergence} by first showing the analogous of \eqref{Jconvergence} and then arguing exactly as in Lemma \ref{xiconvergence}.
\end{proof}

Our final energy estimate is proved deterministically and handles the remaining cases when there is exactly one input of $P_{N}S(t)Y$ or no such inputs.

\begin{lemma}\label{LEM:detenergy}
Let $\al>1$, $0 \le \sigma \le s$, $ \sigma' \geq 0$,
and suppose that $0 < b < 1$ satisfies 
\begin{equation}
 2 \alpha b + 3 \sigma - \sigma' > 1 + 2s. \label{QQquadboundcondition}
\end{equation}
Then, we have that 
\begin{equation}\label{QQquadbound}
\begin{aligned}
&\int_\R \QQ(u_1(t'),u_2(t'),u_3(t'),u_4(t')) dt' \\
&\les \big( \|u_1\|_{X^{\sigma,\frac1{q'}+}_q}\|u_2\|_{X^{\sigma,\frac1{q'}+}_q}\|u_3\|_{X^{\sigma,\frac{1}{q'}-\frac{1}{q}+}_q}\|u_4\|_{X^{-\sigma',b}_{q}} \\
&\phantom{\les\big()} +\|u_1\|_{X^{\sigma,\frac1{q'}+}_q}\|u_2\|_{X^{\sigma,\frac1{q'}+}_q}\|u_3\|_{X^{\sigma,b}_q}\|u_4\|_{X^{-\sigma',\frac{1}{q'}-\frac{1}{q}+}_{q'}}\\
&\phantom{\les\big()} +\|u_1\|_{X^{\sigma,\frac1{q'}+}_q}\|u_2\|_{X^{\sigma,b}_q}\|u_3\|_{X^{\sigma,\frac1{q'}+}_q}\|u_4\|_{X^{-\sigma',\frac{1}{q'}-\frac{1}{q}+}_{q'}} \\
&\phantom{\les\big()} +\|u_1\|_{X^{\sigma,b}_q}\|u_2\|_{X^{\sigma,\frac1{q'}+}_q}\|u_3\|_{X^{\sigma,\frac1{q'}+}_q}\|u_4\|_{X^{-\sigma',\frac{1}{q'}-\frac{1}{q}+}_{q'}}\big),
\end{aligned}
\end{equation}
\end{lemma}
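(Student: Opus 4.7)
\medskip
\noindent
\emph{Proof sketch.} My plan is to mirror closely the proof of Proposition~\ref{PROP:detquad}, treating $\QQ$ as a quadrilinear form whose time-Fourier structure is identical to that of $\NN$, but whose spatial symbol carries the extra multiplier $\Psi_s(\cj n)$. First, by expanding the definition \eqref{Qenergy} as in \eqref{Nexpansion} and setting $\wt\tau_j = \tau_j - |n_j|^{2\alpha}$, the constraint $\tau_1 - \tau_2 + \tau_3 - \tau_4 = 0$ becomes $\wt\tau_1 - \wt\tau_2 + \wt\tau_3 - \wt\tau_4 = \Phi_\alpha(\cj n)$. Applying H\"older in the $\wt\tau$ variables with exponents $(q,q,q,q')$ (one of which can be replaced by $L^{q'}$ to accommodate the last factor being in $X^{-\sigma',\cdot}_{q'}$) and tracking which index carries $\wt\tau_{\max}$ yields exactly the four terms on the right-hand side of \eqref{QQquadbound}, each with one index weighted by $\jb{\wt\tau_j}^{b}$, the paired index by $\jb{\wt\tau_{j'}}^{\frac{1}{q'} - \frac{1}{q}+}$, and the remaining two by $\jb{\wt\tau_k}^{\frac{1}{q'}+}$.

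After this reduction, matters boil down to establishing the weighted frequency bound
\begin{equation*}
\sum_{\substack{n_1-n_2+n_3-n_4=0 \\ n_1 \neq n_2,n_4}} \frac{|\Psi_s(\cj n)|\,\jb{n_4}^{\sigma'}}{\jb{n_1}^{\sigma}\jb{n_2}^{\sigma}\jb{n_3}^{\sigma}\jb{\Phi_\alpha(\cj n)}^b}\prod_{j=1}^{4} w_j(n_j) \lesssim \prod_{j=1}^{4}\|w_j\|_{\ell^2_n}.
\end{equation*}
The key point is to absorb $\Psi_s$ into $\Phi_\alpha$: combining \eqref{psisbd}, which yields $|\Psi_s(\cj n)| \lesssim \jb{n_1-n_2}^s \jb{n_1-n_4}^s$, with the lower bound \eqref{Phase}, which gives $|\Phi_\alpha(\cj n)| \gtrsim \jb{n_1-n_2}\jb{n_1-n_4}\, n_{\max}^{2\alpha-2}$ (using $n_1 \neq n_2, n_4$), we obtain
\begin{equation*}
\frac{|\Psi_s(\cj n)|}{\jb{\Phi_\alpha(\cj n)}^b} \lesssim \frac{1}{\jb{n_3-n_2}^{b-s}\jb{n_3-n_4}^{b-s}\, n_{\max}^{2b(\alpha-1)}},
\end{equation*}
where we have rewritten $\jb{n_1-n_2}=\jb{n_3-n_4}$ and $\jb{n_1-n_4}=\jb{n_3-n_2}$. (When $b\le s$ the differences $b-s$ become negative; we then trivially bound $\jb{\cdot}^{b-s} \le 1$ and use the surplus smoothing of $\Psi_s$ on $n_{\max}$ via the cruder bound $|\Psi_s|\lesssim n_{\max}^{2s}$ instead.)

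With this in hand, I follow the Cauchy--Schwarz pairing scheme from the proof of Proposition~\ref{PROP:detquad} verbatim, splitting into the cases $N_2 \vee N_3 \sim N_{\max}$ and $N_1 \sim N_4 \gg N_2,N_3$, and using the momentum constraint to pair $(n_2,n_4)$ against $(n_1,n_3)$ and vice versa. In each dyadic block $\jb{n_j}\sim N_j$ with $N^1 \sim N^2$, the resulting sum reduces to
\begin{equation*}
\sum_{N_j} N_1^{-\sigma}N_2^{-\sigma}N_3^{\frac12-\sigma}N_4^{\frac12+\sigma'}\, N_{\max}^{-2\alpha b + b + s}\, (N^3)^{-(b-s)}
\end{equation*}
(up to symmetric variants). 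Using $N^1 \sim N^2$ and summing the bottom two dyadic levels first, this is finite precisely when $2\alpha b + 3\sigma - \sigma' > 1 + 2s$, which is condition \eqref{QQquadboundcondition}. The main technical obstacle is the bookkeeping in this final case-by-case numerology---in particular verifying that the borderline exponent lands exactly on \eqref{QQquadboundcondition} regardless of which frequency configuration dominates and which of the two factorizations of $\Psi_s$ via \eqref{psisbd} is used.
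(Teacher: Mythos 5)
There is a genuine gap in your approach, and it comes from your choice of bound for $\Psi_s$. You propose to control the symbol via \eqref{psisbd}, i.e.\ $|\Psi_s(\cj n)| \lesssim \jb{n_1-n_2}^s\jb{n_1-n_4}^s$, whereas the paper's proof uses the sharper Lemma~\ref{LEM:psi}, which gives
\begin{equation*}
|\Psi_s(\cj n)| \lesssim \frac{\jb{n_1-n_2}}{(\jb{n_1}^{1-2s}\vee\jb{n_2}^{1-2s})\wedge(\jb{n_3}^{1-2s}\vee\jb{n_4}^{1-2s})}
\end{equation*}
and hence gains an extra factor $n_{\max}^{-(1-2s)}$ whenever one of $\{n_1,n_2\}$ and one of $\{n_3,n_4\}$ is at the top frequency scale (and similarly for the other splitting). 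This extra gain is not a convenience: it is essential to reach the full range $0<b<1$ permitted by the hypotheses.

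To see where your bound fails, consider the block where all four dyadic frequencies are comparable, $N_1\sim N_2\sim N_3\sim N_4\sim N_{\max}$, with $b>s$. Your combined bound reads
\begin{equation*}
\frac{|\Psi_s(\cj n)|}{\jb{\Phi_\alpha(\cj n)}^b} \lesssim \frac{1}{\jb{n_1-n_2}^{b-s}\jb{n_1-n_4}^{b-s}\,N_{\max}^{2b(\alpha-1)}}.
\end{equation*}
Applying the Cauchy--Schwarz pairing of Proposition~\ref{PROP:detquad}, the two off-diagonal sums produce a factor $N_{\max}^{\max(1-2(b-s),0)+}$, so the total contribution of this block to the dyadic sum is $N_{\max}^{\sigma'-3\sigma-2b(\alpha-1)+\max(1-2(b-s),0)+}$. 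When $b<s+\tfrac12$ the exponent equals $\sigma'-3\sigma+2s+1-2\alpha b+$, which is negative under \eqref{QQquadboundcondition} as you claim. But when $b\ge s+\tfrac12$ (which can certainly occur since $b<1$ and $s>\tfrac14$), the $\max$ collapses to $0$ and the exponent is $\sigma'-3\sigma-2b(\alpha-1)$. The condition $2\alpha b + 3\sigma - \sigma' > 1+2s$ gives $\sigma'-3\sigma-2\alpha b < -1-2s$, hence $\sigma'-3\sigma-2b(\alpha-1) = \sigma'-3\sigma-2\alpha b + 2b < 2b-1-2s$, which is nonnegative precisely in this regime; there is no smallness. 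Concretely, $\alpha=1.1$, $s=0.3$, $\sigma=0$, $\sigma'=0.3$, $b=0.9$ satisfies \eqref{QQquadboundcondition} ($1.68>1.6$), yet your block contributes $N_{\max}^{0.12+}$, which is not summable. By contrast, the paper's use of Lemma~\ref{LEM:psi} at this point yields $|\Psi_s|\lesssim N_{\max}^{-1+2s}\jb{n_1-n_2}^b\jb{n_1-n_4}^{1-b}$, the extra $N_{\max}^{-(1-2s)}$ exactly repairs the numerology, and the block sums to $N_{\max}^{\sigma'-3\sigma+2s+1-2\alpha b+}$ independently of whether $b$ is above or below $s+\frac12$. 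Relatedly, your claimed reduced sum with the factor $(N^3)^{-(b-s)}$ is not what the Cauchy--Schwarz machinery actually delivers: the decay in the free frequency variable produces $(N^3)^{\min(\frac12-(b-s),0)+}$-type powers, not $(N^3)^{-(b-s)}$, and this discrepancy is exactly where the argument breaks for $2(b-s)\ge 1$.

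Two smaller remarks. First, in your parenthetical for $b\le s$ you write that ``$\jb{\cdot}^{b-s}\le 1$''; this inequality is true, but the factor sits in the denominator of your combined bound, so it points the wrong way --- what you want (and what your cruder $|\Psi_s|\lesssim n_{\max}^{2s}$ bound accomplishes) is the absorption $\jb{n_1-n_2}^{s-b}\jb{n_1-n_4}^{s-b}\lesssim n_{\max}^{2(s-b)}$. Second, the paper's proof also contains an interval-covering Schur argument in the case $N_1\sim N_2\sim N_4\sim N_{\max}$ (to control the positive-exponent factor $\jb{n_2-n_3}^{1-2b}$ when $b<\tfrac12$); your write-up does not flag this case, and while your estimate turns out to avoid it incidentally for $b<s+\tfrac12$, that is a symptom of the same coarseness in the $\Psi_s$ bound that eventually causes the failure for larger $b$.
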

\begin{proof}
Proceeding as in the proof of \eqref{quad_det_1} and \eqref{quad_det_2}, it is enough to show that 
\begin{equation*}
\sum_{\substack{n_1-n_2+n_3-n_4=0, \\ n_2 \neq n_1,n_3}} \frac{|\Psi_s(\cj{n})|\jb{n_4}^{\sigma'}}{\jb{\Phi_\alpha(\cj{n})}^{b}\jb{n_1}^{\sigma}\jb{n_2}^{\sigma}\jb{n_3}^{\sigma}}\prod_{j=1}^4 w_j(n_j) \les \prod_{j=1}^4\|w_j\|_{\l^2}.
\end{equation*}
By \eqref{Phase}, we obtain that 
\begin{align}
&\sum_{\substack{n_1-n_2+n_3-n_4=0, \\ n_2 \neq n_1,n_3}} \frac{|\Psi_s(\cj{n})|\jb{n_4}^{\sigma'}}{\jb{\Phi_\alpha(\cj{n})}^{b}\jb{n_1}^{\sigma}\jb{n_2}^{\sigma}\jb{n_3}^{\sigma}}\prod_{j=1}^4 w_j(n_j) \notag \\
&\les \sum_{\substack{n_1-n_2+n_3-n_4=0, \\ n_2 \neq n_1,n_3}} \frac{|\Psi_s(\cj{n})|\jb{n_4}^{\sigma'}\prod_{j=1}^4 |w_j(n_j)|}{\jb{n_1-n_2}^{b} \jb{n_1-n_4}^{b}\jb{n_1}^{\sigma}\jb{n_2}^{\sigma}\jb{n_3}^{\sigma}\jb{n_{\max}}^{b(2\alpha-2)}}\notag \\
&\les \sum_{N_1,N_2,N_3,N_4} N_4^{\sigma'}(N_1N_2N_3)^{-\sigma}N_{\max}^{- (2\alpha -2)b}  \sum_{\substack{n_1-n_2+n_3-n_4=0, \\ n_2 \neq n_1,n_3,\\ \jb{n_j} \sim N_j}} \frac{|\Psi_s(\cj{n})|\prod_{j=1}^4 |w_j(n_j)|}{\jb{n_1-n_2}^{b} \jb{n_1-n_4}^{b}}. \label{energyest1}
\end{align}
We split the sum over $N_1,N_2,N_3,N_4$ in \eqref{energyest1} into two cases. Notice that by symmetry between $n_1$ and $n_3$, we can assume that $N_1 \ge N_3$. Moreover, by symmetry between $n_2$ and $n_4$ in the inner sum, and since $N_4^{\sigma'}N_2^{-\sigma} \le (N_4 \vee N_2)^{\sigma'} (N_4\wedge N_2)^{-\sigma}$, we can assume that $N_4 \ge N_2$. Finally, since $n_1-n_2+n_3-n_4 = 0$, we have that $\max(N_1,N_2,N_3) \sim N_{\max}$.

\medskip
\noi
\textbf{Case 1}: $N_1 \sim \max(N_1,N_2,N_3)$. 

\medskip
\noi
\textbf{Case 1.1}: $N_2 \ll N_1$.
 
\noi
By Lemma \ref{LEM:psi} and H\"older, this contribution to \eqref{energyest1} is bounded by
\begin{align*}
& \sum_{\substack{N_1,N_2,N_3,N_4,\\ N_1 \sim N_{\max} \gg N_2}} \hspace{-0pt}N_4^{\sigma'}(N_1N_2N_3)^{-\sigma}(N_2\vee N_3)^{2s-1}N_{\max}^{- (2\alpha -2)b}\hspace{-15pt} \sum_{\substack{n_1-n_2+n_3-n_4=0, \\ n_2 \neq n_1,n_3,\\ |n_j|\sim N_j}}\hspace{-15pt} \frac{\jb{n_2-n_3}^{1-b}\prod_{j=1}^4 |w_j|(n_j)}{\jb{n_1-n_2}^{b}}\\
&\les \sum_{\substack{N_1,N_2,N_3,N_4,\\ N_1 \sim N_{\max}\gg N_2}} \hspace{-0pt}N_4^{\sigma'}(N_1N_2N_3)^{-\sigma}(N_2\vee N_3)^{2s-1}N_{\max}^{- (2\alpha -2)b} \\
&\phantom{\les \sum} \times \sup_{|n_1| \sim N_1}\Big( \sum_{|n_2|\sim N_2 \ll N_1,|n_3|\sim N_3} \frac{\jb{n_2-n_3}^{2-2b}}{\jb{n_1-n_2}^{2b}}\Big)^\frac12 \prod_{j=1}^4 \|w_j\|_{\l^2}\\
&\les \sum_{\substack{N_1,N_2,N_3,N_4,\\ N_1 \sim N_{\max}}} \hspace{-0pt}N_4^{\sigma'}(N_1N_2N_3)^{-\sigma}(N_2\vee N_3)^{2s-1}N_{\max}^{- (2\alpha -2)b}   \\
&\phantom{\les \sum\les \sum\les \sum} \times \big(N_3(N_2\vee N_3)^{2-2b}N_2N_1^{-2b}\big)^{\frac12} \prod_{j=1}^4 \|w_j\|_{\l^2}.
\end{align*}
As $\sigma \le \frac 12$, this is summable if 
\begin{equation}\label{QQquadboundcondition2}
(2\alpha - 1)b + \sigma > \sigma', \quad \text{and} \quad \quad 2\alpha b + 3 \sigma > 1 + 2s + \sigma'.
\end{equation}
Since $\sigma \le s$, $b < 1$, under condition \eqref{QQquadboundcondition}, we have that  
\begin{align*}
 (2\alpha - 1)b + \sigma - \sigma' &= 2\alpha b + 3\sigma - \sigma' - b - 2 \sigma  > 1 + 2s - b - 2\sigma\ge 1- b > 0.
\end{align*}
Therefore, \eqref{QQquadboundcondition2} holds, and we obtain \eqref{QQquadbound} in this case.

\medskip
\noi
\textbf{Case 1.2}: $N_2 \sim N_4 \sim N_1$. 

\noi
In this case, we cover the annulus $\{|n_2| \sim N_2\}$ with intervals $J_1,\dotsc, J_k$ of size $N_3$, with $k \sim N_2 N_3^{-1}$.
By Lemma \ref{LEM:psi} and $0<b<1$, we have 
\begin{align}
|\Psi_{s}(\cj{n})| \les N_{\max}^{-1+2s}\jb{n_1-n_2}^{b}\jb{n_1-n_4}^{1-b}. \label{Psisbd}
\end{align}
 Then, by \eqref{Psisbd}, H\"older, and Schur's inequality, this part of \eqref{energyest1} is bounded by
\begin{align*}
& \sum_{\substack{N_1,N_2,N_3,N_4,\\ N_1, N_2, N_4 \sim N_{\max}}} \hspace{-0pt}N_{\max}^{\sigma' - 2 \sigma - (2\alpha-2)b}N_3^{-\sigma} \sum_{\substack{n_1-n_2+n_3-n_4=0, \\ n_2 \neq n_1,n_3,\\ |n_j|\sim N_j}} \frac{|\Psi_s(\cj{n})|\prod_{j=1}^4 |w_j(n_j)|}{\jb{n_1-n_2}^{b} \jb{n_1-n_4}^{b}}\\
&\les \sum_{\substack{N_1,N_2,N_3,N_4,\\ N_1 \sim N_{\max}}} \hspace{-0pt}N_{\max}^{\sigma' - 2 \sigma - (2\alpha-2)b + 2s - 1 }N_3^{-\sigma}\hspace{-15pt} \sum_{\substack{n_1-n_2+n_3-n_4=0, \\ n_2 \neq n_1,n_3}}\hspace{-15pt} {\jb{n_2-n_3}^{1-2b}\prod_{j=1}^4 |w_j(n_j)|}\\
&\les \sum_{\substack{N_1,N_2,N_3,N_4,\\ N_1 \sim N_{\max}}} \hspace{-0pt}N_{\max}^{\sigma' - 2 \sigma - (2\alpha-2)b + 2s - 1 }N_3^{-\sigma}\\
&\phantom{\sum_{\substack{N_1,N_2,N_3,N_4,\\ N_1 \sim N_{\max}}}} \times  \sum_{k} \Big( \sup_{n_2 \in J_k} \sum_{|n_3|\sim N_3} {\jb{n_2-n_3}^{1-2b}} + \sup_{|n_3| \sim N_3} \sum_{n_2\in J_k} {\jb{n_2-n_3}^{1-2b}}\Big) \\
&\phantom{\sum_{\substack{N_1,N_2,N_3,N_4,\\ N_1 \sim N_{\max}}} \times  \sum_{k}} \times \|w_1\|_{\l^2}\|w_2\|_{\l^2(J_k)}\|w_3\|_{\l^2}\|w_4\|_{\l^2}\\
&\les \sum_{\substack{N_1,N_2,N_3,N_4,\\ N_1 \sim N_{\max}}} \hspace{-0pt}N_{\max}^{\sigma' - 2 \sigma - (2\alpha-2)b + 2s - 1 + 1 -2b }N_3^{1-\sigma}\sum_{k} \|w_1\|_{\l^2}\|w_2\|_{\l^2(J_k)}\|w_3\|_{\l^2}\|w_4\|_{\l^2}\\
&\les \sum_{\substack{N_1,N_2,N_3,N_4,\\ N_1 \sim N_{\max}}} \hspace{-0pt}N_{\max}^{\sigma' - 2 \sigma - (2\alpha-2)b + 2s  -2b}N_3^{1-\sigma}(N_2N_3^{-1})^\frac12\prod_{j=1}^4 \|w_j\|_{\l^2}.
\end{align*}
Since $\sigma \le s \le \frac 12$, it is easy to check that this is summable if $2\alpha b + 3 \sigma > 1 + 2s + \sigma', $
so in particular whenever \eqref{QQquadboundcondition} holds.

\medskip
\noi
\textbf{Case 2}: $N_2 \sim \max(N_1,N_2,N_3)$. 

\noi
In view of Case 1.2 above, we may further assume that $N_2\sim N_4\gg N_1 \geq N_3$. By Lemma \ref{LEM:psi} and H\"older, we bound this portion of \eqref{energyest1} by
\begin{align*}
&\sum_{\substack{N_1,N_2,N_3,N_4,\\ N_2 \sim N_{\max}}} N_4^{\sigma'}(N_1N_2N_3)^{-\sigma} N_{\max}^{- (2\alpha -2)b}N_{\max}^{2s-2b}\hspace{-15pt} \sum_{\substack{n_1-n_2+n_3-n_4=0, \\ n_2 \neq n_1,n_3}}  \prod_{j=1}^4 |w_j(n_j)|\\
&\les \sum_{\substack{N_1,N_2,N_3,N_4,\\ N_1 \sim N_{\max}}} \hspace{-0pt}  N_{\max}^{\s'-\s-(2\al-2)b+2s-2b} N_{1}^{\frac{1}{2}-\s} N_{3}^{\frac{1}{2}-\s} \prod_{j=1}^{4} \|w_j\|_{\l^2}.
\end{align*}
Since $\sigma \leq \frac 12$, this is again summable  if \eqref{QQquadboundcondition2} holds, and we already saw that this is satisfied under \eqref{QQquadboundcondition}.
\end{proof}

\subsection{On the uniform $L^p$ estimates for the transported density}\label{SEC:Growth} 

In this subsection, we prove the uniform $L^p$ integrability of the densities $f_{t}^{N}$. The first result below is a uniform bound on the density for at least a short time.

\begin{lemma}\label{LEM:densityontau}
For $\alpha > 1$, $\tfrac 14 < s \le \tfrac 12$, such that $\al+s>\tfrac{3}{2}$, define 
\begin{equation}\label{ta*def}
\ta_*(\alpha,s) = \begin{cases}
\frac{4\alpha + 2s - 3}{4\alpha}+ &\text{ if }\quad \alpha \le \frac{5}{4} + \frac{s}{2},\vspace{4pt} \\
\frac{1+2s}{2\alpha}+ &\text{ if }\quad \alpha > \frac{5}{4} + \frac{s}{2},
\end{cases}
\end{equation}
and
\begin{equation}\label{casdef}
c(\alpha,s,\gamma) = \begin{cases}
4 + \frac{(\gamma-1)(5+2s-4\alpha)}{2s}+ &\text{ if }\quad \alpha \le \frac{5}{4} + \frac{s}{2}, \\
4 &\text{ if }\quad \alpha > \frac{5}{4} + \frac{s}{2}.
\end{cases}
\end{equation}
Let $M \ge 1$ be a dyadic number, $0<\tau_* \le M^{-\frac{4\alpha}{2\alpha-1}-}$, $\lambda > 0$, and $\frac{2}{1-2s} > 2\gamma > c(\alpha,s,\gamma)$. Then, we have that
\begin{align}
\begin{split}
&\log\Big( \int \exp\Big(\lambda \int_0^{\tau_*} \QQ(P_N \Phi_{t'}^N(u_0)) d t'  \Big) \ind_{\{\tau_{N}(u_0)^{-1} \le M^{\frac{4\alpha}{2\alpha-1}-}\}} d \rho_{s,\gamma}(u_0)\Big) \\
&\hphantom{XXXXXX} \les  \lambda + (\lambda\tau_*^{1-\ta_*})^{\frac{2\gamma}{2\gamma- c(\alpha,s,\gamma)}} + \lambda\tau_*^{1-\ta_*(\alpha,s)} M^{c(\alpha,s,\gamma)}.
\end{split}\label{QQintegrability}
\end{align}
\end{lemma}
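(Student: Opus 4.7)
The plan is to apply the variational formula of Lemma \ref{LEM:BD2}. Setting $A = \{u_0 : \tau_N(u_0)^{-1} \leq M^{\frac{4\alpha}{2\alpha-1}-}\}$ and $G(u_0) = \lambda \int_0^{\tau_*} \QQ(P_N \Phi_{t'}^N(u_0)) dt'$, the pointwise inequality $\exp(G)\ind_A \leq \exp(G\ind_A)$ allows us to fold the indicator into the exponential. Unfolding the definition of $\rho_{s,\gamma}$ and applying Lemma \ref{LEM:BD2} to the functional $G\ind_A - |\!\int\!\wick{|u_0|^2}|^\gamma$, the lower bound of Lemma \ref{LEM:var5} peels off a term $\tfrac12\|V_0\|_{L^2}^{2\gamma}$ at the cost of tail pieces in $Y$ that are uniformly integrable by Wiener chaos. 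This reduces the lemma to the pointwise-in-$Y$ bound
\begin{equation*}
\sup_{V_0 :\, Y+V_0 \in A}\Big\{\lambda \int_0^{\tau_*} \QQ(P_N\Phi_{t'}^N(Y+V_0)) dt' - \tfrac12\|V_0\|_{L^2}^{2\gamma} - \tfrac{2}{5}\|V_0\|_{H^s}^2\Big\} \lesssim \text{RHS of \eqref{QQintegrability}} + \mathcal R(Y),
\end{equation*}
with $\mathcal R(Y)$ Gaussian-integrable uniformly in $N$ by \eqref{xiintegrability}.

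\textbf{Paragraph 2 (Decomposition).} Write $u_0 = Y + V_0$ and let $\mathcal V = \mathcal V_N(u_0)$ be the remainder solving \eqref{4NLSshiftedtruncated2}, so that $P_N\Phi_t^N(u_0) = P_N S(t)Y + P_N S(t)V_0 + P_N \mathcal V(t)$. Quadrilinearity of $\QQ$ expands the time integral into terms indexed by the number $k \in \{0,1,2,3,4\}$ of slots occupied by the rough linear evolution $P_N S(t)Y$. On the event $A$, the definition \eqref{taudef} (used together with $\tau_* \leq M^{-\frac{4\alpha}{2\alpha-1}-}$) gives $\|\mathcal V\|_{X^{0,1/q'+}_q(\tau_*)} \lesssim M$ and $\|\mathcal V\|_{X^{s,1/q'+}_q(\tau_*)} \lesssim M^\gamma$, while \eqref{linest} yields $\|S(t)V_0\|_{X^{\sigma,b}_q(\tau_*)} \lesssim \|V_0\|_{H^\sigma}$ for any $\sigma$ and $b$.

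\textbf{Paragraph 3 (Term-by-term).} I estimate each $k$-block separately. For $k=4$, Cauchy--Schwarz in $\omega$ and \eqref{QQu0estimate} deliver a contribution of size $\lambda \tau_*^{1/2}$ after taking expectation. For $k=3$, I view the integrand as the pairing $\langle Y^{(3),N},w\rangle$ for some $w \in \{S(t)V_0, \mathcal V\}$ and apply the random trilinear estimate \eqref{Y3estimate} against the $X^{\sigma_3,1/q'+}_q$ norm of $w$; the need to match this negative regularity forces \eqref{sigma3def}. For $k=2$, the random bilinear estimates $J^\QQ_{12,N}$ and $J^\QQ_{13,N}$ of Lemma \ref{lem:bilineargwp} are paired against $X^{s,b}_q$ norms of the two non-random slots. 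For $k \in \{0,1\}$, the deterministic energy estimate \eqref{QQquadbound} with $\sigma = s$ and a suitable $\sigma'$ applies; for $k=1$ the single $P_NS(t)Y$ slot is placed in $X^{s-1/2-,\,\cdot}_q$. In every case, the time-localization inequalities \eqref{timeloc} and \eqref{timeloc4} trade a reduction of the $b$-index for a factor $\tau_*^{1-\theta_*(\alpha,s)}$, the minimal admissible $b$ being determined by whichever of \eqref{bilinearlwpcondition}, \eqref{bJQQcondition}, \eqref{QQquadboundcondition} is binding. The crossover at $\alpha = \tfrac54 + \tfrac{s}{2}$ between the deterministic $\QQ$-constraint (dominant for small $\alpha$) and the random bilinear constraint (dominant for large $\alpha$) is the origin of the piecewise definition \eqref{ta*def}.

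\textbf{Paragraph 4 (Young's balance and main obstacle).} After the reductions of Paragraph 3, each term is of the shape $\lambda \tau_*^{1-\theta_*(\alpha,s)} (1+\|V_0\|_{L^2})^{a_1} (1+\|V_0\|_{H^s})^{a_2} M^{a_3} (1+\mathcal R(Y))$ with $a_1 + a_2 + a_3 \leq 4$ and $\mathcal R(Y)$ integrable. Young's inequality absorbs each mixed $V_0$-monomial into $\tfrac12\|V_0\|_{L^2}^{2\gamma} + \tfrac{2}{5}\|V_0\|_{H^s}^2$, producing a remainder of the form $(\lambda\tau_*^{1-\theta_*})^{\frac{2\gamma}{2\gamma - c(\alpha,s,\gamma)}}$, where $c(\alpha,s,\gamma)$ in \eqref{casdef} collects the worst effective degree after weighting the $L^2$ norm by $1/(2\gamma)$ and the $H^s$ norm by $1/2$; the purely $M$-dependent portion yields the third term $\lambda\tau_*^{1-\theta_*} M^{c(\alpha,s,\gamma)}$. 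The hypothesis $2\gamma > c(\alpha,s,\gamma)$ is precisely what makes the Young balance close. \emph{The main obstacle} is the bookkeeping that pins down $\theta_*$ and $c(\alpha,s,\gamma)$ exactly: every term of the quadrilinear expansion imposes its own structural constraint and its own polynomial degree in $\|V_0\|_{L^2}, \|V_0\|_{H^s}, M$, and the dominant one varies with $\alpha$. Moreover, the $\gamma$-dependence of $c(\alpha,s,\gamma)$ in the low-$\alpha$ regime enters through the $k=3$ block, because $\|\mathcal V\|_{X^{s,\cdot}}$ carries a factor $M^\gamma$ that feeds back through the random trilinear estimate---controlling this feedback uniformly in $N$ while extracting optimal powers of $\tau_*$ is the technical heart of the lemma.
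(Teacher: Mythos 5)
Your architecture is exactly the paper's: Bou\'e--Dupuis with the indicator folded in, the decomposition of $P_N\Phi_{t'}^N(Y+V_0)$ into $P_NS(t')Y + P_NS(t')V_0 + P_N\VV_N$, a case split by the number $k$ of random slots handled respectively by \eqref{QQu0estimate}, \eqref{Y3estimate}, Lemma \ref{lem:bilineargwp} and \eqref{QQquadbound}, and a final Young balance against $\tfrac12\|V_0\|_{L^2}^{2\gamma}+\tfrac25\|V_0\|_{H^s}^2$. However, you defer the exponent bookkeeping as ``the technical heart,'' and the two places where you do commit to where the numerology comes from are both wrong, which undermines confidence that the deferred bookkeeping would close with the stated $\ta_*$ and $c(\alpha,s,\gamma)$.

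First, the piecewise definition of $\ta_*$ does \emph{not} arise from a crossover between the deterministic constraint and the random bilinear constraint. In both regimes $\ta_*$ is the temporal weight $b_0$ of the \emph{fully deterministic} ($k=0$) block, governed by \eqref{QQquadboundcondition} with $\sigma'=0$: for $\alpha>\tfrac54+\tfrac s2$ one may take $\sigma_0=0$ and $b_0=\tfrac{1+2s}{2\alpha}+$, while for $\alpha\le\tfrac54+\tfrac s2$ the ceiling $b_0<\tfrac1{q'}=\tfrac{4\alpha+2s-3}{4\alpha}-$ (coming from \eqref{LWPqbd} and the need to invoke the $\VV_N$ bounds from \eqref{taudef}) becomes binding, and one must compensate by taking $\sigma_0=\tfrac{5+2s-4\alpha}{6}+>0$. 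The random bilinear condition \eqref{bJQQcondition} requires only $b_2>\tfrac{3}{4\alpha}$ (with $\sigma_2=0$), which is strictly weaker than $\tfrac{1+2s}{2\alpha}$ since $s>\tfrac14$, so it is never the binding constraint for $\ta_*$. Second, the $\gamma$-dependence of $c(\alpha,s,\gamma)$ in the low-$\alpha$ regime does \emph{not} enter through the $k=3$ block. That block has only one non-random slot and hence contributes $(M+M_{V_0})^{1+\frac{(\gamma-1)\sigma_3}{s}}$, a single power that is dominated. The exponent $c(\alpha,s,\gamma)=1+3\bigl(1+\frac{(\gamma-1)\sigma_0}{s}\bigr)$ is produced by the $k=0$ block, whose four non-random slots each carry, via the interpolation bound $\|\VV_N\|_{X^{\sigma,\frac1{q'}+}_q}\les M^{1+\frac{(\gamma-1)\sigma}{s}}$ (and its analogue for $S(t)V_0$ with $M_{V_0}=\|V_0\|_{L^2}+\|V_0\|_{H^s}^{1/\gamma}$), the factor $M^{\gamma}$-weighted regularity cost $\sigma_0$; three of the four slots need regularity $\sigma_0$ and one needs only $L^2$, whence $1+3(1+\frac{(\gamma-1)\sigma_0}{s})$. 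Identifying the correct dominant block matters here because the paper must verify, term by term, inequalities of the form $(1-b_1)\,c(\alpha,s,\gamma)>(1-b_0)\cdot 3(1+\frac{(\gamma-1)\sigma_1}{s})$ to confirm that all other blocks are indeed absorbed; with your attribution those comparisons would be set up against the wrong benchmark. A smaller point: the condition \eqref{bilinearlwpcondition} you cite governs the local well-posedness bilinear forms $J_{12},J_{13}$, not the energy estimate; the relevant constraints here are \eqref{bJQQcondition} and \eqref{QQquadboundcondition}.
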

\begin{proof}
By the Bou\'e-Dupuis formula \eqref{P4},\footnote{See Remark \ref{RMK:LpbdN} for the integrability hypothesis on $\int \QQ$.} we have that 
\begin{align*}
&~\log\Big( \int \exp\Big(\lambda \int_0^{\tau_*} \QQ(P_N \Phi_{t'}^N(u_0)) d t'  \Big) \ind_{\{\tau_{N}(u_0)^{-1} \le M^{\frac{4\alpha}{2\alpha-1}-}\}} d \rho_{s,\gamma}(u_0)\Big) \\\
\le&~ \E\Big[ \sup_{V_0 \in H^s} \Big\{ \ind_{\{\tau_{N}(Y+V_0)^{-1} \le M^{-\frac{4\alpha}{2\alpha-1}-}\}}  \lambda \int_0^{\tau_*} \QQ(P_N \Phi_{t'}^N(Y + V_0)) d t'  \\
& \hphantom{XXXXXXXXXXXXXXXXXXXX}-  \Big|\int \wick{|Y + V_0|^2}\Big|^\gamma - \frac 12 \| V_0\|_{H^s}^2 \Big\}\Big]. \stepcounter{equation} \tag{\theequation} \label{bd3}
\end{align*}
We can write $\Phi^{N}_{t'}(Y + V_0) = S(t') Y + S(t') V_0 + \VV_N(Y+V_0)(t')$, where $\VV_N(Y+V_0)(t')$ solves \eqref{4NLSshiftedtruncated2} with initial data given by $Y + V_0$. By definition of $\tau_{N}$ (see \eqref{taudef}), we have that on the set $\{ \tau_{N}(Y+V_0)^{-1} \le M^{-\frac{4\alpha}{2\alpha-1}-}\}$, for $q = \frac{4\alpha}{3 -2s} - $,
$$\| \VV_N(Y+V_0) \|_{X^{0,\frac1{q'}+}_q(M^{-\frac{4\alpha}{2\alpha-1}-})} \les M, \quad \text{and} \quad \quad \| \VV_N(Y+V_0) \|_{X^{s,\frac1{q'}+}_q(M^{-\frac{4\alpha}{2\alpha-1}-})} \les M^\gamma. $$
By interpolation, we obtain that for $0 \le \sigma \le s$, 
\begin{equation}\label{vboundontau}
\| \VV_N(Y+V_0) \|_{X^{\sigma,\frac1{q'}+}_q(M^{-\frac{4\alpha}{2\alpha-1}-})} \les M^{ 1 +\frac{(\gamma-1)\sigma}{s}}.
\end{equation}
Let $M_{V_0} := \|V_0\|_{L^2} + \|V_0\|_{H^s}^{\frac1\gamma}$, so that we obtain the analogous of \eqref{vboundontau},
\begin{equation}\label{V0boundontau}
\| S(t)V_0 \|_{X^{\sigma,\frac1{q'}+}_q(M^{-\frac{4\alpha}{2\alpha-1}-})}\les M_{V_0}^{ 1 +\frac{(\gamma-1)\sigma}{s}}.
\end{equation}
In this way, we obtain that 
\begin{equation*}
\| S(t)V_0 + \VV_N(Y+V_0) \|_{X^{\sigma,\frac1{q'}+}_q(M^{-\frac{4\alpha}{2\alpha-1}-})} \les (M+M_{V_0})^{ 1 +\frac{(\gamma-1)\sigma}{s}}
\end{equation*}
We define the following regularity parameters:
\begin{align*}
\sigma_2 &= 
\begin{cases}
\frac{3-2\alpha}{4}+ & \text{if } \alpha \le \frac32, \\
0 & \text{if } \alpha > \frac32,
\end{cases} &
b_2 &= 
\begin{cases}
\frac12- & \text{if } \alpha \le \frac32, \\
\frac{3}{4\alpha}+ & \text{if } \alpha > \frac32,
\end{cases} 
\\
\sigma_1 &= 
\begin{cases}
\frac{3 - 2\alpha}{3}+& \text{if } \alpha \le \frac32, \\
0 & \text{if } \alpha > \frac32,
\end{cases}
&
b_1 &= 
\begin{cases}
\frac{4\alpha + 2s -3}{4\alpha}-  & \text{if } \alpha \le \frac32, \\
\frac{3+2s}{4\alpha} + & \text{if } \alpha > \frac32,
\end{cases} 
\\
\sigma_0 &= 
\begin{cases}
\frac{5 + 2s - 4\alpha}{6}+& \text{if } \alpha \le \frac54 +\frac s2, \\
0 & \text{if } \alpha > \frac54 +\frac s2,
\end{cases}
&
b_0 &= 
\begin{cases}
\frac{4\alpha + 2s -3}{4\alpha}- & \text{if } \alpha \le \frac54 +\frac s2, \\
\frac{1+2s}{2\alpha} + & \text{if } \alpha > \frac54 +\frac s2.
\end{cases}
\end{align*}
By \eqref{Y3def}, \eqref{sigma3def}, \eqref{JQQN12def}, \eqref{JQQN13def}, and \eqref{QQquadbound} with \eqref{QQquadboundcondition}, \eqref{vboundontau} and \eqref{V0boundontau}, using the symmetry 
$$\QQ(u_1,u_2,u_3,u_4) = \QQ(u_3,u_2,u_1,u_4)=\QQ(u_1,u_4,u_3,u_2) = \QQ(u_3,u_4,u_1,u_2),$$
and by \eqref{timeloc}, we obtain that 
\begin{align*}
&\ind_{\{\tau_{N}(Y+V_0)^{-1} \le M^{-\frac{4\alpha}{2\alpha-1}-}\}} \int_0^{\tau_*} \QQ(P_N \Phi_{t'}^N(Y + V_0)) d t' \\
&\les \ind_{\{\tau_{N}(Y+V_0)^{-1} \le M^{-\frac{4\alpha}{2\alpha-1}-}\}} \\
&\phantom{\les\ } \times \Big(\int_0^{\tau_*} \QQ(P_N S(t')Y ) d t' \\
&\phantom{\les\  \times \Big()}+ \|Y^{(3),N}(t)\|_{X^{-\sigma_3,-\frac1{q'}-}_{q'}(\tau_*)} \| S(t)V_0 + \VV_N(Y+V_0) \|_{X^{\sigma_3,\frac1{q'}+}_q(\tau_*)}\\
&\phantom{\les\  \times \Big()}+ (\|J_{12}^\QQ\|_{2,b_2,q,\sigma_2,\sigma_2} + \|J_{13}^\QQ\|_{2,b_2,q,\sigma_2,\sigma_2} )\\
&\phantom{\les\  \times \Big()+\ }\times \| S(t)V_0 + \VV_N(Y+V_0) \|_{X^{\sigma_2,b_2}_q(\tau_*)}\| \chi_{\tau_*}(S(t)V_0 + \VV_N(Y+V_0)) \|_{X^{\sigma_2,\frac{1}{q'}-\frac1 q +}_q(\tau_*)}\\
&\phantom{\les\  \times \Big()}+ \|\chi_{\tau_*} S(t)Y\|_{X^{s-\frac{1}{2} -,b_1}_q}\| S(t)V_0 + \VV_N(Y+V_0) \|_{X^{\sigma_1,\frac1{q'}+}_q(\tau_*)}^2\\
& \hphantom{\les\  \times \Big()XX} \times  \| S(t)V_0 + \VV_N(Y+V_0) \|_{X^{\sigma_1,\frac1{q'}-\frac1q+}_{q'}(\tau_*)}\\
&\phantom{\les\  \times \Big()}+ \|\chi_{\tau_*} S(t)Y\|_{X^{s-\frac12 -,\frac1{q'}-\frac1q+}_{q'}}\| S(t)V_0 + \VV_N(Y+V_0) \|_{X^{\sigma_1,\frac1{q'}+}_q(\tau_*)}^2 \\
&\hphantom{\les\  \times \Big()XX} \times \| S(t)V_0 + \VV_N(Y+V_0) \|_{X^{\sigma_1,b_1}_q(\tau_*)}\\
&\phantom{\les\  \times \Big()}+ \|\chi_{\tau_*} (S(t)V_0 + \VV_N(Y+V_0))\|_{X^{0,b_0}_q}\|\chi_{\tau_*} (S(t)V_0 + \VV_N(Y+V_0))\|_{X^{\sigma_0,\frac1{q'}+}_q(\tau_*)}^2\\
&\phantom{\les\  \times \Big()+}\ \times \|\chi_{\tau_*} (S(t)V_0 + \VV_N(Y+V_0))\|_{X^{\sigma_0,\frac1{q'}-\frac1q+}_{q'}(\tau_*)}\\
&\phantom{\les\  \times \Big()}+ \|\chi_{\tau_*} (S(t)V_0 + \VV_N(Y+V_0))\|_{X^{0,\frac{1}{q'}-\frac1q+}_q}\|\chi_{\tau_*} (S(t)V_0 + \VV_N(Y+V_0))\|_{X^{\sigma_0,b_0}_q(\tau_*)}^2\\
&\phantom{\les\  \times \Big()+}\ \times \|\chi_{\tau_*} (S(t)V_0 + \VV_N(Y+V_0))\|_{X^{\sigma_0,\frac1{q'}-\frac1q+}_q(\tau_*)}\\
&\les \ind_{\{\tau_{N}(Y+V_0)^{-1} \le M^{-\frac{4\alpha}{2\alpha-1}-}\}} \\
&\phantom{\les\ } \times \Big(\int_0^{\tau_*} \QQ(P_N S(t')Y ) d t' \\
&\phantom{\les\  \times \Big()}+ (\tau_*^{-\frac12}\|Y^{(3),N}(t)\|_{X^{-\sigma_3,-\frac1{q'}-}_{q'}(\tau_*)}) \tau_*^\frac12(M+M_{V_0})^{ 1 +\frac{(\gamma-1)\sigma_3}{s}}\\
&\phantom{\les\  \times \Big()}+ (\|J_{12}^\QQ\|_{2,b_2,q,\sigma_2,\sigma_2} + \|J_{13}^\QQ\|_{2,b_2,q,\sigma_2,\sigma_2} )\tau_*^{1-b_2-}(M+M_{V_0})^{ 2(1 +\frac{(\gamma-1)\sigma_2}{s})} \\
&\phantom{\les\  \times \Big()}+ \tau_*^{1-b_1-} \|Y\|_{H^{s-\frac12-}} (M+M_{V_0})^{ 3(1 +\frac{(\gamma-1)\sigma_1}{s})}\\
&\phantom{\les\  \times \Big()}+ \tau_*^{1-b_0-}  (M+M_{V_0})^{ 1 + 3(1 +\frac{(\gamma-1)\sigma_0}{s})}\Big).
\end{align*}
Notice that $ 1 + 3(1 +\frac{(\gamma-1)\sigma_0}{s}) = c(\alpha,s,\gamma)$. Moreover, recalling that  $\gamma < \frac{1}{1-2s}$, it is straightforward to check that 
\begin{align*}
\tfrac12 c(\alpha,s,\gamma) &>  (1-b_0) (1 +\tfrac{(\gamma-1)\sigma_3}{s}), \\
 c(\alpha,s,\gamma) & >2(1 +\tfrac{(\gamma-1)\sigma_2}{s}), \\
(1 - b_1)  c(\alpha,s,\gamma) & >(1-b_0)3(1 +\tfrac{(\gamma-1)\sigma_1}{s}).
\end{align*}
Note that in the limit $\gamma = \frac{1}{1-2s}$, and the case $\alpha < \frac 54 + \frac s2$, the last inequality becomes an equality.
As $\tau_* \le 1$, and noticing that $b_0 + = \ta_*$, by Young's inequality we obtain for some constant $C = C(\alpha,s,\gamma)$, 
\begin{align*}
&\ind_{\{\tau_{N}(Y+V_0)^{-1} \le M^{-\frac{4\alpha}{2\alpha-1}-}\}} \int_0^{\tau_*} \QQ(P_N \Phi_{t'}^N(Y + V_0)) d t' \\
&\les \ind_{\{\tau_{N}(Y+V_0)^{-1} \le M^{-\frac{4\alpha}{2\alpha-1}-}\}} \\
&\phantom{\les\ } \times \Big(\int_0^{\tau_*} \QQ(P_N S(t')Y ) d t' + \big(1 + (\tau_*^{-\frac12}\|Y^{(3),N}(t)\|_{X^{-\sigma_3,-\frac1{q'}-}_{q'}(\tau_{\ast})}) + (\|J_{12}^\QQ\|_{b_2,\sigma_2} + \|J_{13}^\QQ\|_{b_2,\sigma_2})\big)^C \\
&\phantom{\les\  \times \Big()} +\|Y\|_{H^{s-\frac{1}{2}-}}^{C}+ \tau_*^{1-\ta_*}  (M+M_{V_0})^{c(\alpha,s,\gamma) }\Big).
\end{align*}
Therefore, by inserting this estimate into \eqref{bd3}, H\"older's inequality,\eqref{l2integrability}, \eqref{xiintegrability}, \eqref{QQu0estimate}, \eqref{Y3estimate} and \eqref{JQQintegrability}, we obtain for some constant $\eps_0 > 0$,
\begin{align*}
&\log\Big( \int \exp\Big(\lambda \int_0^{\tau_*} \QQ(P_N \Phi_{t'}^N(u_0)) d t'  \Big) \ind_{\{\tau(u_0)^{-1} \le M^{-\frac{4\alpha}{2\alpha-1}-}\}} d \rho_{s,\gamma}(u_0)\Big) \\
&\les \lambda \tau_*^\frac12 + \lambda + \lambda \tau_*^{1-\ta_*} M^{c(\alpha,s,\gamma)} \\
&\phantom{\les\ } + \E \big[\sup_{V_0 \in H^s} \big\{ \lambda \tau_*^{1-\ta_*} M_{V_0}^{c(\alpha,s,\gamma)} - \eps_0 M_{V_0}^{2\gamma} \big\}\big]\\
&\les \lambda + \lambda \tau_*^{1-\ta_*} M^{c(\alpha,s,\gamma)} + (\lambda \tau_*^{1-\ta_*})^{\frac{2\gamma}{2\gamma - c(\alpha,s,\gamma)}},
\end{align*}
which completes the proof of Lemma~\ref{LEM:densityontau}.
\end{proof}

\begin{remark}\rm \label{RMK:reasonforq}
It is in the proof of Lemma~\ref{LEM:densityontau} that we see the benefit of allowing $q>2$ in the local-well-posedness result (Proposition~\ref{PROP:LWP}). Specifically, in obtaining \eqref{bd3}, we applied Lemma~\ref{LEM:detenergy} where one of the terms here has twisted temporal regularity $b<1$. In order to use \eqref{vboundontau}, we need $b<\tfrac{1}{q'}$. As $q>2$, we have $\tfrac{1}{2}<\tfrac{1}{q'}$.
This extra room in choosing $b$ allows us to keep $\mathcal{V}_{N}(Y+V_0)$ in $L^2_{x}$ for smaller values of $\alpha$ compared to if we had fixed $q=2$.
\end{remark}

\begin{lemma}\label{LEM:ftNSM}
Let $M\geq 1$ be a dyadic number, and for $t \ge 0$, consider the sets 
$$ S_t^M := \Big\{ u_0 : \sup_{0 \le t' \le t} \big[ \tau_{N}(\Phi^{N}_{-t'}(u_0))^{-1} \big] \le M^{\frac{4\alpha}{2\alpha-1}+}\Big\},  $$
where $\tau_{N}$ is the stopping time defined in \eqref{taudef}.
For every $1 \le p < \infty$, and for every $t \ge 0$, we have that $f^{N}_t$ defined in \eqref{densityformula} satisfies $f^{N}_t\ind_{S_t^M} \in L^p(\rho_{s,\gamma})$. More precisely, we have that
\begin{equation}\label{densitybound1}
\begin{aligned}
\log \| f^{N}_t \ind_{S_t^M} \|_{L^p(\rho_{s,\gamma})}\les p^{\frac{c(\alpha,s,\gamma)}{2\gamma - c(\alpha,s,\gamma)}}\jb{t}^{\frac{2\gamma}{2\gamma - c(\alpha,s,\gamma)}} M^{\frac{4\alpha\ta_*}{2\alpha-1}\cdot\frac{2\gamma}{2\gamma - c(\alpha,s,\gamma)}+} + \jb{t} M^{\frac{4\alpha\ta_*}{2\alpha-1}+ c(\alpha,s,\gamma)+}.
\end{aligned}
\end{equation}
where the implicit constant is independent of $N$.
\end{lemma}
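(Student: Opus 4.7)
The plan is to decompose $[0,t]$ into $K = \lceil t/\tau_*\rceil$ subintervals of length $\tau_* = M^{-\frac{4\alpha}{2\alpha-1}-}$ and iterate Lemma~\ref{LEM:densityontau} with $\lambda = (p-1)K$. The arithmetic motivation is that invoking \eqref{QQintegrability} at this $\lambda$, and substituting $K\tau_* = t$ together with $\tau_*^{1-\ta_*} = M^{-\frac{4\alpha(1-\ta_*)}{2\alpha-1}-}$, produces a bound
\[
\les (p-1)t M^{\frac{4\alpha}{2\alpha-1}+} + \big((p-1)t\big)^{\frac{2\gamma}{2\gamma-c}} M^{\frac{4\alpha\ta_*}{2\alpha-1}\cdot\frac{2\gamma}{2\gamma-c}+} + (p-1)t M^{\frac{4\alpha\ta_*}{2\alpha-1}+c+};
\]
dividing by $p$ and absorbing the first term into the third (which is possible precisely because $\frac{4\alpha}{2\alpha-1} \le \frac{4\alpha\ta_*}{2\alpha-1} + c$ under the definitions \eqref{ta*def} and \eqref{casdef}) yields exactly \eqref{densitybound1}.

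To implement this, I would first convert the backward flow in the explicit formula \eqref{densityformula} for $f_t^N$ into a forward flow, so that the right-hand side matches the form treated by Lemma~\ref{LEM:densityontau}. Using the push-forward identity $(\Phi_t^N)_\#\rho_{s,\gamma} = f_t^N\rho_{s,\gamma}$ with test function $G=(f_t^N)^{p-1}\ind_{S_t^M}$, together with the computation $f_t^N(\Phi_t^N(v_0)) = \exp\big(\int_0^t \QQ(P_N\Phi_s^N(v_0))\,ds\big)$ obtained by substituting $u_0 = \Phi_t^N(v_0)$ in \eqref{densityformula}, and denoting by $\widetilde{S}_t^M := \{v_0 : \sup_{0\le s\le t}\tau_N(\Phi_s^N(v_0))^{-1}\le M^{\frac{4\alpha}{2\alpha-1}+}\}$ the preimage of $S_t^M$ under $\Phi_t^N$, I obtain the identity
\[
\int (f_t^N)^p \ind_{S_t^M}\, d\rho_{s,\gamma} = \int \exp\Big((p-1)\int_0^t \QQ(P_N\Phi_s^N(v_0))\,ds\Big) \ind_{\widetilde{S}_t^M}(v_0)\, d\rho_{s,\gamma}(v_0).
\]
Jensen's inequality then produces $\exp((p-1)\int_0^t h\,ds) \le K^{-1}\sum_{k=1}^K \exp((p-1)K\int_{(k-1)\tau_*}^{k\tau_*} h\,ds)$, and for each $k$ the time-translation $w_0 = \Phi_{(k-1)\tau_*}^N(v_0)$ recasts the inner integral as $\int_0^{\tau_*}\QQ(P_N\Phi_s^N(w_0))\,ds$, which is exactly the object bounded by Lemma~\ref{LEM:densityontau}; the constraint $\widetilde{S}_t^M(v_0)$ guarantees (taking $s=(k-1)\tau_*\le t$) that $\tau_N(w_0)^{-1}\le M^{\frac{4\alpha}{2\alpha-1}+}$, so the stopping-time indicator in \eqref{QQintegrability} is compatible. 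The translation is performed via the push-forward identity, which introduces a density factor $f_{(k-1)\tau_*}^N(w_0)$; this factor is isolated by a Hölder split with exponents $(a,a')$ and controlled through a bootstrap that bases out on the trivial bound $\|f_\sigma^N\ind_{S_\sigma^M}\|_{L^1(\rho_{s,\gamma})}\le\rho_{s,\gamma}(\T)$.

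The main obstacle is closing this bootstrap cleanly: the density factor $f_{(k-1)\tau_*}^N$ makes the estimate recursive in the very $L^p$-type quantity we wish to bound, and the Hölder exponents $(a,a')$ must be balanced so that inflating $\lambda$ to $a(p-1)K$ in Lemma~\ref{LEM:densityontau} does not spoil the target numerology. Taking $a'$ close to $1$ reduces the recursive term to an essentially $L^1$ bound with negligible cost, while the corresponding $a$ appears only as a harmless overall multiplicative constant after dividing by $p$; verifying that the interplay between these exponents preserves the sharp $p$-, $t$- and $M$-dependence in \eqref{densitybound1} is the most delicate part of the argument.
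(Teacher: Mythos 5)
Your plan reproduces the paper's argument in all essential respects: the change of variables via the push-forward identity to rewrite $\int (f_t^N)^p\ind_{S_t^M}\,d\rho_{s,\gamma}$ as $\int \exp\big((p-1)\int_0^t\QQ\big)\ind$, Jensen over $k\sim \jb{t M^{\frac{4\alpha}{2\alpha-1}+}}$ subintervals of length $\le M^{-\frac{4\alpha}{2\alpha-1}-}$, the time-translation back to $[0,\tau_*]$ that picks up the factor $f^N_{(k-1)\tau_*}$, a H\"older split, and an application of Lemma~\ref{LEM:densityontau} with $\lambda\sim pk$; the monotonicity of the sets (the paper's \eqref{SMinclusions}) is used exactly as you indicate, and your final numerology matches \eqref{densitybound1}. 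The one place you deviate is the choice of H\"older exponents, and it is also the one step that is not quite right as stated: for $a'>1$ the factor $\big[\int (f^N_{(k-1)\tau_*})^{a'}\ind\,d\rho\big]^{1/a'}$ is \emph{not} controlled by the trivial $L^1$ bound, however close $a'$ is to $1$, so the bootstrap does not ``base out'' on $\|f\|_{L^1}\le\|\rho_{s,\gamma}\|_{\mathrm{TV}}$; you would need an extra interpolation $\|f\|_{L^{a'}}\le\|f\|_{L^1}^{1-\theta}\|f\|_{L^p}^{\theta}$, which reintroduces the target quantity with a small power $\theta$ and then closes via the a priori finiteness of $\|f_t^N\|_{L^p}$ for fixed $N$ (Remark~\ref{RMK:LpbdN}) -- a fact you should cite, since otherwise the self-improving inequality does not exclude the value $+\infty$. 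The paper avoids this entirely by taking the conjugate pair $(p',p)$, i.e.\ putting the exponential in $L^{p'}$ and the density in $L^p$ (so $p'(p-1)=p$), whence the recursive factor is exactly $\big[\sup_{t'\le t}\int (f^N_{t'})^p\ind\big]^{1/p}$ and the inequality $X\le A^{1/p'}X^{1/p}$ closes in one line; your variant works after the fix above and yields the same constants, but is slightly more cumbersome.
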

\begin{proof}
By remark~\ref{RMK:LpbdN}, we know that $\|f_t^{N}\|_{L^p(\rho_{s,\g})}$ is finite for each fixed $N$ and $t\geq 0$.

We first note that for $0 \le t' \le t$, we have the following inclusion:
\begin{align}
\begin{split}
\big\{\Phi_t^N(u_0) \in S_t^M\big\} &= \Big\{\sup_{0 \le \wt{t} \le t} \tau_{N}(\Phi^{N}_{\wt{t}}(u_0))^{-1} \le M^{\frac{4\alpha}{2\alpha-1}+}\Big\} \\
&\subseteq\Big\{\sup_{0 \le \wt{t} \le t'} \tau_{N}(\Phi^{N}_{\wt{t}}(u_0))^{-1} \le M^{\frac{4\alpha}{2\alpha-1}+} \Big\} = \big\{\Phi_{t'}^N(u_0) \in S_{t'}^M\big\}.
\end{split} \label{SMinclusions}
\end{align}
Let $k = k(t) \in \N$ be the smallest positive integer such that $ \frac {t}{k} \le M^{-\frac{4\alpha}{2\alpha-1}-}$. 
By \eqref{densityformula}, Jensen's inequality and \eqref{SMinclusions}, we have that 
\begin{align*}
&\int f_t^{N}(u_0)^{p} \ind_{S_t^M}(u_0) d \rho_{s,\gamma}(u_0) \\
&= \int f_t^{N}(u_0)^{p-1} \ind_{S_t^M}(u_0) f^{N}_t(u_0) d \rho_{s,\gamma}(u_0) \\
&= \int f_t^{N}(\Phi^{N}_t(u_0))^{p-1} \ind_{S_t^M}(\Phi_t^N(u_0)) d \rho_{s,\gamma}(u_0) \\
&= \int \exp\bigg((p-1) \int_0^t \QQ(P_N\Phi_{t'}^N(u_0)) dt' \bigg)\ind_{S_t^M}(\Phi_t(u_0)) d \rho_{s,\gamma}(u_0) \\
&= \int \exp\bigg((p-1) \sum_{h=0}^{k-1} \int_{\frac{h}{k} t}^{\frac{h+1}{k} t} \QQ(P_N\Phi_{t'}^N(u_0)) dt' \bigg)\ind_{S_t^M}(\Phi_t^N(u_0)) d \rho_{s,\gamma}(u_0) \\
&\le \int \frac{1}{k}\sum_{h=0}^{k-1} \exp\bigg((p-1) k \int_{\frac{h}{k} t}^{\frac{h+1}{k} t}\QQ(P_N\Phi_{t'}^N(u_0)) dt' \bigg)\ind_{S_t^M}(\Phi_t^N(u_0)) d \rho_{s,\gamma}(u_0) \\
&= \int \frac{1}{k}\sum_{h=0}^{k-1} \exp\bigg((p-1) k \int_{0}^{\frac{t}{k}}\QQ(P_N\Phi_{\frac{h}{k} t + t'}^N(u_0)) dt' \bigg)\ind_{S_t^M}(\Phi_t^N(u_0)) d \rho_{s,\gamma}(u_0) \\
&\le \frac{1}{k}\sum_{h=0}^{k-1} \int  \exp\bigg((p-1) k \int_{0}^{\frac{t}{k}}\QQ(P_N\Phi_{\frac{h}{k} t + t'}^N(u_0)) dt' \bigg)\ind_{S_{\frac hk t}^M}(\Phi_{\frac hk t}^N(u_0)) d \rho_{s,\gamma}(u_0) \\
&= \frac{1}{k}\sum_{h=0}^{k-1} \int  \exp\bigg((p-1) k \int_{0}^{\frac{t}{k}}\QQ(P_N\Phi_{t'}^N(u_0)) dt' \bigg)\ind_{S_{\frac hk t}^M}(u_0) f^{N}_{\frac hk t}(u_0) d \rho_{s,\gamma}(u_0) \\
&\le \frac{1}{k}\sum_{h=0}^{k-1}\bigg[ \int  \exp\Big(p'(p-1) k \int_{0}^{\frac{t}{k}}\QQ(P_N\Phi_{t'}^N(u_0)) dt' \Big)\ind_{S_{\frac hk t}^M}(u_0) d \rho_{s,\gamma}(u_0) \bigg]^\frac{1}{p'} \\
& \hphantom{\frac{1}{k}\sum_{h=0}^{k-1}\bigg[ \int  \exp\Big(p'(p-1) k }  \times\bigg[\int f^{N}_{\frac hk t}(u_0)^p \ind_{S_{\frac hk t}^M}(u_0) d \rho_{s,\gamma}(u_0) \bigg]^\frac1p\\
&\le \frac{1}{k}\sum_{h=0}^{k-1}\bigg[ \int  \exp\bigg(p'(p-1) k \int_{0}^{\frac{t}{k}}\QQ(P_N\Phi_{t'}^N(u_0)) dt' \bigg)\ind_{S_{0}^M}(u_0) d \rho_{s,\gamma}(u_0) \bigg]^\frac{1}{p'} \\
& \hphantom{\frac{1}{k}\sum_{h=0}^{k-1}\bigg[ \int  \exp\Big(p'(p-1) k }  \times\bigg[\int f^{N}_{\frac hk t}(u_0)^p \ind_{S_{\frac hk t}^M}(u_0) d \rho_{s,\gamma}(u_0) \bigg]^\frac1p\\
&\le \bigg[ \int  \exp\Big(p k \int_{0}^{\frac{t}{k}}\QQ(P_N\Phi_{t'}^N(u_0)) dt' \Big)\ind_{S_{0}^M}(u_0) d \rho_{s,\gamma}(u_0) \bigg]^\frac{1}{p'} \\
&\hphantom{\frac{1}{k}\sum_{h=0}^{k-1}\bigg[ \int  \exp\Big(p'(p-1) k } \times \bigg[\sup_{0 \le t' \le t} \int f^{N}_{t'}(u_0)^p \ind_{S^{M}_{t'}}(u_0) d \rho_{s,\gamma}(u_0)\bigg]^\frac{1}{p}, \stepcounter{equation} \tag{\theequation} \label{togronw}
\end{align*}

Therefore, by taking the supremum in \eqref{togronw} for $0\leq t' \le t$, \eqref{togronw} implies
\begin{align}
\begin{split}
&~\sup_{0\leq t'\leq t}\int f^{N}_{t'}(u_0)^p \ind_{S^{M}_{t'}}(u_0) d \rho_{s,\gamma}(u_0) \\
&\le \sup_{0\le t' \le t} \int \exp\bigg(p k(t') \int_{0}^{\frac{t'}{k(t')}}\QQ(P_N\Phi_{\wt{t}}^N(u_0)) d\wt{t} \bigg)\ind_{S_{0}^M}(u_0) d \rho_{s,\gamma}(u_0).
\end{split} \label{densitybd1}
\end{align}
Therefore, by \eqref{QQintegrability}, we obtain that 
\begin{align*}
&~\log \| f^{N}_t \ind_{S_t^M} \|_{L^p} \\
&= p^{-1} \log \| f^{N}_t \ind_{S_t^M} \|_{L^p}^p \\
&\les p^{-1} \sup_{0\le t' \le t}\Big[ pk(s) +\Big(pk(t') \big(\tfrac{t'}{k(t')}\big)^{1-\ta_*}\Big)^{\frac{2\gamma}{2\gamma - c(\alpha,s,\gamma)}} + pk(t') \big(\tfrac{t'}{k(t')}\big)^{1-\ta_*} M^{c(\alpha,s,\gamma)}\Big]\\
&\les \jb{tM^{\frac{4\alpha}{2\alpha-1}+}} + p^{\frac{c(\alpha,s,\gamma)}{2\gamma - c(\alpha,s,\gamma)}} \big(t^{1-\ta_*} \jb{tM^{\frac{4\alpha}{2\alpha-1}+}}^{\ta_*}\big)^{\frac{2\gamma}{2\gamma - c(\alpha,s,\gamma)}}
 + t^{1-\ta_*} \jb{tM^{\frac{4\alpha}{2\alpha-1}+}}^{\ta_*}M^{c(\alpha,s,\gamma)} \\
&\les p^{\frac{c(\alpha,s,\gamma)}{2\gamma - c(\alpha,s,\gamma)}}\jb{t}^{\frac{2\gamma}{2\gamma - c(\alpha,s,\gamma)}} M^{\frac{4\alpha\ta_*}{2\alpha-1}\cdot\frac{2\gamma}{2\gamma - c(\alpha,s,\gamma)}+} + \jb{t} M^{\frac{4\alpha\ta_*}{2\alpha-1}+ c(\alpha,s,\gamma)+}. \qedhere
\end{align*}
\end{proof}

Finally, we remove the localizations to the sets $S_{t}^{M}$ in \eqref{densitybound1}.

\begin{proposition}\label{prop:lpbound}
Let $\al>1$, $\tfrac{1}{4}<s\leq \tfrac{1}{2}$ with $\alpha + s > \tfrac 32$, and $\gamma > 2$ be such that 
\begin{equation} \label{condition}
\gamma < \frac{1}{1-2s},  \quad \text{and}\quad 2\gamma > \frac{4\alpha\ta_*}{2\alpha-1}+c(\alpha,s,\gamma).
\end{equation}
Then, the density $f^{N}_t$, defined in \eqref{densityformula}, belongs to $L^p(\rho_{s,\gamma})$, for any $1\leq p<\infty$, uniformly in $N$. More precisely, 
\begin{equation}\label{lpdensity}
\log \| f^{N}_t \|_{L^p(\rho_{s,\g})} \les_p\jb{t}^{\frac{2\gamma}{2\gamma - c(\alpha,s,\gamma) - \frac{4\alpha\ta_*}{2\alpha-1}}+},
\end{equation}
where the implicit constant is independent of $N$.
\end{proposition}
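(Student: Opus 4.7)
The plan is to combine Lemma \ref{LEM:ftNSM}, which controls the density on the ``good'' set $S_t^M$, with the tail estimate for $\tau_N^{-1}$ from Lemma \ref{lem:tauintegrability}, and then to close the estimate via a bootstrap iteration in $p$, exploiting the strict inequality in condition \eqref{condition}. The starting point is H\"older's inequality,
\begin{equation*}
\|f_t^N\|_{L^p(\rho_{s,\gamma})} \le \|f_t^N \ind_{S_t^M}\|_{L^p(\rho_{s,\gamma})} + \|f_t^N\|_{L^{2p}(\rho_{s,\gamma})}\, \rho_{s,\gamma}\big((S_t^M)^c\big)^{1/(2p)}.
\end{equation*}
Lemma \ref{LEM:ftNSM} handles the first summand directly; the remaining task is to estimate $\rho_{s,\gamma}((S_t^M)^c)$ and absorb the $\|f_t^N\|_{L^{2p}}$ factor into the main bound.

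For the measure of the complement, I would use that by definition of the Radon-Nikodym derivative,
\begin{equation*}
\rho_{s,\gamma}\big(\{u_0:\tau_N(\Phi_{-t'}^N(u_0))^{-1} > K\}\big) = \int_{\{\tau_N^{-1} > K\}} f_{-t'}^N\, d\rho_{s,\gamma} \le \|f_{-t'}^N\|_{L^{4p}}\, \rho_{s,\gamma}(\tau_N^{-1} > K)^{1 - 1/(4p)},
\end{equation*}
and Chebyshev's inequality combined with Lemma \ref{lem:tauintegrability} yields $\rho_{s,\gamma}(\tau_N^{-1} > K) \les e^{-K^\beta}$ for any $\beta < \tfrac{2\alpha-1}{2\alpha}\gamma$. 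To handle the sup over $t' \in [0,t]$ defining $(S_t^M)^c$, I would cover $[0,t]$ with $\les tM^{\frac{4\alpha}{2\alpha-1}+}$ points spaced $M^{-\frac{4\alpha}{2\alpha-1}-}$ apart, observing that the local well-posedness time-scale built into the definition \eqref{taudef} of $\tau_N$, together with the continuity of $t' \mapsto \Phi_{-t'}^N(u_0)$, controls the oscillation of $\tau_N^{-1}$ along the flow within each subinterval so that a continuous-to-discrete reduction is permissible. By time-reversibility of \eqref{4NLStrunc} and the symmetry of all estimates in Section \ref{SEC:Growth} under $t \mapsto -t$, analogous bounds hold for $f_{-t'}^N$, so I work with $A_p(t) := \sup_{|t'|\le t}\log\|f_{t'}^N\|_{L^p(\rho_{s,\gamma})}$.

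Optimizing by choosing $M^{\frac{4\alpha\beta}{2\alpha-1}} \sim p A_{4p}(t)$ so that the second summand contributes below $A_{2p}(t)$, combining with Lemma \ref{LEM:ftNSM}, and sending $\beta \uparrow \tfrac{2\alpha-1}{2\alpha}\gamma$ yields the recursion
\begin{equation*}
A_p(t) \les_p \jb{t}^{\frac{2\gamma}{2\gamma-c}}\, A_{4p}(t)^{\theta_1+} + \jb{t}\, A_{4p}(t)^{\theta_2+}, \qquad \theta_1 = \tfrac{4\alpha\ta_*}{(2\alpha-1)(2\gamma-c)},\quad \theta_2 = \tfrac{1}{2\gamma}\Big(\tfrac{4\alpha\ta_*}{2\alpha-1}+c\Big).
\end{equation*}
A direct computation gives $\frac{2\gamma/(2\gamma-c)}{1-\theta_1} = \frac{1}{1-\theta_2} = \frac{2\gamma}{2\gamma-c-\frac{4\alpha\ta_*}{2\alpha-1}}$, and condition \eqref{condition} is precisely what ensures $\theta_1, \theta_2 < 1$.

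The main obstacle is to close the recursion despite its involving $A_{4p}(t)$ rather than $A_p(t)$. For each $\varepsilon > 0$, I would apply Young's inequality $\jb{t}^x A_{4p}^\theta \le \varepsilon A_{4p} + C(\varepsilon,\theta) \jb{t}^{x/(1-\theta)}$ to convert the recursion into
\begin{equation*}
A_p(t) \le \varepsilon A_{4p}(t) + C_{\varepsilon,p}\, \jb{t}^{\frac{2\gamma}{2\gamma-c-\frac{4\alpha\ta_*}{2\alpha-1}}+},
\end{equation*}
and iterate $p \mapsto 4p \mapsto 16p \mapsto \cdots$. After $k$ steps the residual is $\varepsilon^k A_{4^kp}(t)$, which can be controlled using the (\,$N$-dependent) a priori bound $A_{p'}(t) \les_{N,t} (p')^{2/(\gamma-2)}$ from Remark \ref{RMK:LpbdN}; condition \eqref{condition} combined with $c \ge 4$ gives $\gamma - 2 \ge \frac{2\alpha\ta_*}{2\alpha-1}$, bounded below by a constant depending only on $\alpha,s$, so choosing $\varepsilon < 4^{-2/(\gamma-2)}$ forces the residual to vanish as $k \to \infty$. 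Since the constants $C_{\varepsilon,p}$ depend only on $\alpha, s, \gamma, p, \varepsilon$ (the constants from Lemmas \ref{LEM:ftNSM} and \ref{lem:tauintegrability} being independent of $N$), this produces \eqref{lpdensity} with a constant uniform in $N$.
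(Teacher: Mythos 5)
Your proposal is correct, but it closes the estimate by a genuinely different mechanism than the paper. Both arguments share the same skeleton: discretize the supremum in the definition of $S_t^M$ on a grid of spacing $M^{-\frac{4\alpha}{2\alpha-1}-}$ (justified by the gluing relation \eqref{VViteration}, which is exactly how the paper shows $E_M(t)\subseteq\{\Phi_t^N(u_0)\in S_t^M\}$), transport each bad event back to time zero via the Radon--Nikodym derivative, and pay with the tail bound \eqref{tauintegrability}. The divergence is in how the density factors arising in the complement estimate are handled. The paper never lets an unlocalized norm of $f^N$ appear: decomposing over the annuli $E_{2M}(t)\setminus E_M(t)$ and observing that on $E_{2M}(t)$ the trajectory still lies in $S^{2M}_{t'}$ at every intermediate time, every occurrence of the density comes multiplied by the indicator of a good set, so Lemma \ref{LEM:ftNSM} applies directly and the dyadic sum over $M$ converges under \eqref{condition} with no iteration. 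You instead fix a single $M$ adapted to the unknown quantity $A_{4p}(t)$, accept the unlocalized $L^{2p}$ and $L^{4p}$ norms, and close by a bootstrap in the integrability exponent seeded by the $N$-dependent a priori bound of Remark \ref{RMK:LpbdN}. This does work: the constants $C_{\eps,p}$ grow only polynomially in $p$ (coming from the $p^{\frac{c}{2\gamma-c}}$ factor in \eqref{densitybound1} and from your choice of $M$), the seed grows like $p^{2/(\gamma-2)}$ with $\gamma-2$ bounded below, and your exponent arithmetic $\frac{2\gamma/(2\gamma-c)}{1-\theta_1}=\frac{1}{1-\theta_2}=\frac{2\gamma}{2\gamma-c-\frac{4\alpha\ta_*}{2\alpha-1}}$ checks out, so the residual $\eps^kA_{4^kp}$ vanishes for $\eps$ small and the limit is $N$-independent. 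The price is the extra book-keeping of the iteration and the need to extend the estimates to negative times via the symmetry $\Phi_{-t}(u_0)=\cj{\Phi_t(\cj{u_0})}$ -- the latter you could avoid entirely by first composing with the flow, as the paper does via $\|f^N_t\|_{L^p(\rho_{s,\gamma})}^p=\|f^N_t\circ\Phi^N_t\|_{L^{p-1}(\rho_{s,\gamma})}^{p-1}$, so that only forward-time sets appear. In short, the paper's localization trick buys a shorter, non-iterative proof; your bootstrap is the more generic fallback and reaches the same exponent.
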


\begin{proof}
Fix $t \ge 0$, and for $M\geq 1$ dyadic, consider the sets 
$$E_M(t) := \bigcap_{h = 0}^{\lfloor tM^{\frac{4\alpha}{2\alpha-1}+}\rfloor} \Big\{ \tau_{N}(\Phi_{hM^{-\frac{4\alpha}{2\alpha-1}-}}^N(u_0))^{-1} \le \eps_0 M^{\frac{4\alpha}{2\alpha-1}+} \Big\},$$
where $0<\eps_0 \ll 1$.
From the relationship
\begin{equation}
\VV_N(u_0)(t_1 + t_2) = \VV_N\big(S(t_1) u_0 + \VV_N(u_0)(t_1)\big)(t_2) +S(t_2)\big(\VV_N(u_0)(t_1)\big), \label{VViteration}
\end{equation}
it is easy to check that for $\eps_0$ small enough, we have that $E_M(t) \subseteq \{\Phi^{N}_t(u_0) \in S_t^{M}\}$. Moreover, by Proposition \ref{4NLStruncGWP}, together with the estimate \eqref{tau0Nbound}, we have that for fixed $N$, $\rho_{s,\gamma}(E_M(t)) \to \|\rho_{s,\gamma}\|_{\text{TV}}$ as $M \to \infty$. Note that we did not normalize $\rho_{s,\g}$ to be a probability measure, so $\|\rho_{s,\g}\|_{\text{TV}}=\int 1 d\rho_{s,\g}(u_0)$. Therefore, for $1<p<\infty$, by \eqref{densitybound1}, we obtain
\begin{align*}
\| f_t^{N} \|_{L^p}^p &= \| f_t^{N} \circ \Phi_t \|_{L^{p-1}}^{p-1}\\
&\leq \| (f_t^{N} \circ \Phi_{t}^{N}) \ind_{E_1(t)}\|_{L^{p-1}}^{p-1} + \sum_{M \ge 1} \| (f_t^{N} \circ \Phi_{t}^{N})\ind_{E_{2M}(t) \setminus E_M(t)} \|_{L^{p-1}}^{p-1} \\
&\le \| (f_t^{N} \circ \Phi_{t}^{N}) (\ind_{S_t^1}\circ \Phi_{t}^{N}) \|_{L^{p-1}}^{p-1}+ \sum_{M \ge 1} \| (f_t^{N} \circ \Phi_{t}^{N}) (\ind_{S_t^{2M}}\circ \Phi_{t}^{N})\ind_{ E_M(t)^c} \|_{L^{p-1}}^{p-1}\\
&\le \| f_t^{N} \ind_{S_t^1}\|_{L^{p}}^{p} + \sum_{M \ge 1} \| (f_t^{N} \circ \Phi_{t}^{N}) (\ind_{S_t^M}\circ \Phi_{t}^{N})\|_{L^{2p-1}}^{p-1} \rho_{s,\gamma}(\{\Phi_{t}^{N} \in S_t^{2M}\} \cap E_M(t)^c)^{\frac p {2p-1}}\\
&\le \| f_t^{N} \ind_{S_t^1}\|_{L^{p}}^{p} + \sum_{M \ge 1} \| f_t^{N} \ind_{S_t^{2M}}\|_{L^{2p}}^{p - \frac{p}{2p-1}} \rho_{s,\gamma}(\{\Phi_{t}^{N} \in S_t^{2M}\} \cap E_M(t)^c)^{\frac p {2p-1}}\stepcounter{equation} \tag{\theequation} \label{ftlp1}
\end{align*}
By \eqref{SMinclusions}, \eqref{densitybound1} and \eqref{tauintegrability}, we have that for some constants $C_1, C_2, C_1' > 0$,
\begin{align*}
&~\rho_{s,\gamma}(\{\Phi_{t}^{N}(u_0) \in S_t^{2M}\} \cap E_M(t)^c) \\
& \le \sum_{h=0}^{\lfloor tM^{\frac{4\alpha}{2\alpha-1}+}\rfloor} \hspace{-0.1cm} \rho_{s,\gamma}\Big(\Big\{\Phi_{t}^{N}(u_0) \in S_t^{2M}\Big\} \cap  \Big\{ \tau_{N}\big(\Phi_{hM^{-\frac{4\alpha}{2\alpha-1}-}}^N(u_0)\big)^{-1} > \eps_0 M^{\frac{4\alpha}{2\alpha-1}+} \Big\}\Big) \\
&\le \sum_{h=0}^{\lfloor tM^{\frac{4\alpha}{2\alpha-1}+}\rfloor}  \hspace{-0.1cm} \rho_{s,\gamma}\Big( \Big\{ \Phi_{hM^{-\frac{4\alpha}{2\alpha-1}-}}^N(u_0) \in S_{hM^{-\frac{4\alpha}{2\alpha-1}-}}^{2M}\Big\} \cap \Big\{ \tau_{N}(\Phi_{hM^{-\frac{4\alpha}{2\alpha-1}-}}^N(u_0))^{-1} > \eps_0 M^{\frac{4\alpha}{2\alpha-1}+} \Big\}\Big) \\
&=  \sum_{h=0}^{\lfloor tM^{\frac{4\alpha}{2\alpha-1}+}\rfloor} \hspace{-0.1cm} \int \ind_{S_{hM^{-\frac{4\alpha}{2\alpha-1}-}}^{2M}}(\Phi_{hM^{-\frac{4\alpha}{2\alpha-1}-}}^N(u_0)) \ind_{\{\tau_{N}(u_0)^{-1} > \eps_0M^{-\frac{4\alpha}{2\alpha-1} - \eps}\}}(\Phi_{hM^{-\frac{4\alpha}{2\alpha-1}-}}^N(u_0)) d\rho_{s,\gamma}(u_0) \\
&= \sum_{h=0}^{\lfloor tM^{\frac{4\alpha}{2\alpha-1}+}\rfloor}\hspace{-0.1cm} \int f^{N}_{{hM^{-\frac{4\alpha}{2\alpha-1}-}}} \ind_{S_{hM^{-\frac{4\alpha}{2\alpha-1}-}}^{2M}} \ind_{\{\tau_{N}(u_0)^{-1} > \eps_0M^{-\frac{4\alpha}{2\alpha-1} - \eps}\}} d\rho_{s,\gamma}(u_0)\\
&\le \sum_{h=0}^{\lfloor tM^{\frac{4\alpha}{2\alpha-1}+}\rfloor} \hspace{-0.1cm}  \Big\| f^{N}_{{hM^{-\frac{4\alpha}{2\alpha-1}-}}} \ind_{S_{hM^{-\frac{4\alpha}{2\alpha-1}-}}^{2M}} \Big\|_{L^2(\rho_{s,\gamma})} \rho_{s,\gamma}(\{\tau_{N}(u_0)^{-1} > \eps_0M^{\frac{4\alpha}{2\alpha-1}+}\})^\frac12\\
&\les \sum_{h=0}^{\lfloor tM^{\frac{4\alpha}{2\alpha-1}+}\rfloor} \exp\Big(C_1\big(\jb{t}^{\frac{2\gamma}{2\gamma - c(\alpha,s,\gamma)}} M^{\frac{4\alpha\ta_*}{2\alpha-1}\cdot\frac{2\gamma}{2\gamma - c(\alpha,s,\gamma)}+} + \jb{t} M^{\frac{4\alpha\ta_*}{2\alpha-1}+ c(\alpha,s,\gamma)+}\big) - C_2 M^{2\gamma-} \Big)\\
&\les \exp\Big(C_1'\big(\jb{t}^{\frac{2\gamma}{2\gamma - c(\alpha,s,\gamma)}} M^{\frac{4\alpha\ta_*}{2\alpha-1}\cdot\frac{2\gamma}{2\gamma - c(\alpha,s,\gamma)}+} + \jb{t} M^{\frac{4\alpha\ta_*}{2\alpha-1}+ c(\alpha,s,\gamma)+}\big) - C_2 M^{2\gamma-} \Big).
\end{align*}
Therefore, by \eqref{ftlp1} and \eqref{densitybound1}, we obtain that for some $C_p, C_p', C_p'' > 0$, 
\begin{align*}
&\| f^{N}_t \|_{L^p}^p \\
&\les \sum_{M \ge 1}\exp\Big(C_p\big(\jb{t}^{\frac{2\gamma}{2\gamma - c(\alpha,s,\gamma)}} M^{\frac{4\alpha\ta_*}{2\alpha-1}\cdot\frac{2\gamma}{2\gamma - c(\alpha,s,\gamma)}+} + \jb{t} M^{\frac{4\alpha\ta_*}{2\alpha-1}+ c(\alpha,s,\gamma)+}\big) - C_p' M^{2\gamma-} \Big)\\
&\les \exp\Big(C_p'' \jb{t}^{\frac{2\gamma}{2\gamma - c(\alpha,s,\gamma) - \frac{4\alpha\ta_*}{2\alpha-1}}+}\Big),
\end{align*}
which yields \eqref{lpdensity}.
\end{proof}

\begin{remark}\label{rk:numerology}
We can now explain the numerology in Theorem~\ref{thm:qiandgwp}.
Recalling the definitions of $\ta_{\ast}$ in \eqref{ta*def} and $c(\al,s,\g)$ from \eqref{casdef}, the condition \eqref{condition} becomes
\begin{equation}\label{condition1}
\tfrac{2}{1-2s} > 2 \gamma > 4 + \tfrac{\gamma-1}{2s}(5+2s-4\alpha) + \tfrac{4\alpha+2s-3}{2\alpha-1}
\end{equation}
if $1<\alpha \le \frac54+\frac s2$, and 
\begin{equation}\label{condition2}
\tfrac{2}{1-2s} > 2 \gamma > 4 + \tfrac{2+4s}{2\alpha-1}
\end{equation}
if $\alpha > \frac54 + \frac s2$. Recall that in both cases, we have the extra condition $\alpha + s > \frac 32$. Fixing $\gamma = \frac 1{1-2s}-$, we obtain that \eqref{condition1} is equivalent to 
\begin{equation*}
s > \frac14 \Big(\sqrt{68\alpha^2-52\alpha + 9} - 10 \alpha + 7 \Big),
\end{equation*}
while \eqref{condition2} is equivalent to 
\begin{equation*}
s> \frac12 \Big(\sqrt{4\alpha^2-2\alpha+1} - 2\alpha + 1\Big).
\end{equation*}
It it then straightforward that in both cases the condition $\alpha + s > \frac 32$ is satisfied.
By imposing the extra conditions $1<\alpha \le \frac54+\frac s2$ and $\alpha > \frac54+\frac s2$ respectively, we find
\begin{equation*}
s > \begin{cases}
\frac14 \Big(\sqrt{68\alpha^2-52\alpha + 9} - 10 \alpha + 7 \Big) & \text{ if }\quad  1< \alpha \le \frac{1}{32} \big(35 + \sqrt{105}\big), \\
\frac12 \Big(\sqrt{4\alpha^2-2\alpha+1} - 2\alpha + 1\Big) & \text{ if } \quad \alpha >  \frac{1}{32} \big(35 + \sqrt{105}\big),
\end{cases}
\end{equation*}
which corresponds exactly to \eqref{conditionalphas}.
\end{remark}

\section{Convergence of the truncated flow and quasi-invariance}

In this section, we complete the proof of Theorem~\ref{thm:qiandgwp}. Under the uniform estimate \eqref{lpdensity} for the $L^p$ norm of the density, we can apply Bourgain's (quasi-)invariant measure argument (Theorem~\ref{bima}) to show that the limiting flow is globally well-posed. A weak convergence argument then implies the quasi-invariance of the Gaussian measures $\mu_s$.

\begin{theorem}[Bourgain's quasi-invariant measure argument] \label{bima}
Let $X$ be a Polish space, and for every $t \in \R$, let 
$ \Phi_t : X \to X $
be a flow map, i.e.\ $\Phi_0(u_0) = u_0$, $\Phi_{t+s} = \Phi_t \circ \Phi_s$.
Suppose moreover that the flow map $(t,u_0) \mapsto \Phi_{t}(u_0)$ is jointly measurable in $t$ and $u_0$. Let $\rho$ be a $\sigma$-finite measure on the Borel sets of $X$, and suppose that $(\Phi_t)_\# \rho= f_t \rho$, for some $f_t\in L^1(X)$. Finally, suppose that the following properties hold:
\begin{enumerate}[\rm{(}i\rm{)}]
\item There exists a quantity $ r: X \to \R$, a non-increasing function $\tau: \R^+ \to \R^+$, and a number $K > 1$, such that 
$$ r(\Phi_s(u_0)) \le K r(u_0) \quad \text{for every } 0 \le s \le \tau(r(u_0)), $$ \label{bimai}
and $\tau$ decays sub-polynomially, i.e.\ $\tau(x) \gtrsim \jb{x}^{-\kappa}$ for some $\kappa \ge 0$.
\item There exists $\gamma > 0$ such that 
$$ \int \exp\Big(r(u_0)^\gamma\Big) d \rho(u_0) < + \infty. $$ \label{bimaii}
\item For some $p > 1$, and some nondecreasing function $c: \R^+ \to \R$,  
\begin{equation*}
\sup_{0 \le s \le t} \log \| f_s \|_{L^p(\rho)} \les c(t).
\end{equation*} \label{bimaiii}
\end{enumerate}
Then, for $\rho_0$-a.e.\ $u_0$ we have the estimate 
\begin{equation}
 r(\Phi_t(u_0))\les_{u_0} \log\big(2 + t \big)^\frac 1 \gamma + c(t)^{\frac 1 {\gamma}}. \label{gwthestimate}
\end{equation}
More precisely, for every $M \gg 1$,
\begin{equation}\label{gwthestimatestrong}
- \log \rho\Big(\Big\{r(\Phi_t(u_0))> M( \log\big(2 + t \big)^\frac 1 \gamma + c(t)^{\frac 1 {\gamma}}) \textup{ for some }t>0\Big\} \Big) \gtrsim M^\gamma.
\end{equation}
\end{theorem}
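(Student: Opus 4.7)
The plan is to combine a discretization-and-union-bound strategy with H\"older's inequality on the density: hypothesis~(i) will reduce control of $\sup_{t \in [0,T]} r(\Phi_t(u_0))$ to controlling $r(\Phi_{t_j}(u_0))$ on a finite grid, while~(ii) and~(iii) together bound each individual event on the grid.

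First, I fix $T > 0$ and a threshold $M \ge 1$, and set $t_j := j\tau(M)$ for $j = 0, 1, \dots, J$ with $J := \lceil T/\tau(M)\rceil \lesssim T\jb{M}^\kappa$ (using the polynomial lower bound on $\tau$). If $r(\Phi_{t_j}(u_0)) \le M$ for every $j$, then since $\tau$ is non-increasing and $t_{j+1} - t_j = \tau(M) \le \tau(r(\Phi_{t_j}(u_0)))$, hypothesis~(i) together with the group property of the flow yields $r(\Phi_t(u_0)) \le K r(\Phi_{t_j}(u_0)) \le KM$ for every $t \in [t_j, t_{j+1}]$. Taking complements, I obtain the key inclusion
\[ \Big\{\sup_{t \in [0,T]} r(\Phi_t(u_0)) > KM\Big\} \subseteq \bigcup_{j=0}^{J} \big\{r(\Phi_{t_j}(u_0)) > M\big\}. \]
For each $j$, H\"older's inequality on the pushed-forward measure gives
\[ \rho\big(\{r(\Phi_{t_j}(u_0)) > M\}\big) = \int \ind_{\{r > M\}}\, f_{t_j}\, d\rho \le \|f_{t_j}\|_{L^p(\rho)}\, \rho(\{r > M\})^{1/p'}, \]
and the two factors are estimated by $e^{c(T)}$ (from~(iii)) and by $e^{-M^\gamma/p'}$ up to constants (via Chebyshev together with~(ii)). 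Summing over $j$ produces
\[ \rho\Big(\Big\{\sup_{t \in [0,T]} r(\Phi_t(u_0)) > KM\Big\}\Big) \lesssim \big(T M^\kappa + 1\big)\, e^{c(T) - M^\gamma/p'}. \]

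To conclude, for $M' \gg 1$ I will set $M := C M'\big[(\log(2+T))^{1/\gamma} + c(T)^{1/\gamma}\big]$ with $C = C(p, \gamma, \kappa)$ large enough that $M^\gamma/p'$ absorbs $c(T)$, the logarithmic term coming from $T M^\kappa$, and an extra $2\log(2+T)$ with room to spare. This produces a tail bound of the form $\lesssim (2+T)^{-2}\, e^{-(M')^\gamma}$ on $\{\sup_{t \in [T/2, T]} r(\Phi_t(u_0)) > K M\}$. Summing over dyadic scales $T = 2^k$ (the factor $(2+T)^{-2}$ makes the sum convergent) yields the quantitative tail estimate~\eqref{gwthestimatestrong}, after which the pointwise almost-sure bound~\eqref{gwthestimate} follows by a standard Borel--Cantelli argument applied, e.g., to $M' = M'_k \to \infty$ slowly.

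The main obstacle is essentially bookkeeping rather than anything conceptually novel: one has to track that the polynomial pre-factor $T M^\kappa e^{c(T)}$ is indeed absorbed by $e^{-M^\gamma/p'}$ for the above choice of $M$, which is what forces the expression $\log(2+t)^{1/\gamma} + c(t)^{1/\gamma}$ into~\eqref{gwthestimate} and explains both the $1/\gamma$ exponents and the additive form. A minor subtlety is that $\rho$ is only $\sigma$-finite, but~(ii) guarantees that $\rho(\{r > M\}) < \infty$ for every $M$ large enough, so all the manipulations above are well-defined; the joint measurability of $(t,u_0)\mapsto\Phi_t(u_0)$ is used only to ensure measurability of the events appearing above.
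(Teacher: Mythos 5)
Your overall mechanism — a union bound over a time grid, H\"older's inequality on the density $f_{t_j}$, and a Chebyshev bound coming from the exponential moment in (ii) — coincides with the paper's. What differs is the grid. You place a \emph{uniform} grid $t_j = j\,\tau(M)$ inside each dyadic window $[0,2^k]$, recalibrating the single scalar threshold $M = CM'g(2^k)$ per window, whereas the paper uses one \emph{adaptive} grid $\tau_{n+1} = \tau_n + \tau(Mg(\tau_n))$ on all of $[0,\infty)$ and compares $r(\Phi_{\tau_n}(u_0))$ against the time-varying threshold $Mg(\tau_n)$. Up to the dyadic tail bound $\rho\big(\sup_{[0,2^k]}r(\Phi_\cdot(u_0)) > KCM'g(2^k)\big)\les (2+2^k)^{-2}e^{-(M')^\gamma}$ your bookkeeping is correct.

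The gap is in the final summation. To deduce \eqref{gwthestimatestrong} you need the inclusion $\{\exists\,t>0: r(\Phi_t(u_0))>M''g(t)\}\subseteq\bigcup_k\{\sup_{(2^{k-1},2^k]}r(\Phi_\cdot(u_0))>KCM'g(2^k)\}$ for some $M''\sim M'$; for $t\in(2^{k-1},2^k]$ the best available lower bound is $g(t)\ge g(2^{k-1})$, so the inclusion forces $M''\ge KCM'\cdot g(2^k)/g(2^{k-1})$. Since $c$ is an arbitrary nondecreasing function and enters $g$ through $c(t)^{1/\gamma}$, the ratio $g(2^k)/g(2^{k-1})$ need not be bounded in $k$ (take $c(t)=\exp(\exp t)$), and no single $M''$ works. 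What your argument actually yields is $r(\Phi_t(u_0))\les_{u_0}\log(2+2t)^{1/\gamma}+c(2t)^{1/\gamma}$, which agrees with \eqref{gwthestimate} only under a doubling assumption on $c$ — true in the paper's application, where $c(t)\sim\jb{t}^A$, but not a hypothesis of the theorem. The paper's adaptive grid removes this mismatch at the source: at each $\tau_n$ the threshold $Mg(\tau_n)$ and the density exponent $c(\tau_n)$ are evaluated at the \emph{same} time, so the absorption reduces to the tautology $g(\tau_n)^\gamma\ge c(\tau_n)$, with no growth condition on $c$; the price is the slightly more elaborate cardinality bound for $|\{n: g(\tau_n)\le x\}|$ in place of your elementary $J\les TM^\kappa$.
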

\begin{proof}
Let $g(t) =\log(2 + t) ^\frac 1 \gamma + c(t)^{\frac 1 {\gamma}}$, and let $M > 0$. Define recursively the sequence of times $\tau_n$ by $\tau_0 = 0$, 
$$ \tau_{n+1} = \tau_n + \tau(M g(\tau_n)).$$
We have, for $x \ge (\log2)^\frac1\gamma $, for some constant $c> 0$, noticing that the function $g$ is nondecreasing, we have that 
\begin{align*}
|\{n : g(\tau_n) \le x\}| &= |\{n : g(\tau_n) \le x, \tau_n \le \exp(x^\gamma) \}|\le \tfrac{\exp(x^\gamma)}{\tau(Mx)}\les M^{\kappa} \exp(c x^\gamma),
\end{align*}
and trivially, $g(t) \ge (\log2)^\frac1\gamma$ for every $t \ge 0$.
In particular, this implies that $g(\tau_n) \to \infty$ as $n \to \infty$, so we must also have $\tau_n \to \infty$ as $n \to \infty$. Notice that, by \eqref{bimai}, we have that 
\begin{align*}
\Big\{\sup_{s \ge 0} \frac{r(\Phi_s(u_0))}{K M g(s)} > 1\Big\} & \subseteq \bigcup_{n} \Big\{ r(\Phi_{\tau_n}(u_0)) > M g(\tau_n) \Big\}.
\end{align*}
Therefore, for $M \gg 1$, by H\"older's inequality, \eqref{bimaii} and \eqref{bimaiii}, for some constant $C_p > 0$, we have that
\begin{align*}
\phantom{\le\ }\rho\Big(\Big\{\sup_{s \ge 0} \frac{r(\Phi_s(u_0))}{K M g(s)} > 1\Big\} \Big) 
&\le \sum_{n=1}^{\infty} (\Phi_{\tau_n})_{\#}\rho\Big( \{ r(u_0) > M g(\tau_n) \} \Big) \\
&\le \sum_{n=1}^{\infty}\rho\Big( \{ r(u_0) > M g(\tau_n) \} \Big)^\frac 1 {p'} \| f_{\tau_n} \|_{L^p(\rho)} \\
&\le  \sum_{n=1}^\infty \exp\big( -2C_{p} M^\gamma g(\tau_n)^\gamma + c(\tau_n)  \big)\\
&\le \sum_{n=1}^\infty \exp\big( -C_{p} M^\gamma g(\tau_n)^\gamma \big)\\
&= \sum_{n=1}^\infty \int_{-\infty}^{+\infty} \Big|\Big\{n:  -C_p M^\gamma g(\tau_n)^\gamma \ge \lambda \Big\}\Big| \exp(\lambda) d\lambda \\
&= \int_{C_pM^\gamma\log 2}^{+\infty} \Big|\Big\{n: g(\tau_n) \le \Big(\frac{\lambda}{C_pM^\gamma}\Big)^\frac 1 \gamma\Big\}\Big| \exp(-\lambda) d\lambda\\
&\les  \int_{C_pM^\gamma\log 2}^{+\infty} M^\kappa \exp\Big(-\lambda \Big(1 - \frac{c}{C_pM^\gamma}\Big)\Big) d \lambda\\
&\les  \exp(-C'_{p,\kappa,\gamma,\dl} M^\gamma),
\end{align*}
which is \eqref{gwthestimatestrong}, up to replacing $M$ with $K^{-1}M$. In particular, \eqref{gwthestimate} follows.
\end{proof}

As a direct consequence, we have the following. 

\begin{corollary}
Let $\tau_{N}$ be the stopping time defined in \eqref{taudef}, and assume that $\alpha, s ,\gamma$ satisfy \eqref{condition}. Then for some exponents $A = A(\alpha,s,\gamma)$, $\beta = \beta(\alpha,s,\gamma) > 0$, for some constant $c=c(\alpha,s,\gamma)>0$, we have that
\begin{equation} \label{taugrowthestimate}
\rho_{s,\gamma} \Big(\Big\{\tau_{N}(\Phi_t^N(u_0))^{-1} \le M \jb{t}^{A} \textup{ for some }t>0\Big\} \Big) \ge \|\rho_{s,\gamma}\|_{\textup{TV}}- \exp(-c M^\beta).
\end{equation}
\end{corollary}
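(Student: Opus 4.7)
The plan is to apply Bourgain's quasi-invariant measure argument, Theorem~\ref{bima}, to the truncated flow $\Phi^N_t$ on $(X, \rho_{s,\gamma})$, with the choice
\begin{equation*}
r(u_0) := \tau_N(u_0)^{-\beta_0}, \qquad \beta_0 \in \big(0, \tfrac{2\alpha-1}{2\alpha}\gamma\big),
\end{equation*}
and $f_t = f_t^N$ as in Lemma~\ref{LEM:density}. Hypotheses (ii) and (iii) are essentially in hand: Lemma~\ref{lem:tauintegrability} gives $\int \exp(r(u_0))\, d\rho_{s,\gamma}(u_0) < \infty$ precisely under the restriction on $\beta_0$, so (ii) holds with exponent $\gamma_0 = 1$ in the notation of Theorem~\ref{bima}; Proposition~\ref{prop:lpbound} supplies the uniform-in-$N$ estimate $\log \|f_t^N\|_{L^p(\rho_{s,\gamma})} \les_p \jb{t}^{D}$ with $D = D(\alpha,s,\gamma)$ as in \eqref{lpdensity}, so $c(t) = C_p\jb{t}^D$ is admissible in (iii).

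The main work lies in verifying hypothesis (i): a purely deterministic, $N$-independent, short-time stability estimate for the stopping time $\tau_N$ along the flow. The key observation is the comparison identity
\begin{equation*}
\VV_N\big(\Phi^N_s(u_0)\big)(t) = \VV_N(u_0)(s+t) - S(t)\, \VV_N(u_0)(s),
\end{equation*}
derived directly from the semigroup property $\Phi^N_{s+t} = \Phi^N_t \circ \Phi^N_s$ and the decomposition $\Phi^N_t(w_0) = S(t)w_0 + \VV_N(w_0)(t)$. Setting $T_\ast := \tau_N(u_0) = M^{-\frac{4\alpha}{2\alpha-1}-}$ and taking $s \le T_\ast/2$, I would bound by the triangle inequality and \eqref{linest}
\begin{equation*}
\big\|\VV_N(\Phi^N_s(u_0))\big\|_{X^{\sigma, \frac{1}{q'}+}_q(T_\ast/2)} \les \|\VV_N(u_0)\|_{X^{\sigma,\frac{1}{q'}+}_q(T_\ast)} + \|\VV_N(u_0)(s)\|_{H^\sigma}
\end{equation*}
for $\sigma \in \{0, s\}$, and then combine with the embedding $X^{\sigma, \frac{1}{q'}+}_q(T_\ast) \hookrightarrow C([0, T_\ast]; H^\sigma)$ from \eqref{ctsembed} and the very definition \eqref{taudef} of $\tau_N$ to deduce $\|\VV_N(\Phi^N_s(u_0))\|_{X^{0, \frac{1}{q'}+}_q(T_\ast/2)} \les M$ and $\|\VV_N(\Phi^N_s(u_0))\|_{X^{s, \frac{1}{q'}+}_q(T_\ast/2)} \les M^\gamma$. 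Hence $\tau_N(\Phi^N_s(u_0)) \gtrsim T_\ast$, which translates to $r(\Phi^N_s(u_0)) \le K\, r(u_0)$ for $s \le \tau(r(u_0)) := c_0 r(u_0)^{-1/\beta_0}$; the decay of $\tau$ is polynomial and all constants are independent of $N$.

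With the three hypotheses verified, \eqref{gwthestimatestrong} applied with $r = \tau_N^{-\beta_0}$ yields, for every $M' \gg 1$,
\begin{equation*}
\rho_{s,\gamma}\Big(\Big\{ \tau_N(\Phi^N_t(u_0))^{-\beta_0} > M'\big(\log(2+t) + C_p\jb{t}^D\big)^{\vphantom{1}} \text{ for some } t > 0 \Big\}\Big) \les \exp(-cM').
\end{equation*}
Taking the $1/\beta_0$-th power inside the braces, then rescaling $M' \mapsto M^{\beta_0}$ and passing to the complement, produces \eqref{taugrowthestimate} with $A := D/\beta_0$ and $\beta := \beta_0$. The principal obstacle is not in the concentration part of the argument, which is supplied by the estimates of the previous two sections, but in the deterministic transfer estimate above: without locally controlling $\tau_N \circ \Phi^N_s$ by $\tau_N$, the telescoping of the bad events across the sequence $\{\tau_n\}$ in the proof of Theorem~\ref{bima} cannot be closed, and uniformity in $N$ would be lost.
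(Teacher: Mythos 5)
Your proposal is correct and follows essentially the same route as the paper: both verify hypothesis (i) of Theorem~\ref{bima} via the iteration identity \eqref{VViteration}, hypothesis (ii) via Lemma~\ref{lem:tauintegrability}, and hypothesis (iii) via Proposition~\ref{prop:lpbound}. The only (cosmetic) difference is that you take $r=\tau_N^{-\beta_0}$ with exponent $1$ in (ii), whereas the paper takes $r=\tau_N^{-1}$ with exponent $\beta$; these are equivalent reparametrizations and yield the same conclusion.
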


\begin{proof}
We apply Theorem \ref{bima} to the flow $\Phi_t^N$, measure $\rho_{s,\gamma}$, and $r = \tau_{N}^{-1}$. We just need to check that the conditions \eqref{bimai}, \eqref{bimaii} and \eqref{bimaiii} hold.
\begin{enumerate}
\item 
From \eqref{VViteration}, we have that 
\begin{align*}
\|\VV_N(\Phi^{N}_t(u_0))\|_{X^{\sigma,\frac 1q+}_{q}(\tfrac\tau2)} \le 2 \|\VV_N(u_0)\|_{X^{\sigma,\frac 1q+}_{q}(\tau)}
\end{align*}
for any $0\leq \s\leq s$.
In particular, recalling that $\gamma > 1$ and $\frac{4\alpha}{2\alpha-1} > 2$, by definition \eqref{taudef} of $\tau$, this implies that 
$$ \sup_{0 \le t' \le \tfrac {\tau(u_0)}2} \tau_{N}(\Phi_{t'}^N(u_0))^{-1} \le 2^{\frac{4\alpha}{2\alpha-1}+} \tau_{N}(u_0)^{-1},$$
which is \eqref{bimai} with $\kappa = 1$.
\item \eqref{bimaii} is a direct consequence of \eqref{tauintegrability}.
\item \eqref{bimaiii} is \eqref{lpdensity}.
\end{enumerate}
Therefore, \eqref{taugrowthestimate} is exactly \eqref{gwthestimatestrong} in this setting, with 
\begin{gather*}
A = \frac{2\gamma}{2\gamma - c(\alpha,s,\gamma) - \frac{4\alpha\ta_*}{2\alpha-1}}+,  \quad \text{and} \quad \beta = \frac{2\alpha-1}{2\alpha}\gamma-.
\end{gather*}
\end{proof}

This allows us to prove the following.

\begin{proposition}
Suppose that $\alpha, s ,\gamma$ satisfy \eqref{conditionalphas}. Let $\VV_N(u_0)$ be the solution of \eqref{4NLSshiftedtruncated2}. Then, for $\mu_s$ a.e.\ $u_0$, for every $t \ge 0$, we have that 
$$ \VV_N(\Phi_t(u_0)) \in L^{\infty}_t([0,1]; L^2_x).$$
Moreover, for some exponents $A' = A'(\alpha,s,\gamma), \beta = \beta(\alpha,s,\gamma)$, there exists a constant $c > 0$ independent of $N$, such that for $M \gg 1$
\begin{equation}\label{VVgrowth}
\rho_{s,\gamma}\Big(\|\VV_N(\Phi_t(u_0))\|_{L^{\infty}_t([0,1]; L^2_x)} \le \exp(M\jb{t}^{A'})\Big) \ge \|\rho_{s,\gamma}\|_{\textup{TV}}- \exp(-c M^\beta).
\end{equation}
\end{proposition}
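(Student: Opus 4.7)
The plan is to deduce \eqref{VVgrowth} from the already-established growth estimate \eqref{taugrowthestimate} for $\tau_N(\Phi^N_s(u_0))^{-1}$, combined with a discrete-in-time iteration of the composition identity \eqref{VViteration}. Specifically, let $\mathcal{E}_M$ denote the event appearing on the right-hand side of \eqref{taugrowthestimate}, on which $\tau_N(\Phi^N_s(u_0))^{-1}\le M\jb{s}^A$ for every $s\ge 0$. Then $\rho_{s,\gamma}(\mathcal E_M)\ge \|\rho_{s,\gamma}\|_{\textup{TV}}-\exp(-cM^\beta)$, and the whole proposition reduces to the deterministic statement that on $\mathcal{E}_M$, for every fixed $t\ge 0$, one has $\|\VV_N(\Phi^N_t(u_0))\|_{L^\infty_{t'}([0,1];L^2_x)}\le \exp(M\jb{t}^{A'})$. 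The qualitative a.s.\ finiteness claim then follows from Borel--Cantelli by sending $M$ to infinity along a countable sequence.

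To prove this deterministic bound, fix $u_0\in\mathcal E_M$ and $t\ge 0$, write $w_0:=\Phi^N_t(u_0)$, and define inductively a partition of $[0,1]$ by $\tau_0:=0$, $\tau_{j+1}:=\tau_j+\tau_N(\Phi^N_{t+\tau_j}(u_0))$. On $\mathcal{E}_M$ we have $\tau_{j+1}-\tau_j\gtrsim (M\jb{t+1}^A)^{-1}\gtrsim (M\jb{t}^A)^{-1}$ for every $\tau_j\le 1$, so after $k=O(M\jb{t}^A)$ steps we reach $\tau_k\ge 1$. On each subinterval $[\tau_j,\tau_{j+1}]$, the defining property of $\tau_N$ in \eqref{taudef} together with the embedding \eqref{ctsembed} gives
\begin{equation*}
\|\VV_N(\Phi^N_{t+\tau_j}(u_0))\|_{L^\infty_{t''}([0,\tau_{j+1}-\tau_j];L^2_x)}
\les \tau_N(\Phi^N_{t+\tau_j}(u_0))^{-\frac{2\alpha-1}{4\alpha}-}
\les (M\jb{t}^A)^{\frac{2\alpha-1}{4\alpha}+}.
\end{equation*}

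To assemble these local pieces into a global $L^\infty_{t'}([0,1];L^2)$ bound, I iterate \eqref{VViteration}. A straightforward induction on $j$ (using that $\Phi^N_{\tau_j}(w_0)=\Phi^N_{t+\tau_j}(u_0)$) produces the telescoping formula
\begin{equation*}
\VV_N(w_0)(\tau_k)=\sum_{j=0}^{k-1} S(\tau_k-\tau_{j+1})\,\VV_N(\Phi^N_{t+\tau_j}(u_0))(\tau_{j+1}-\tau_j),
\end{equation*}
and analogously if $\tau_k$ is replaced by an intermediate $t'\in[0,1]$ (treating the final increment as $t'-\tau_{j^*}$). Taking $L^2_x$ norms, using unitarity of $S$ on $L^2_x$ and the per-step bound above,
\begin{equation*}
\|\VV_N(w_0)(t')\|_{L^2}\les \sum_{j=0}^{k-1}(M\jb{t}^A)^{\frac{2\alpha-1}{4\alpha}+}\les (M\jb{t}^A)^{1+\frac{2\alpha-1}{4\alpha}+}.
\end{equation*}
This is a polynomial bound in $M\jb{t}^A$; the elementary inequality $x^C\le\exp(Cx)$ for $x\ge 0$ converts it into a bound of the form $\exp(C'M\jb{t}^A)$, and absorbing the constant $C'$ into $M$ by a redefinition of $M$ (which only changes the constant $c$ and $\beta$ in the probability estimate) yields \eqref{VVgrowth} with $A'=A$.

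The only real obstacle in this argument is the bookkeeping: correctly iterating the Duhamel-type identity \eqref{VViteration} to obtain the telescoping representation, and checking that $O(M\jb{t}^A)$ increments suffice to exhaust $[0,1]$ uniformly in $N$. Once this structural identity is in hand, the per-step bound is immediate from \eqref{taudef} and the continuous embedding $X^{0,\frac{1}{q'}+}_q(T)\hookrightarrow C([0,T];L^2_x)$, and all constants are independent of $N$ since both \eqref{taugrowthestimate} and the definition of $\tau_N$ are uniform in $N$.
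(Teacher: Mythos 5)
Your proof is correct and follows essentially the same route as the paper: decompose $[0,1]$ into subintervals of length at most the local existence time $\tau_N$, bound $\VV_N$ on each piece via the definition \eqref{taudef} combined with the embedding \eqref{ctsembed}, iterate \eqref{VViteration} to assemble, and invoke the tail estimate \eqref{taugrowthestimate}. The only (inconsequential) refinement is that your telescoping identity together with unitarity of $S$ on $L^2$ yields a linear-in-$k$ sum, hence a polynomial bound $(M\jb{t}^A)^{1+\frac{2\alpha-1}{4\alpha}+}$, whereas the paper uses the cruder recursion giving $2^k$ times the per-step bound; both are dominated by $\exp(CM\jb{t}^A)$, so the final form of \eqref{VVgrowth} is identical.
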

\begin{proof}
By Remark \ref{rk:numerology}, we obtain that if $\alpha, s ,\gamma$ satisfy \eqref{conditionalphas}, they satisfy \eqref{condition} as well.
From the \eqref{VViteration},
we obtain that, for every $k \in \N$, 
$$ \sup_{0 \le t' \le 1} \|\VV_N(u_0)\|_{L^2} \le 2^{k} \sup_{0 \le h \le k-1} \|\VV_N\big(\Phi_{\tfrac{h}{k}}^N(u_0)\big)\|_{L_{t}^{\infty}([0,k^{-1}]; L_{x}^2)}. $$
Moreover, if $\tau_{N}(\Phi_{\frac{h}{k}}^{N}(u_0)) \ge k^{-1}$, then by definition \eqref{taudef} of $\tau_{N}$ and \eqref{ctsembed}, 
$$ \|\VV_N\big(\Phi_{\tfrac{h}{k}}^N(u_0)\big)\|_{L_{t}^{\infty}([0,k^{-1}]; L_{x}^2)} \les \tau_{N}(\Phi_{\frac{h}{k}}^{N}(u_0))^{-\frac{2\alpha-1}{4\alpha}-}.$$
Therefore, for fixed $t \ge 0$, by picking $k \sim \sup_{0 \le t' \le t+1} \tau_{N}(\Phi_{t'}(u_0))^{-1}$, we obtain that 
\begin{align*}
\|\VV_N(\Phi_t(u_0))\|_{L^{\infty}_t([0,1]; L^2_x)} &\les \exp\big(c \sup_{0 \le t' \le t+1} \tau_{N}(\Phi_{t'}(u_0))^{-1}\big) \sup_{0 \le t' \le t+1} \tau_{N}(\Phi_{t'}(u_0))^{-\frac{2\alpha-1}{4\alpha}-} \\
&\les \exp\big(c' \sup_{0 \le t' \le t+1} \tau_{N}(\Phi_{t'}(u_0))^{-1}\big)
\end{align*}
Therefore, \eqref{VVgrowth} follows from \eqref{taugrowthestimate}.
\end{proof}

\begin{proof}[Proof of Theorem \ref{thm:qiandgwp}]
We start by showing global well-posedness and quasi-invariance up to time $1$. Let $N_0(u_0,C)$ be as in \eqref{vNboundedness}. Fix $M \gg1$. For $N$ dyadic, let  
$$ F_N:= \{u_0 \, :\, N_0(u_0, \exp(M)) \le N \}. $$
By definition of $N_0$, we have that $\lim_{N \to \infty} \rho_{s,\gamma}(F_N) = \|\rho_{s,\gamma}\|_{TV}.$ Moreover, by \eqref{vboundedness} and \eqref{VVgrowth}, we have that 
\begin{align*}
&\rho_{s,\gamma}( \{ \|\VV(u_0)\|_{L_{t}^\infty([0,1];L_{x}^2)} \le \exp(M)+1 \}) \\
&\ge \rho_{s,\gamma}( \{ \|\VV^N(u_0)\|_{L_{t}^\infty([0,1];L_{x}^2)} \le \exp(M)+1 \} \cap F_N) \\
&\ge \rho_{s,\gamma}( \{ \|\VV^N(u_0)\|_{L_{t}^\infty([0,1];L_{x}^2)} \le \exp(M)+1 \})  - \rho_{s,\gamma}(F_N^c) \\
&\ge \|\rho_{s,\gamma}\|_{\text{TV}}- \exp(-c (M+1)^\beta) - \rho_{s,\gamma}(F_N^c).
\end{align*}
By taking limits as $N \to \infty$, we obtain that
\begin{equation}
\rho_{s,\gamma}( \{ \|\VV(u_0)\|_{L_{t}^\infty([0,1];L_{x}^2)} \le \exp(M)+1 \}) \ge \|\rho_{s,\gamma}\|_{\text{TV}}- \exp(-c (M+1)^\beta). 
\end{equation}
In particular, by sending $M \to \infty$, we obtain that $\|\VV(u_0)\|_{L_{t}^\infty([0,1];L_{x}^2)} < \infty$ $\mu_s$-a.s. 

We now move to showing quasi-invariance of $\mu_s$ (up to time 1). Fix $0 \le t \le 1$. Let $f_t^N$ be the density of $(\Phi_t^N)_\#\rho_{s,\gamma}$, i.e.\ the function defined in \eqref{densityformula}. By \eqref{lpdensity}, we have that $f_t^N$ is uniformly bounded in $L^p(\rho_{s,\gamma})$ for any $ p < \infty$. Therefore, up to subsequences, we have that $f_t^N$ has a weak limit $f_t$ in $L^2(\rho_{s,\gamma})$ as $N \to \infty$. Let $F: H^{-1} \to \R$ be bounded and continuous. By \eqref{vconvergence}, and dominated convergence, we obtain that 
\begin{equation} \label{compatibility}
\begin{aligned}
\int F(u_0) d (\Phi_t)_\#\rho_{s,\gamma}(u_0) &= \int F(\Phi_t(u_0)) d\rho_{s,\gamma}(u_0) \\
&= \lim_{N \to \infty}  \int F(\Phi_t^N(u_0)) d\rho_{s,\gamma}(u_0) \\
&= \lim_{N \to \infty}  \int F(u_0) f_t^N(u_0) d\rho_{s,\gamma}(u_0) \\
&= \int F(u_0) f_t(u_0) d\rho_{s,\gamma}(u_0).
\end{aligned}
\end{equation}
In particular, we obtain that 
$$ (\Phi_t)_\#\rho_{s,\gamma} = f_t d\rho_{s,\gamma},$$
so the measure $\rho_{s,\gamma}$ is quasi-invariant up to $t=1$ (and consequently, also the measure $\mu_s$). 

In order to obtain quasi-invariance for every positive time $t$, we simply iterate this process. First of all, since $(\Phi_1)_\# \rho_{s,\gamma} \ll \rho_{s,\gamma}$, we have that 
\begin{align}
\|\VV(u_0)\|_{L_{t}^\infty([0,1];L_{x}^2)} < \infty\quad (\Phi_1)_\#\rho_{s,\gamma}\text{-a.s.} \Rightarrow \|\VV(\Phi_1(u_0))\|_{L_{t}^\infty([0,1];L_{x}^2)} < \infty\quad \rho_{s,\gamma}\text{-a.s.},  \label{bd01}
\end{align}
and in the same way, 
$$ \lim_{N \to \infty} \|\VV(\Phi_1(u_0)) - \VV_N(\Phi_1(u_0)) \|_{L_{t}^\infty([0,1];L_{x}^2)} = 0 \quad\rho_{s,\gamma}\text{-a.s.} $$
For $1<t\leq 2$, we then define
\begin{align*}
\VV(u_0)(t) = \VV(\Phi_{1}(u_0))(t-1)+S(t-1)\VV(\Phi_1(u_0)),
\end{align*}
and so, by \eqref{bd01}, we have 
\begin{align*}
\| \VV(u_0) \|_{L^{\infty}([0,2];L^2_{x})} <\infty \quad \rho_{s,\gamma}\text{-a.s.}
\end{align*} Then, by Proposition~\ref{PROP:LWPN}, for $N \ge N_3(u_0) \gg 1$, we can extend $\VV_{N}(u_0)$ to times $0\leq t\leq 2$, and by \eqref{VViteration} and the analogous property in the interval $[0,1]$, we obtain that 
$$ \lim_{N \to \infty} \|\VV(u_0) - \VV_N(u_0) \|_{L_{t}^\infty([0,2];L_{x}^2)} = 0 \quad\rho_{s,\gamma}\text{-a.s.} $$ 
By taking a weak limit of $f_t^N$ for $0 \le t \le N$, and repeating the chain of equalities in \eqref{compatibility}, this shows that 
$$ (\Phi_{t})_\#\rho_{s,\gamma} \ll \rho_{s,\gamma} \quad \text{ for } 0 \le t \le 2.$$
Proceeding inductively, we obtain both global well-posedness of \eqref{alphaNLS} and quasi-invariance of $\mu_s$ for every time $t > 0$. By the property 
$$ \Phi_{-t}(u_0) = \cj{\Phi_t(\cj{u_0})}, $$
and the fact that both $\mu_s$ and $\rho_{s,\gamma}$ are invariant for complex conjugation, we derive the same result holds for every $t < 0$.

Finally, we move to showing the growth estimate \eqref{sobolevgrowth}. We want to apply the result of Theorem \ref{bima} to the flow $\Phi_t$, the measure $\rho_{s,\g}$, and 
$$ r(u) := \inf_{0\le t_0 < 1, u_0, v} \{ (1-t_0)^{-1} + \|\Xi(u_0)\|_{\mathcal X^{0,q}} + \|v\|_{L^2}: u = S(t_0)u_0 + v \},$$
where $\|\Xi(u_0)\|_{\mathcal X^{0,q}}$ is defined in \eqref{normxidef}. We clearly have that $r(u) \ge \|u\|_{H^{s-\frac12-}}$.
Notice that the definition of $\|\Xi(u_0)\|_{\mathcal X^{0,q}}$ is invariant with respect to replacing $u_0$ with $S(t)u_0$, 
with the exception of the term depending on $X^{(3)}$. Denoting $X^{(3)} = X^{(3)}[u_0]$ to make the dependency on $u_0$ explicit, by \eqref{X3def} we have that 
$$X^{(3)}[S(t_0)u_0](t) = X^{(3)}[u_0](t+t_0) -  S(t_0)X^{(3)}[u_0](t_0).$$
Therefore, by \eqref{tau0def} and \eqref{tau0bound}, for some constant $\eps_0 > 0$ we have that
\begin{equation*}
\| \VV(S(t_0)u_0 + v) (t) \|_{L^2} \les \|v\|_{L^2} \quad \text{ if } t \le \eps_0\big(1 + \|v\|_{L^2}^{\frac{4\alpha}{2\alpha-1}} + \|\Xi(u_0)\|_{\mathcal X^{0,q}}^{C_\eps}\big)^{-1}\wedge (1-t_0).
\end{equation*}
Therefore, if $u = S(t_0)u_0 + v$, on the interval 
$$t \in \Big[0, \eps_0\big(1 + \|v\|_{L^2}^{\frac{4\alpha}{2\alpha-1}} + \|\Xi(u_0)\|_{\mathcal X^{0,q}}^{C_\eps}\big)^{-1}\wedge \tfrac{(1-t_0)}2\Big],$$ 
we have that 
$$ \Phi_{t}(u) = S(t+t_0)u_0 + S(t)v + \VV(S(t_0)u_0 + v) (t),  $$
and so 
$$ r(\Phi_t(u)) \les (1-t_0)^{-1} + \|\Xi(u_0)\|_{\mathcal X^{0,q}} + C\|v\|_{L^2}.$$
In particular, noticing that 
$$ \eps_0\big(1 + \|v\|_{L^2}^{\frac{4\alpha}{2\alpha-1}} + \|\Xi(u_0)\|_{\mathcal X^{0,q}}^{C_\eps}\big)^{-1}\wedge \tfrac{(1-t_0)}2 \gtrsim \big((1-t_0)^{-1} + \|\Xi(u_0)\|_{\mathcal X^{0,q}} + C\|v\|_{L^2}\big)^{-\kappa}$$
for $\kappa$ big enough, we have that \eqref{bimai} is satisfied. Therefore, recalling that $r(u) \ge \|u\|_{H^{s-\frac12-}}$, if we check that \eqref{bimaii}, \eqref{bimaiii} hold with $c(t) \les \jb{t}^A$, we obtain \eqref{sobolevgrowth} as a direct consequence of \eqref{gwthestimate}. The fact that \eqref{bimaiii} holds with $c(t) \les \jb{t}^A$ follows from \eqref{lpdensity}, together with the weak convergence in $L^p$ of the density $f_t^N$ to $f_t$. Therefore, we just need to check exponential integrability of the quantity $r(u)$. By the Bou\'e-Dupuis formula \eqref{P4},\footnote{We are applying the formula without the Fourier truncation $P_N$, so in order to be rigorous, we should apply the formula to $r(P_Nu)$ and then take a limit as $N \to \infty$. We omit the details, which are straightforward.} \eqref{l2integrability}, \eqref{xiintegrability} and the definition of $r$, for some constant $C \in \R$, we have that 
\begin{align*}
&-\log \int \exp\Big(r(u)\Big) d\rho_{s,\gamma}(u)\\
&\le \E\Big[ \sup_{V_0 \in H^s} \Big(r(Y + V_0) - \Big| \int :(Y+V_0)^2: \Big|^\gamma - \frac 12 \|V_0\|_{H^s}^2\Big)\Big]\\
&\le \E\Big[ \sup_{V_0 \in H^s} \Big(1 + \|\Xi(Y)\|_{\mathcal X^{0,q}} + \|V_0\|_{L^2} + C_\gamma \Big| \int :Y^2:\Big|^\gamma + C_\gamma \|Y\|_{H^{s-\frac12-}}^\kappa\\
&\phantom{\le \sup_{V_0 \in H^s} \E\Big[]} - \frac 12 \|V_0\|_{L^2}^{2\gamma} - \frac 14 \|V_0\|_{H^s}^2\Big)\Big] \\
&\le  \E\Big[\sup_{V_0 \in H^s} - \frac 12 \|V_0\|_{L^2}^{2\gamma} + \|V_0\|_{L^2}\Big] + C \\
&< \infty.
\end{align*}
Therefore, we have \eqref{bimaii} as well, and \eqref{gwthestimate} gives us that 
\begin{equation*}
\| \Phi_t(u)\|_{H^{s-\frac12-}} \le r(\Phi_t(u)) \les_u \jb{t}^A
\end{equation*}
for $\mu_s$ a.e.\ $u$.
\end{proof}

\end{document}